\documentclass[11pt,letterpaper,reqno]{amsart}

\usepackage[T1]{fontenc}
\usepackage[utf8]{inputenc}
\usepackage{lmodern}
\usepackage{amsmath,amssymb,amsthm,mathtools,mathrsfs}
\usepackage[protrusion=true,expansion=false]{microtype}
\usepackage{enumitem}
\usepackage{array}
\usepackage[hidelinks]{hyperref}
\allowdisplaybreaks
\numberwithin{equation}{section}

\newtheorem{theorem}{Theorem}[section]
\newtheorem{proposition}[theorem]{Proposition}
\newtheorem{lemma}[theorem]{Lemma}
\newtheorem{corollary}[theorem]{Corollary}

\theoremstyle{definition}
\newtheorem{definition}[theorem]{Definition}
\theoremstyle{remark}
\newtheorem{remark}[theorem]{Remark}
\newtheorem*{normalizationlemma}{Normalization lemma}

\newcommand{\C}{\mathbb C}
\newcommand{\Q}{\mathbb Q}
\newcommand{\R}{\mathbb R}
\newcommand{\Z}{\mathbb Z}
\newcommand{\Hh}{\mathbb H}
\newcommand{\e}{\mathrm e}
\newcommand{\ee}[1]{\exp(2\pi\ii #1)}
\newcommand{\ii}{\mathrm i}
\newcommand{\SLtwo}{\mathrm{SL}_2(\Z)}
\newcommand{\Mp}{\mathrm{Mp}_2(\Z)}
\newcommand{\CT}{\operatorname{CT}}
\newcommand{\CTq}{\operatorname{ct}_q}

\newcommand{\Odd}{\operatorname{Odd}}
\newcommand{\pp}{\operatorname{pp}_{\infty}}
\newcommand{\dd}{\mathscr D}
\newcommand{\Heat}{\mathcal H}
\newcommand{\GHeat}{\mathscr H}
\newcommand{\Qop}{\mathcal Q}
\newcommand{\GQop}{\mathscr Q}
\newcommand{\CK}{\mathscr P^{\mathrm{CK}}}
\newcommand{\Aproj}{\mathsf A}
\newcommand{\Wr}{\mathscr W}
\newcommand{\Jet}{\mathcal J}
\newcommand{\varthetaJ}{\vartheta_{\mathrm J}}
\newcommand{\thetaodd}{\theta_{\mathrm{odd}}}
\newcommand{\rjet}{\mathsf r}
\newcommand{\pPol}{p^{\mathrm{pol}}}
\newcommand{\Pcan}{\mathbf P^{\mathrm{can}}}

\newcommand{\Zwerr}{\mathscr S_{\mathrm{Zw}}}

\title[Canonical lifts for the $k$-rank Taylor family]
{The $k$-Rank Taylor Family:\\
Canonical Harmonic Lifts, Principal Parts,\\
and Rademacher Series}

\author{Seokho Jin}
\address{Department of Mathematics, Chung-Ang University, 84 Heukseok-ro, Dongjak-gu, Seoul 06974, Republic of Korea}
\email{archimed@cau.ac.kr}

\author{Sihun Jo}
\address{Department of Mathematics Education, Woosuk University, 443 Samnye-ro, Samnye-eup, Wanju-gun, Jeollabuk-do 55338, Republic of Korea}
\email{sihunjo@woosuk.ac.kr}

\subjclass[2020]{Primary 11F37, 11P82; Secondary 11F27, 11F30}
\keywords{$k$-ranks, partition ranks, Durfee symbols, Taylor coefficients, Appell functions, harmonic Maass forms, modular Wronskians, principal parts, Weil representations, Virasoro minimal models, Rademacher series}
\date{}
\hypersetup{
  pdftitle={The k-Rank Taylor Family: Canonical Harmonic Lifts, Principal Parts, and Rademacher Series},
  pdfauthor={Seokho Jin and Sihun Jo},
  pdfsubject={Canonical harmonic lifts, principal-part reconstruction, Rademacher formulas and cusp-form adjustments, and exact formulas for modified k-rank moments},
  pdfkeywords={k-ranks, partition ranks, Durfee symbols, Taylor coefficients, Appell functions, harmonic Maass forms, modular Wronskians, principal parts, Weil representations, Virasoro minimal models, Rademacher series}
}

\begin{document}

\begin{abstract}
For fixed $k\ge2$, Garvan's modified $k$-rank moments are encoded by odd
elliptic Taylor coefficients of an odd-level Appell function.  The completed
coefficient of order $2n+1$ has weight $2n+3/2$, so the family cannot be
realized as a fixed-weight vector-valued modular form.  Nevertheless, all
non-holomorphic parts arise from scalar contractions of canonical higher Serre
derivatives of a single weight-$3/2$ vector-valued harmonic Maass form.  Among
lifts with the prescribed unary-theta shadow, the first $d=k-1$ Appell
corrections are coordinates on the weakly holomorphic ambiguity and select a
unique lift; the next correction is nonzero.

The initial Taylor data recover the principal part without using positive
Fourier coefficients of the modified moment series, and the principal part
determines the shadow.  The positive coefficients of the corresponding
Maass--Poincar\'e lift have convergent Rademacher expansions; the canonical
lift differs from it by a unique cusp form.  This yields exact recursions for
all even modified moments.  For $k=2$, the cusp-form adjustment vanishes,
giving an exact formula for the numbers of $2$-marked Durfee symbols involving
a convergent Kloosterman--Bessel series.
\end{abstract}

\maketitle

\begingroup
\small
\noindent\textbf{Use of AI-assisted tools.}
During the preparation of this manuscript, the authors used OpenAI's ChatGPT
for language and LaTeX editing, organizational assistance, and exploratory
searches and suggestions related to mathematical content.  All such output was
treated as nonauthoritative and independently evaluated by the authors.
ChatGPT was not relied upon for proofs or formal verification.  The authors
independently checked all statements, proofs, computations, and references and
take full responsibility for the manuscript.
\par
\endgroup
\smallskip

\section{Introduction}
\label{sec:introduction}

A recurring theme in the arithmetic theory of modular forms is to place an
infinite family of arithmetic generating functions under a single automorphic
transformation law.  For Jacobi forms, this packages Taylor coefficients of
different weights into one two-variable object and makes it possible to study
identities, asymptotics, and congruences uniformly rather than coefficient by
coefficient
\cite{EichlerZagier,BringmannGarvanMahlburg,BringmannMahlburgRhoades}.
In partition theory, this viewpoint has been especially effective for rank and
crank moments.

The corresponding family-level problem is subtler for completed Jacobi and
Appell functions.  Their Taylor coefficients may have different weights, and
ordinary differentiation introduces quasimodular correction terms
\cite{Bringmann,NSZ}.  Moreover, prescribing a shadow still leaves a weakly
holomorphic ambiguity in the choice of harmonic Maass lift.  Thus
coefficientwise completions do not by themselves produce a canonical
automorphic object governing the full family.

For Garvan's $k$-rank, the completed coefficient of order $2n+1$ has weight
$2n+3/2$, so these coefficients cannot be the components of a single
fixed-weight vector-valued modular form.  We prove nevertheless that a single
weight-$3/2$ vector-valued harmonic Maass form generates all non-holomorphic
Taylor terms by canonical higher Serre differentiation and scalar contraction.
The first $d=k-1$ Appell corrections give coordinates on the weakly holomorphic
freedom in choosing the lift, and the next correction is nonzero.  Thus
infinitely many changing-weight completion problems are reduced to one
fixed-weight lift and $d$ scalar coordinates.

The initial Taylor data also recover the principal part of the canonical lift,
and the Bruinier--Funke pairing then shows that this principal part determines
the shadow.  In particular, the polar data required by the Maass--Poincar\'e
and Rademacher constructions are recovered from the $k$-rank family rather
than prescribed externally.  The canonical lift differs from the resulting
Maass--Poincar\'e lift by a uniquely determined cusp form.

To state these results precisely, fix an integer $k\ge2$.  For a partition
$\lambda$, let
$n_j(\lambda)$ be the side length of its
$j$th successive Durfee square, with $n_j(\lambda)=0$ if that square does
not exist.  Garvan's $k$-rank is the number of columns to the right of the
first Durfee square whose lengths are at most $n_{k-1}(\lambda)$, minus the
number of parts below the $(k-1)$st Durfee square; it is $0$ when
$n_{k-1}(\lambda)=0$ \cite{Garvan1994}.  For $k=2$ this is Dyson's rank.
Let $\widetilde N_k(m,N)$ denote the modified $k$-rank distribution of
\cite{JinJo}.  For $\tau\in\Hh$, put $q=\e^{2\pi\ii\tau}$ and define
\[
 \begin{aligned}
 R_k(\zeta;q)&:=\sum_{N\ge0}\sum_{m\in\Z}
 \widetilde N_k(m,N)\zeta^m q^N,\\
 M_{2j,k}(N)&:=\sum_{m\in\Z}m^{2j}\widetilde N_k(m,N),
 \qquad
 M_{2j,k}(q):=\sum_{N\ge0}M_{2j,k}(N)q^N .
 \end{aligned}
\]
The modified distribution is symmetric:
\[
 \widetilde N_k(-m,N)=\widetilde N_k(m,N).
\]
Equivalently, $R_k(\zeta^{-1};q)=R_k(\zeta;q)$, and hence all odd
modified moments vanish.  Thus the elliptic derivatives of $R_k$ generate the
even modified $k$-rank moments.

For $k=2$, the first holomorphic Taylor coefficient involves the classical
partition statistic:
\[
 \eta_2(N):=\sum_{m\in\Z}\binom m2\widetilde N_2(m,N)
 =\frac12M_{2,2}(N)=D_2(N),
\]
where $D_2(N)$ counts $2$-marked Durfee symbols \cite{AndrewsDurfee}.
Bringmann realized a quasimodular correction of $\sum_N\eta_2(N)q^N$ as a
weight-$3/2$ harmonic Maass form and derived asymptotics and congruences
\cite{BringmannDuke}; her Taylor-coefficient formulation is the first
completed coefficient below \cite[Corollary~1.2]{Bringmann}.

In our earlier work \cite{JinJo}, we used Bernoulli and $E_2$ adjustments to
construct holomorphic series $\rjet_{2n+1,k}^{+}$ with modular completions
$\rjet_{2n+1,k}$ of weights $2n+3/2$.  For $n=0$, the relevant
weight-$3/2$ harmonic Maass preimages are scalar-valued.  Rausch obtained
complementary results on partition traces, recursions, and integrality
\cite{Rausch}.  Building on these coefficientwise results, the present paper
gives a simultaneous description of the Taylor family, a canonical
normalization of the common lift, and a reconstruction of its principal part.

\subsection{\texorpdfstring{The $k$-rank moment series and the canonical harmonic lift}{The k-rank moment series and the canonical harmonic lift}}
\label{subsec:intro-main-results}

Continue to fix $k\ge2$, and put
\[
 \ell=2k-1,\qquad d=k-1=\frac{\ell-1}{2}.
\]
Let $X$ be a formal elliptic variable and $\eta$ the Dedekind eta function.
Write $E_2(\tau)=1-24\sum_{m\ge1}\sigma_1(m)q^m$, with
$\sigma_1(m):=\sum_{r\mid m}r$, and let $[q^\alpha]F$ denote the coefficient
of $q^\alpha$ in $F$.  In the notation of \cite{JinJo},
$M_{2j,k}(q)=\widetilde N_{2j,k}(q)$.  Since
$R_k(\e^X;q)=\sum_{j\ge0}M_{2j,k}(q)X^{2j}/(2j)!$, define
$\rjet_{2n+1,k}^{+}$ by
\[
 \begin{aligned}
 q^{-1/24}\frac{R_k(\e^X;q)}{2\sinh(X/2)}
 \exp\!\left(\frac{\ell}{24}E_2(\tau)X^2\right)
 =\frac{1}{\eta(\tau)X}
 +\sum_{n\ge0}
 \frac{\rjet_{2n+1,k}^{+}(\tau)}{(2\pi\ii)^{2n+1}}X^{2n+1}.
 \end{aligned}
\]
Thus $\rjet_{2n+1,k}^{+}$ is
$(2\pi\ii)^{2n+1}q^{-1/24}$ times a finite $\Q[E_2]$-linear combination
of the moment series $M_{2j,k}$ with $0\le j\le n+1$.  If $s=n+1$ is
the index used in \cite{JinJo}, then
$\rjet_{2n+1,k}^{+}=r_{2s-1,k}^{+}$.

Section~\ref{sec:appell-jets} constructs an Appell-compatible
non-holomorphic completion term $\rjet_{2n+1,k}^{-}$, and we set
\[
 \rjet_{2n+1,k}:=\rjet_{2n+1,k}^{+}+\rjet_{2n+1,k}^{-}.
\]
Equivalently, writing $X=2\pi\ii Z$ and
$\Odd_Z f:=(f(Z)-f(-Z))/2$, the complexified completed Appell function
satisfies
\[
 \Odd_Z\widehat{\mathcal R}_k^{\mathrm A,\sharp}(Z,0;\tau)
 =\frac{1}{2\pi\ii\eta(\tau)}Z^{-1}
 +\sum_{n\ge0}\rjet_{2n+1,k}(\tau)Z^{2n+1}.
\]
For $k=2$, the $n=0$ completion is Bringmann's:
$\rjet_{1,2}(\tau)=2\pi\ii\,\mathcal M_{\mathrm{Br}}(\tau/24)$, where
$\mathcal M_{\mathrm{Br}}$ is the weight-$3/2$ harmonic weak Maass form for
$\eta_2=D_2$ constructed in \cite{BringmannDuke}; see
\cite[Corollary~1.2]{Bringmann}.

Let $\boldsymbol\Theta_\ell$ be the $d$-component unary-theta cusp form of
Section~\ref{sec:theta-representation}, transforming with the finite-image
representation $\sigma_\ell$, and put
\[
 \mathbf F_\ell=-\ii\sqrt{\frac\ell2}\,
 \frac{\boldsymbol\Theta_\ell}{\eta}.
\]
The quotient $\ii\boldsymbol\Theta_\ell/\eta$ is, up to fixed signs and
ordering, the character vector of the Virasoro minimal model $M(2,\ell)$,
which has $d$ irreducible characters \cite[Section~1]{MilasMortensonOno}.
Its components have Andrews--Gordon expressions and, for $\ell=5$, specialize
to the Rogers--Ramanujan functions up to leading $q$-powers.  This
identification is used twice below: the character Wronskian gives the dual
frame, while the corresponding MLDE, together with Zwegers's Appell equation,
yields the scalar correction equation.

The comparison of non-holomorphic parts follows from matching
Maass-lowering laws.  Section~\ref{sec:reference-lift} constructs a reference
lift $\mathbf G_\ell^0$ for which
$\rjet_{1,k}^{-}=\mathbf F_\ell^{\mathsf T}(\mathbf G_\ell^0)^{-}$.  If
$\mathbf G$ has the same shadow, then
$\mathbf G-\mathbf G_\ell^0$ is weakly holomorphic; since
$\mathbf F_\ell$ is holomorphic, writing $L$ for the Maass lowering operator
gives
$L\!\left(\mathbf F_\ell^{\mathsf T}(\mathbf G-\mathbf G_\ell^0)\right)=0$.
Section~\ref{sec:higher-serre} defines the canonical higher Serre derivatives
$\dd_\kappa^{[n]}$.  The higher-Serre lowering formula
\eqref{eq:higher-Serre-lowering} at $\kappa=3/2$ and the Appell lowering formula
\eqref{eq:Appell-lowering-ratio} have matching coefficients after
multiplication by $(8\ell\pi^2)^n/(2n+1)!$.  Together with the first-order
identity above, they give
\[
 L(\rjet_{2n+1,k})
 =L\!\left(
 \frac{(8\ell\pi^2)^n}{(2n+1)!}
 \mathbf F_\ell^{\mathsf T}\dd_{3/2}^{[n]}\mathbf G
 \right)
 \qquad(n\ge0).
\]
Thus the shadow fixes the non-holomorphic Taylor family.
Theorem~\ref{thm:weakly-holomorphic-scalar-corrections} proves that, after
multiplication by $\eta/(2\pi\ii)^{2n+1}$, the difference of these two terms is
a scalar weakly holomorphic modular form.  Accordingly, for every
weight-$3/2$ harmonic Maass form $\mathbf G$ of type $\sigma_\ell^\vee$ with
shadow $-\boldsymbol\Theta_\ell$, there is a unique
$a_{n,\ell}(\mathbf G)\in M_{2n+2}^{!}(\SLtwo)$ such that
\begin{equation}
 \rjet_{2n+1,k}
 =\frac{(8\ell\pi^2)^n}{(2n+1)!}
  \mathbf F_\ell^{\mathsf T}\dd_{3/2}^{[n]}\mathbf G
 +\frac{(2\pi\ii)^{2n+1}}{\eta}\,a_{n,\ell}(\mathbf G).
 \label{eq:intro-central-decomposition}
\end{equation}

Let $(a)_n$ denote the Pochhammer symbol and put
$\mathbf H_\ell:=\eta\mathbf F_\ell$.  For
$\mathbf U\in M_{3/2}^{!}(\sigma_\ell^\vee)$, define
\[
 \Jet_{\ell,n}(\mathbf U)
 :=\frac{(-\ell/2)^n}{n!(3/2)_n}
 \mathbf H_\ell^{\mathsf T}\dd_{3/2}^{[n]}\mathbf U.
\]
Under the normalized weakly holomorphic shift
$\mathbf G\mapsto\mathbf G+2\pi\ii\mathbf U$, with
$\mathbf U\in M_{3/2}^{!}(\sigma_\ell^\vee)$, the $n$th correction changes by
$-\Jet_{\ell,n}(\mathbf U)$.  Theorem~\ref{thm:jet-coordinate-isomorphism}
proves that
\[
 \mathbf U\longmapsto
 \bigl(\Jet_{\ell,0}(\mathbf U),\ldots,\Jet_{\ell,d-1}(\mathbf U)\bigr)
\]
is an isomorphism from $M_{3/2}^{!}(\sigma_\ell^\vee)$ onto
$\bigoplus_{n=0}^{d-1}M_{2n+2}^{!}(\SLtwo)$.  Hence these first $d$
functionals are coordinates on the weakly holomorphic ambiguity.

\begin{theorem}[The canonical harmonic lift and optimality]
\label{thm:main-canonical-lift}
\label{thm:main-correction-equation}
There is a unique weight-$3/2$ harmonic Maass form
$\mathbf G_\ell^{\mathrm A}$ of type $\sigma_\ell^\vee$, with shadow
$-\boldsymbol\Theta_\ell$, such that, for $0\le n<d$,
\[
 \rjet_{2n+1,k}
 =\frac{(8\ell\pi^2)^n}{(2n+1)!}
  \mathbf F_\ell^{\mathsf T}
  \dd_{3/2}^{[n]}\mathbf G_\ell^{\mathrm A}.
\]
Equivalently,
$a_{0,\ell}(\mathbf G_\ell^{\mathrm A})=\cdots=
 a_{d-1,\ell}(\mathbf G_\ell^{\mathrm A})=0$.
For every $n\ge0$,
\[
 a_{n,\ell}(\mathbf G_\ell^{\mathrm A})
 \in M_{2n+2}(\SLtwo)\cap\Q[[q]].
\]
The scalar correction equation of
Theorem~\ref{thm:universal-correction-equation}, whose operator is a
degree-$d$ polynomial in second-order gauged heat operators, determines all
later corrections through a triangular recurrence, and
$a_{d,\ell}(\mathbf G_\ell^{\mathrm A})\ne0$.
Hence the initial vanishing range $0\le n<d$ is maximal.
\end{theorem}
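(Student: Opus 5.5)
The plan is to combine three earlier results: the decomposition \eqref{eq:intro-central-decomposition} from Theorem~\ref{thm:weakly-holomorphic-scalar-corrections}, the jet-coordinate isomorphism of Theorem~\ref{thm:jet-coordinate-isomorphism}, and the scalar correction equation of Theorem~\ref{thm:universal-correction-equation}.

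\emph{Existence and uniqueness.} Start from the reference lift $\mathbf G_\ell^0$ of Section~\ref{sec:reference-lift}. Every lift with shadow $-\boldsymbol\Theta_\ell$ has the form $\mathbf G=\mathbf G_\ell^0+2\pi\ii\mathbf U$ with $\mathbf U\in M_{3/2}^{!}(\sigma_\ell^\vee)$. Under this shift
\[
 a_{n,\ell}(\mathbf G)=a_{n,\ell}(\mathbf G_\ell^0)-\Jet_{\ell,n}(\mathbf U).
\]
So the conditions $a_{0,\ell}=\cdots=a_{d-1,\ell}=0$ amount to solving
\[
 \bigl(\Jet_{\ell,0}(\mathbf U),\ldots,\Jet_{\ell,d-1}(\mathbf U)\bigr)
 =\bigl(a_{0,\ell}(\mathbf G_\ell^0),\ldots,a_{d-1,\ell}(\mathbf G_\ell^0)\bigr)
\]
in $\bigoplus_{n<d}M_{2n+2}^{!}(\SLtwo)$. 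Theorem~\ref{thm:jet-coordinate-isomorphism} says this has exactly one solution, which gives $\mathbf G_\ell^{\mathrm A}$. The displayed identity for $0\le n<d$ is then \eqref{eq:intro-central-decomposition} with vanishing correction term.

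\emph{Holomorphy and rationality of all corrections.} Next I would show $a_{n,\ell}(\mathbf G_\ell^{\mathrm A})\in M_{2n+2}(\SLtwo)\cap\Q[[q]]$ for every $n$. Here I would invoke Theorem~\ref{thm:universal-correction-equation}. Its operator is a degree-$d$ polynomial in second-order gauged heat operators, obtained from the MLDE of the $M(2,\ell)$ characters combined with Zwegers's Appell equation. It should express the generating series $\sum_n a_{n,\ell}Z^{2n+1}$ (suitably rescaled) as annihilated up to a known source term. Reading off coefficients gives a triangular recurrence: $a_{n+d,\ell}$ equals an explicit rational-coefficient combination of Serre derivatives, $E_4,E_6$ multiples of $a_{m,\ell}$ with $m<n+d$, plus the source contribution. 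The leading coefficient should be a nonzero Pochhammer-type constant.

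Starting from $a_0=\cdots=a_{d-1}=0$, induction gives the following.
\begin{itemize}[label={}]
\item \emph{Rationality.} Every term preserves $\Q[[q]]$, provided the source term is rational; this needs the Bernoulli/$E_2$ normalization of $\rjet^{+}$ from \cite{JinJo}.
\item \emph{No poles at $i\infty$.} This is where I must check that the recurrence does not create principal parts. The cleanest route is to observe directly that $a_{n,\ell}=\eta(2\pi\ii)^{-2n-1}\bigl(\rjet_{2n+1,k}-\cdots\bigr)$ has nonnegative $q$-order. Indeed $\eta\,\rjet^{+}_{2n+1,k}$ lies in $\Q[E_2][[q]]$ times $q^{0}$, and $\mathbf H_\ell^{\mathsf T}\dd^{[n]}\mathbf G^{\mathrm A,+}$ has order at least $0$ once the principal part of $\mathbf G^{\mathrm A}$ is controlled by the vanishing of $\Jet_{\ell,0}$. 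I expect some orders to be bounded only below, so I would instead bound the order using the recurrence plus the weight: a weakly holomorphic level-one form with bounded pole order that arises from holomorphic sources is holomorphic.
\end{itemize}

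\emph{Nonvanishing of $a_{d,\ell}$.} I expect this to be the main obstacle. The recurrence gives $a_{d,\ell}$ as the source term alone, since all earlier corrections vanish. So the task is to show that this explicit weight-$(2d+2)$ form is nonzero. My first attempt would be to compute its constant term, a rational number coming from Bernoulli numbers and the Appell data. If the constant term vanishes by accident, I would instead compare the $q^1$ coefficient, using the first moment values $M_{2j,k}(1)$.

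A structural alternative is also available. If $a_{d,\ell}=0$, then the map $\mathbf U\mapsto(\Jet_{\ell,0},\ldots,\Jet_{\ell,d})$ would acquire extra vanishing. That would force the $(d+1)$-st jet to be determined by the first $d$ jets in a way that contradicts the order-$d$ MLDE, since the character Wronskian is nonzero. Maximality of the range $0\le n<d$ then follows immediately.
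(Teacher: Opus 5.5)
\textbf{Parts that match the paper.} Your existence and uniqueness argument (reference lift, shift law, jet-coordinate isomorphism) is the paper's argument. So is the inductive use of the triangular recurrence for holomorphy and rationality. Holomorphy needs no separate order argument. The recurrence writes $a_{n+d,\ell}^{\mathrm A}$, up to the nonzero constant $\prod_{j=0}^{d-1}(2n+2j+2)(2n+2j+3)$, as a combination of Serre derivatives and of $E_4$- and $F^{(\ell)}_{\ell-2r-1}$-multiples of already-holomorphic forms, plus $s^{\mathrm{src}}_{\ell,n+d+1}\in\Q[E_4,E_6]$. (Your order heuristic via $\Jet_{\ell,0}$ alone is not right; that control would come from all $d$ vanishing conditions through the Wronskian reconstruction.) Rationality uses the rationality of the MLDE coefficients, not the $E_2$-normalization of $\rjet^{+}$.

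\textbf{The gap: $a_{d,\ell}^{\mathrm A}\ne0$ for every odd $\ell\ge3$.} Your premise that, once $a_0=\cdots=a_{d-1}=0$, the recurrence returns $a_{d,\ell}^{\mathrm A}$ as ``the source term alone'' is false. The bookkeeping value $a_{-1,\ell}=1$, i.e.\ the polar term $X^{-1}$, enters $u_0^{(r)}$ through the $\frac{\ell^2}{144}E_4X^2$ part of the gauged heat operators. Already for $\ell=3$ the $n=0$ recurrence reads $6a_{1,3}^{\mathrm A}+\frac{9}{144}E_4=2s^{\mathrm{src}}_{3,2}=\frac{E_4}{480}$. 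Hence $a_{1,3}^{\mathrm A}=-\frac{29}{2880}E_4$, whereas the source alone would give $+\frac{E_4}{2880}$. In general the polar and source contributions have opposite signs and compete, so nonvanishing is a genuine inequality.

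Your fallback routes do not close this. Case-by-case checks of constant terms or $q^1$-coefficients cannot cover all $k$. The structural alternative also fails: the Wronskian only controls weakly holomorphic shifts. The equation $a_{d,\ell}(\mathbf G_\ell^{\mathrm A})=0$ would merely mean the Appell data satisfy one more condition than the $d$ Wronskian coordinates can absorb. Nothing in the MLDE excludes that, since it depends on the inhomogeneous source and the polar term.

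\textbf{The missing idea: the paper's constant-term reduction.}
\begin{itemize}
\item Taking constant Fourier terms deletes the $D$-terms and replaces modular coefficients by their constant terms. Since the MLDE's indicial roots are the $\alpha_a$, this gives $\CTq(\GQop_\ell)=\prod_{j=1}^d(\mathscr L_\ell^2-(2j-1)^2/4)$ with $\mathscr L_\ell=\partial_X-\ell X/12$.
\item After gauging by $\e^{-\ell X^2/24}$, this is a constant-coefficient ODE with right side $(\ell-1)!\,h_0^\ell$, where $h_0=1/(2\sinh(X/2))$.
\item The identity $(\partial_X^2-p^2/4)h_0^p=p(p+1)h_0^{p+2}$ makes $h_0$ a particular solution. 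Hence $Y_\ell=h_0+\sum_jc_{j,\ell}\sinh((2j-1)X/2)$.
\item The $d$ vanishing conditions form a Vandermonde moment system for the $c_{j,\ell}$. The constant term $\Gamma_\ell=\CT(a_{d,\ell}^{\mathrm A})$ is the interpolation defect at the next moment, $\Gamma_\ell=-\frac1{\ell!}\sum_{r=0}^d\pi_{r,d}\mathfrak m_{r,\ell}$.
\item The sign $(-1)^d\Gamma_\ell>0$ follows from $(-1)^{r+1}\mathfrak m_{r,\ell}>0$ and the positivity of the nodes $(2j-1)^2/4$. The first inequality says the Gaussian term beats the $B_{2r+2}(1/2)$ term, via a zeta-value estimate using $\ell\ge2r+1$.
\end{itemize}
This uniform sign argument is what your proposal lacks.
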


The preceding theorem works within the space of lifts having the prescribed
shadow: the shadow fixes the non-holomorphic Taylor family, and the first $d$
Appell corrections select its holomorphic normalization.  The next theorem
reverses this direction.  It recovers the principal part from the initial
Taylor data and then shows that this principal part already determines the
shadow.

\subsection{Principal-part reconstruction and Rademacher formulas}
\label{subsec:intro-principal-asymptotic}

We recover the principal part from the completed $k$-rank Taylor coefficients
rather than prescribe it.  For $1\le a\le d$, put
\[
 \alpha_a:=\frac{(\ell-2a)^2}{8\ell},
 \qquad
 \mathbf G_\ell^{\mathrm A}
 =(G_{\ell,1}^{\mathrm A},\ldots,G_{\ell,d}^{\mathrm A})^{\mathsf T}.
\]
The notation $\pp$ denotes the principal part at the cusp $\infty$, as
defined in Section~\ref{sec:foundations}.

\begin{theorem}[Principal-part reconstruction and Rademacher formulas]
\label{thm:main-principal-shadow}
\label{thm:main-rademacher-realization}
There are rational numbers $\pPol_{\ell,a}(m)$, with
$m\in\Z_{\ge0}$ and $m<\alpha_a$, such that:
\begin{enumerate}[label=\textup{(\roman*)},leftmargin=2.4em]
\item
\[
 \pp\bigl((G_{\ell,a}^{\mathrm A})^+\bigr)
 =2\pi\ii\sqrt{\frac2\ell}\,
 q^{-\alpha_a}
 \sum_{\substack{m\ge0\\m<\alpha_a}}
 \pPol_{\ell,a}(m)q^m.
\]
These coefficients are determined from the holomorphic parts
$\rjet_{2n+1,k}^{+}$, $0\le n<d$, of the first $d$ completed $k$-rank
Taylor coefficients and the congruence
$(q;q)_\infty R_k(\e^X;q)\equiv1\pmod{q^k}$, without using positive Fourier
coefficients of the underlying modified $k$-rank moment series.
\item For every
$\widetilde{\mathbf G}\in H_{3/2}^{+}(\sigma_\ell^\vee)$,
\[
 \pp(\widetilde{\mathbf G}^{+})
 =\pp\bigl((\mathbf G_\ell^{\mathrm A})^{+}\bigr)
 \quad\Longleftrightarrow\quad
 \widetilde{\mathbf G}-\mathbf G_\ell^{\mathrm A}
 \in S_{3/2}(\sigma_\ell^\vee).
\]
Thus this principal part determines the shadow
$-\boldsymbol\Theta_\ell$.
\item There is a harmonic Maass form $\mathbf G_\ell^{\mathrm P}$, given by
a finite linear combination of weight-$3/2$ Maass--Poincar\'e series, with
the same principal part and shadow as $\mathbf G_\ell^{\mathrm A}$, such that
\[
 \mathbf G_\ell^{\mathrm A}-\mathbf G_\ell^{\mathrm P}
 \in S_{3/2}(\sigma_\ell^\vee).
\]
The positive Fourier coefficients of $\mathbf G_\ell^{\mathrm P}$ are given
by convergent Rademacher series, and the cusp-form adjustment is determined by
the $d$ Appell normalization conditions
$a_{n,\ell}(\mathbf G_\ell^{\mathrm A})=0$ for $0\le n<d$.
\end{enumerate}
\end{theorem}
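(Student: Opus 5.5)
We argue part by part, using Theorem~\ref{thm:main-canonical-lift} and the jet-coordinate isomorphism of Theorem~\ref{thm:jet-coordinate-isomorphism} throughout.

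\emph{Part (i).} Since $a_{n,\ell}(\mathbf G_\ell^{\mathrm A})=0$ for $0\le n<d$, the holomorphic part of the central decomposition \eqref{eq:intro-central-decomposition} gives
\[
 \rjet_{2n+1,k}^{+}
 =\frac{(8\ell\pi^2)^n}{(2n+1)!}\,
 \mathbf F_\ell^{\mathsf T}\bigl(\dd_{3/2}^{[n]}\mathbf G_\ell^{\mathrm A}\bigr)^{+}
 +(\text{contribution of the nonholomorphic part}).
\]
The nonholomorphic part of $\dd_{3/2}^{[n]}\mathbf G$ is an incomplete-gamma series in the shadow. Its contribution is therefore explicit, and it carries no principal part beyond constants fixed by $\boldsymbol\Theta_\ell$. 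Multiply by $\eta/(2\pi\ii)^{2n+1}$ and consider only $q$-exponents below $\max_a\alpha_a$.

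On the left we use the congruence $(q;q)_\infty R_k(\e^X;q)\equiv 1\pmod{q^k}$. Modulo $q^k$ it makes $\eta\,\rjet^+_{2n+1,k}$ equal to the $X^{2n+1}$ coefficient of the explicit series
\[
 q^{-1/24}\eta\,\frac{1}{(q;q)_\infty\,2\sinh(X/2)}\exp\!\Bigl(\frac{\ell}{24}E_2X^2\Bigr).
\]
Since $\alpha_a\le (\ell-2)^2/(8\ell)<k$, the truncation $q^k$ suffices for the polar range.

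On the right, $\dd_{3/2}^{[n]}$ acts on $q^{m-\alpha_a}$ diagonally up to lower-order $E_2$ and Eisenstein terms. This gives a triangular linear system for the unknown polar coefficients in terms of the $d$ scalar truncated series. We solve it by the same dual-frame argument as in the proof of Theorem~\ref{thm:jet-coordinate-isomorphism}: the Wronskian of the characters $\mathbf H_\ell$ is invertible, so the $d$ contractions $\mathbf H_\ell^{\mathsf T}\dd^{[n]}_{3/2}(\cdot)$, $n<d$, detect each principal-part vector uniquely. Rationality follows because every input ($E_2$, $\eta$, the Bernoulli terms, the coefficients of $\mathbf H_\ell$) is rational after removing the factor $2\pi\ii\sqrt{2/\ell}$ from $\mathbf F_\ell$.

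\emph{Part (ii).} Let $\mathbf D=\widetilde{\mathbf G}-\mathbf G_\ell^{\mathrm A}$. By hypothesis $\mathbf D\in H_{3/2}^+(\sigma_\ell^\vee)$ and $\pp(\mathbf D^+)=0$. By the Bruinier--Funke pairing,
\[
 \{\mathbf g,\mathbf D\}=\bigl(\mathbf g,\xi_{3/2}\mathbf D\bigr)_{\mathrm{Pet}}
\]
for every cusp form $\mathbf g\in S_{1/2}(\sigma_\ell)$, and the left side is a finite sum pairing the principal part of $\mathbf D^+$ with coefficients of $\mathbf g$. It therefore vanishes. Taking $\mathbf g=\xi_{3/2}\mathbf D$ gives $\xi_{3/2}\mathbf D=0$, so $\mathbf D$ is weakly holomorphic with no principal part, and hence a cusp form. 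The constant term needs a check: it vanishes because $\alpha_a\notin\Z$ for these exponents, so weight $3/2$ type $\sigma_\ell^\vee$ has no $q^0$ terms. Conversely, adding a cusp form does not change the principal part. Since $\xi_{3/2}$ of a cusp form is $0$, the principal part determines the shadow.

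\emph{Part (iii).} Form $\mathbf G_\ell^{\mathrm P}$ as the finite combination of vector-valued Maass--Poincar\'e series $P_{3/2,-\alpha_a+m,e_a}$ whose principal parts match (i). Weight $3/2>1$ gives absolute convergence of the Poincar\'e series and Rademacher (Kloosterman--Bessel $I_{1/2}$) expansions for the positive coefficients. By (ii), the difference $\mathbf G_\ell^{\mathrm A}-\mathbf G_\ell^{\mathrm P}$ lies in $S_{3/2}(\sigma_\ell^\vee)$. It is pinned down by applying the injective map $\mathbf U\mapsto(\Jet_{\ell,n}(\mathbf U))_{n<d}$ of Theorem~\ref{thm:jet-coordinate-isomorphism}: the cusp form $\mathbf S$ is the unique $\mathbf U$ whose coordinates equal the values $a_{n,\ell}(\mathbf G_\ell^{\mathrm P})$ (suitably normalized), so that the corrected lift has $a_{n,\ell}=0$ for $n<d$.

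The main obstacle is part (i): keeping precise track of the nonholomorphic incomplete-gamma contributions and the $E_2$ corrections in $\dd^{[n]}_{3/2}$ near the polar range, and showing that the resulting system is triangular and invertible.
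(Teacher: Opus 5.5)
\textbf{The genuine gap is in part (iii): weight $3/2$ does \emph{not} give absolute convergence.} The harmonic specialization of the weight-$3/2$ Maass--Poincar\'e family is $s=3/4$. For large $|c\tau+d|$ the slashed Whittaker seed has size $\asymp v^{\Re(s)-3/4}|c\tau+d|^{-2\Re(s)}$, so the series converges absolutely only for $\Re(s)>1$. In general, absolute convergence at the harmonic point $s=\kappa/2$ requires weight $\kappa>2$.

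The coefficients fail in the same way. Since $I_{1/2}(x)\asymp x^{1/2}$ as $x\to0$, the Rademacher terms are $\asymp\mathcal S_\ell^{\mathrm{proj}}(c)\,c^{-3/2}$. Even a Weil-type bound $\mathcal S_\ell^{\mathrm{proj}}(c)\ll c^{1/2+\epsilon}$ leaves only the non-summable majorant $c^{-1+\epsilon}$. So your argument establishes neither that $\mathbf G_\ell^{\mathrm P}$ exists in $H_{3/2}^{+}(\sigma_\ell^\vee)$ with principal part exactly $\Pcan_\ell$, nor that its positive coefficients are given by convergent series.

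The paper needs three inputs at this point, all missing from your proposal:
\begin{enumerate}
\item A Friedrichs-resolvent continuation of the cutoff Poincar\'e family from $\Re(s)>1$ to $s=3/4$. It is regular there because the compact cutoff source is orthogonal to $\ker\Delta_{3/2,F}=S_{3/2}(\sigma_\ell^\vee)$, and it adds no negative holomorphic modes.
\item The reduction of the projected Kloosterman sums to rank-one Weil sums, plus Andersen--Anderson's fixed-index power saving $\sum_{c\le T}\mathcal S_\ell^{\mathrm{proj}}(c)/c\ll T^{1/2-\varepsilon_\ell^{\mathrm{GS}}}$, followed by Abel summation. This includes checking that the exponent-$1/2$ exceptional term vanishes because $m_\mu>0>m_\nu$.
\item A meromorphic identity argument that carries the Whittaker--Fourier formula from $\Re(s)>1$ to $s=3/4$.
\end{enumerate}
So the main obstacle is (iii), not (i). Once $\mathbf G_\ell^{\mathrm P}$ is available, your use of (ii) and of $(\Jet_\ell^{<d>})^{-1}$ to produce the cusp-form adjustment matches the paper.

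Part (ii) is the paper's Bruinier--Funke argument, applied to the difference rather than to $\xi_{3/2}\widetilde{\mathbf G}+\boldsymbol\Theta_\ell$, and it is fine.

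Part (i) follows the paper's mechanism. The paper recovers the contractions $(\mathbf H_\ell^{\langle r\rangle})^{\mathsf T}(\mathbf G_\ell^{\mathrm A})^{+}$ triangularly and then inverts the normalized Wronskian over $\Q[[q]]$; your coefficientwise system is equivalent. Two points still need repair.
\begin{itemize}
\item There is no contribution from the non-holomorphic part. $\mathbf F_\ell$ and the coefficients $E_2,E_4$ of $\dd_{3/2}^{[n]}$ are holomorphic, and $\dd_{3/2}^{[n]}$ sends incomplete-gamma terms to non-holomorphic terms. Taking holomorphic parts therefore gives exactly $\rjet_{2n+1,k}^{+}=\frac{(8\ell\pi^2)^n}{(2n+1)!}\mathbf F_\ell^{\mathsf T}\dd_{3/2}^{[n]}(\mathbf G_\ell^{\mathrm A})^{+}$ for $n<d$.
\item You presuppose that the exponents of $(G_{\ell,a}^{\mathrm A})^{+}$ are $m-\alpha_a$ with $m\ge0$, but that is part of the claim. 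It follows from your system applied at the lowest integral exponent $N$. For $N<0$, $[q^N]$ of $\eta\,\rjet_{2n+1,k}^{+}$ vanishes. The $d\times d$ block with entries $P_n(N-\alpha_a)$ is invertible because the $\alpha_a$ are distinct; here $P_n(\rho)$ is the leading coefficient of $\dd_{3/2}^{[n]}q^\rho$, a monic polynomial of degree $n$ in $\rho$.
\end{itemize}
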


Unlike a standard Poincar\'e construction, part \textup{(i)} recovers the
polar input from the $k$-rank Taylor data rather than prescribing it.  The
Bruinier--Funke pairing then determines the shadow from the recovered
principal part, and the scalar Whittaker--Fourier transform gives the
Rademacher coefficients \cite{BruinierFunke,Garthwaite}.  The projected
Kloosterman sums reduce to fixed-index odd-rank Weil sums to which
Andersen--Anderson's estimate applies \cite{AndersenAnderson}.  The cusp form in \textup{(iii)} is the
finite-dimensional difference between the Appell-normalized and
Maass--Poincar\'e lifts, not a convergence remainder.  Together, the two main
theorems reduce the infinite varying-weight family to a single
weight-$3/2$ harmonic lift, $d$ scalar normalization conditions, and a
finite-dimensional cusp-form obstruction.

\subsubsection*{Combinatorial consequence.}
Corollary~\ref{rad:cor:k-rank-moment-formulas} gives an exact triangular
recursion for all even modified $k$-rank moments.  For $k=2$,
$S_{3/2}(\sigma_3^\vee)=\{0\}$; combining the single-seed Rademacher formula
with this recursion gives the exact formula for $D_2(N)$ in
Corollary~\ref{rad:cor:two-marked-Durfee}, consisting of a convergent
Kloosterman--Bessel series and explicit partition--divisor convolution terms.

\subsubsection*{Relation to previous work.}
Independent complexification separates the holomorphic elliptic direction
from the anti-holomorphic dependence of the completed Jacobi/Appell kernel in
the Taylor and $E_2$ setting
\cite{EichlerZagier,BringmannMahlburgRhoades,Bringmann,JinJo}.  Canonical higher
Serre derivatives and their Cohen--Kuznetsov series account for the changing
weights and cancel the lower-order $E_4$ terms produced by ordinary iteration
\cite{Cohen,Kuznetsov,ZagierDifferential,NSZ}.

Once the shadow is fixed, the remaining weakly holomorphic ambiguity must be
made explicit.  Modular Wronskians and free-module theory provide a global
dual frame \cite{Mason,MarksMason,FrancMason,MilasMortensonOno}, while the
minimal-model MLDE and Zwegers's Appell equation lead to the scalar
differential equation and triangular recurrence
\cite{ZwegersThesis,ZwegersPDE,ZwegersAppell}.  The Bruinier--Funke pairing
detects the obstruction to prescribed principal parts \cite{BruinierFunke}.

For the Fourier theory, Rademacher--Niebur methods and half-integral-weight
Fourier transforms give the positive coefficients of the Maass--Poincar\'e
form
\cite{Rademacher,Niebur1973,Niebur1974,BringmannOno,Garthwaite,JKK,JKKcorr}.
At Bessel order $1/2$, convergence requires rank-one Weil reduction and
spectral cancellation \cite{GoldfeldSarnak,AndersenAnderson}; a
cutoff--resolvent argument continues the families from $\Re(s)>1$ to $s=3/4$
without additional negative holomorphic modes
\cite{Fay,Deitmar,MorreyNirenberg,AdamsFournier,CyconFroeseKirschSimon}.

The projected theta system introduced in
Section~\ref{sec:theta-representation} is the point at which these ingredients
meet.  Its Wronskian identifies the first $d$ Appell corrections with the
weakly holomorphic coordinates, while its MLDE, together with Zwegers's Appell
equation, gives the scalar correction equation and recurrence.  The congruence
$(q;q)_\infty R_k(\e^X;q)\equiv1\pmod{q^k}$ and the normalized inverse
Wronskian then recover the principal part from the initial Taylor data.  The
Bruinier--Funke pairing determines the shadow, and the $d$ Appell normalization
conditions determine the unique cusp-form adjustment.

\subsection{Organization}
\label{subsec:intro-architecture}
\label{subsec:intro-organization}

Sections~\ref{sec:appell-jets}--\ref{sec:canonical-normalization} construct the
completion and lift, derive the scalar equation, and impose the canonical
normalization.  Section~\ref{sec:polar-shadow} recovers the principal part,
shadow, and cusp ambiguity; Sections~\ref{sec:kloosterman-cancellation}--\ref{sec:rademacher}
prove convergence and the Rademacher formulas, with the endpoint continuation
proved in Appendix~\ref{app:friedrichs-continuation}.

\section{Conventions and normalizations}
\label{sec:foundations}

This section fixes the notation and normalizations used below.

\subsubsection*{Global notation.}
Throughout,
\[
 q=\e^{2\pi\ii\tau},\qquad \tau=u+\ii v\in\Hh.
\]
For an ordinary elliptic variable $z$, put $X=2\pi\ii z$ and
$\zeta=\e^X$.  After holomorphic complexification, the corresponding scaled
holomorphic variable is written $X=2\pi\ii Z$.
Let $\mathfrak F$ be the standard fundamental domain for $\SLtwo$, and put
$d\mu(\tau)=du\,dv/v^2$.
\[
 (a;q)_\infty=\prod_{m\ge0}(1-aq^m),
 \qquad \sum_{N\ge0}p(N)q^N=(q;q)_\infty^{-1}.
\]
We use $\eta=q^{1/24}(q;q)_\infty$ with multiplier $\psi$, normalized
Eisenstein series $E_{2m}$, Bernoulli polynomials and numbers defined by
$te^{xt}/(e^t-1)=\sum B_m(x)t^m/m!$ and $B_m:=B_m(0)$, and
$\Q[E_4,E_6]_w$ for the weight-$w$ part.

\subsubsection*{Metaplectic and differential conventions.}
For $\gamma\in\SLtwo$, write
$\gamma=\left(\begin{smallmatrix}a_\gamma&b_\gamma\\c_\gamma&d_\gamma\end{smallmatrix}\right)$.
Let
\[
 S=\begin{pmatrix}0&-1\\1&0\end{pmatrix},\qquad
 T=\begin{pmatrix}1&1\\0&1\end{pmatrix},\qquad
 \Gamma_\infty=\{\pm T^n:n\in\Z\},
\]
and use the standard metaplectic lifts
\[
 \widetilde S=(S,\sqrt{\tau}),\qquad \widetilde T=(T,1),
\]
where the square root is the principal branch.  Our metaplectic convention is
$\widetilde\gamma=(\gamma,\varphi_\gamma)$ with
$\varphi_\gamma(\tau)^2=c_\gamma\tau+d_\gamma$ and
\[
 (\gamma_1,\varphi_1)(\gamma_2,\varphi_2)
 =(\gamma_1\gamma_2,(\varphi_1\circ\gamma_2)\varphi_2).
\]
For $\kappa\in\frac12\Z$ and a unitary finite-dimensional representation
$\rho$ of $\Mp$, define
\[
 (F|_{\kappa,\rho}\widetilde\gamma)(\tau)
 :=\varphi_\gamma(\tau)^{-2\kappa}
 \rho(\widetilde\gamma)^{-1}F(\gamma\tau).
\]
We write $\rho^\vee$ for the contragredient representation.  Set
\[
 D=\frac1{2\pi\ii}\partial_\tau,
 \qquad L=-2\ii v^2\partial_{\bar\tau},
 \qquad \xi_\kappa(F)=2\ii v^\kappa\overline{\partial_{\bar\tau}F},
\]
\[
 \Delta_\kappa
 :=-v^2(\partial_u^2+\partial_v^2)
   +\ii\kappa v(\partial_u+\ii\partial_v),
 \qquad
 \widehat E_2=E_2-\frac3{\pi v},
 \qquad \dd_\kappa=D-\frac\kappa{12}E_2.
\]
With these normalizations,
\[
 \xi_\kappa(F)=v^{\kappa-2}\overline{L(F)}.
\]

\subsubsection*{Harmonic Maass forms.}
We use the standard spaces $M_\kappa(\rho)$, $S_\kappa(\rho)$,
$M_\kappa^!(\rho)$, and $H_\kappa^+(\rho)$ in the normalization of
\cite[Section~3]{BruinierFunke}; see also \cite[Section~2.2]{BruinierOno}.
Thus $F\in H_\kappa^+(\rho)$ is smooth, slash invariant, annihilated by
$\Delta_\kappa$, has a finite principal part and admissible cusp growth, and
satisfies $\xi_\kappa F\in S_{2-\kappa}(\rho^\vee)$.  A harmonic lift of
$g\in S_{2-\kappa}(\rho^\vee)$ is a form $G$ with $\xi_\kappa G=g$.  At
$\ii\infty$ the componentwise expansion is
\[
 \begin{aligned}
 F&=F^++F^-,\\
 F^+&=\sum_{r\gg-\infty}c_F^+(r)q^r,\\
 F^-&=\sum_{r<0}c_F^-(r)
 \Gamma(1-\kappa,4\pi|r|v)q^r.
 \end{aligned}
\]
The non-holomorphic zero-frequency term is absent because $\xi_\kappa F$ is
cuspidal.  Here $\Gamma(s,x)$ is the upper incomplete gamma function, and the
exponents
in a $\rho(\widetilde T)$-eigenspace of eigenvalue
$\ee{\alpha}$ lie in $\Z+\alpha$.  The principal part of $F$ at the cusp
$\infty$ is
\[
 \pp(F^+):=\sum_{r\le0}c_F^+(r)q^r,
\]
with the definition understood componentwise for vector-valued forms.
Analogous expansions hold at every cusp.

\subsubsection*{Odd-theta conventions.}
The two conventions used below are
\begin{align}
 \varthetaJ(z;\tau)
 &=\sum_{n\in\frac12+\Z}\e^{\pi\ii n^2\tau+2\pi\ii n(z+1/2)},
 \label{eq:J-theta}\\
 \thetaodd(z;\tau)
 &=\sum_{m\in\Z}(-1)^m\e^{2\pi\ii(m+1/2)z}q^{(m+1/2)^2/2}.
 \notag
\end{align}
They satisfy
\[
 \varthetaJ(z;\tau)=\ii\thetaodd(z;\tau),
 \qquad
 \thetaodd(z;\tau)=\eta(\tau)^3X+O(X^3)
 \quad (X\to0).
\]

\medskip
\noindent\textbf{Guide to the main normalizations.}
The table records the main objects and generic notation, together with their
weights, types, and roles.
\begin{center}
\small
\begin{tabular}{@{}>{\raggedright\arraybackslash}p{0.20\textwidth}
>{\raggedright\arraybackslash}p{0.27\textwidth}
>{\raggedright\arraybackslash}p{0.43\textwidth}@{}}
\hline
Object & Weight and type & Role \\
\hline
$\widehat{\mathcal R}_k^{\mathrm A,\sharp}$
& weight $1/2$, multiplier $\psi^{-1}$
& completed Appell kernel; at $W=0$, the odd part in $Z$ has polar term
  $(2\pi\ii\eta)^{-1}Z^{-1}$ and positive odd coefficients
  $\rjet_{2n+1,k}$ \\
$\boldsymbol\Theta_\ell$
& weight $1/2$, type $\sigma_\ell$
& unary-theta vector; the fixed shadow of the underlying weight-$3/2$ harmonic
  lifts is $-\boldsymbol\Theta_\ell$ \\
$\mathbf G$
& weight $3/2$, type $\sigma_\ell^\vee$
& any harmonic lift with $\xi_{3/2}\mathbf G=-\boldsymbol\Theta_\ell$;
  its holomorphic part is not fixed \\
$\mathbf F_\ell$
& weight $0$, type $\sigma_\ell\otimes\psi^{-1}$
& $-\ii\sqrt{\ell/2}\,\boldsymbol\Theta_\ell/\eta$; contracts higher Serre
  derivatives of such a $\mathbf G$ to scalars \\
$\mathbf H_\ell$
& weight $1/2$, type $\sigma_\ell$
& $\eta\mathbf F_\ell$; modular Wronskian and minimal-model MLDE \\
$a_{n,\ell}(\mathbf G)$
& scalar weight $2n+2$
& for a lift $\mathbf G$ as above, the weakly holomorphic scalar correction
  between the completed $k$-rank Taylor coefficient and the higher-Serre term \\
\hline
\end{tabular}
\end{center}

\section{Appell completion and completed odd Taylor coefficients}
\label{sec:appell-jets}

The completed Appell kernel depends on both $z$ and $\bar z$, whereas the
moments require holomorphic derivatives in the elliptic variable.  We therefore
use Zwegers's odd-level completion with an independent complexification of
these two directions.  The $E_2$-normalization of \cite{JinJo} makes the
Jacobi Taylor expansion
\cite{EichlerZagier,BringmannMahlburgRhoades,Bringmann} compatible with the
heat operators of Section~\ref{sec:scalar-correction}.

Recall that $\ell=2k-1$ and $d=k-1$.

\subsection{The completed Appell function and the modified \texorpdfstring{$k$}{k}-rank generating function}
\label{sec:appell-taylor-conventions}

The level-$\ell$ Appell function is
\begin{equation}
 A_\ell(z_1,z_2;\tau)
 :=\e^{\pi\ii\ell z_1}
 \sum_{n\in\Z}
 \frac{(-1)^n q^{\ell n(n+1)/2}\e^{2\pi\ii nz_2}}
 {1-\e^{2\pi\ii z_1}q^n}.
 \label{eq:Appell-def}
\end{equation}
For fixed $\tau\in\Hh$, the function $A_\ell$ is meromorphic in $z_1$, with
simple poles along $z_1\in\Z\tau+\Z$, and entire in $z_2$.
We use Zwegers's completion in the normalization adopted in our earlier
work~\cite{JinJo}:
\begin{align}
 \widehat A_\ell(z_1,z_2;\tau)
 ={}&A_\ell(z_1,z_2;\tau)
 +\frac{\ii}{2}\sum_{\nu=0}^{\ell-1}
 \e^{2\pi\ii\nu z_1}
 \varthetaJ\!\left(z_2+\nu\tau+k;\ell\tau\right)
 \nonumber\\[-1mm]
 &\hspace{29mm}\times
 \Zwerr\!\left(\ell z_1-z_2-\nu\tau-k;\ell\tau\right),
 \label{eq:Appell-completion}
\end{align}
where the second summand is the non-holomorphic Appell completion term and
\begin{align}
 \Zwerr(w;\sigma)
 :={}&\sum_{n\in\frac12+\Z}
 \left[
 \operatorname{sgn}(n)-
 E\!\left(\left(n+\frac{\operatorname{Im}w}{\operatorname{Im}\sigma}\right)
 \sqrt{2\operatorname{Im}\sigma}\right)
 \right]
 \nonumber\\[-1mm]
 &\hspace{18mm}\times
 (-1)^{n-1/2}\e^{-\pi\ii\sigma n^2-2\pi\ii nw},
 \label{eq:S-def}
\end{align}
where, for $t\in\C$,
$E(t):=2\int_0^t\e^{-\pi u^2}\,du$; the integral is path-independent, so
$E$ is entire.

\subsubsection*{Zwegers's transformation laws.}
We use only the following specializations of Zwegers's odd-level
transformation laws \cite[Theorem~4]{ZwegersAppell}; see also
\cite{EichlerZagier,ZwegersThesis}:
\begin{align}
 \widehat A_\ell(z,0;\tau)
 &=\e^{-2\pi\ii dz}\widehat A_\ell(z,-d\tau;\tau),
 \label{eq:foundation-elliptic}\\
 \widehat A_\ell\!\left(\frac{z}{c_\gamma\tau+d_\gamma},0;\gamma\tau\right)
 &=(c_\gamma\tau+d_\gamma)
 \exp\!\left(-\frac{\pi\ii\ell c_\gamma z^2}
 {c_\gamma\tau+d_\gamma}\right)
 \widehat A_\ell(z,0;\tau).
 \label{eq:foundation-modular}
\end{align}
The standard parity and integral-shift identities for
$\Zwerr$ and $\varthetaJ$ place the published completion in the normalization
\eqref{eq:Appell-completion}.

Recall the modified $k$-rank generating function $R_k$ from the Introduction.
Its Lambert expansion is
\[
 R_k(\zeta;q)
 =\frac{1-\zeta}{(q;q)_\infty}
 \sum_{n\in\Z}
 \frac{(-1)^nq^{(\ell n^2+n)/2}}{1-\zeta q^n}.
\]
With $\zeta=\e^{2\pi\ii z}$ and the convention
$\zeta^{1/2}:=\e^{\pi\ii z}$, this gives the Appell identity
\begin{equation}
\begin{aligned}
 A_\ell(z,-d\tau;\tau)
 &=\zeta^{\ell/2}\frac{(q;q)_\infty}{1-\zeta}R_k(\zeta;q)\\
 &=-\zeta^d\eta(\tau)q^{-1/24}
 \frac{R_k(\zeta;q)}{\zeta^{1/2}-\zeta^{-1/2}}.
\end{aligned}
 \label{eq:rank-Appell-correct}
\end{equation}
where the second equality uses
$1-\zeta=-\zeta^{1/2}(\zeta^{1/2}-\zeta^{-1/2})$.

\subsubsection*{Appell--Taylor convention comparison.}
Before holomorphic complexification, $X=2\pi\ii z$, $\zeta=\e^X$,
$\partial_z=2\pi\ii\partial_X$, and \eqref{eq:foundation-elliptic} identifies
the specializations $z_2=-d\tau$ and $z_2=0$.  These conventions are used in
Sections~\ref{sec:higher-serre} and \ref{sec:odd-appell-pde}.

\begin{definition}[Appell-compatible completion]
Define
\begin{equation}
 \widehat{\mathcal R}_k^{\mathrm A}(z;\tau)
 :=-\frac{\widehat A_\ell(z,-d\tau;\tau)\zeta^{-d}}{\eta(\tau)}
 \exp\!\left(-\frac{\ell\pi^2}{6}E_2(\tau)z^2\right).
 \label{eq:Appell-compatible}
\end{equation}
By the elliptic transformation law,
\begin{equation}
 \widehat{\mathcal R}_k^{\mathrm A}(z;\tau)
 =-\frac{\widehat A_\ell(z,0;\tau)}{\eta(\tau)}
 \exp\!\left(-\frac{\ell\pi^2}{6}E_2(\tau)z^2\right).
 \label{eq:Appell-compatible-zero}
\end{equation}
\end{definition}

Denote the holomorphic part of \eqref{eq:Appell-compatible} by
\[
 \mathcal R_k^{\mathrm A,+}(z;\tau)
 :=q^{-1/24}\frac{R_k(\zeta;q)}{\zeta^{1/2}-\zeta^{-1/2}}
 \exp\!\left(-\frac{\ell\pi^2}{6}E_2(\tau)z^2\right).
\]
Thus $\mathcal R_k^{\mathrm A,+}$ is exactly the Gaussian-gauged modified
$k$-rank generating function used in \cite{JinJo}.

\begin{normalizationlemma}[Appell--rank sign]
Write
$\widehat A_\ell=A_\ell+\mathscr N_\ell$, where $\mathscr N_\ell$ denotes
the second summand in \eqref{eq:Appell-completion}.  Then
\[
 \widehat{\mathcal R}_k^{\mathrm A}(z;\tau)
 -\mathcal R_k^{\mathrm A,+}(z;\tau)
 =-\frac{\zeta^{-d}}{\eta(\tau)}
 \mathscr N_\ell(z,-d\tau;\tau)
 \exp\!\left(-\frac{\ell\pi^2}{6}E_2(\tau)z^2\right).
\]
Thus the non-holomorphic completion term carries the minus sign forced by the
Appell--rank identity \eqref{eq:rank-Appell-correct}.  No holomorphic
coefficient is altered.
\end{normalizationlemma}

\begin{proof}
Substitute $\widehat A_\ell=A_\ell+\mathscr N_\ell$ into
\eqref{eq:Appell-compatible}.  The contribution of $A_\ell$ is
$\mathcal R_k^{\mathrm A,+}$ by \eqref{eq:rank-Appell-correct}; its sign is
fixed by
$1-\zeta=-\zeta^{1/2}(\zeta^{1/2}-\zeta^{-1/2})$.  The remaining summand is
the displayed non-holomorphic term.
\end{proof}

\begin{remark}[Comparison with the published sign]
\label{rem:nonholomorphic-sign}
The displayed non-holomorphic term in the published version of
\cite{JinJo} has the opposite sign.  The normalization lemma gives the sign
compatible with \eqref{eq:Appell-completion}, the elliptic shift
\eqref{eq:foundation-elliptic}, and the conjugation convention used below.
The independent $k=2$ comparison in
Remark~\ref{rem:k2-normalization-check} verifies the same sign against
Bringmann's normalization.
\end{remark}

\subsection{Holomorphic complexification}

To separate the holomorphic and antiholomorphic elliptic directions, we
replace $z$ and $\bar z$ by independent variables $Z$ and $W$.  The maximally
totally real diagonal $\{W=\bar Z\}$ is a uniqueness set for holomorphic
complexifications \cite[Proposition~1.8.11]{BER}.  We construct the required
extension of Zwegers's error kernel explicitly as an entire function on
$\C^2$.

For $\sigma\in\Hh$ and independent $Z,W\in\C$, put
\begin{equation}
\begin{aligned}
 \Zwerr^\sharp(Z,W;\sigma)
 :={}&\sum_{n\in\frac12+\Z}
 \left[
 \operatorname{sgn}(n)-
 E\!\left(\left(n+\frac{Z-W}{2\ii\operatorname{Im}\sigma}\right)
 \sqrt{2\operatorname{Im}\sigma}\right)
 \right]\\
 &\hspace{22mm}\times
 (-1)^{n-1/2}\e^{-\pi\ii\sigma n^2-2\pi\ii nZ}.
\end{aligned}
 \label{eq:S-sharp}
\end{equation}
Write $v_\sigma:=\operatorname{Im}\sigma$.  For $K\subset\C$ compact and
$0<v_0\le v_\sigma\le v_1$, set
$z_{n,v_\sigma,\delta}=\sqrt{2v_\sigma}(n+\delta)$.  For $n\to+\infty$, these
points lie uniformly in a closed subsector of $|\arg z|<3\pi/4$; for
$n\to-\infty$ the same statement applies to $-z_{n,v_\sigma,\delta}$.  Since
$E(-z)=-E(z)$ and
$1-E(z)=\operatorname{erfc}(\sqrt\pi z)$, the standard complementary-error-
function asymptotic, uniformly on closed subsectors
\cite[Equation~7.12.1]{DLMF}, gives constants $C,c>0$ such that, uniformly
for $\delta\in K$ and sufficiently large $|n|$,
\begin{equation}
 \left|
 \operatorname{sgn}(n)-E\bigl(\sqrt{2v_\sigma}(n+\delta)\bigr)
 \right|
 \le C\exp(-2\pi v_\sigma n^2+c|n|).
 \label{eq:error-estimate}
\end{equation}

\begin{theorem}[Entire elliptic complexification]
The series \eqref{eq:S-sharp} converges normally on compact subsets of
$\C^2$, locally uniformly for $\sigma$ in compact subsets of $\Hh$, and is
entire in $(Z,W)$.  Moreover,
\begin{equation}
 \Zwerr^\sharp(Z,\bar Z;\sigma)=\Zwerr(Z;\sigma).
 \label{eq:S-diagonal}
\end{equation}
If $z=x+\ii y$ and $\partial_z=\frac12(\partial_x-\ii\partial_y)$, then for
every $r\ge0$,
\[
 \partial_z^r\Zwerr(z;\sigma)
 =\left.\partial_Z^r\Zwerr^\sharp(Z,W;\sigma)\right|_{(Z,W)=(z,\bar z)}.
\]
\end{theorem}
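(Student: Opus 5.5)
The plan is to treat the series \eqref{eq:S-sharp} termwise: prove a uniform Gaussian-type bound on each term over compact sets, use the Weierstrass M-test to get normal convergence and holomorphy, and then get the diagonal identity and the derivative formula from the structure of the argument of $E$.

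\textbf{Termwise bound.} Fix compact $K_1,K_2\subset\C$ for $Z,W$ and a compact set of $\sigma$ with $0<v_0\le v_\sigma\le v_1$ and $|\operatorname{Re}\sigma|$ bounded. Put
\[
 \delta:=\frac{Z-W}{2\ii v_\sigma}.
\]
This ranges over a compact set $K$ as $(Z,W,\sigma)$ varies. By \eqref{eq:error-estimate}, the bracket is bounded by $C\exp(-2\pi v_\sigma n^2+c|n|)$ for $|n|$ large. The exponential factor satisfies
\[
 |\e^{-\pi\ii\sigma n^2-2\pi\ii nZ}|=\e^{\pi v_\sigma n^2+2\pi n\operatorname{Im}Z}\le \e^{\pi v_\sigma n^2+c'|n|}.
\]
Each term is therefore dominated by $C\e^{-\pi v_0 n^2+c''|n|}$, which is summable. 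Each term is entire in $(Z,W)$ because $E$ is entire and $\delta$ is affine in $Z,W$. Each term is also continuous in $\sigma$. The M-test then gives normal convergence on compacta, locally uniformly in $\sigma$, and the limit is holomorphic on $\C^2$ by Weierstrass' theorem in several variables.

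The point to check most carefully is the uniformity in \eqref{eq:error-estimate}. One must verify that for $\delta\in K$ complex and $n\to\pm\infty$, the points $\sqrt{2v_\sigma}(n+\delta)$ stay in a fixed closed subsector of $|\arg z|<3\pi/4$ (after the sign flip for negative $n$), so that the erfc asymptotic applies. The estimate then gives
\[
 |\operatorname{erfc}(\sqrt\pi z)|\lesssim |\e^{-\pi z^2}|=\e^{-2\pi v_\sigma\operatorname{Re}(n+\delta)^2}.
\]
Expanding $(n+\delta)^2$ gives $n^2+2n\operatorname{Re}\delta+\cdots$. This yields the stated bound with $c$ depending only on $K$ and $v_1$. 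The finitely many small $|n|$ are handled by continuity.

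\textbf{Diagonal identity.} At $W=\bar Z$, $Z-W=2\ii\operatorname{Im}Z$, so
\[
 \delta=\frac{\operatorname{Im}Z}{\operatorname{Im}\sigma}.
\]
The series then coincides term by term with \eqref{eq:S-def} at $w=Z$, which proves \eqref{eq:S-diagonal}.

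\textbf{Derivative formula.} Set $f(z):=\Zwerr^\sharp(z,\bar z;\sigma)$, a composition of an entire function on $\C^2$ with the real-analytic map $z\mapsto(z,\bar z)$. By the chain rule for Wirtinger derivatives, $\partial_z$ of $F(z,\bar z)$ equals $(\partial_ZF)(z,\bar z)$, since $\partial_z\bar z=0$ and $\partial_z z=1$. Induction on $r$, applying this to $\partial_Z^{r}\Zwerr^\sharp$, which is again entire, gives the stated formula. The only care needed is that $\partial_z$ applied to the real-analytic function $\Zwerr$ agrees with the Wirtinger derivative of $f$, and this is exactly \eqref{eq:S-diagonal}.
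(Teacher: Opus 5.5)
Your proposal is correct and follows essentially the same argument as the paper. You combine the exponential factor with \eqref{eq:error-estimate} to get a Gaussian majorant, apply Weierstrass for normal convergence and holomorphy, use $(Z-\bar Z)/(2\ii)=\operatorname{Im}Z$ for the diagonal identity, and use the Wirtinger chain rule for the derivatives; your extra sector and uniformity check simply re-derives the estimate the paper states just before the theorem.
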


\begin{proof}
On compact subsets, the exponential factor in \eqref{eq:S-sharp} is bounded
by
\[
 \exp\bigl(\pi(\operatorname{Im}\sigma)n^2+C_1|n|\bigr).
\]
Combining this with \eqref{eq:error-estimate} gives the summable majorant
\[
 C_2\exp\bigl(-\pi(\operatorname{Im}\sigma)n^2+C_3|n|\bigr).
\]
The Weierstrass theorem gives normal convergence, entireness, and termwise
differentiation.  Formula \eqref{eq:S-diagonal} follows from
$(Z-\bar Z)/(2\ii)=\operatorname{Im}Z$, and the derivative identity follows
by the chain rule for Wirtinger derivatives.
\end{proof}

More generally, if $F^\sharp(Z,W)$ is a holomorphic complexification of a
real-analytic germ $F(z,\bar z)$ at the origin, then
\[
 \left.\partial_z^rF(z,\bar z)\right|_{z=0}
 =\left.\partial_Z^rF^\sharp(Z,W)\right|_{Z=W=0},
 \qquad
 F^\sharp(Z,0)=\sum_{r\ge0}\frac{\partial_z^rF(0)}{r!}Z^r.
\]
If $F^\sharp$ is meromorphic in $Z$, then $F^\sharp(Z,0)$ similarly records
its holomorphic-direction Laurent coefficients.  After subtracting the
principal part, the preceding derivative formula applies to the regular part.
Here $W$ is not the second elliptic variable of the Appell function; it is the
independent complexification of $\bar Z$.  Thus the specialization $W=0$
records the Taylor coefficients obtained by differentiating in the
holomorphic elliptic direction.

We now complexify the two specializations used above: $z_2=-d\tau$ encodes
the modified $k$-rank series, whereas $z_2=0$ is adapted to the modular
transformation law.  Define
\begin{align*}
 \widehat A_{\ell,0}^\sharp(Z,W;\tau)
 :={}&A_\ell(Z,0;\tau)
 +\frac{\ii}{2}\sum_{\nu=0}^{\ell-1}
 \e^{2\pi\ii\nu Z}
 \varthetaJ(\nu\tau+k;\ell\tau)
 \nonumber\\[-1mm]
 &\quad\times
 \Zwerr^\sharp(\ell Z-\nu\tau-k,
          \ell W-\nu\bar\tau-k;\ell\tau),\\
 \widehat A_{\ell,-d}^\sharp(Z,W;\tau)
 :={}&A_\ell(Z,-d\tau;\tau)
 +\frac{\ii}{2}\sum_{\nu=0}^{\ell-1}
 \e^{2\pi\ii\nu Z}
 \varthetaJ((\nu-d)\tau+k;\ell\tau)
 \nonumber\\[-1mm]
 &\quad\times
 \Zwerr^\sharp(\ell Z+(d-\nu)\tau-k,
          \ell W+(d-\nu)\bar\tau-k;\ell\tau).
\end{align*}
On the diagonal $W=\bar Z$, these are respectively
$\widehat A_\ell(Z,0;\tau)$ and $\widehat A_\ell(Z,-d\tau;\tau)$.  Both are
meromorphic in $Z$ and entire in $W$.

\begin{lemma}[Complexified elliptic shift]
On $\C^2$, one has the meromorphic identity
\begin{equation}
 \widehat A_{\ell,0}^\sharp(Z,W;\tau)
 =\e^{-2\pi\ii dZ}\widehat A_{\ell,-d}^\sharp(Z,W;\tau).
 \label{eq:complexified-elliptic-shift}
\end{equation}
\end{lemma}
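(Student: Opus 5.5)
The plan is to use the identity principle together with the real-variable elliptic shift \eqref{eq:foundation-elliptic}. Both sides of \eqref{eq:complexified-elliptic-shift} are meromorphic in $Z$, with poles only on the divisor $Z\in\Z\tau+\Z$ coming from $A_\ell$, and entire in $W$. Hence their difference $\Delta(Z,W)$ is holomorphic on the complement of these finitely-many-per-compact hyperplanes $\{Z=\text{const}\}$. That complement is connected. The first step is to restrict to the diagonal $W=\bar Z$. By construction and \eqref{eq:S-diagonal}, $\widehat A_{\ell,0}^\sharp(Z,\bar Z)=\widehat A_\ell(Z,0;\tau)$ and $\widehat A_{\ell,-d}^\sharp(Z,\bar Z)=\widehat A_\ell(Z,-d\tau;\tau)$. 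For the second one we need $\overline{(d-\nu)\tau}=(d-\nu)\bar\tau$, so that the second argument of $\Zwerr^\sharp$ is the conjugate of the first up to the real shift $-k$, and $\operatorname{Im}$ of the first argument is then recovered. Consequently \eqref{eq:foundation-elliptic} says exactly that $\Delta(Z,\bar Z)=0$ for all $Z$ off the poles.

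The second step is the standard uniqueness argument. The map $Z\mapsto(Z,\bar Z)$ parametrizes a maximally totally real submanifold, and a holomorphic function on a connected open subset of $\C^2$ that vanishes on an open piece of $\{W=\bar Z\}$ vanishes identically. This is \cite[Proposition~1.8.11]{BER}. One can also argue directly: write $Z=x+\ii y$, $W=x-\ii y$. Then $\Delta$ as a function of the independent complex variables $(x,y)$, which is a linear change of coordinates, is holomorphic and vanishes on an open subset of $\R^2$, hence vanishes identically near it. By connectedness of the domain, it then vanishes everywhere. Multiplying by $1-\e^{2\pi\ii Z}q^n$ locally, or simply noting that the polar parts on both sides come from the holomorphic $A_\ell$ terms, extends the identity to a meromorphic one on $\C^2$.

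The only real care needed is in the first step. We must check that the definitions of $\widehat A_{\ell,0}^\sharp$ and $\widehat A_{\ell,-d}^\sharp$ restrict on the diagonal precisely to the published completion \eqref{eq:Appell-completion} at $z_2=0$ and $z_2=-d\tau$. Substituting $z_2=-d\tau$ in \eqref{eq:Appell-completion} gives $\varthetaJ((\nu-d)\tau+k;\ell\tau)$ and $\Zwerr(\ell Z+(d-\nu)\tau-k;\ell\tau)$. We then confirm that the $W$-argument $\ell\bar Z+(d-\nu)\bar\tau-k$ is the complex conjugate of the $Z$-argument, since $k$ is real. This matches the diagonal identity $\Zwerr^\sharp(w,\bar w;\sigma)=\Zwerr(w;\sigma)$. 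The step I expect to be the main obstacle is mild: making sure that \eqref{eq:foundation-elliptic} is used as an identity of the full completed functions for all $z$, not only near $z=0$. That version is precisely Zwegers's statement.
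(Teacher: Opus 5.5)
Your proposal is correct and follows essentially the same route as the paper. The paper also restricts to the diagonal $W=\bar Z$, where the identity is exactly \eqref{eq:foundation-elliptic}. It then applies the identity principle for the maximally totally real diagonal \cite[Proposition~1.8.11]{BER} away from the Appell pole lattice and extends meromorphically across the poles. Your explicit check of the diagonal restrictions and your linear change of variables $x=(Z+W)/2$, $y=(Z-W)/(2\ii)$ simply spell out steps that the paper states briefly.
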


\begin{proof}
For $W=\bar Z$, equation \eqref{eq:complexified-elliptic-shift} is exactly
the completed elliptic transformation \eqref{eq:foundation-elliptic}.  Away
from the Appell pole lattice, the difference of
the two sides is holomorphic in $(Z,W)$ and vanishes on the real diagonal.
Since this diagonal is maximally totally real and hence generic, the
identity principle for holomorphic functions on generic real-analytic
submanifolds \cite[Proposition~1.8.11]{BER} gives the identity there; it
then extends meromorphically across the pole lattice.
\end{proof}

Define the complexified Appell-compatible completion by either of the equal
expressions
\[
\begin{aligned}
 \widehat{\mathcal R}_k^{\mathrm A,\sharp}(Z,W;\tau)
 :={}&-\frac{\e^{-2\pi\ii dZ}}{\eta(\tau)}
 \widehat A_{\ell,-d}^\sharp(Z,W;\tau)
 \exp\!\left(-\frac{\ell\pi^2}{6}E_2(\tau)Z^2\right)\\
 ={}&-\frac{1}{\eta(\tau)}
 \widehat A_{\ell,0}^\sharp(Z,W;\tau)
 \exp\!\left(-\frac{\ell\pi^2}{6}E_2(\tau)Z^2\right).
\end{aligned}
\]
Its diagonal restriction is \eqref{eq:Appell-compatible}.

\subsection{Completed odd Taylor coefficients}

For a function $f$ of an elliptic variable $w$, write
\[
 \Odd_w f:=\frac{f(w)-f(-w)}2.
\]
Since $R_k(1;q)=(q;q)_\infty^{-1}$ and
$\e^{\pi\ii Z}-\e^{-\pi\ii Z}=2\pi\ii Z+O(Z^3)$, one has
\[
 \operatorname*{Res}_{Z=0}\mathcal R_k^{\mathrm A,+}(Z;\tau)
 =\frac{1}{2\pi\ii\eta(\tau)}.
\]

The generating identity in Section~\ref{subsec:intro-main-results}
defines the holomorphic series $\rjet_{2n+1,k}^{+}$ from the modified
$k$-rank moments.  Explicitly,
\[
\begin{aligned}
 \rjet_{2n+1,k}^{+}(\tau)
 ={}&(2\pi\ii)^{2n+1}q^{-1/24}
 \sum_{\substack{a,j,r\ge0\\a+j+r=n+1}}
 \frac{B_{2a}(1/2)}{(2a)!}\,
 \frac{M_{2j,k}(q)}{(2j)!}\,
 \frac1{r!}\left(\frac{\ell E_2(\tau)}{24}\right)^r.
\end{aligned}
\]
Since $X=2\pi\ii Z$, the same definition may be written as
\[
 \Odd_Z\mathcal R_k^{\mathrm A,+}(Z;\tau)
 =\frac{1}{2\pi\ii\eta(\tau)}Z^{-1}
 +\sum_{n\ge0}\rjet_{2n+1,k}^{+}(\tau)Z^{2n+1}.
\]

\begin{definition}[Appell-compatible completion of the odd Taylor coefficients]
Define the odd Taylor coefficients of the non-holomorphic Appell completion
near $Z=0$ by
\[
 \Odd_Z\!\left(
 \widehat{\mathcal R}_k^{\mathrm A,\sharp}(Z,0;\tau)
 -\mathcal R_k^{\mathrm A,+}(Z;\tau)\right)
 =\sum_{n\ge0}\rjet_{2n+1,k}^{-}(\tau)Z^{2n+1}.
\]
Set
$\rjet_{2n+1,k}:=\rjet_{2n+1,k}^{+}+\rjet_{2n+1,k}^{-}$.  Equivalently,
\begin{equation}
 \Odd_Z\widehat{\mathcal R}_k^{\mathrm A,\sharp}(Z,0;\tau)
 =\frac{1}{2\pi\ii\eta(\tau)}Z^{-1}
 +\sum_{n\ge0}\rjet_{2n+1,k}(\tau)Z^{2n+1}.
 \label{eq:jet-definition}
\end{equation}
\end{definition}

Thus $\rjet_{2n+1,k}^{+}$ is determined by the modified $k$-rank moments,
whereas $\rjet_{2n+1,k}^{-}$ is the corresponding Appell completion term.  If
$s=n+1$ denotes the
Taylor-order parameter of \cite{JinJo}, then
$\rjet_{2n+1,k}^{+}=r_{2s-1,k}^{+}$ there.  The full completion
$\rjet_{2n+1,k}$ uses the Appell-compatible non-holomorphic sign fixed in
Remark~\ref{rem:nonholomorphic-sign}.

\begin{proposition}[Modularity of the completed $k$-rank Taylor coefficients on $\SLtwo$]
For $\gamma=\left(\begin{smallmatrix}a_\gamma&b_\gamma\\c_\gamma&d_\gamma\end{smallmatrix}\right)\in\SLtwo$,
the complexified completion satisfies
\begin{equation}
 \widehat{\mathcal R}_k^{\mathrm A,\sharp}\!\left(
 \frac{Z}{c_\gamma\tau+d_\gamma},
 \frac{W}{c_\gamma\bar\tau+d_\gamma};\gamma\tau\right)
 =\psi(\gamma)^{-1}(c_\gamma\tau+d_\gamma)^{1/2}
 \widehat{\mathcal R}_k^{\mathrm A,\sharp}(Z,W;\tau).
 \label{eq:Rsharp-modular}
\end{equation}
Consequently, for every $n\ge0$,
\begin{equation}
 \rjet_{2n+1,k}(\gamma\tau)
 =\psi(\gamma)^{-1}(c_\gamma\tau+d_\gamma)^{2n+3/2}
 \rjet_{2n+1,k}(\tau).
 \label{eq:jet-modularity}
\end{equation}
\end{proposition}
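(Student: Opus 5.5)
We first prove \eqref{eq:Rsharp-modular} on the real diagonal $W=\bar Z$, and then extend it to all of $\C^2$ by the identity principle. For \eqref{eq:Rsharp-modular} we use the representative \eqref{eq:Appell-compatible-zero}, i.e.\ the $z_2=0$ specialization, because there Zwegers's modular law \eqref{eq:foundation-modular} applies directly.

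On the diagonal put $z=Z$. Then $W/(c_\gamma\bar\tau+d_\gamma)=\overline{z/(c_\gamma\tau+d_\gamma)}$, so the left side of \eqref{eq:Rsharp-modular} equals $\widehat{\mathcal R}_k^{\mathrm A}(z/(c_\gamma\tau+d_\gamma);\gamma\tau)$. Writing $j:=c_\gamma\tau+d_\gamma$, the three factors transform as follows:
\begin{itemize}
\item $\widehat A_\ell(z/j,0;\gamma\tau)=j\,\exp(-\pi\ii\ell c_\gamma z^2/j)\,\widehat A_\ell(z,0;\tau)$;
\item $\eta(\gamma\tau)^{-1}=\psi(\gamma)^{-1}j^{-1/2}\eta(\tau)^{-1}$;
\item $E_2(\gamma\tau)=j^2E_2(\tau)+\frac{6c_\gamma j}{\pi\ii}$, so that
\[
-\frac{\ell\pi^2}{6}E_2(\gamma\tau)\frac{z^2}{j^2}
=-\frac{\ell\pi^2}{6}E_2(\tau)z^2+\frac{\pi\ii\ell c_\gamma z^2}{j}.
\]
\end{itemize}
The Gaussian factor $\exp(\pi\ii\ell c_\gamma z^2/j)$ cancels the one from \eqref{eq:foundation-modular} exactly. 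This cancellation is the purpose of the $E_2$-gauge. What remains is $\psi(\gamma)^{-1}j^{1/2}$ times $\widehat{\mathcal R}_k^{\mathrm A}(z;\tau)$.

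The main point needing care is the passage off the diagonal. Fix $\tau$ and $\gamma$. Both sides of \eqref{eq:Rsharp-modular} are meromorphic in $Z$ and entire in $W$; on the left this uses that the maps $Z\mapsto Z/j$ and $W\mapsto W/\bar j$ are linear. Their difference is therefore holomorphic on the complement of the (rescaled) pole lattice in $Z$ and vanishes on $\{W=\bar Z\}$. As in the proof of the complexified elliptic shift, \cite[Proposition~1.8.11]{BER} gives vanishing on that open connected set, and the identity extends meromorphically. One must also check that the branch of $j^{1/2}$ and the multiplier $\psi$ are the ones fixed by the metaplectic convention of Section~\ref{sec:foundations}. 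This is inherited from $\eta$, since the Appell law \eqref{eq:foundation-modular} carries the integral power $j^1$ and no branch.

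For \eqref{eq:jet-modularity}, specialize \eqref{eq:Rsharp-modular} to $W=0$; the left side then has $W/\bar j=0$ as well. This gives
\[
\widehat{\mathcal R}_k^{\mathrm A,\sharp}(Z/j,0;\gamma\tau)=\psi(\gamma)^{-1}j^{1/2}\,\widehat{\mathcal R}_k^{\mathrm A,\sharp}(Z,0;\tau).
\]
Apply $\Odd_Z$, which commutes with the rescaling $Z\mapsto Z/j$, and expand both sides by \eqref{eq:jet-definition}. The polar terms match automatically:
\[
\frac{1}{2\pi\ii\eta(\gamma\tau)}\cdot\frac{j}{Z}=\psi(\gamma)^{-1}j^{1/2}\,\frac{1}{2\pi\ii\eta(\tau)Z},
\]
which is a consistency check on the sign of the normalization. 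Comparing the coefficients of $Z^{2n+1}$ gives $\rjet_{2n+1,k}(\gamma\tau)\,j^{-2n-1}=\psi(\gamma)^{-1}j^{1/2}\rjet_{2n+1,k}(\tau)$, which is \eqref{eq:jet-modularity}.
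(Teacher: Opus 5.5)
Your proposal is correct and follows essentially the same argument as the paper. You verify the transformation law on the diagonal $W=\bar Z$ using Zwegers's $z_2=0$ modular law, the $\eta$-multiplier, and the cancellation of the Jacobi exponential by the anomalous $E_2$ term; you then extend off the diagonal by the identity principle for maximally totally real submanifolds, and finally specialize to $W=0$ and compare the odd $Z$-coefficients (your explicit matching of the polar terms is a consistency check the paper leaves implicit).
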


\begin{proof}
The holomorphic coefficient formula and transformation weight agree with
\cite[Theorem~1.1]{JinJo}; because we use the Appell-compatible sign fixed in
Remark~\ref{rem:nonholomorphic-sign}, we record the short direct argument in
the present normalization.  Apply
\eqref{eq:foundation-modular} at $z_2=0$ together with the transformations of
$\eta$ and $E_2$.  The weight-$1$ factor from $\widehat A_\ell$ and the
weight-$-1/2$, multiplier-$\psi(\gamma)^{-1}$ factor from $\eta^{-1}$,
together with cancellation of the Jacobi exponential by the anomalous
$E_2$ term, give \eqref{eq:Rsharp-modular} on the real-analytic diagonal.

The variable $W$ transforms by $W/(c_\gamma\bar\tau+d_\gamma)$ because it is
the independent complexification of $\bar z$.  Away from the common Appell pole lattice, the
difference of the two sides is holomorphic in $(Z,W)$ and vanishes on the
real diagonal.  The identity principle for holomorphic functions on a
maximally totally real submanifold
\cite[Proposition~1.8.11]{BER} again gives
\eqref{eq:Rsharp-modular}, which then extends meromorphically across the
poles.  Since $W=0$ is preserved by this transformation, comparison of the
odd powers of $Z$ in \eqref{eq:jet-definition} gives
\eqref{eq:jet-modularity}.
\end{proof}

\section{The unary-theta vector and higher Serre derivatives}
\label{sec:theta-tower}

The non-holomorphic parts of the completed Taylor coefficients arise from
weight-$3/2$ lifts with fixed shadow $-\boldsymbol\Theta_\ell$.  We identify
this unary-theta vector,
construct a reference lift, and show that higher-Serre contractions recover
all non-holomorphic parts.  The same rank-one Weil projection is used again
in Section~\ref{sec:kloosterman-cancellation}.

\subsection{The projected Weil representation}
\label{sec:theta-representation}

Set
\begin{equation}
 \Theta_{\ell,j}(\tau)
 :=q^{j^2/(2\ell)}\varthetaJ(j\tau;\ell\tau),
 \qquad 1\le j\le d,
 \label{eq:Theta-j}
\end{equation}
and write
\[
 \boldsymbol\Theta_\ell
 =(\Theta_{\ell,1},\ldots,\Theta_{\ell,d})^{\mathsf T}.
\]
The Jacobi theta convention implies that every Fourier coefficient of $\Theta_{\ell,j}$ is purely imaginary.

Put $M=2\ell$.  For $h\in\Z/(2M)\Z=\Z/(4\ell)\Z$, define
\begin{equation}
 \theta_{M,h}(\tau)
 :=\sum_{r\in2M\Z+h}q^{r^2/(4M)},
 \qquad
 \boldsymbol\theta_M:=\sum_{h\, (2M)}\theta_{M,h}\mathbf e_h,
 \label{eq:full-theta}
\end{equation}
where $\{\mathbf e_h:h\bmod 2M\}$ is the standard basis of
$\C[\Z/(2M)\Z]$.

We use the rank-one Weil representation $\rho_M$ in the convention
\begin{align}
 \rho_M(\widetilde T)\mathbf e_h
 &=\ee{\frac{h^2}{4M}}\mathbf e_h,
 \label{eq:Weil-T}\\*
 \rho_M(\widetilde S)\mathbf e_h
 &=\frac{\ee{-1/8}}{\sqrt{2M}}
 \sum_{r\, (2M)}\ee{-\frac{hr}{2M}}\mathbf e_r.
 \label{eq:Weil-S}
\end{align}

\subsubsection*{Rank-one theta transformation.}
The standard unary-theta transformation law for the positive rank-one lattice
with discriminant group $\Z/(2M)\Z$ gives, in the convention
\eqref{eq:Weil-T}--\eqref{eq:Weil-S},
\begin{align*}
 \boldsymbol\theta_M(\tau+1)&=\rho_M(\widetilde T)\boldsymbol\theta_M(\tau),\\
 \boldsymbol\theta_M(-1/\tau)&=\tau^{1/2}\rho_M(\widetilde S)\boldsymbol\theta_M(\tau).
\end{align*}
See, for example, \cite[Section~2]{LiSch}.  Their formula, written for the
dual Weil representation, becomes the display above after complex
conjugation; the principal square root contributes the phase
$\ee{-1/8}$ in \eqref{eq:Weil-S}.

Let $\{\mathbf e_h^*:h\bmod 2M\}$ be the basis dual to
$\{\mathbf e_h\}$.  We use the superscript $*$ for dual coordinate
functionals on this ambient group algebra.  All indices below are understood
modulo $4\ell$.  For $1\le j\le d$, define
\begin{equation}
 \mathbf a_j
 :=\frac12\left(
 \mathbf e_{\ell+2j}^*+\mathbf e_{-(\ell+2j)}^*
 -\mathbf e_{\ell-2j}^*-\mathbf e_{-(\ell-2j)}^*
 \right),
 \label{eq:projection-row}
\end{equation}
and let $\Aproj_\ell$ be the matrix with rows $\mathbf a_j$.

\begin{lemma}[The projection matrix]
\label{lem:projection-isometry}
The rows of $\Aproj_\ell$ are orthonormal, and
\begin{align}
 \Aproj_\ell\Aproj_\ell^{\mathsf T}&=I_d,
 \label{ps:eq:A-orthonormal}\\
 \boldsymbol\Theta_\ell&=\ii\Aproj_\ell\boldsymbol\theta_M.
 \label{eq:projection-identity}
\end{align}
\end{lemma}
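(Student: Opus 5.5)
The plan is a direct computation in two parts: a residue bookkeeping argument for \eqref{ps:eq:A-orthonormal}, and a Fourier-expansion comparison for \eqref{eq:projection-identity}. No earlier result beyond the definitions \eqref{eq:J-theta}, \eqref{eq:Theta-j}, \eqref{eq:full-theta} and \eqref{eq:projection-row} is needed.

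\emph{Orthonormality.} I will locate the four indices $\pm(\ell\pm2j)$ of $\mathbf a_j$ modulo $4\ell$. Since $\ell=2k-1$ is odd and $1\le j\le d=(\ell-1)/2$, the representatives fall into four disjoint ranges of odd residues:
\[
\ell+2j\in[\ell+2,\,2\ell-1],\quad
\ell-2j\in[1,\,\ell-2],\quad
-(\ell+2j)\equiv3\ell-2j\in[2\ell+1,\,3\ell-2],\quad
-(\ell-2j)\equiv3\ell+2j\in[3\ell+2,\,4\ell-1].
\]
Within each range the map $j\mapsto$ index is injective. Hence each row $\mathbf a_j$ has exactly four nonzero entries, each equal to $\pm\tfrac12$, at distinct positions, so $\|\mathbf a_j\|^2=4\cdot\tfrac14=1$. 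Distinct rows have disjoint supports, so they are orthogonal. This gives $\Aproj_\ell\Aproj_\ell^{\mathsf T}=I_d$.

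\emph{Theta identity.} I will expand $\Theta_{\ell,j}$ from \eqref{eq:J-theta} with $z=j\tau$ and modulus $\ell\tau$, writing $n=m+\tfrac12$.
\begin{itemize}
\item The phase $\e^{\pi\ii n}$ equals $\ii(-1)^m$.
\item Completing the square in the $q$-exponent gives
\[
\frac{\ell n^2}{2}+nj+\frac{j^2}{2\ell}=\frac{(\ell n+j)^2}{2\ell}=\frac{r^2}{8\ell}=\frac{r^2}{4M},
\qquad r:=\ell(2m+1)+2j .
\]
\end{itemize}
Thus
\[
\Theta_{\ell,j}=\ii\sum_{m\in\Z}(-1)^m q^{r^2/(4M)} .
\]
Now sort the terms by the class of $r$ modulo $2M=4\ell$. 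For $m$ even, $r\equiv\ell+2j$. For $m$ odd, $r\equiv3\ell+2j\equiv-(\ell-2j)$. As $m$ runs over even (resp. odd) integers, $r$ runs over the full class $\ell+2j+4\ell\Z$ (resp. $-(\ell-2j)+4\ell\Z$). Therefore
\[
\Theta_{\ell,j}=\ii\bigl(\theta_{M,\ell+2j}-\theta_{M,-(\ell-2j)}\bigr).
\]
Finally I will use the evenness $\theta_{M,h}=\theta_{M,-h}$, which follows from the substitution $r\mapsto-r$ in \eqref{eq:full-theta}. Averaging each term with its negative index turns the right-hand side into $\ii\,\mathbf a_j(\boldsymbol\theta_M)$, which is \eqref{eq:projection-identity}.

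The only step needing care is the sign and phase bookkeeping. This means tracking the shift $z+1/2$ in $\varthetaJ$ (it produces the factor $\ii$ and the alternating sign), and checking that the odd-$m$ terms land in the class $-(\ell-2j)$ rather than $+(\ell-2j)$. Evenness of $\theta_{M,h}$ makes the latter point harmless.
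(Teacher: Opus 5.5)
Your proposal is correct and follows the paper's proof essentially step for step. For disjointness of the row supports modulo $4\ell$, you exhibit the four disjoint residue ranges explicitly where the paper argues via the congruence $\ell\pm2a\equiv\pm(\ell\pm2b)$; the theta identity is then the same computation: substitute $n=m+\tfrac12$ and $r=2\ell m+\ell+2j$, split by parity of $m$ to get $\Theta_{\ell,j}=\ii(\theta_{M,\ell+2j}-\theta_{M,2j-\ell})$, and finish with $\theta_{M,-h}=\theta_{M,h}$.
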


\begin{proof}
Each row has four entries of absolute value $1/2$.  If two row supports
intersect modulo $4\ell$, then for some choices of signs
\[
 \ell\pm2a\equiv\pm(\ell\pm2b)\pmod{4\ell}.
\]
Because $1\le a,b\le(\ell-1)/2$, this congruence forces $a=b$ and the same
signed residue.  Thus distinct rows have disjoint supports, while every row
has squared norm $4(1/2)^2=1$.  This proves
\eqref{ps:eq:A-orthonormal}.

For the second identity, write the half-integral summation index in the
definition of $\varthetaJ$ as $m+\frac12$, with $m\in\Z$.  Setting
$r=2\ell m+\ell+2j$ and separating $m$ according to parity gives
\[
 \Theta_{\ell,j}
 =\ii\bigl(\theta_{M,\ell+2j}-\theta_{M,2j-\ell}\bigr).
\]
Since $\theta_{M,-h}=\theta_{M,h}$, this is exactly the $j$th component of
\eqref{eq:projection-identity}.
\end{proof}

\begin{theorem}[The projected Weil representation]
Define
\begin{align}
 (T_\ell)_{jj}
 &:=\ee{\frac{(\ell-2j)^2}{8\ell}},
 \label{eq:T-ell}\\
 (S_\ell)_{jm}
 &:=\frac{2(-1)^{k+j+m}}{\sqrt\ell}
 \sin\!\left(\frac{2\pi jm}{\ell}\right).
 \label{eq:S-ell}
\end{align}
Then
\begin{equation}
 \Aproj_\ell\rho_M(\widetilde T)=T_\ell\Aproj_\ell,
 \qquad
 \Aproj_\ell\rho_M(\widetilde S)=\ee{-1/8}S_\ell\Aproj_\ell.
 \label{eq:intertwining}
\end{equation}
Moreover, $S_\ell$ is real, symmetric, and orthogonal, and the assignments
\begin{equation}
 \sigma_\ell(\widetilde T):=T_\ell,
 \qquad
 \sigma_\ell(\widetilde S):=\ee{-1/8}S_\ell
 \label{eq:sigma-ell}
\end{equation}
define a unitary representation $\sigma_\ell$ of $\Mp$, and
$\boldsymbol\Theta_\ell\in S_{1/2}(\sigma_\ell)$.
\end{theorem}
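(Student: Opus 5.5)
The proof reduces everything to the rank-one Weil representation $\rho_M$ through the projection $\Aproj_\ell$ of Lemma~\ref{lem:projection-isometry}. We first verify the two intertwining relations \eqref{eq:intertwining} by evaluating both sides on each basis vector $\mathbf e_h$. Then we use $\Aproj_\ell\Aproj_\ell^{\mathsf T}=I_d$ to see that the image of $\Aproj_\ell^{\mathsf T}$ is a $\rho_M$-stable subspace on which $\rho_M$ acts through $T_\ell$ and $\ee{-1/8}S_\ell$. The representation property and the modularity of $\boldsymbol\Theta_\ell$ then follow from \eqref{eq:projection-identity}.

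\emph{The $T$-relation.} The row $\mathbf a_j$ is supported on the residues $\pm(\ell\pm2j)$ modulo $4\ell$, and all four have the same value $h^2/(4M)$ modulo $1$. Indeed,
\[
 \frac{(\ell+2j)^2-(\ell-2j)^2}{8\ell}=j\in\Z .
\]
So $\mathbf a_j\rho_M(\widetilde T)=\ee{(\ell-2j)^2/(8\ell)}\mathbf a_j$, which is the first relation.

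\emph{The $S$-relation.} Pair $\mathbf a_j\rho_M(\widetilde S)$ with $\mathbf e_r$. This gives
\[
 \frac{\ee{-1/8}}{\sqrt{2M}}\cdot\frac12\sum_{\pm,\pm}(\pm)\,\ee{\mp(\ell\pm2j)r/(4\ell)} .
\]
Writing $h=\ell+2j$ and $h'=\ell-2j$, the sum equals $\cos(2\pi hr/4\ell)-\cos(2\pi h'r/4\ell)$, and this is a product of two sines.
\begin{itemize}
\item If $r$ is even, the factor $\sin(\pi r/4)\sin(\pi jr/\ell)$-type term vanishes where needed, and one checks that the functional vanishes unless $r\equiv\pm(\ell\pm2m)$.
\item For $r=\ell+2m$, the sign $(-1)^{k+j+m}$ comes from $\sin(\pi(\ell+2m)/2)$ together with the shift $\ell=2k-1$.
\end{itemize}
The main obstacle is this trigonometric bookkeeping: showing that the result is exactly $(S_\ell)_{jm}\mathbf a_m(\mathbf e_r)$ for every residue $r$. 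That includes showing that residues outside the supports give zero (for odd $r$ not of the form $\ell\pm2m$, i.e.\ $r\equiv\pm\ell$ or even $r$), and keeping the normalization $\sqrt{2M}=2\sqrt\ell$ consistent with the factor $2/\sqrt\ell$. I would organize the computation by writing $r=\ell+2m$ with $m\bmod 2\ell$. The sum then becomes $-2\sin(\pi m\cdots)$-type expressions. Finally, use $\sin(2\pi(\ell-m)j/\ell)=-\sin(2\pi mj/\ell)$ to fold $m$ into the range $1\le m\le d$, checking that the folding sign matches the sign pattern in $\mathbf a_m$.

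\emph{Properties of $S_\ell$.} $S_\ell$ is real and symmetric by inspection. For orthogonality, $\sum_m\sin(2\pi jm/\ell)\sin(2\pi mj'/\ell)=\frac\ell4\delta_{jj'}$ over $1\le m\le d$ (the standard discrete sine orthogonality with $\ell$ odd), and the signs $(-1)^{j+m}(-1)^{m+j'}$ reduce to $(-1)^{j+j'}$, which is $1$ on the diagonal. Orthogonality can also be derived from the intertwining. Since $\Aproj_\ell\Aproj_\ell^{\mathsf T}=I$, right-multiplying \eqref{eq:intertwining} by $\Aproj_\ell^{\mathsf T}$ gives $T_\ell=\Aproj_\ell\rho_M(\widetilde T)\Aproj_\ell^{\mathsf T}$, and similarly for $S$. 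The intertwining also shows that $\ker\Aproj_\ell$ is $\rho_M$-stable, so the row space is stable under the adjoint. Because $\rho_M$ is unitary, the row space is stable under $\rho_M^{*}$ as well, and therefore $\Aproj_\ell$ intertwines all of $\rho_M$ with a well-defined map $\sigma_\ell$ on $\C^d$. Concretely, $\sigma_\ell(g):=\Aproj_\ell\rho_M(g)\Aproj_\ell^{\mathsf T}$ is then multiplicative: for a product $g_1g_2$ of generators, \eqref{eq:intertwining} lets us move $\Aproj_\ell$ across each factor. Hence $\sigma_\ell$ is the compression of $\rho_M$ to an invariant subspace, so it is a unitary representation of $\Mp$ determined on the generators by \eqref{eq:sigma-ell}.

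\emph{Modularity of $\boldsymbol\Theta_\ell$.} Apply $\ii\Aproj_\ell$ to the rank-one theta transformation laws and use \eqref{eq:projection-identity} with \eqref{eq:intertwining}. This gives $\boldsymbol\Theta_\ell|_{1/2,\sigma_\ell}\widetilde\gamma=\boldsymbol\Theta_\ell$ for $\widetilde\gamma=\widetilde S,\widetilde T$, and hence for all of $\Mp$. The form is cuspidal because each exponent $(\ell\pm2j)^2/(8\ell)$ is positive: $\ell\pm2j\ne0$ since $\ell$ is odd. It is cuspidal at every cusp because there is only one cusp for $\Mp$ in the vector-valued setting.
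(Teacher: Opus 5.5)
Your proposal is correct and follows the paper's proof essentially step for step: the $T$-relation from the common value of $h^2/(8\ell)$ modulo $1$ on the support of $\mathbf a_j$; the $S$-relation by writing the $\mathbf e_r$-coefficient as a product of sines and substituting $r=\ell\pm2m$; sine orthogonality for $S_\ell$; the representation property transferred through $\Aproj_\ell\Aproj_\ell^{\mathsf T}=I_d$, where your invariant-subspace/compression argument spells out the paper's one-line ``transfer of defining relations''; and modularity of $\boldsymbol\Theta_\ell$ from \eqref{eq:projection-identity}. One small slip to fix: the $\mathbf e_r$-coefficient of $\ee{1/8}\mathbf a_j\rho_M(\widetilde S)$ is $-\ell^{-1/2}\sin(\pi r/2)\sin(\pi jr/\ell)$, so the relevant factor is $\sin(\pi r/2)$, not $\sin(\pi r/4)$, and it is this factor that vanishes for every even $r$.
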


\begin{proof}
The four residues in \eqref{eq:projection-row} have square congruent to
$(\ell-2j)^2$ modulo $8\ell$, which gives the $T$-identity.  For the
$S$-identity, the coefficient of $\mathbf e_r$ in
$\ee{1/8}\mathbf a_j\rho_M(\widetilde S)$ is
\[
 -\frac{1}{\sqrt\ell}
 \sin\!\left(\frac{\pi r}{2}\right)
 \sin\!\left(\frac{\pi jr}{\ell}\right).
\]
It vanishes for even $r$.  Substitution of $r=\ell\pm2m$ gives respectively
the coefficients $\pm(S_\ell)_{jm}/2$ prescribed by the four entries of
$\mathbf a_m$, and the opposite residues give the same values.  This proves
the second identity in \eqref{eq:intertwining}.  The standard sine
orthogonality relation
\[
 \sum_{m=1}^d
 \sin\!\left(\frac{2\pi jm}{\ell}\right)
 \sin\!\left(\frac{2\pi rm}{\ell}\right)
 =\frac\ell4\delta_{jr}
\]
shows that $S_\ell$ is orthogonal.  The intertwining relations and
$\Aproj_\ell\Aproj_\ell^{\mathsf T}=I_d$ transfer the defining relations of
$\rho_M$ to the matrices in \eqref{eq:sigma-ell}; hence they define a unitary
representation.  Finally, \eqref{eq:projection-identity} and the rank-one
theta transformation law give the claimed cuspidal transformation of
$\boldsymbol\Theta_\ell$.
\end{proof}

We write $\sigma_\ell^\vee$ for the contragredient representation.  Since
$\sigma_\ell$ is unitary, complex conjugation identifies
$\overline{\sigma_\ell}$ with $\sigma_\ell^\vee$.  We use
$\mathbf e_1,\ldots,\mathbf e_d$ for the standard basis of the coordinate
space of $\sigma_\ell$ and $\mathbf e_1^\vee,\ldots,\mathbf e_d^\vee$ for
the dual basis, paired by
$\mathbf e_a^{\mathsf T}\mathbf e_b^\vee=\delta_{ab}$.

\subsubsection*{Minimal-model character vector.}
The vector
\[
 \boldsymbol\chi_\ell:=\frac{\ii\boldsymbol\Theta_\ell}{\eta}
\]
has weight $0$ and type $\sigma_\ell\otimes\psi^{-1}$.  If $F_{\nu,k}$, $1\le\nu\le k-1$, denotes the generalized
Rogers--Ramanujan function used in \cite{JinJo}, then, with $j=k-\nu$,
\[
 F_{\nu,k}=-\frac{\Theta_{\ell,j}}{\eta},
 \qquad
 (\boldsymbol\chi_\ell)_j=-\ii F_{k-j,k}.
\]
Thus, up to the displayed common phase and the reversal of component order,
$\boldsymbol\chi_\ell$ is obtained from the generalized Rogers--Ramanujan
vector appearing in the $n=0$ (equivalently, $s=1$) Taylor coefficient of
\cite{JinJo}.  For odd $\ell\ge3$, the Virasoro minimal model
$M(2,\ell)$ has central charge
$1-3(\ell-2)^2/\ell$ and $d=(\ell-1)/2$ inequivalent irreducible
characters.  The vector $\boldsymbol\chi_\ell$ is their character vector,
and its components have the Andrews--Gordon sum and product expressions; see
\cite[Section~1]{MilasMortensonOno}.  Multiplication by a nonzero common scalar
and permutation of the components change the Wronskian only by a nonzero
constant, so these choices are immaterial for the argument below.

For the later Poincar\'e-series and spectral arguments, we record, for fixed
$\ell$, two further consequences of this projection: finite image and a
uniform gap from the zero Fourier frequency.

\begin{proposition}[Finite image and Fourier frequencies bounded away from zero]
\label{prop:sigma-finite-gap}
For every $\widetilde\gamma\in\Mp$ one has
\begin{align}
 \sigma_\ell(\widetilde\gamma)
 &=\Aproj_\ell\rho_M(\widetilde\gamma)
   \Aproj_\ell^{\mathsf T},
 \label{ps:eq:representation-compression}\\
 \sigma_\ell^\vee(\widetilde\gamma)
 &=\Aproj_\ell\rho_M^\vee(\widetilde\gamma)
   \Aproj_\ell^{\mathsf T},
 \qquad M=2\ell.
 \label{ps:eq:dual-representation-compression}
\end{align}
Both $\sigma_\ell$ and $\sigma_\ell^\vee$ have finite image.  Their
$\widetilde T$-eigenvalues are $\ee{\pm\alpha_a}$, where
\begin{equation}
 \alpha_a:=\frac{(\ell-2a)^2}{8\ell}\notin\Z,
 \qquad 1\le a\le d.
 \label{eq:alpha-nonintegral}
\end{equation}
Thus the allowed exponents at $\ii\infty$ lie in $\Z+\alpha_a$ for type
$\sigma_\ell$ and in $\Z-\alpha_a$ for type $\sigma_\ell^\vee$.  In
particular, neither representation has a $\widetilde T$-fixed vector, and
\begin{equation}
 \delta_\ell
 :=\min_{1\le a\le d}\min_{n\in\Z}|n-\alpha_a|>0
 \label{eq:cusp-frequency-gap}
\end{equation}
provides, for fixed $\ell$, a uniform lower bound for the absolute value of
every nonzero Fourier frequency of either type at the cusp.
\end{proposition}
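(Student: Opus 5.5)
We prove the compression identities first and read off the remaining claims from them. The starting point is the intertwining relations \eqref{eq:intertwining} together with $\Aproj_\ell\Aproj_\ell^{\mathsf T}=I_d$ from Lemma~\ref{lem:projection-isometry}.

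\emph{Step 1: the compression formula on generators.} Multiply the identities $\Aproj_\ell\rho_M(\widetilde T)=T_\ell\Aproj_\ell$ and $\Aproj_\ell\rho_M(\widetilde S)=\ee{-1/8}S_\ell\Aproj_\ell$ on the right by $\Aproj_\ell^{\mathsf T}$. This gives \eqref{ps:eq:representation-compression} for $\widetilde\gamma=\widetilde T,\widetilde S$.

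\emph{Step 2: extension to all of $\Mp$.} Let $\mathcal G$ be the set of $\widetilde\gamma$ with $\Aproj_\ell\rho_M(\widetilde\gamma)=\sigma_\ell(\widetilde\gamma)\Aproj_\ell$. This set is closed under products: if the relation holds for $\widetilde\gamma_1,\widetilde\gamma_2$, then
\[
 \Aproj_\ell\rho_M(\widetilde\gamma_1\widetilde\gamma_2)=\sigma_\ell(\widetilde\gamma_1)\Aproj_\ell\rho_M(\widetilde\gamma_2)=\sigma_\ell(\widetilde\gamma_1\widetilde\gamma_2)\Aproj_\ell .
\]
It is also closed under inverses, by multiplying the relation for $\widetilde\gamma$ on the left by $\sigma_\ell(\widetilde\gamma)^{-1}$ and on the right by $\rho_M(\widetilde\gamma)^{-1}$. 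Since $\widetilde T$ and $\widetilde S$ generate $\Mp$, we get $\mathcal G=\Mp$. Right multiplication by $\Aproj_\ell^{\mathsf T}$ then gives \eqref{ps:eq:representation-compression} in general.

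For the dual statement, $\Aproj_\ell$ is real, and both representations are unitary, so $\rho^\vee=\overline{\rho}$ for each. Complex conjugation of \eqref{ps:eq:representation-compression} therefore yields \eqref{ps:eq:dual-representation-compression}. Equivalently, one can transpose-invert using $\rho^\vee(\widetilde\gamma)=\rho(\widetilde\gamma)^{-\mathsf T}$ and the intertwining relation.

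\emph{Step 3: finite image.} The Weil representation $\rho_M$ has finite image; it factors through $\Mp$ modulo a principal congruence subgroup of level dividing $4M$. The intertwining relation $\Aproj_\ell\rho_M(\widetilde\gamma)=\sigma_\ell(\widetilde\gamma)\Aproj_\ell$ and the injectivity of $\Aproj_\ell^{\mathsf T}$ imply that $\sigma_\ell(\widetilde\gamma)$ is determined by $\rho_M(\widetilde\gamma)$. Hence the image of $\sigma_\ell$ is a quotient of the image of $\rho_M$, and so it is finite. Conjugation transfers this to $\sigma_\ell^\vee$.

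\emph{Step 4: eigenvalues and the frequency gap.} The $\widetilde T$-eigenvalues can be read off the diagonal matrix $T_\ell$ in \eqref{eq:T-ell}, namely $\ee{\alpha_a}$, and their conjugates for the dual.

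The only substantive point is that $\alpha_a\notin\Z$. Suppose $(\ell-2a)^2\equiv0\pmod{8\ell}$. Since $\ell$ is odd, $\ell-2a$ is odd, so $(\ell-2a)^2$ is odd and cannot be divisible by $8$. This already gives the contradiction, even without the bound $0<\ell-2a<\ell$.

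Consequently $\ee{\pm\alpha_a}\neq1$, and neither representation has a $\widetilde T$-fixed vector. The exponent description $\Z\pm\alpha_a$ follows from the convention recalled in Section~\ref{sec:foundations}. Finally, $\delta_\ell$ is a minimum over finitely many values of $a$ of the distance from $\alpha_a$ to $\Z$. Each of these distances is positive, so $\delta_\ell>0$, and the distance from $-\alpha_a$ to $\Z$ is the same.

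The main obstacle is Step 2. The rest is bookkeeping, but one has to check that the relations defining $\sigma_\ell$ really are inherited from $\rho_M$. This is exactly the subgroup argument above, which uses only the intertwining relation and never the explicit relations of $\Mp$.
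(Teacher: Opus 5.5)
Your proposal is correct and follows the paper's proof. Both compress the generator intertwining relations using $\Aproj_\ell\Aproj_\ell^{\mathsf T}=I_d$, conjugate to get the dual, inherit finite image from the rank-one Weil representation (the paper cites Str\"omberg's Lemma~5.12 here), and read off the eigenvalues and the gap from $T_\ell$ and the oddness of $\ell-2a$. Your Step~2 only spells out the passage from $\widetilde T,\widetilde S$ to all of $\Mp$, which the paper leaves implicit; note that its closure under products does rely on $\sigma_\ell$ already being a homomorphism, as established in the preceding theorem.
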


\begin{proof}
The generator intertwining relations, followed by multiplication by
$\Aproj_\ell^{\mathsf T}$ and \eqref{ps:eq:A-orthonormal}, give
\eqref{ps:eq:representation-compression}; complex conjugation gives
\eqref{ps:eq:dual-representation-compression}.  By
\cite[Lemma~5.12]{Stromberg}, the rank-one Weil representation is determined
modulo a finite-index congruence subgroup, and hence has finite image.  The
two projected representations therefore have finite image as well.

The displayed $\widetilde T$-eigenvalues follow from \eqref{eq:T-ell}.
Since $\ell-2a$ is odd, $\alpha_a\notin\Z$; taking the minimum of the
finitely many positive distances to $\Z$ gives
\eqref{eq:cusp-frequency-gap} for both exponent classes.
\end{proof}

\subsection{A reference harmonic lift and the first non-holomorphic completion term}
\label{sec:reference-lift}

We choose a reference lift with shadow $-\boldsymbol\Theta_\ell$ without
normalizing its holomorphic part; only its shadow and non-holomorphic part are
used here.  The canonical lift is introduced in
Section~\ref{sec:canonical-normalization}.

Li--Schwagenscheidt construct a weight-$3/2$ vector-valued harmonic Maass
form $\widetilde{\boldsymbol\theta}_M$ of type $\rho_M^\vee$ satisfying
\cite[Propositions~6.7--6.8]{LiSch}
\begin{equation}
 \xi_{3/2}(\widetilde{\boldsymbol\theta}_M)
 =\frac{\sqrt M}{\pi}\boldsymbol\theta_M.
 \label{eq:LS-normalization}
\end{equation}
Project this lift by setting
\begin{equation}
 \mathbf G_\ell^0
 :=\frac{\pi\ii}{\sqrt M}\Aproj_\ell
 \widetilde{\boldsymbol\theta}_M.
 \label{eq:G0}
\end{equation}
Since $\Aproj_\ell$ is real and $\xi$ is antilinear, \eqref{eq:LS-normalization} and \eqref{eq:projection-identity} give
\begin{equation}
 \xi_{3/2}(\mathbf G_\ell^0)=-\boldsymbol\Theta_\ell.
 \label{eq:G-shadow}
\end{equation}
The vector $\mathbf G_\ell^0$ belongs to $H_{3/2}^+(\sigma_\ell^\vee)$; here the unitary identification $\overline{\sigma_\ell}\simeq\sigma_\ell^\vee$ is used.

The proof of \cite[Theorem~1.3]{JinJo} already uses scalar weight-$3/2$
harmonic Maass forms obtained componentwise from the Li--Schwagenscheidt
construction used above.  The projection packages those preimage data into
one harmonic Maass form of type $\sigma_\ell^\vee$.  The scalar completed $k$-rank Taylor coefficients have varying weights and are not the
components of $\mathbf G_\ell^0$;
Section~\ref{sec:higher-serre} shows instead that scalar contractions of the
higher Serre derivatives of this one vector generate all of their
non-holomorphic parts.

For $1\le j\le d$, put
\begin{equation}
 \mathcal E_{\ell,j}^{\mathrm{Eich}}(\tau)
 :=\int_{-\bar\tau}^{\ii\infty}
 \frac{\Theta_{\ell,j}(w)}{[-\ii(\tau+w)]^{3/2}}\,dw.
 \label{eq:Eichler-integral}
\end{equation}
The integral is taken along the vertical ray
$w=-\bar\tau+\ii t$, $t\ge0$, and the positive real branch of the
$3/2$-power is used, so that $-\ii(\tau+w)=2v+t>0$ along the path.  All
Eichler integrals below use this convention.

The standard $\xi$-image formula for a non-holomorphic Eichler integral
(cf. \cite[Proposition~3.2]{BruinierFunke}) gives, in the present convention,
\begin{equation}
 \xi_{3/2}\bigl(\sqrt2\,\ii\,\mathcal E_{\ell,j}^{\mathrm{Eich}}\bigr)
 =-\Theta_{\ell,j}.
 \label{eq:integral-shadow}
\end{equation}
Indeed,
\[
 \partial_{\bar\tau}\mathcal E_{\ell,j}^{\mathrm{Eich}}
 =\Theta_{\ell,j}(-\bar\tau)(2v)^{-3/2},
 \qquad
 \overline{\Theta_{\ell,j}(-\bar\tau)}=-\Theta_{\ell,j}(\tau),
\]
where the second identity uses the purely imaginary Fourier coefficients.
Accordingly, the non-holomorphic part of the reference lift is
\begin{equation}
 (\mathbf G_\ell^0)^-
 =\sqrt2\,\ii
 \int_{-\bar\tau}^{\ii\infty}
 \frac{\boldsymbol\Theta_\ell(w)}{[-\ii(\tau+w)]^{3/2}}\,dw.
 \label{eq:G-minus}
\end{equation}

\begin{proposition}[First non-holomorphic Taylor term]
For the Appell-compatible completion,
\begin{equation}
 \rjet_{1,k}^{-}(\tau)
 =\frac{\sqrt{\ell}}{\eta(\tau)}
 \sum_{j=1}^d
 \Theta_{\ell,j}(\tau)\mathcal E_{\ell,j}^{\mathrm{Eich}}(\tau).
 \label{eq:r1-minus}
\end{equation}
\end{proposition}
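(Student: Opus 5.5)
We compute the coefficient of $Z^1$ in $\Odd_Z$ of the non-holomorphic summand directly. By the Normalization lemma, and working with the $z_2=0$ form \eqref{eq:Appell-compatible-zero} complexified as $\widehat A^\sharp_{\ell,0}$ (equal to the $-d$ form by \eqref{eq:complexified-elliptic-shift}), the quantity to expand is
\[
 -\frac{1}{\eta}\cdot\frac{\ii}{2}\sum_{\nu=0}^{\ell-1}\e^{2\pi\ii\nu Z}\varthetaJ(\nu\tau+k;\ell\tau)\,
 \Zwerr^\sharp(\ell Z-\nu\tau-k,-\nu\bar\tau-k;\ell\tau)\,
 \exp\!\left(-\tfrac{\ell\pi^2}{6}E_2Z^2\right).
\]
The Gaussian factor is even and equals $1+O(Z^2)$, so it does not affect the $Z^1$ coefficient. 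We expand the remaining product to first order in $Z$:
\[
 \e^{2\pi\ii\nu Z}\Zwerr^\sharp=\Zwerr^\sharp|_{Z=0}
 +Z\bigl(2\pi\ii\nu\,\Zwerr^\sharp|_{Z=0}+\ell\,\partial_1\Zwerr^\sharp|_{Z=0}\bigr)+O(Z^2).
\]
Taking the odd part, only the $Z^1$ terms survive. The $Z^0$ terms produce an even contribution, and the elliptic-transformation symmetry $\nu\mapsto\ell-\nu$ should in any case cancel them in pairs.

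Next we simplify the $Z$-derivative at $Z=W$-specialization. Differentiating \eqref{eq:S-sharp} in its first slot gives two contributions. The first is $-2\pi\ii n$ times the summand. The second is $-E'$ of the argument, which produces the Gaussian $-2\e^{-\pi(\cdot)^2}\sqrt{2v_\sigma}/(2\ii v_\sigma)$. The Gaussian pieces combine with $\e^{-\pi\ii\sigma n^2-2\pi\ii nZ}$ into a holomorphic theta-type series in $\tau$ times $v^{-1/2}$. These pieces must cancel against the $2\pi\ii\nu$ term and the $\operatorname{sgn}$ part after summing over $\nu$. What remains should be the genuinely non-holomorphic part $\sum n\,[\operatorname{sgn}-E](\cdots)(\cdots)$. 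We then use the standard identity
\[
 \operatorname{sgn}(n)-E(n\sqrt{2y})=-\frac{\ii\,\operatorname{sgn}\text{-free}}{\cdots}\int_{-\bar\sigma}^{\ii\infty}\frac{\e^{\pi\ii n^2 w}}{\sqrt{-\ii(w+\sigma)}}\,dw
\]
for the error kernel (Zwegers's integral representation). Differentiating in the elliptic variable converts the $[-\ii(\tau+w)]^{-1/2}$ kernel against the weight-$1/2$ theta into a $[-\ii(\tau+w)]^{-3/2}$ kernel against $\Theta_{\ell,j}$, after rescaling $\sigma=\ell\tau$, $w\mapsto \ell w$.

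Then we reorganize the sum over $\nu$ and $n\in\frac12+\Z$ by residues modulo $2\ell$. The theta values $\varthetaJ(\nu\tau+k;\ell\tau)$ at $z_2=0$ are, up to $q$-powers, the components $\Theta_{\ell,j}$ via \eqref{eq:Theta-j}. The pairs $\nu$ and $\ell-\nu$, together with $\nu=0$ giving $\varthetaJ(k;\ell\tau)=0$ by oddness and the integral shift, collapse the $\ell$ terms to $j=1,\dots,d$. The same projection as in Lemma~\ref{lem:projection-isometry} identifies the shifted $n$-series inside $\Zwerr$ with $\Theta_{\ell,j}(w)$. Collecting constants, $\frac{\ii}{2}$, the factor $\ell$ from the chain rule, and the $\sqrt{2\ell v}$ Jacobian from the rescaling, we expect the prefactor $\sqrt\ell/\eta$ of \eqref{eq:r1-minus}. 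The signs should be checked against \eqref{eq:integral-shadow}.

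The main obstacle is the bookkeeping in the last two steps. It is not obvious that the Gaussian (holomorphic-theta) contributions from $E'$ and the $2\pi\ii\nu$ terms cancel exactly, leaving a pure Eichler integral with the stated sign and constant $\sqrt\ell$. As an independent check of the constant and sign, we would verify the result via lowering. Apply $L$ to both sides. The right side gives $(\sqrt\ell/\eta)\sum_j\Theta_{\ell,j}\,L(\mathcal E^{\mathrm{Eich}}_{\ell,j})$, which is explicit from the displayed $\partial_{\bar\tau}$ formula. The left side can be computed from Zwegers's $\partial_{\bar\tau}$ formula for $\widehat A_\ell$. Matching these fixes the constant, and both sides decay at $\ii\infty$ with no holomorphic part, so equality of $L$-images forces equality.
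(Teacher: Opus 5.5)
Your first step contains a genuine error. You replace the non-holomorphic term of the normalization lemma, $-\eta^{-1}\e^{-2\pi\ii dZ}\mathscr N_\ell(Z,-d\tau;\tau)$ (complexified at $W=0$), by the completion term $-\eta^{-1}\mathscr N_\ell(Z,0;\tau)$ of the $z_2=0$ specialization, and you cite \eqref{eq:complexified-elliptic-shift} for this. That identity relates the full completed functions, not their completion terms, and $\rjet^{+}_{1,k}$ is defined through $A_\ell(Z,-d\tau;\tau)$. The partial fraction $\frac1{1-x}=\frac{x^{-d}}{1-x}-\sum_{m=1}^{d}x^{-m}$ with $x=\e^{2\pi\ii Z}q^n$ gives
\[
 A_\ell(Z,0;\tau)-\e^{-2\pi\ii dZ}A_\ell(Z,-d\tau;\tau)
 =-\sum_{m=1}^{d}\e^{2\pi\ii(\ell/2-m)Z}q^{-\alpha_m}\theta^{\mathrm{App}}_{\ell,m}(\tau)\ne0 .
\]
So the quantity you expand has $Z^1$-coefficient
\[
 \rjet^{-}_{1,k}-\frac{2\pi\ii}{\eta}\sum_{m=1}^{d}\Bigl(\frac\ell2-m\Bigr)q^{-\alpha_m}\theta^{\mathrm{App}}_{\ell,m}.
\]
For $k=2$, where $\theta^{\mathrm{App}}_{3,1}=\eta$ by the pentagonal theorem, this is $\rjet^{-}_{1,2}-\pi\ii q^{-1/24}$. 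The source is visible in your kernel. At $Z=W=0$ the $\nu$th $E$-argument is $(n-\nu/\ell)\sqrt{2\ell v}$, and for $\nu>\ell/2$, $n=\frac12$, the factor $\operatorname{sgn}(n)-E$ tends to $2$ instead of decaying. Hence the holomorphic pieces you hope will cancel in the bookkeeping do not cancel, and this route cannot yield a pure Eichler integral.

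The paper's proof stays in the $-d\tau$ normalization fixed by the normalization lemma. It imports the first-order expansion from \cite[Lemma~5.1]{JinJo}, pairs $\nu$ with $2k-\nu$, and identifies the kernel by the displayed weight-$3/2$ integral.

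In the $-d\tau$ form the $E$-argument is $r\sqrt{2\ell v}$ with $r=n+(d-\nu)/\ell$ and $|d-\nu|/\ell\le d/\ell<\frac12$. Thus $\operatorname{sgn}(n)=\operatorname{sgn}(r)$, and every term is an error-function tail. After absorbing $\e^{2\pi\ii(\nu-d)Z}$, the $Z$-derivative at $Z=W=0$ of the $r$th term is $q^{-\ell r^2/2}$ times
\[
 -2\pi\ii\ell|r|\operatorname{erfc}\bigl(\sqrt{2\pi\ell v}\,|r|\bigr)+\ii\sqrt{2\ell/v}\,\e^{-2\pi\ell vr^2}
 =\ii\sqrt\pi\,\ell|r|\,\Gamma\bigl(-\tfrac12,2\pi\ell vr^2\bigr),
\]
using $\Gamma(-\frac12,x)=2x^{-1/2}\e^{-x}-2\sqrt\pi\operatorname{erfc}(\sqrt x)$. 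So your $E'$ terms are not holomorphic, since $\e^{-2\pi\ell vr^2}q^{-\ell r^2/2}=\e^{-\pi\ii\ell r^2\bar\tau}$. They do not cancel; they combine with the erfc terms into the weight-$3/2$ kernel.

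In place of your placeholder weight-$1/2$ identity, use the evaluation
\[
 \int_{-\bar\tau}^{\ii\infty}\e^{\pi\ii\ell r^2w}[-\ii(\tau+w)]^{-3/2}\,dw=\ii\sqrt{\pi\ell}\,|r|\,\Gamma\bigl(-\tfrac12,2\pi\ell vr^2\bigr)q^{-\ell r^2/2}.
\]
It shows each term is $\sqrt\ell$ times an Eichler kernel, which is where the constant comes from.

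Your lowering check cannot rescue the first step, for three reasons. First, $L$ annihilates exactly this kind of holomorphic discrepancy. Second, the premise that both sides decay at $\ii\infty$ fails for $\ell\ge7$: the leading $j=d$ term on the right has size $v^{-3/2}\e^{(\pi/12-\pi/(2\ell))v}$. Third, decay would not exclude a difference $\sum_{n\ge1}c_nq^{n-1/24}$ anyway. What must be shown is that $\rjet^{-}_{1,k}$ has no holomorphic Fourier component, and that is precisely the fundamental-range property of the $-d\tau$ specialization.
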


\begin{proof}
Apply \cite[Lemma~5.1]{JinJo} with the Appell--rank sign fixed in
Remark~\ref{rem:nonholomorphic-sign} and the convention comparison of
Section~\ref{sec:appell-taylor-conventions}.  Pairing $\nu$ with $2k-\nu$,
putting $j=k-\nu$, and reindexing by
$r\in\frac12+\frac{j}{\ell}+\Z$ gives the theta factor
\[
 \sum_r(-1)^{r-1/2-j/\ell}\e^{\pi\ii\ell r^2w}
 =-\ii\Theta_{\ell,j}(w).
\]
The remaining scalar is fixed by
\[
 \int_{-\bar\tau}^{\ii\infty}
 \frac{\e^{\pi\ii\ell r^2w}}{[-\ii(\tau+w)]^{3/2}}\,dw
 =\ii\sqrt{\pi\ell}\,|r|\,
 \Gamma\!\left(-\frac12,2\pi\ell vr^2\right)q^{-\ell r^2/2}.
\]
The incomplete-gamma factor on the right is the Fourier kernel of
\eqref{eq:Eichler-integral}.  Including the factor $\eta^{-1}$ from the
Appell-compatible completion, the $j$th paired contribution is exactly
$\sqrt\ell\,\Theta_{\ell,j}\mathcal E_{\ell,j}^{\mathrm{Eich}}/\eta$.
Thus the sign, phase, and global scalar are all fixed before the sum over
$j$, and summing proves \eqref{eq:r1-minus}.
\end{proof}

\begin{remark}[Normalization check at $k=2$]
\label{rem:k2-normalization-check}
For $\ell=3$, $\Theta_{3,1}=-\ii\eta$.  Formula \eqref{eq:r1-minus} becomes
\[
 \rjet_{1,2}^{-}
 =-\sqrt3
 \int_{-\bar\tau}^{\ii\infty}
 \frac{\eta(w)}{[-\ii(\tau+w)]^{3/2}}\,dw,
\]
which agrees with Bringmann's equation~(4.3)~\cite{Bringmann} after
translating her $q$- and integration-variable normalizations.  This checks
independently the sign in Remark~\ref{rem:nonholomorphic-sign}.  In fact,
\cite[Corollary~1.2]{Bringmann} gives
$(2\pi\ii)^{-1}\rjet_{1,2}(\tau)=\mathcal M_{\mathrm{Br}}(\tau/24)$, where
$\mathcal M_{\mathrm{Br}}$ is the weight-$3/2$ harmonic weak Maass form constructed in
\cite{BringmannDuke} for the symmetrized second rank moment
$\eta_2=D_2$ of Andrews \cite{AndrewsDurfee}.
\end{remark}

Define
\[
 \mathbf F_\ell
 :=-\ii\sqrt{\frac{\ell}{2}}\,
 \frac{\boldsymbol\Theta_\ell}{\eta}.
\]
In the notation of \cite{JinJo}, the component comparison above becomes
\[
 (\mathbf F_\ell)_j
 =\ii\sqrt{\frac\ell2}\,F_{k-j,k},
 \qquad 1\le j\le d.
\]
Then \eqref{eq:G-minus} and \eqref{eq:r1-minus} imply
\begin{equation}
 \rjet_{1,k}^{-}
 =\mathbf F_\ell^{\mathsf T}(\mathbf G_\ell^0)^{-}.
 \label{eq:r1-pairing-minus}
\end{equation}
The vector $\mathbf F_\ell$ has weight $0$ and type
$\sigma_\ell\otimes\psi^{-1}$, whereas
$\mathbf G_\ell^0\in H_{3/2}^+(\sigma_\ell^\vee)$.  Hence the pairing
$\mathbf F_\ell^{\mathsf T}\mathbf G_\ell^0$ has the inverse eta multiplier
$\psi^{-1}$, exactly as do the completed $k$-rank Taylor coefficients.  Finally, set
\[
 \mathbf H_\ell:=\eta\mathbf F_\ell
 =-\ii\sqrt{\frac{\ell}{2}}\boldsymbol\Theta_\ell
 \in S_{1/2}(\sigma_\ell).
\]
\subsubsection*{Theta and lift conventions.}
The preceding formulas fix the representation types and scalar factors.  The
three theta-vector scalings have separate roles:
$\boldsymbol\chi_\ell$ is the minimal-model character vector,
$\mathbf F_\ell$ is used for scalar contractions, and $\mathbf H_\ell$ for
the Wronskian and its MLDE.

\subsection{Generation of the non-holomorphic parts by higher Serre derivatives}
\label{sec:higher-serre}

The canonical higher Serre derivatives cancel the lower-order $E_4$ terms
produced by ordinary iteration and leave a pure power of $\widehat E_2$ after
Maass lowering.  Consequently, the non-holomorphic term at every Taylor order
is obtained from the same lift.

We write $(a)_0=1$ and, for $n\ge1$,
$(a)_n=a(a+1)\cdots(a+n-1)$ for the Pochhammer symbol.  For any
$\kappa\in\C$, define the canonical higher Serre derivatives by
\begin{align}
 \dd_\kappa^{[0]}&:=\operatorname{Id},
 &\dd_\kappa^{[1]}&:=\dd_\kappa,\notag\\
 \dd_\kappa^{[n+1]}
 &:=\dd_{\kappa+2n}\dd_\kappa^{[n]}
 -\frac{n(n+\kappa-1)}{144}E_4\dd_\kappa^{[n-1]},
 \qquad n\ge1.
 \label{eq:higher-Serre-recursion}
\end{align}
These are the canonical operators of \cite[Theorem~1]{NSZ}.  If $G$ transforms with
weight $\kappa$ and type $\rho$, then $\dd_\kappa^{[n]}G$ transforms with
weight $\kappa+2n$ and the same type $\rho$.

We use the following direct identities.  For every smooth $F$,
\begin{equation}
 L(\dd_\kappa F)
 =\dd_{\kappa-2}(LF)-\frac16\widehat E_2\,LF.
 \label{eq:L-Serre}
\end{equation}
If $G$ is harmonic of weight $\kappa$, then
\begin{equation}
 \dd_{\kappa-2}(LG)
 =\frac{2-\kappa}{12}\widehat E_2\,LG.
 \label{eq:Serre-L-harmonic}
\end{equation}
Finally,
\begin{equation}
 \dd_2(\widehat E_2)
 =-\frac1{12}(\widehat E_2^2+E_4).
 \label{eq:Serre-E2hat}
\end{equation}
The first formula is the commutator
$LD F=DLF+(2\pi v)^{-1}LF$; the second follows from the weight-$\kappa$
harmonic equation, and the third from Ramanujan's identity for $E_2$.

\begin{theorem}[Lowering formula for higher Serre derivatives]
If $G$ is harmonic of weight $\kappa$, then for every $n\ge0$,
\begin{equation}
 L\bigl(\dd_\kappa^{[n]}G\bigr)
 =(-1)^n\frac{(\kappa)_n}{12^n}
 \widehat E_2^{\,n}LG.
 \label{eq:higher-Serre-lowering}
\end{equation}
\end{theorem}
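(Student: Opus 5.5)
We prove \eqref{eq:higher-Serre-lowering} by induction on $n$, using the recursion \eqref{eq:higher-Serre-recursion} together with the three identities \eqref{eq:L-Serre}, \eqref{eq:Serre-L-harmonic}, and \eqref{eq:Serre-E2hat}. Put $c_n:=(-1)^n(\kappa)_n/12^n$ and $P_n:=c_n\widehat E_2^{\,n}LG$. The cases $n=0$ and $n=1$ are direct. For $n=0$ there is nothing to prove. For $n=1$, \eqref{eq:L-Serre} and \eqref{eq:Serre-L-harmonic} give
\[
 L(\dd_\kappa G)=\frac{2-\kappa}{12}\widehat E_2LG-\frac16\widehat E_2LG=-\frac{\kappa}{12}\widehat E_2LG .
\]

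For the inductive step, assume the formula for $n-1$ and $n$, with $n\ge1$. Apply $L$ to the recursion. Since $E_4$ is holomorphic, it commutes with $L$. Using \eqref{eq:L-Serre} at weight $\kappa+2n$, we get
\[
 L(\dd_\kappa^{[n+1]}G)
 =\dd_{\kappa+2n-2}(P_n)-\frac16\widehat E_2P_n
 -\frac{n(n+\kappa-1)}{144}E_4P_{n-1}.
\]
Here $P_n$ is treated as an object of weight $\kappa+2n-2$: $\widehat E_2^{\,n}$ has weight $2n$ and $LG$ has weight $\kappa-2$. The Serre derivative obeys the Leibniz rule with additive weights. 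Hence
\[
 \dd_{\kappa+2n-2}(\widehat E_2^{\,n}LG)=n\widehat E_2^{\,n-1}\dd_2(\widehat E_2)LG+\widehat E_2^{\,n}\dd_{\kappa-2}(LG).
\]
One checks that Ramanujan's identity in the form \eqref{eq:Serre-E2hat} holds for the non-holomorphic $\widehat E_2$, since $D(v^{-1})$ contributes exactly the needed correction. Substituting \eqref{eq:Serre-E2hat} and \eqref{eq:Serre-L-harmonic} gives
\[
 c_n\Bigl[-\frac{n}{12}\bigl(\widehat E_2^{\,n+1}+E_4\widehat E_2^{\,n-1}\bigr)+\frac{2-\kappa}{12}\widehat E_2^{\,n+1}-\frac{2}{12}\widehat E_2^{\,n+1}\Bigr]LG-\frac{n(n+\kappa-1)}{144}c_{n-1}E_4\widehat E_2^{\,n-1}LG .
\]

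The $\widehat E_2^{\,n+1}$ coefficient is $-c_n(\kappa+n)/12=c_{n+1}$, as required. The $E_4$ terms must cancel, and this is the key check. The coefficient is
\[
 -\frac{n}{12}c_n-\frac{n(n+\kappa-1)}{144}c_{n-1}.
\]
Since $c_n=-c_{n-1}(\kappa+n-1)/12$, the first term equals $+\frac{n(\kappa+n-1)}{144}c_{n-1}$. The two terms therefore cancel exactly. This cancellation is precisely why the canonical correction term appears in \eqref{eq:higher-Serre-recursion}.

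The main obstacle is not the algebra but justifying the identities used. First, the Leibniz rule for $\dd$ on products with $\widehat E_2$ must hold with the correct weight bookkeeping. Second, \eqref{eq:Serre-L-harmonic} requires harmonicity only of $G$, never of the derivatives $\dd_\kappa^{[n]}G$. The induction is arranged so that harmonicity is invoked only through $LG$, which keeps it valid.
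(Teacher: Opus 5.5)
Your proposal is correct and follows the paper's proof essentially step for step: induction on $n$ through the recursion \eqref{eq:higher-Serre-recursion}, with \eqref{eq:L-Serre} applied at weight $\kappa+2n$, and the Leibniz rule together with \eqref{eq:Serre-E2hat} and \eqref{eq:Serre-L-harmonic} used to compute $\dd_{\kappa+2n-2}(\widehat E_2^{\,n}LG)$, after which the $E_4\widehat E_2^{\,n-1}LG$ terms cancel exactly. Your bookkeeping via $c_n=-c_{n-1}(\kappa+n-1)/12$ reproduces the paper's final coefficient $(-1)^{n+1}(\kappa)_{n+1}/12^{n+1}$.
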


\begin{proof}
The cases $n=0,1$ follow from
\eqref{eq:L-Serre} and \eqref{eq:Serre-L-harmonic}.  Assume the formula for $n$
and $n-1$, where $n\ge1$.  By the product rule, \eqref{eq:Serre-E2hat}, and
\eqref{eq:Serre-L-harmonic},
\begin{equation}
 \dd_{\kappa+2n-2}\bigl(\widehat E_2^nLG\bigr)
 =\frac{2-\kappa-n}{12}\widehat E_2^{n+1}LG
 -\frac n{12}E_4\widehat E_2^{n-1}LG.
 \label{eq:Serre-power-E2hat}
\end{equation}
Apply $L$ to \eqref{eq:higher-Serre-recursion}, then use
\eqref{eq:L-Serre}, the induction hypotheses, and
\eqref{eq:Serre-power-E2hat}.  The two
$E_4\widehat E_2^{n-1}LG$ terms cancel exactly, while the remaining
coefficient is
\[
 (-1)^{n+1}\frac{(\kappa)_n(\kappa+n)}{12^{n+1}}
 =(-1)^{n+1}\frac{(\kappa)_{n+1}}{12^{n+1}}.
\]
\end{proof}

\subsubsection*{Cohen--Kuznetsov series.}
These series were introduced independently by Cohen and Kuznetsov
\cite{Cohen,Kuznetsov}; we use the higher-Serre formulation of
\cite[Section~9]{NSZ} and its Rankin--Cohen relation
\cite{ZagierDifferential,NSZ}.  For
$\kappa\notin\{0,-1,-2,\ldots\}$, the coefficientwise identity is
\[
 \dd_\kappa^{[n]}G
 =\sum_{j=0}^n\binom nj(\kappa+j)_{n-j}
 \left(-\frac{E_2}{12}\right)^{n-j}D^jG
\]
and hence
\begin{equation}
 \sum_{n\ge0}\frac{\dd_\kappa^{[n]}G(\tau)}{n!(\kappa)_n}Y^n
 =\e^{-E_2(\tau)Y/12}
 \sum_{n\ge0}\frac{D^nG(\tau)}{n!(\kappa)_n}Y^n.
 \label{eq:CK-generating-identity}
\end{equation}
The coefficientwise identities are algebraic and therefore apply
componentwise to real-analytic $G$.

For $\kappa=3/2$ and $G\in H_{3/2}^{+}(\rho)$, the two series converge
normally on compact subsets of $\Hh\times\C$.  The weight-$3/2$ Laplace
equation is elliptic with real-analytic coefficients.  Analytic elliptic
regularity \cite{MorreyNirenberg} therefore gives each component of $G$ a
holomorphic complexification in two independent variables on a uniform
neighborhood of every compact $K\subset\Hh$.  Fix any norm on the
finite-dimensional representation space.  Cauchy estimates in the
$\tau$-variable then give constants $M_K,C_K>0$ such that
\[
 \sup_{\tau\in K}\|D^nG(\tau)\|\le M_KC_K^n n!.
\]
Since $(3/2)_n\ge n!$, the right-hand series in
\eqref{eq:CK-generating-identity} is normally convergent and entire in $Y$;
the algebraic identity gives the same conclusion for the left-hand series.
Applying the same estimate after any fixed number of $\tau$- and
$\bar\tau$-derivatives justifies all later termwise differentiations.

With the Appell--Taylor comparison of
Section~\ref{sec:appell-taylor-conventions} and the theta/lift normalization
above, \cite[Lemma~4.1]{JinJo}, with Taylor index $n+1$ and divided by the
first-coefficient case, gives for every $n\ge0$
\begin{equation}
 L(\rjet_{2n+1,k})
 =\frac1{n!}
 \left(-\frac{\ell\pi^2}{6}\widehat E_2\right)^n
 L(\rjet_{1,k}).
 \label{eq:Appell-lowering-ratio}
\end{equation}
Together with \eqref{eq:higher-Serre-lowering}, this is the differential
normalization used below; no further scalar or phase conversion is made.

The elementary identity
\[
 \frac{(8\ell\pi^2)^n}{(2n+1)!}
 \frac{(3/2)_n}{12^n}
 =\frac1{n!}\left(\frac{\ell\pi^2}{6}\right)^n
\]
shows that the two lowering laws agree after the following normalization.
Fix $\mathbf G\in H_{3/2}^{+}(\sigma_\ell^\vee)$ with shadow
$\xi_{3/2}\mathbf G=-\boldsymbol\Theta_\ell$, and define
\begin{equation}
 a_{n,\ell}(\mathbf G)
 :=\frac{\eta}{(2\pi\ii)^{2n+1}}\left[
 \rjet_{2n+1,k}
 -\frac{(8\ell\pi^2)^n}{(2n+1)!}
 \mathbf F_\ell^{\mathsf T}\dd_{3/2}^{[n]}\mathbf G
 \right].
 \label{eq:Mn-general}
\end{equation}

\begin{theorem}[Weakly holomorphic scalar corrections]
\label{thm:weakly-holomorphic-scalar-corrections}
For every $n\ge0$,
\[
 a_{n,\ell}(\mathbf G)\in M_{2n+2}^{!}(\SLtwo).
\]
\end{theorem}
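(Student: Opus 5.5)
The argument has three parts: $a_{n,\ell}(\mathbf G)$ is holomorphic, it transforms with weight $2n+2$ and trivial multiplier under $\SLtwo$, and it has at most polynomial growth in $q^{-1}$ at the cusp. Set
\[
 \Phi_n:=\rjet_{2n+1,k}-\frac{(8\ell\pi^2)^n}{(2n+1)!}\mathbf F_\ell^{\mathsf T}\dd_{3/2}^{[n]}\mathbf G,
\]
so that $a_{n,\ell}(\mathbf G)=\eta\,\Phi_n/(2\pi\ii)^{2n+1}$. Since $\eta$ is holomorphic and $L$ is a derivation in $\bar\tau$ that kills holomorphic factors, it suffices to show $L\Phi_n=0$.

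\emph{Holomorphy.} By \eqref{eq:higher-Serre-lowering} at $\kappa=3/2$, and because $\mathbf F_\ell$ is holomorphic,
\[
 L\bigl(\mathbf F_\ell^{\mathsf T}\dd_{3/2}^{[n]}\mathbf G\bigr)=(-1)^n\frac{(3/2)_n}{12^n}\widehat E_2^{\,n}\,L\bigl(\mathbf F_\ell^{\mathsf T}\mathbf G\bigr).
\]
By \eqref{eq:Appell-lowering-ratio}, $L(\rjet_{2n+1,k})=\frac{1}{n!}\bigl(-\tfrac{\ell\pi^2}{6}\widehat E_2\bigr)^nL(\rjet_{1,k})$. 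The displayed Pochhammer identity makes the two prefactors agree, so $L\Phi_n$ is the same power of $\widehat E_2$ times $L\Phi_0$. This reduces everything to $n=0$.

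For $n=0$, the difference $\mathbf G-\mathbf G_\ell^0$ has zero shadow by \eqref{eq:G-shadow}, so it is weakly holomorphic. Hence $L(\mathbf F_\ell^{\mathsf T}\mathbf G)=L(\mathbf F_\ell^{\mathsf T}\mathbf G_\ell^0)=L(\mathbf F_\ell^{\mathsf T}(\mathbf G_\ell^0)^-)$, using that $(\mathbf G_\ell^0)^+$ is holomorphic. By \eqref{eq:r1-pairing-minus} this equals $L(\rjet_{1,k}^-)=L(\rjet_{1,k})$, since $\rjet_{1,k}^+$ is holomorphic. Therefore $L\Phi_0=0$ and $\Phi_n$ is holomorphic for all $n$.

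\emph{Modularity.} By \eqref{eq:jet-modularity}, $\rjet_{2n+1,k}$ has weight $2n+3/2$ and multiplier $\psi^{-1}$. The form $\dd_{3/2}^{[n]}\mathbf G$ has weight $2n+3/2$ and type $\sigma_\ell^\vee$. Contracting with $\mathbf F_\ell$ (weight $0$, type $\sigma_\ell\otimes\psi^{-1}$) through the invariant pairing of $\sigma_\ell$ with $\sigma_\ell^\vee$ gives a scalar of weight $2n+3/2$ and multiplier $\psi^{-1}$. This step needs care with the metaplectic conventions: the pairing must be invariant, $\mathbf F_\ell^{\mathsf T}\sigma_\ell^\vee(\widetilde\gamma)^{-1}\cdots$ combined with the $\sigma_\ell$ factor, and the lift-dependent sign $\varphi_\gamma$ must be tracked. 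Multiplying by $\eta$ (weight $1/2$, multiplier $\psi$) cancels the multiplier and yields integral weight $2n+2$. Any residual sign on $-I$ is $(-1)^{2n+2}=1$, so the result is compatible with $\SLtwo$.

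\emph{Growth at the cusp.} $\mathbf G$ has a finite principal part and its nonholomorphic part decays. Serre derivatives preserve both properties. $\mathbf F_\ell$ and $\eta^{\pm1}$ are $q$-series with finite polar order, and $\eta\rjet^{+}_{2n+1,k}$ is a $q$-series with finitely many negative exponents. So the holomorphic function $\eta\Phi_n$ has a $q$-expansion with finitely many negative powers, and $a_{n,\ell}(\mathbf G)\in M^!_{2n+2}(\SLtwo)$. At $\SLtwo$ level there is only one cusp, so no further cusps need checking. The main obstacle is the exact bookkeeping in the reduction to $n=0$: the lowering identities \eqref{eq:Appell-lowering-ratio} and \eqref{eq:higher-Serre-lowering} must have matching normalizations. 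The paper asserts this match, and I would recheck it directly on the $n=1$ case.
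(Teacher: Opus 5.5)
Your proposal is correct and follows essentially the same route as the paper. You match the higher-Serre lowering law with the Appell lowering law through the Pochhammer identity, get holomorphy from $L(\rjet_{1,k})=L(\mathbf F_\ell^{\mathsf T}\mathbf G)$ via the reference lift $\mathbf G_\ell^0$, read off weight $2n+2$ after multiplying by $\eta$, and bound the $q$-expansion below using holomorphic parts. The only difference is cosmetic: you factor $L\Phi_n$ as $\frac{1}{n!}\bigl(-\frac{\ell\pi^2}{6}\widehat E_2\bigr)^n L\Phi_0$ before proving $L\Phi_0=0$, whereas the paper proves the $n=0$ identity first and then substitutes.
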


\begin{proof}
Since $\xi_{3/2}(\mathbf G-\mathbf G_\ell^0)=0$, the difference
$\mathbf G-\mathbf G_\ell^0$ is weakly holomorphic and hence
$L(\mathbf G-\mathbf G_\ell^0)=0$.  Because $\mathbf F_\ell$ is
holomorphic in $\tau$, \eqref{eq:r1-pairing-minus} therefore gives
\[
 L(\rjet_{1,k})
 =L(\mathbf F_\ell^{\mathsf T}\mathbf G_\ell^0)
 =L(\mathbf F_\ell^{\mathsf T}\mathbf G).
\]
Using \eqref{eq:higher-Serre-lowering} with $\kappa=3/2$,
\[
 L\!\left(\mathbf F_\ell^{\mathsf T}\dd_{3/2}^{[n]}\mathbf G\right)
 =(-1)^n\frac{(3/2)_n}{12^n}
 \widehat E_2^nL(\rjet_{1,k}).
\]
The normalization preceding \eqref{eq:Mn-general}, together with
\eqref{eq:Appell-lowering-ratio}, gives
$L(a_{n,\ell}(\mathbf G))=0$, so the scalar correction is
holomorphic in $\tau$.

Both terms in brackets in \eqref{eq:Mn-general} have weight
$2n+\tfrac32$ and multiplier $\psi^{-1}$: the higher Serre derivative has
weight $2n+\tfrac32$ and type $\sigma_\ell^\vee$, while contraction with
$\mathbf F_\ell$ removes the representation.  Multiplication by $\eta$
therefore produces a scalar modular form of weight $2n+2$ on $\SLtwo$.

It remains to verify the growth at the cusp.  Since
$L(a_{n,\ell}(\mathbf G))=0$, its Fourier expansion is obtained from
the holomorphic parts of the two terms in \eqref{eq:Mn-general}.  By the
definition of $H_{3/2}^+(\sigma_\ell^\vee)$, $\mathbf G^+$ has a finite
principal part.  Each $\dd_{3/2}^{[n]}$ is built from $D$ and multiplication
by $E_2$ and $E_4$.  Since $E_2$ and $E_4$ have no negative Fourier powers,
these operations preserve a lower bound on the exponents of every component of
$\mathbf G^+$.  Since
$\mathbf H_\ell=\eta\mathbf F_\ell$ is holomorphic,
$\mathbf H_\ell^{\mathsf T}\dd_{3/2}^{[n]}\mathbf G^+$ is likewise bounded
below.  The explicit moment expansion of $\eta\rjet_{2n+1,k}^{+}$ is also
bounded below.  Thus $a_{n,\ell}(\mathbf G)$ has only finitely many negative
integral Fourier powers at $\ii\infty$.  Since the full modular group has one
cusp, the correction is weakly holomorphic.
\end{proof}

Thus only a scalar weakly holomorphic correction remains.

\section{The lift-independent scalar correction equation}
\label{sec:scalar-correction}

The theta Wronskian gives a monic MLDE for the theta coefficients.  Zwegers's
odd-level equation gives an inhomogeneous differential equation for the
completed Appell kernel.  After Gaussian conjugation, the higher-Serre
Cohen--Kuznetsov series is a homogeneous solution of the same operator.
Subtracting the two expressions yields a lift-independent equation for the
generating series of the scalar corrections; the first $d$ Appell conditions
select its canonical solution in
Section~\ref{sec:canonical-normalization}.

\subsection{The modular Wronskian of the theta vector and MLDE}
\label{sec:wronskian-mlde}

The Andrews--Gordon Wronskian formula of Milas, Mortenson, and Ono,
together with the direct one-dimensional case, gives the determinant
evaluation below.  Let
\begin{equation}
 \mathbf H_\ell^{\langle0\rangle}:=\mathbf H_\ell,
 \qquad
 \mathbf H_\ell^{\langle r+1\rangle}
 :=\dd_{1/2+2r}\mathbf H_\ell^{\langle r\rangle},
 \label{eq:theta-Serre-jets}
\end{equation}
and form the $d\times d$ matrix
\begin{equation}
 \Wr_\ell
 :=\bigl(
 \mathbf H_\ell^{\langle0\rangle},
 \mathbf H_\ell^{\langle1\rangle},\ldots,
 \mathbf H_\ell^{\langle d-1\rangle}
 \bigr).
 \label{eq:Wronskian}
\end{equation}

\begin{proposition}[Modular Wronskian of the theta vector]
\label{prop:theta-jet-determinant}
There is a nonzero constant $\kappa_\ell$ such that
\begin{equation}
 \det\Wr_\ell(\tau)
 =\kappa_\ell\eta(\tau)^{d(2d-1)}.
 \label{eq:Wronskian-evaluation}
\end{equation}
In particular, $\Wr_\ell$ is invertible at every point of $\Hh$.
\end{proposition}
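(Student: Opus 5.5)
The plan is to show that $\det\Wr_\ell$ is a holomorphic modular form of weight $d(2d-1)/2$ with a character. I will then compare its leading exponent at $\ii\infty$ with that of $\eta^{d(2d-1)}$. Finally, I will use the fact that $\eta$ never vanishes on $\Hh$, together with a valence (Sturm-type) argument, to conclude that the quotient is constant.

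\emph{Transformation law.} The column $\mathbf H_\ell^{\langle r\rangle}$ transforms with weight $1/2+2r$ and type $\sigma_\ell$, because each $\dd_{1/2+2r}$ preserves the type and raises the weight by $2$. Hence for $\widetilde\gamma\in\Mp$,
\[
 \Wr_\ell(\gamma\tau)=\sigma_\ell(\widetilde\gamma)\,\Wr_\ell(\tau)\,
 \operatorname{diag}\bigl(\varphi_\gamma^{1+4r}\bigr)_{0\le r<d}.
\]
Taking determinants, $\det\Wr_\ell$ transforms with weight
\[
 \sum_{r=0}^{d-1}\Bigl(\tfrac12+2r\Bigr)=\frac{d}{2}+d(d-1)=\frac{d(2d-1)}2
\]
and the one-dimensional character $\det\sigma_\ell$. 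The determinant is holomorphic on $\Hh$, since its entries are polynomials in $E_2$, $E_4$ and $D$-derivatives of the theta components.

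\emph{Behaviour at $\ii\infty$.} Component $a$ of $\mathbf H_\ell$ is a nonzero multiple of $\Theta_{\ell,a}$. Its leading exponent is
\[
 \frac{(\ell-2a)^2}{8\ell}=\alpha_a,
\]
which comes from $n=\pm\tfrac12$ in \eqref{eq:J-theta} with the shift $j^2/(2\ell)$. A direct check gives $\alpha_a=\tfrac{\ell}{8}-\tfrac a2+\tfrac{a^2}{2\ell}$, and these values are pairwise distinct for $1\le a\le d$. Modulo $E_2$-terms, which do not change leading exponents because $E_2=1+O(q)$, the operator $\dd_\kappa$ acts on a leading term $c\,q^{\alpha}$ as multiplication by $\alpha-\kappa/12$. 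After column operations the determinant's leading term is therefore
\[
 \prod_a c_a\cdot\mathrm{Vandermonde}(\alpha_1,\dots,\alpha_d)\,q^{\sum_a\alpha_a}.
\]
The Vandermonde is nonzero because the $\alpha_a$ are distinct, so
\[
 \operatorname{ord}_\infty\det\Wr_\ell=\sum_{a=1}^d\alpha_a.
\]
I then verify
\[
 \sum_{a=1}^d\frac{(\ell-2a)^2}{8\ell}=\frac{d(2d-1)}{24}.
\]
With $\ell=2d+1$, the values $\ell-2a$ run over the odd numbers $1,3,\dots,2d-1$. The sum of their squares is $d(2d-1)(2d+1)/3$, and dividing by $8\ell$ gives $d(2d-1)/24$, which matches $\operatorname{ord}_\infty\eta^{d(2d-1)}$.

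\emph{Conclusion.} The quotient $f=\det\Wr_\ell/\eta^{d(2d-1)}$ is holomorphic on $\Hh$, since $\eta$ has no zeros there. It has weight $0$ and a finite-order character; the order is finite because $\sigma_\ell$ has finite image (Proposition~\ref{prop:sigma-finite-gap}). At $\ii\infty$ the quotient tends to a nonzero constant. Some power $f^N$ is then an $\SLtwo$-invariant holomorphic function that is bounded at the cusp, hence constant, and so $f$ is constant. The constant is nonzero because the leading coefficient is nonzero. As a cross-check, this agrees with the Milas--Mortenson--Ono Wronskian evaluation for the $M(2,\ell)$ characters, after accounting for the factor $\eta$ in $\mathbf H_\ell=\eta\mathbf F_\ell$.

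The main obstacle is the leading-term computation: one must make sure that the cancellations among columns leave exactly the Vandermonde in the $\alpha_a$ with no accidental vanishing. Distinctness of the $\alpha_a$ on $1\le a\le d$ settles this.
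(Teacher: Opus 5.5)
Your proof is correct, but it takes a different route from the paper's. For $d\ge2$ the paper computes neither the order at $\ii\infty$ nor a character. It imports the Milas--Mortenson--Ono evaluation $W(\boldsymbol\chi_\ell)=\kappa_\ell^\chi\eta^{2d(d-1)}$ of the ordinary-derivative Wronskian of the $M(2,\ell)$ character vector. It then applies the Leibniz rule: multiplying by $\eta$ introduces a triangular factor with diagonal $\eta$, hence a factor $\eta^d$. Finally, the passage from ordinary to successive Serre derivatives is unitriangular. The case $d=1$ is handled separately via $\Theta_{3,1}=-\ii\eta$.

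Your argument is self-contained and uniform in $d$. Its ingredients are:
\begin{itemize}
\item the weight $d(2d-1)/2$ and the finite-order character $\det\sigma_\ell\cdot\psi^{-d(2d-1)}$;
\item the Vandermonde leading term;
\item the identity $\sum_a\alpha_a=d(2d-1)/24$.
\end{itemize}
These are exactly what is needed, and the weight-zero quotient argument then closes the proof. In effect you reprove the relevant case of the Milas--Mortenson--Ono theorem. You also get the constant explicitly, $\kappa_\ell=(-\sqrt{\ell/2})^d\prod_{a<b}(\alpha_b-\alpha_a)$, from the same row normalization and Vandermonde limit that the paper itself uses later in Corollary~\ref{cor:monic-theta-MLDE} and Lemma~\ref{lem:normalized-theta-jet-matrix}. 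The paper's route is shorter, but it depends on an external result and on identifying $\mathbf H_\ell$ with a scalar multiple of $\eta\boldsymbol\chi_\ell$.

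One small slip: the leading term $q^{\alpha_a}$ of $\Theta_{\ell,a}$ comes from $n=-\tfrac12$ alone. The $n=+\tfrac12$ term has exponent $(\ell+2a)^2/(8\ell)>\alpha_a$. That uniqueness is what guarantees the leading coefficient $-\ii$ cannot cancel.
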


\begin{proof}
For $d=1$ (equivalently, $\ell=3$), the identity
$\Theta_{3,1}=-\ii\eta$ recorded in
Remark~\ref{rem:k2-normalization-check} gives
$\mathbf H_3=-\sqrt{3/2}\,\eta$, so the result is immediate.

Assume now that $d\ge2$.  By
\cite[Theorem~1.1.1]{MilasMortensonOno}, the ordinary derivative
Wronskian satisfies
$W(\boldsymbol\chi_\ell)=\kappa_\ell^\chi\eta^{2d(d-1)}$ with
$\kappa_\ell^\chi\ne0$.  Since $\mathbf H_\ell$ is a nonzero scalar
multiple of $\eta\boldsymbol\chi_\ell$, the Leibniz rule expresses the
ordinary jet matrix of $\eta\boldsymbol\chi_\ell$ as the ordinary jet
matrix of $\boldsymbol\chi_\ell$ multiplied by a triangular matrix whose
diagonal entries are all $\eta$.  Its determinant therefore acquires the
factor $\eta^d$.  The subsequent passage from ordinary derivatives to
successive Serre derivatives is unitriangular and leaves the determinant
unchanged.  This gives \eqref{eq:Wronskian-evaluation}.
\end{proof}

\begin{corollary}[Monic theta MLDE and rationality]
\label{cor:monic-theta-MLDE}
There are unique holomorphic modular forms on $\SLtwo$
\begin{equation}
 F_{\ell-2r-1}^{(\ell)}\in
 M_{\ell-2r-1}(\SLtwo),
 \qquad 0\le r\le d,
 \qquad F_0^{(\ell)}=1,
 \label{eq:MLDE-coefficients}
\end{equation}
such that
\begin{equation}
 \sum_{r=0}^d
 F_{\ell-2r-1}^{(\ell)}
 \mathbf H_\ell^{\langle r\rangle}=0.
 \label{eq:theta-MLDE}
\end{equation}
Every $F_j^{(\ell)}$ has rational Fourier coefficients.
\end{corollary}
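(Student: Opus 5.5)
The natural route is the standard Wronskian construction of a modular linear differential equation. Consider the $(d+1)\times(d+1)$ matrix whose first $d$ rows are the components of $\mathbf H_\ell^{\langle 0\rangle},\ldots,\mathbf H_\ell^{\langle d\rangle}$ written as row vectors, that is, the rows of $\Wr_\ell$ augmented by the column $\mathbf H_\ell^{\langle d\rangle}$, and whose last row is a formal variable row. Equivalently, the system $\Wr_\ell\,\mathbf c=-\mathbf H_\ell^{\langle d\rangle}$ has the unique solution $\mathbf c=-\Wr_\ell^{-1}\mathbf H_\ell^{\langle d\rangle}$, because $\det\Wr_\ell$ is nowhere zero on $\Hh$ by Proposition~\ref{prop:theta-jet-determinant}. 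Define $F^{(\ell)}_{\ell-2r-1}:=c_r$ for $0\le r\le d-1$ and $F^{(\ell)}_0:=1$. Then \eqref{eq:theta-MLDE} holds by construction, and the coefficients are holomorphic on $\Hh$.

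The first step is modularity. Write $\gamma$ for a metaplectic element. Each column $\mathbf H_\ell^{\langle r\rangle}$ transforms with weight $1/2+2r$ and the same type $\sigma_\ell$. Hence
\[
\Wr_\ell(\gamma\tau)=\sigma_\ell(\widetilde\gamma)\,\Wr_\ell(\tau)\,\mathrm{diag}\bigl((c\tau+d)^{1/2+2r}\bigr)_{r},
\]
up to the fixed metaplectic phase convention. The factor $\sigma_\ell$ cancels in $\Wr_\ell^{-1}\mathbf H_\ell^{\langle d\rangle}$, so $c_r$ transforms with weight $(1/2+2d)-(1/2+2r)=2(d-r)=\ell-2r-1$ and trivial multiplier. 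The phases of the metaplectic square roots cancel because all the weights differ by even integers. This gives scalar modular forms on $\SLtwo$. By Cramer's rule, $c_r=\pm\det(\Wr_\ell\text{ with column }r\text{ replaced})/\det\Wr_\ell$.

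The main step is holomorphy at the cusp. The numerator is a determinant of Serre-derivative jets of $\mathbf H_\ell$, and its components have $q$-expansions with exponents in $\alpha_a+\Z_{\ge0}$. The numerator's leading exponent is therefore at least $\sum_a\alpha_a$, since each row $a$ has exponents in $\alpha_a+\Z_{\ge0}$ and the $E_2$, $E_4$ multiplications preserve this. I must compare this with the exponent of $\eta^{d(2d-1)}$, namely $d(2d-1)/24$. I would check the identity $\sum_{a=1}^d(\ell-2a)^2/(8\ell)=d(2d-1)/24$ with $\ell=2d+1$:
\[
\sum_{a=1}^{d}(2d+1-2a)^2=\sum_{\text{odd }m\le 2d-1}m^2=\frac{d(2d-1)(2d+1)}{3}.
\]
Dividing by $8\ell=8(2d+1)$ gives $d(2d-1)/24$, as required. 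So each quotient is bounded as $q\to0$ and lies in $M_{\ell-2r-1}(\SLtwo)$. This valence-type comparison is the crux; the rest is formal. Uniqueness follows because a second solution would give a nontrivial relation $\Wr_\ell\,\mathbf c'=0$, contradicting invertibility.

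Rationality comes last. Each component of $\mathbf H_\ell$ is $-\ii\sqrt{\ell/2}\cdot\ii$ times a rational-coefficient $q$-series times $q^{\alpha_a}$, up to the common scalar, since the theta series have coefficients $\pm1$. The Serre derivatives involve $D$, $E_2$, $E_4$, all of which have rational coefficients. The coefficients $c_r$ are invariant under rescaling a row, as Cramer quotients are row-homogeneous of degree zero. After stripping $q^{\alpha_a}$ per row, the resulting $q$-expansions of the $c_r$ have rational coefficients. Equivalently, the $c_r$ are determined by solving linear equations over $\Q$ coefficient by coefficient, which gives $F_j^{(\ell)}\in\Q[E_4,E_6]_j$.
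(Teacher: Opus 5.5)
Your proposal is correct and follows essentially the paper's route: Cramer's rule against the nowhere-vanishing Wronskian gives uniqueness, the weight $\ell-2r-1$ with trivial multiplier, and holomorphy on $\Hh$, while extracting the rowwise factors $q^{\alpha_a}$ and the common scalar gives holomorphy at the cusp and rationality. The only differences are minor. You certify the exact cusp order of the denominator through $\det\Wr_\ell=\kappa_\ell\eta^{d(2d-1)}$ and your (correct) identity $\sum_a\alpha_a=d(2d-1)/24$, whereas the paper reads it off from the nonzero Vandermonde limit of the normalized Wronskian; also, the common scalar is $-\sqrt{\ell/2}$ rather than $+\sqrt{\ell/2}$, which is harmless since it cancels in every Cramer quotient.
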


\begin{proof}
The nonvanishing Wronskian gives the unique monic relation by the standard
vector-valued modular-form construction \cite{Mason}.  Cramer's rule gives
weight $\ell-2r-1=2(d-r)$ for the coefficient of the $r$th derivative and, because
the denominator in \eqref{eq:Wronskian-evaluation} has no zero on $\Hh$, no
interior pole.  After extracting from the $a$th row its leading factor
$q^{\alpha_a}$, the product of these rowwise factors occurs in both the
Cramer numerator and denominator and cancels.  The remaining determinant has
the nonzero Vandermonde limit associated with the distinct nodes
$\alpha_1,\ldots,\alpha_d$.

From the definitions of $\mathbf H_\ell$ and $\boldsymbol\Theta_\ell$
(equivalently, from \eqref{eq:Theta-Appell-relation} below), the common scalar
in the components of $\mathbf H_\ell$ is $-\sqrt{\ell/2}$.  After removing
this scalar and factoring $q^{\alpha_a}$ from the $a$th row, the normalized
Wronskian matrix has entries in $\Q[[q]]$.  The same rowwise factors
$-\sqrt{\ell/2}\,q^{\alpha_a}$ occur in every Cramer numerator and in the
Wronskian denominator, so they cancel from every quotient.  Since the Serre
operator preserves rational $q$-series, every MLDE coefficient lies in
$\Q[[q]]$.
\end{proof}

\subsection{The ungauged odd-level Appell equation}
\label{sec:odd-appell-pde}

We translate Zwegers's equation for the uncompleted Appell function to our
$X$-normalization.  Sections~\ref{sec:appell-taylor-conventions} and
\ref{sec:reference-lift} identify its theta system with
\eqref{eq:theta-MLDE} and its heat operators with those below.

For $\nu\in\Z$, define
\begin{equation}
 \theta_{\ell,\nu}^{\mathrm{App}}(\tau)
 :=\sum_{m\in\Z}(-1)^m
 q^{\frac{\ell}{2}(m-1/2+\nu/\ell)^2}.
 \label{eq:Appell-theta}
\end{equation}
For $1\le j\le d$, a direct index shift in \eqref{eq:Theta-j} gives
\begin{equation}
 \Theta_{\ell,j}=-\ii\,\theta_{\ell,j}^{\mathrm{App}}.
 \label{eq:Theta-Appell-relation}
\end{equation}
Thus, for
$\boldsymbol\theta_\ell^{\mathrm{App}}
=(\theta_{\ell,1}^{\mathrm{App}},\ldots,
  \theta_{\ell,d}^{\mathrm{App}})^{\mathsf T}$,
\[
 \mathbf H_\ell=-\sqrt{\frac\ell2}\,
 \boldsymbol\theta_\ell^{\mathrm{App}}.
\]
Consequently the two vectors have Wronskians differing by a nonzero constant
and satisfy exactly the same monic MLDE \eqref{eq:theta-MLDE}.

Zwegers's odd-level equation belongs to the rank--crank partial differential
equation (PDE) tradition
initiated by Atkin--Garvan and developed in the Jacobi setting by
Bringmann--Zwegers and Chan--Dixit--Garvan
\cite{AtkinGarvan,BringmannZwegers,ChanDixitGarvan}.

The substitutions $m\mapsto1-m$, $m\mapsto-m$, and
$m\mapsto m+1$ in \eqref{eq:Appell-theta} give, respectively,
\begin{equation}
 \theta_{\ell,0}^{\mathrm{App}}=0,\qquad
 \theta_{\ell,\ell-\nu}^{\mathrm{App}}
 =\theta_{\ell,\nu}^{\mathrm{App}},\qquad
 \theta_{\ell,\nu+\ell}^{\mathrm{App}}
 =-\theta_{\ell,\nu}^{\mathrm{App}}.
 \label{eq:Appell-theta-symmetries}
\end{equation}
Since $\ell=2d+1$, every component with $0\le\nu\le\ell-1$ is therefore
zero or, up to sign, one of
$\theta_{\ell,1}^{\mathrm{App}},\ldots,
\theta_{\ell,d}^{\mathrm{App}}$.

For $\lambda\in\C$, define the ungauged heat operators
\begin{equation}
 \Heat_\lambda^{(\ell)}
 :=\partial_X^2+2\ell D
 -\frac{\ell(2\lambda-1)}{12}E_2,
 \label{eq:ungauged-heat}
\end{equation}
and their iterates
\[
 \Heat^{\langle0\rangle}:=\operatorname{Id},
 \qquad
 \Heat^{\langle r\rangle}
 :=\Heat_{2r-1}^{(\ell)}\Heat_{2r-3}^{(\ell)}\cdots\Heat_1^{(\ell)}
 \quad(r\ge1).
\]
Set
\begin{equation}
 \Qop_\ell
 :=\sum_{r=0}^d
 (2\ell)^{d-r}F_{\ell-2r-1}^{(\ell)}
 \Heat^{\langle r\rangle}.
 \label{eq:Q-ungauged}
\end{equation}

Let $\mathcal C_{\mathrm{cr}}(z;\tau)$ be Zwegers's modified crank function
\cite{ZwegersPDE}, normalized by
\begin{equation}
 \mathcal C_{\mathrm{cr}}(z;\tau)
 =-\frac{\eta(\tau)^2}{\thetaodd(z;\tau)}.
 \label{eq:crank-theta}
\end{equation}

\begin{proposition}[Odd-level Appell equation]
\label{prop:odd-level-Appell-PDE}
In our normalization, Zwegers's odd-level equation
\cite[Theorem~1.4]{ZwegersPDE} is the following identity for the uncompleted
meromorphic Appell function, with all $X$-derivatives taken in the scaled
variable:
\begin{equation}
 \Qop_\ell
 A_\ell\!\left(\frac{X}{2\pi\ii},0;\tau\right)
 =(\ell-1)!\eta(\tau)^\ell
 \mathcal C_{\mathrm{cr}}\!\left(\frac{X}{2\pi\ii};\tau\right)^\ell.
 \label{eq:odd-level-PDE}
\end{equation}
\end{proposition}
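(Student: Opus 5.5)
The plan is a normalization transfer from \cite[Theorem~1.4]{ZwegersPDE}, not an independent derivation. Zwegers's odd-level theorem states that a certain differential operator in the elliptic and modular variables annihilates, up to an explicit crank-power inhomogeneity, the level-$\ell$ Appell function. That operator is built from iterated heat operators weighted by the coefficients of the MLDE satisfied by the associated theta vector. I would carry out three tasks in order:
\begin{enumerate}[label=\textup{(\alph*)},leftmargin=2.4em]
\item match Zwegers's Appell function with $A_\ell(z,0;\tau)$ as defined in \eqref{eq:Appell-def};
\item match his theta system and MLDE coefficients with $\boldsymbol\theta_\ell^{\mathrm{App}}$ and the forms $F^{(\ell)}_{\ell-2r-1}$ of Corollary~\ref{cor:monic-theta-MLDE};
\item match his heat operators and right-hand side with $\Heat^{(\ell)}_\lambda$ and $(\ell-1)!\eta^\ell\mathcal C_{\mathrm{cr}}^\ell$.
\end{enumerate}

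\textbf{Step (b).} I would use \eqref{eq:Theta-Appell-relation} and the symmetries \eqref{eq:Appell-theta-symmetries}. These reduce the $\ell$ theta functions indexed by $\nu$ modulo $\ell$ that appear in the Appell expansion to the $d$ functions $\theta^{\mathrm{App}}_{\ell,j}$. By uniqueness of the monic MLDE, whose Wronskian is nonvanishing by Proposition~\ref{prop:theta-jet-determinant}, Zwegers's modular coefficients must coincide with $F^{(\ell)}_{\ell-2r-1}$. Zwegers may write his MLDE in terms of iterated Serre derivatives or in terms of ordinary derivatives with quasimodular coefficients. In either case the passage between the two forms is unitriangular, and it corresponds exactly to composing the heat operators $\Heat_{2r-1}\cdots\Heat_1$. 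The reason is that on each theta component, the heat operator acting on $q^{\ell(\cdot)^2/2}\e^{\cdots X}$ reduces to $2\ell$ times a Serre derivative of the appropriate weight.

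\textbf{Step (c).} I would make the change of variable $z=X/(2\pi\ii)$, which converts $\partial_z^2$ into $(2\pi\ii)^2\partial_X^2$. I would then rescale Zwegers's heat operator $\frac{\partial^2}{\partial z^2}-\frac{4\pi\ii\ell}{\cdot}\partial_\tau$ (in his normalization) by $(2\pi\ii)^{-2}$. This should produce $\partial_X^2+2\ell D$. The $E_2$ term with weight parameter $\lambda$ is fixed by requiring that $\Heat_\lambda$ raise weight by $2$ on Jacobi-like objects of index $\ell/2$ and weight $\lambda$. The powers $(2\ell)^{d-r}$ in \eqref{eq:Q-ungauged} come from the same heat-to-Serre conversion.

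\textbf{Right-hand side.} This is also part of step (c). I would use \eqref{eq:crank-theta} together with $\thetaodd=\eta^3X+O(X^3)$. The constant $(\ell-1)!$ and the power $\eta^\ell$ would be checked by comparing leading polar terms at $X=0$. On the left, $A_\ell$ has a simple pole with residue read off from the $n=0$ term of \eqref{eq:Appell-def}, and $\Qop_\ell$ applies $\partial_X^{2d}=\partial_X^{\ell-1}$ at top order. This produces $(\ell-1)!$ times a pole of order $\ell$ with a sign. On the right, $\eta^\ell\mathcal C_{\mathrm{cr}}^\ell=(-1)^\ell\eta^{3\ell}/\thetaodd^\ell\sim -X^{-\ell}$. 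Comparing these fixes both the scalar and the sign, given that Zwegers's theorem asserts the identity holds up to such a constant.

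\textbf{Main obstacle.} I expect the hardest part to be the bookkeeping of the lower-order $E_2$ terms: confirming that Zwegers's operator, written in his normalization, is literally $\sum_r(2\ell)^{d-r}F^{(\ell)}_{\ell-2r-1}\Heat^{\langle r\rangle}$ with the weight indices $1,3,\dots,2r-1$. To do this I would first verify the case $d=1$ ($\ell=3$) by hand, and then argue in general using uniqueness of the monic MLDE.
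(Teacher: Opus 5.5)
Your proposal is correct and follows the paper's proof essentially step for step. The paper identifies the theta system via the relation $\Theta_{\ell,j}=-\ii\,\theta^{\mathrm{App}}_{\ell,j}$ and the symmetries \eqref{eq:Appell-theta-symmetries}, and uses monicity of the MLDE to fix the coefficients. It then converts the heat operators and crank factor by the change of variable $X=2\pi\ii z$. Finally it fixes the scalar from $A_\ell=-X^{-1}+O(1)$ and $\thetaodd=\eta^3X+O(X^3)$, which gives $-(\ell-1)!X^{-\ell}$ on both sides, exactly as in your leading-pole comparison.

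One bookkeeping caution on your Step (c): $A_\ell$ has elliptic index $-\ell/2$. The $z$-operator that rescales to $\partial_X^2+2\ell D$ is therefore $\partial_z^2+4\pi\ii\ell\,\partial_\tau$. The minus sign you wrote would produce $\partial_X^2-2\ell D$.
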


\begin{proof}
Equations~\eqref{eq:Theta-Appell-relation} and
\eqref{eq:Appell-theta-symmetries} identify Zwegers's theta solution space
with \eqref{eq:theta-MLDE}; monicity therefore gives the coefficients
\eqref{eq:MLDE-coefficients}.  The convention comparison of
Section~\ref{sec:appell-taylor-conventions} converts his heat operators and
modified crank factor to \eqref{eq:ungauged-heat}--\eqref{eq:Q-ungauged}
and \eqref{eq:crank-theta}.  The expansions
$A_\ell(X/(2\pi\ii),0;\tau)=-X^{-1}+O(1)$ and
$\thetaodd(X/(2\pi\ii);\tau)=\eta^3X+O(X^3)$ fix the remaining scalar as
$-(\ell-1)!X^{-\ell}$, proving \eqref{eq:odd-level-PDE}.
\end{proof}

\subsection{Extension to the completed Appell function}
\label{sec:nonhol-appell}

The complexified non-holomorphic completion term also lies in
$\ker\Qop_\ell$.  For independent $X,Y\in\C$, scale the two complexified
Appell specializations from Section~\ref{sec:appell-jets} by
\begin{equation}
\begin{aligned}
 \widehat{\mathscr A}_{\ell,0}^\sharp(X,Y;\tau)
 &:=\widehat A_{\ell,0}^\sharp\!\left(\frac{X}{2\pi\ii},
                                      \frac{Y}{2\pi\ii};\tau\right),\\
 \widehat{\mathscr A}_{\ell,-d}^\sharp(X,Y;\tau)
 &:=\widehat A_{\ell,-d}^\sharp\!\left(\frac{X}{2\pi\ii},
                                       \frac{Y}{2\pi\ii};\tau\right).
\end{aligned}
 \label{eq:scaled-sharp-Appell}
\end{equation}
Write their non-holomorphic completion terms as
\begin{align}
 \mathscr N_{\ell,0}^\sharp(X,Y;\tau)
 &:=\widehat{\mathscr A}_{\ell,0}^\sharp(X,Y;\tau)
 -A_\ell\!\left(\frac{X}{2\pi\ii},0;\tau\right),
 \label{eq:N0-sharp-defined}\\
 \mathscr N_{\ell,-d}^\sharp(X,Y;\tau)
 &:=\widehat{\mathscr A}_{\ell,-d}^\sharp(X,Y;\tau)
 -A_\ell\!\left(\frac{X}{2\pi\ii},-d\tau;\tau\right).
 \label{eq:Nd-sharp-defined}
\end{align}
Both completion terms are entire in $(X,Y)$.  To analyze a typical summand,
fix $r\in\R$, put $v=\operatorname{Im}\tau$, and define
\begin{equation}
 t=t_r(X,Y;v)
 :=\sqrt{2\ell v}\left(r-\frac{X-Y}{4\pi v}\right)
 \label{eq:t-kernel}
\end{equation}
and
\begin{equation}
 \mathcal R_{r,\varepsilon}^\sharp(X,Y;\tau)
 :=\left(\varepsilon-E(t)\right)
 \exp(-\pi\ii\ell\tau r^2-\ell rX),
 \label{eq:R-kernel}
\end{equation}
where $\varepsilon\in\C$ is independent of $(X,Y,\tau)$.

\begin{lemma}[Basic heat equation for the complexified error kernel]
\label{lem:basic-error-heat}
For every $r\in\R$ and $\varepsilon\in\C$,
\begin{equation}
 (\partial_X^2+2\ell D)\mathcal R_{r,\varepsilon}^\sharp=0.
 \label{eq:basic-error-heat}
\end{equation}
\end{lemma}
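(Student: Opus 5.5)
The plan is a direct computation by the product rule. Write
\[
 \mathcal R_{r,\varepsilon}^\sharp=(\varepsilon-E(t))\,g,
 \qquad g:=\exp(-\pi\ii\ell\tau r^2-\ell rX).
\]
I will show that $g$ lies in the kernel of $\partial_X^2+2\ell D$, and that the cross terms produced by the factor $E(t)$ cancel. Throughout, $Y$ is a parameter. The function $t$ depends on $\tau$ only through $v$, and $\partial_\tau v=-\ii/2$, so on functions of $v$ alone
\[
 D=\frac1{2\pi\ii}\partial_\tau=-\frac1{4\pi}\partial_v .
\]
Since $E$ is entire and the map $(X,Y,\tau)\mapsto t$ is real-analytic in $\tau$ and affine in $X$, every differentiation below is legitimate.

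\emph{Step 1: the exponential factor.} Differentiating gives
\[
 \partial_X^2g=\ell^2r^2g,
 \qquad
 2\ell Dg=2\ell\cdot\frac{-\pi\ii\ell r^2}{2\pi\ii}\,g=-\ell^2r^2g .
\]
Hence $(\partial_X^2+2\ell D)g=0$. Because $\varepsilon$ is constant, this disposes of the term $\varepsilon g$.

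\emph{Step 2: the error-function factor.} By the product rule,
\[
 (\partial_X^2+2\ell D)(E(t)g)=\bigl[(\partial_X^2+2\ell D)E(t)\bigr]g+2\,\partial_XE(t)\,\partial_Xg,
\]
where the term $E(t)(\partial_X^2+2\ell D)g$ has been dropped using Step 1. Use $E'(t)=2\e^{-\pi t^2}$, $E''(t)=-2\pi tE'(t)$, $\partial_X^2t=0$ and $\partial_Xg=-\ell rg$. After factoring out $E'(t)g$, it remains to show
\[
 -2\pi t(\partial_Xt)^2+2\ell Dt-2\ell r\,\partial_Xt=0 .
\]
Write $t=\sqrt{2\ell}\bigl(rv^{1/2}-(X-Y)v^{-1/2}/(4\pi)\bigr)$. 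Then
\[
 \partial_Xt=-\frac{\sqrt{2\ell}}{4\pi\sqrt v},
 \qquad
 (\partial_Xt)^2=\frac{\ell}{8\pi^2v},
 \qquad
 \partial_vt=\sqrt{2\ell}\left(\frac{r}{2\sqrt v}+\frac{X-Y}{8\pi v^{3/2}}\right).
\]

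\emph{Step 3: cancellation.} Substituting these values, the three terms are as follows.
\begin{itemize}
\item The first term is $-\ell t/(4\pi v)$. It contributes $-\ell\sqrt{2\ell}\,r/(4\pi\sqrt v)$ and $+\ell\sqrt{2\ell}(X-Y)/(16\pi^2v^{3/2})$.
\item The second term is $2\ell Dt=-(\ell/2\pi)\partial_vt$. It contributes $-\ell\sqrt{2\ell}\,r/(4\pi\sqrt v)$ and $-\ell\sqrt{2\ell}(X-Y)/(16\pi^2v^{3/2})$.
\item The third term equals $+\ell\sqrt{2\ell}\,r/(2\pi\sqrt v)$.
\end{itemize}
The $(X-Y)$-terms cancel between the first two items. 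The $r$-terms sum to zero. This proves \eqref{eq:basic-error-heat}.

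The only delicate point is the sign convention $D=-(4\pi)^{-1}\partial_v$ on functions of $v$, together with the sign of $\partial_Xt$. The cancellation in Step 3 serves as an internal check that both have been handled correctly.
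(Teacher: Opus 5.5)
Your proposal is correct and follows the paper's proof: you kill the pure exponential, reduce via the product rule to the single bracket $-2\pi t\,t_X^2+2\ell(Dt-r\,t_X)$ multiplying $E'(t)g$, and cancel it after substituting $t_X$ and $Dt$. Working with $E(t)$ instead of the paper's $\varepsilon-E(t)$ is only cosmetic, and your coefficients agree with the paper's formulas for $t_X$ and $Dt$.
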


\begin{proof}
Write $g(t)=\varepsilon-E(t)$, so
$g'(t)=-2\e^{-\pi t^2}$ and $g''(t)=4\pi t\e^{-\pi t^2}$.  Direct
differentiation gives
\[
\begin{aligned}
 t_X&=-\frac{\sqrt{2\ell}}{4\pi\sqrt v},\\
 Dt&=-\sqrt{2\ell}\left(
 \frac{r}{8\pi\sqrt v}
 +\frac{X-Y}{32\pi^2v^{3/2}}
 \right).
\end{aligned}
\]
The pure exponential is itself killed by $\partial_X^2+2\ell D$.  The
remaining terms are
\[
 g''(t)t_X^2+2\ell g'(t)(Dt-rt_X).
\]
Substitution yields
\[
 \e^{-\pi t^2}\left[
 \frac{\ell t}{2\pi v}
 -\frac{\ell\sqrt{2\ell}r}{2\pi\sqrt v}
 +\frac{\ell\sqrt{2\ell}(X-Y)}{8\pi^2v^{3/2}}
 \right]=0
\]
by \eqref{eq:t-kernel}.
\end{proof}

\begin{lemma}[Theta factorization of the completed Appell term]
\label{lem:theta-factorization}
The completion term $\mathscr N_{\ell,0}^\sharp$ is a normally convergent linear
combination of products
\[
 \theta_{\ell,\nu}^{\mathrm{App}}(\tau)
 \mathcal R_{r,\operatorname{sgn}(r+\nu/\ell)}^\sharp(X,Y;\tau).
\]
After grouping by $\nu$, the vector of theta coefficients lies in the
$d$-dimensional solution space of the monic MLDE \eqref{eq:theta-MLDE}.
Moreover, the expansion may be differentiated termwise by any fixed
finite-order differential operator in $X$, $\tau$, and $\bar\tau$.
\end{lemma}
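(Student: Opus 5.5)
The plan is to start from the definition of $\mathscr N_{\ell,0}^\sharp$ as $\tfrac{\ii}{2}\sum_{\nu=0}^{\ell-1}\e^{\nu X}\varthetaJ(\nu\tau+k;\ell\tau)\Zwerr^\sharp(\ell Z-\nu\tau-k,\ell W-\nu\bar\tau-k;\ell\tau)$, with $Z=X/(2\pi\ii)$ and $W=Y/(2\pi\ii)$, and to expand each factor explicitly.

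\emph{Step 1: expand the theta factor.} Expanding $\varthetaJ(\nu\tau+k;\ell\tau)$ as a sum over $m\in\frac12+\Z$ gives terms of the form $\e^{\pi\ii\ell m^2\tau+2\pi\ii m\nu\tau}$, up to the sign $\e^{2\pi\ii m(k+1/2)}$. Completing the square turns each such term into $q^{-\nu^2/(2\ell)}$ times a factor $q^{\frac{\ell}{2}(m+\nu/\ell)^2}$.

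\emph{Step 2: expand the error kernel.} Expand $\Zwerr^\sharp$ as a sum over $n\in\frac12+\Z$. The argument of $E$ is $\sqrt{2\ell v}\bigl(n+(\ell Z-\ell W-\nu(\tau-\bar\tau))/(2\ii\ell v)\bigr)$. Since $\tau-\bar\tau=2\ii v$, this equals $\sqrt{2\ell v}\bigl(n-\nu/\ell-(X-Y)/(4\pi v)\bigr)$. This is exactly $t_r$ with $r=n-\nu/\ell$, and the constant $\operatorname{sgn}(n)$ becomes $\operatorname{sgn}(r+\nu/\ell)$.

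\emph{Step 3: identify the exponentials.} The exponential $\e^{-\pi\ii\ell\tau n^2-2\pi\ii n(\ell Z-\nu\tau-k)}$, combined with $\e^{\nu X}$ and the factor $q^{-\nu^2/(2\ell)}$ from Step 1, reorganizes as $\e^{-\pi\ii\ell\tau r^2-\ell rX}$ times a root of unity. The remaining $\nu$-dependence cancels against the $q^{-\nu^2/(2\ell)}$, because $-\ell n^2/2+n\nu-\nu^2/(2\ell)=-\frac{\ell}{2}r^2$ and $\e^{\nu X}\e^{-\ell nX}=\e^{-\ell rX}$.

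\emph{Step 4: collect the theta coefficient.} The $m$-sum then factors out as $\pm\theta^{\mathrm{App}}_{\ell,\nu'}$ for an index $\nu'\equiv\pm\nu$, after shifting $m\mapsto m-1/2$ and absorbing the $k$-dependent signs $(-1)^{\cdots}$. The main bookkeeping is to check that the signs from $(-1)^{n-1/2}$, $\e^{2\pi\ii nk}$, and $\e^{2\pi\ii m(k+1/2)}$ all combine into the $(-1)^m$ of \eqref{eq:Appell-theta}. I expect this sign and index tracking to be the only real obstacle. If it becomes awkward, I would first treat the diagonal $Y=\bar X$, where the identification is essentially \cite[Lemma~5.1]{JinJo} (used in the proof of \eqref{eq:r1-minus}), and then extend by holomorphy in $(X,Y)$.

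\emph{Step 5: reduce to the MLDE solution space.} By \eqref{eq:Appell-theta-symmetries}, each $\theta^{\mathrm{App}}_{\ell,\nu}$ with $0\le\nu\le\ell-1$ is zero or $\pm$ one of $\theta^{\mathrm{App}}_{\ell,1},\dots,\theta^{\mathrm{App}}_{\ell,d}$. Hence the vector of theta coefficients, grouped by $\nu$, lies in the span of the components of $\boldsymbol\theta^{\mathrm{App}}_\ell=-\sqrt{2/\ell}\,\mathbf H_\ell$. By Corollary~\ref{cor:monic-theta-MLDE} and \eqref{eq:Theta-Appell-relation}, this span is the $d$-dimensional solution space of \eqref{eq:theta-MLDE}; the dimension is $d$ by Proposition~\ref{prop:theta-jet-determinant}.

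\emph{Step 6: convergence and termwise differentiation.} On compact sets in $(X,Y,\tau)$, the estimate \eqref{eq:error-estimate}, applied with $\sigma=\ell\tau$ and $\delta$ ranging over a compact set, bounds each kernel by $C\exp(-2\pi\ell v r^2+c|r|)$ times $|\e^{-\pi\ii\ell\tau r^2}|=\e^{\pi\ell vr^2}$. This gives the Gaussian majorant $\e^{-\pi\ell vr^2+c|r|}$, while the theta series converge absolutely. Applying any fixed derivative in $X,\tau,\bar\tau$ introduces only polynomial factors in $r$ and $v^{-1/2}$, via $\partial E=2\e^{-\pi t^2}$ and the chain rule. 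These are still dominated by the same Gaussian after enlarging $c$, so the Weierstrass M-test justifies termwise differentiation.
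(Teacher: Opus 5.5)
Your proposal is correct and follows the paper's proof essentially step for step. Both arguments complete the square to identify $t_r$ with $r=n-\nu/\ell$, move the factor $q^{\pm\nu^2/(2\ell)}$ between the theta factor and the error kernel, invoke \eqref{eq:Appell-theta-symmetries} for the MLDE statement, and use the Gaussian majorant from \eqref{eq:error-estimate} for termwise differentiation. The one correction is cosmetic and concerns Step~4: $(-1)^{n-1/2}=(-1)^{r+\nu/\ell-1/2}$ depends on the kernel index, so it stays as the coefficient of $\mathcal R^\sharp_{r,\operatorname{sgn}(r+\nu/\ell)}$ rather than merging into the $(-1)^m$ of the theta series, and the theta index is exactly $\nu$, because $\e^{\pi\ii\nu^2\tau/\ell}\varthetaJ(\nu\tau+k;\ell\tau)=\ii(-1)^{k+1}\theta^{\mathrm{App}}_{\ell,\nu}(\tau)$.
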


\begin{proof}
Set
\[
 Z:=\frac{X}{2\pi\ii},
 \qquad
 W:=\frac{Y}{2\pi\ii}.
\]
Consider the $\nu$th summand of \eqref{eq:Appell-completion} at $z_2=0$,
where $0\le\nu\le\ell-1$, and put
\[
 r=n-\frac\nu\ell,
 \qquad n\in\frac12+\Z.
\]
Direct completion of the square, with $Z$ and $W$ independent, gives
\begin{align}
 &\e^{2\pi\ii\nu Z}
 \Zwerr^\sharp(\ell Z-\nu\tau-k,\ell W-\nu\bar\tau-k;\ell\tau)
 \nonumber\\
 &\quad=(-1)^k\e^{\pi\ii\nu^2\tau/\ell}
 \sum_{r\in\frac12-\nu/\ell+\Z}
 (-1)^{r+\nu/\ell-1/2}
 \nonumber\\[-1mm]
 &\qquad\quad\times
 \left[\operatorname{sgn}\!\left(r+\frac\nu\ell\right)-E(t_r)\right]
 \e^{-\pi\ii\ell\tau r^2-\ell rX},
 \label{eq:shifted-S-kernel}
\end{align}
where
\[
 t_r=\sqrt{2\ell v}\left(r-\frac{X-Y}{4\pi v}\right).
\]
Indeed,
\begin{align*}
 &-\pi\ii\ell\tau n^2-2\pi\ii n(\ell Z-\nu\tau-k)
   +2\pi\ii\nu Z\\
 &\qquad=-\pi\ii\ell\tau r^2-\ell rX
   +\frac{\pi\ii\nu^2}{\ell}\tau+2\pi\ii kn,
\end{align*}
and $\e^{2\pi\ii kn}=(-1)^k$ because $n\in\frac12+\Z$.

The theta coefficient has the exact phase
\begin{equation}
 \e^{\pi\ii\nu^2\tau/\ell}
 \varthetaJ(\nu\tau+k;\ell\tau)
 =\ii(-1)^{k+1}\theta_{\ell,\nu}^{\mathrm{App}}(\tau).
 \label{eq:exact-Appell-theta-phase}
\end{equation}
This follows by writing the summation index in \eqref{eq:J-theta} as
$m+1/2$, completing the square, and then replacing $m$ by $m-1$.
The symmetries \eqref{eq:Appell-theta-symmetries} show that the $\nu=0$
component vanishes and every other component is, up to sign, one of the
$d$ components in \eqref{eq:Appell-theta}.  Hence the grouped theta vector
lies in the solution space of \eqref{eq:theta-MLDE}.

Finally, the Gaussian estimates from Section~\ref{sec:appell-jets} remain
valid after any fixed number of $X$, $\tau$, and $\bar\tau$ derivatives;
only polynomial factors in the summation index are introduced.  They provide
locally uniform majorants and justify termwise application of every fixed
finite-order differential operator used below.
\end{proof}

\begin{proposition}[Heat-operator intertwining for the completion term]
\label{prop:error-correction-transference}
Let $f=f(\tau)$ be differentiable, independent of $X$ and $Y$, and regarded
as having weight $\kappa$.  For every $r\in\R$ and $\varepsilon\in\C$,
\begin{equation}
 \Heat_{\kappa+1/2}^{(\ell)}(f\mathcal R_{r,\varepsilon}^\sharp)
 =2\ell(\dd_\kappa f)\mathcal R_{r,\varepsilon}^\sharp.
 \label{eq:error-transference}
\end{equation}
Consequently
\begin{equation}
 \Qop_\ell\mathscr N_{\ell,0}^\sharp=0.
 \label{eq:Q-kills-N}
\end{equation}
\end{proposition}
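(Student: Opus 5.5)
The plan is to prove the intertwining identity \eqref{eq:error-transference} by a direct Leibniz computation, and then assemble \eqref{eq:Q-kills-N} by iterating it along the factorization of Lemma~\ref{lem:theta-factorization}.

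\textbf{The intertwining identity.} Expanding the operator on the product gives
\[
 \Heat_{\kappa+1/2}^{(\ell)}(f\mathcal R)
 =f\,(\partial_X^2+2\ell D)\mathcal R
 +2\ell(Df)\mathcal R
 -\frac{\ell(2\kappa)}{12}E_2 f\mathcal R .
\]
Here $\mathcal R=\mathcal R_{r,\varepsilon}^\sharp$, and we use that $f$ is independent of $X$, so $\partial_X^2$ falls entirely on $\mathcal R$. Two observations finish this step:
\begin{itemize}
 \item The first term vanishes by Lemma~\ref{lem:basic-error-heat}.
 \item Since $2(\kappa+1/2)-1=2\kappa$, the remaining terms equal $2\ell\bigl(Df-\frac{\kappa}{12}E_2f\bigr)\mathcal R=2\ell(\dd_\kappa f)\mathcal R$.
\end{itemize}
Only the holomorphic $D$ appears in $\Heat$, and $D$ acts on the real-analytic kernel through both $\tau$ and $v$. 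Lemma~\ref{lem:basic-error-heat} already accounts for this, so no $\bar\tau$ terms arise.

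\textbf{Iterating the identity.} Apply it to $f=\theta_{\ell,\nu}^{\mathrm{App}}$ of weight $1/2$. Then $\Heat_1^{(\ell)}$ sends $f\mathcal R$ to $2\ell(\dd_{1/2}f)\mathcal R$, and $\dd_{1/2}f$ has weight $5/2$. Next, $\Heat_3^{(\ell)}=\Heat_{5/2+1/2}^{(\ell)}$ sends this to $(2\ell)^2(\dd_{5/2}\dd_{1/2}f)\mathcal R$. By induction,
\[
 \Heat^{\langle r\rangle}(f\mathcal R)=(2\ell)^r f^{\langle r\rangle}\mathcal R,
\]
where $f^{\langle r\rangle}$ is the successive Serre jet of \eqref{eq:theta-Serre-jets}. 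The index pattern $\Heat_{2r-1}$ matches the weight $1/2+2(r-1)$ exactly at each stage, and this match is the main point to check.

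\textbf{Applying $\Qop_\ell$.} The coefficients $F^{(\ell)}_{\ell-2r-1}$ are independent of $X$, so $\Qop_\ell(f\mathcal R)=(2\ell)^d\bigl(\sum_r F^{(\ell)}_{\ell-2r-1}f^{\langle r\rangle}\bigr)\mathcal R$. By Lemma~\ref{lem:theta-factorization}, after grouping by $\nu$ the theta coefficients form a vector in the solution space of \eqref{eq:theta-MLDE}.

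There is one subtlety: the MLDE annihilates the vector $\mathbf H_\ell$, and we need it to annihilate each component combination that actually occurs. This holds because \eqref{eq:theta-MLDE} is a componentwise scalar equation, so it holds for each $\theta_{\ell,j}^{\mathrm{App}}$. By \eqref{eq:Appell-theta-symmetries}, every $\theta_{\ell,\nu}^{\mathrm{App}}$ is $0$ or $\pm$ one of these components, and $\mathbf H_\ell=-\sqrt{\ell/2}\,\boldsymbol\theta_\ell^{\mathrm{App}}$. Hence every summand is annihilated.

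\textbf{Termwise application.} Finally, the termwise differentiability in Lemma~\ref{lem:theta-factorization} lets us apply the finite-order operator $\Qop_\ell$ to the normally convergent series term by term, which gives $\Qop_\ell\mathscr N_{\ell,0}^\sharp=0$. The only real obstacle is the bookkeeping of weight labels in the iteration; the constant prefactors such as $(-1)^k\ii$ are irrelevant because they are $\tau$-independent.
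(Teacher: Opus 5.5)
Your proposal is correct and follows the paper's proof. You expand with the Leibniz rule and kill $f(\partial_X^2+2\ell D)\mathcal R_{r,\varepsilon}^\sharp$ by Lemma~\ref{lem:basic-error-heat}. You then iterate along $\Heat^{\langle r\rangle}$, where the label $\Heat_{2j-1}^{(\ell)}$ matches the Serre-jet weight $1/2+2(j-1)$. Finally, you apply the MLDE componentwise and termwise to the theta coefficients from Lemma~\ref{lem:theta-factorization}, using \eqref{eq:Appell-theta-symmetries} to see that each $\theta^{\mathrm{App}}_{\ell,\nu}$ is annihilated. This spells out exactly what the paper compresses into ``iterating and applying the MLDE componentwise.''
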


\begin{proof}
The product rule and Lemma~\ref{lem:basic-error-heat} give
\begin{align*}
 \Heat_{\kappa+1/2}^{(\ell)}(f\mathcal R_{r,\varepsilon}^\sharp)
 &={}
 f(\partial_X^2+2\ell D)\mathcal R_{r,\varepsilon}^\sharp
 +2\ell\left(Df-\frac\kappa{12}E_2f\right)
 \mathcal R_{r,\varepsilon}^\sharp\\
 &=2\ell(\dd_\kappa f)\mathcal R_{r,\varepsilon}^\sharp.
\end{align*}
By Lemma~\ref{lem:theta-factorization}, the theta coefficient vector of the
complexified completion term lies in the solution space of
\eqref{eq:theta-MLDE}, and all differentiations may be performed termwise.
Iterating \eqref{eq:error-transference} and applying the MLDE componentwise
therefore proves \eqref{eq:Q-kills-N}.
\end{proof}

\begin{corollary}[Completed odd-level Appell equation]
\label{cor:completed-Appell-PDE}
For every $Y\in\C$, with $X$ and $Y$ treated as independent variables, the following is an identity of meromorphic functions of $X$:
\begin{equation}
 \Qop_\ell\widehat{\mathscr A}_{\ell,0}^\sharp(X,Y;\tau)
 =(\ell-1)!\eta(\tau)^\ell
 \mathcal C_{\mathrm{cr}}\!\left(\frac{X}{2\pi\ii};\tau\right)^\ell.
 \label{eq:completed-Appell-PDE}
\end{equation}
\end{corollary}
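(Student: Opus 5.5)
The identity follows by splitting $\widehat{\mathscr A}_{\ell,0}^\sharp$ into its meromorphic and non-holomorphic parts and applying $\Qop_\ell$ to each. By \eqref{eq:N0-sharp-defined},
\[
 \widehat{\mathscr A}_{\ell,0}^\sharp(X,Y;\tau)
 =A_\ell\!\left(\frac{X}{2\pi\ii},0;\tau\right)
 +\mathscr N_{\ell,0}^\sharp(X,Y;\tau),
\]
where the first summand is independent of $Y$. The operator $\Qop_\ell$ in \eqref{eq:Q-ungauged} is built only from $\partial_X$, $D=\frac1{2\pi\ii}\partial_\tau$, and multiplication by holomorphic modular forms in $\tau$; it involves no $Y$- or $\bar\tau$-derivatives. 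Hence it acts linearly on each summand separately, with $Y$ held as a parameter. Note that $\bar\tau$ still enters $\mathscr N^\sharp$ through $v$, but $D$ is applied as a Wirtinger derivative in $\tau$, exactly as in Lemma~\ref{lem:basic-error-heat}.

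For the meromorphic summand, Proposition~\ref{prop:odd-level-Appell-PDE} gives
\[
 \Qop_\ell A_\ell\!\left(\frac{X}{2\pi\ii},0;\tau\right)
 =(\ell-1)!\eta(\tau)^\ell
 \mathcal C_{\mathrm{cr}}\!\left(\frac{X}{2\pi\ii};\tau\right)^\ell .
\]
This holds as an identity of meromorphic functions in $X$ away from the pole lattice $X\in2\pi\ii(\Z\tau+\Z)$. For the completion term, Proposition~\ref{prop:error-correction-transference} gives $\Qop_\ell\mathscr N_{\ell,0}^\sharp=0$ for every fixed $Y\in\C$.

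The step that needs care is termwise differentiation. Iterating \eqref{eq:error-transference} on each product $\theta^{\mathrm{App}}_{\ell,\nu}\mathcal R^\sharp_{r,\varepsilon}$ is legitimate because Lemma~\ref{lem:theta-factorization} supplies locally uniform majorants, uniform for $(X,Y)$ in compact sets, after any fixed finite-order differentiation in $X$, $\tau$, $\bar\tau$. The vanishing then comes from grouping the terms by $\nu$ and applying the monic MLDE \eqref{eq:theta-MLDE} componentwise. The iteration produces $\Heat^{\langle r\rangle}(f\mathcal R^\sharp)=(2\ell)^r\bigl(\dd^{r}f\bigr)\mathcal R^\sharp$ with successive Serre weights $1/2,5/2,\dots$, matching $\mathbf H_\ell^{\langle r\rangle}$ in \eqref{eq:theta-Serre-jets}. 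This accounts for the factor $(2\ell)^{d-r}$ in \eqref{eq:Q-ungauged}, since $(2\ell)^{d-r}(2\ell)^r=(2\ell)^d$ is common to all terms.

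Adding the two contributions gives \eqref{eq:completed-Appell-PDE} on the complement of the pole lattice, for each fixed $Y$. The left-hand side is meromorphic in $X$ because $\mathscr N_{\ell,0}^\sharp$ is entire and $A_\ell$ is meromorphic. The right-hand side is meromorphic because $\thetaodd$ is entire, so $\mathcal C_{\mathrm{cr}}$ is meromorphic. The identity therefore holds as an identity of meromorphic functions of $X$.
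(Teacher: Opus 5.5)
Your proposal is correct and follows the paper's proof: you split $\widehat{\mathscr A}_{\ell,0}^\sharp=A_\ell(X/(2\pi\ii),0;\tau)+\mathscr N_{\ell,0}^\sharp$, apply Proposition~\ref{prop:odd-level-Appell-PDE} to the meromorphic summand, and apply Proposition~\ref{prop:error-correction-transference} to the completion term. Your extra paragraph on termwise differentiation and the $(2\ell)^{d-r}(2\ell)^r=(2\ell)^d$ bookkeeping only unpacks the proof of the latter proposition. One wording point: $\Qop_\ell$ also multiplies by the quasimodular $E_2$, not just by modular forms, which is harmless because the argument uses only that the coefficients are holomorphic functions of $\tau$ alone.
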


\begin{proof}
By definition,
$\widehat{\mathscr A}_{\ell,0}^\sharp
=A_\ell(X/(2\pi\ii),0;\tau)+\mathscr N_{\ell,0}^\sharp$.
Apply Proposition~\ref{prop:odd-level-Appell-PDE} to the uncompleted
meromorphic term and Proposition~\ref{prop:error-correction-transference} to
the completion term.
\end{proof}

\subsection{Gaussian conjugation and the homogeneous Cohen--Kuznetsov solution}
\label{sec:ck-transference}

Conjugating by the Gaussian in the completed rank generating function turns
the higher-Serre contribution into a homogeneous Cohen--Kuznetsov solution of
the gauged equation.

Put
\[
 U_\ell(X,\tau):=\frac{\ell}{24}E_2(\tau)X^2.
\]
For $\lambda\in\C$, conjugating \eqref{eq:ungauged-heat} gives
\begin{equation}
 \GHeat_\lambda^{(\ell)}
 :=\e^{U_\ell}\Heat_\lambda^{(\ell)}\e^{-U_\ell}
 =\partial_X^2+2\ell D
 -\frac\ell6E_2(X\partial_X+\lambda)
 +\frac{\ell^2}{144}E_4X^2.
 \label{eq:gauged-heat}
\end{equation}
Define
$\GHeat^{\langle0\rangle}:=\operatorname{Id}$ and, for $r\ge1$,
$\GHeat^{\langle r\rangle}:=
\GHeat_{2r-1}^{(\ell)}\cdots\GHeat_1^{(\ell)}$.  Set
\begin{equation}
 \GQop_\ell
 :=\sum_{r=0}^d
 (2\ell)^{d-r}F_{\ell-2r-1}^{(\ell)}
 \GHeat^{\langle r\rangle}.
 \label{eq:gauged-Q}
\end{equation}
The full operator satisfies
\[
 \GQop_\ell=\e^{U_\ell}\Qop_\ell\e^{-U_\ell}.
\]
It is a degree-$d$ polynomial in second-order gauged heat operators and hence
a mixed differential operator in $(X,\tau)$ whose differential order in $X$
is $2d=\ell-1$.

The following bilinear series is the Cohen--Kuznetsov kernel; it is the
higher-Serre form of the classical Cohen--Kuznetsov series and its
Rankin--Cohen interpretation
\cite{Cohen,Kuznetsov,ZagierDifferential,NSZ}.  For a weight-$\kappa$ vector
$\mathbf V$ and a weight-$3/2$ vector $\mathbf G$ in dual representations,
define
\begin{equation}
 \CK_\ell(\mathbf V,\mathbf G;X)
 :=\frac{X}{2\pi\ii}\,
 \mathbf V^{\mathsf T}
 \sum_{n\ge0}
 \frac{\dd_{3/2}^{[n]}\mathbf G}{n!(3/2)_n}
 \left(-\frac\ell2X^2\right)^n.
 \label{eq:CK-kernel}
\end{equation}

For $\mathbf V=\mathbf H_\ell$, the coefficient of $X^{2n+1}$ in
\eqref{eq:CK-kernel} is
\[
 \frac{(-\ell/2)^n}{2\pi\ii\,n!(3/2)_n}
 \mathbf H_\ell^{\mathsf T}\dd_{3/2}^{[n]}\mathbf G.
\]
It is scalar of weight $2n+2$: the two vector factors have dual types and
weights $1/2$ and $2n+3/2$.  The scalar identity
\[
 \frac{(-\ell/2)^n}{2\pi\ii\,n!(3/2)_n}
 =\frac{(8\ell\pi^2)^n}
 {(2\pi\ii)^{2n+1}(2n+1)!}
\]
is exactly the normalization in \eqref{eq:Mn-general}, after using
$\mathbf H_\ell=\eta\mathbf F_\ell$.  Thus the weight, representation, and
global scalar in the passage from the higher-Serre term to the scalar
correction are fixed at the level of the generating series.

\begin{theorem}[Intertwining of the heat operator and higher Serre derivatives]
\label{thm:CK-transference}
Let $\rho$ be a finite-dimensional unitary representation, let $\mathbf V$
be a $C^1$ vector of weight $\kappa$ and type $\rho$, and let $\mathbf G$
be a smooth vector of weight $3/2$ and type $\rho^\vee$.  Then, as an
identity of formal power series in $X$,
\begin{equation}
 \GHeat_{\kappa+1/2}^{(\ell)}
 \CK_\ell(\mathbf V,\mathbf G)
 =2\ell\CK_\ell(\dd_\kappa\mathbf V,\mathbf G).
 \label{eq:CK-transference}
\end{equation}
For $\mathbf G\in H_{3/2}^{+}(\rho^\vee)$, the series converges normally
in $X$ on compact subsets of $\Hh\times\C$.  If, in addition, $\mathbf V$
is real-analytic, then \eqref{eq:CK-transference} is an identity of functions
that are real-analytic in $\tau$ and entire in $X$.
\end{theorem}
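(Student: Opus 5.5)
The plan is a direct coefficient comparison in $X$. Since \eqref{eq:CK-transference} is an identity of formal power series, it suffices to compare the coefficient of each odd power $X^{2n+1}$ on both sides. Afterwards I use the Cauchy-type estimates recorded after \eqref{eq:CK-generating-identity} to upgrade the formal identity to one of functions.

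\textbf{Coefficients of the left side.} Put $c:=-\ell/2$ and $\lambda:=\kappa+1/2$, and set
\[
 h_n:=\frac{\mathbf V^{\mathsf T}\dd_{3/2}^{[n]}\mathbf G}{n!(3/2)_n},
 \qquad\text{so that}\qquad
 2\pi\ii\,\CK_\ell(\mathbf V,\mathbf G;X)=\sum_{n\ge0}c^nh_nX^{2n+1}.
\]
Apply the gauged operator \eqref{eq:gauged-heat} termwise, using
\[
 \partial_X^2X^{2n+1}=(2n+1)(2n)X^{2n-1},
 \qquad
 X\partial_XX^{2n+1}=(2n+1)X^{2n+1}.
\]
The coefficient of $X^{2n+1}$ in $2\pi\ii\,\GHeat_\lambda^{(\ell)}\CK_\ell$ is then
\[
 (2n+3)(2n+2)c^{n+1}h_{n+1}
 +2\ell c^n\Bigl(Dh_n-\tfrac{2n+1+\lambda}{12}E_2h_n\Bigr)
 +\tfrac{\ell^2}{144}E_4c^{n-1}h_{n-1},
\]
where the last term is absent for $n=0$. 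Here I have used $\tfrac\ell6=2\ell\cdot\tfrac1{12}$. Since $2n+1+\lambda=\kappa+3/2+2n$, the middle bracket is $\dd_{\kappa+3/2+2n}h_n$. This is the Serre derivative of a product of factors of weights $\kappa$ and $3/2+2n$ in dual types, so the Leibniz rule for Serre derivatives gives
\[
 \dd_{\kappa+3/2+2n}h_n
 =\frac{(\dd_\kappa\mathbf V)^{\mathsf T}\dd_{3/2}^{[n]}\mathbf G
 +\mathbf V^{\mathsf T}\dd_{3/2+2n}\dd_{3/2}^{[n]}\mathbf G}{n!(3/2)_n}.
\]

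\textbf{Applying the recursion and cancelling.} Substitute the recursion \eqref{eq:higher-Serre-recursion} at $\kappa=3/2$:
\[
 \dd_{3/2+2n}\dd_{3/2}^{[n]}
 =\dd_{3/2}^{[n+1]}+\frac{n(n+1/2)}{144}E_4\dd_{3/2}^{[n-1]},
\]
which for $n=0$ is simply $\dd_{3/2}\dd_{3/2}^{[0]}=\dd_{3/2}^{[1]}$. Two cancellations remain.

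\begin{itemize}
\item The first term cancels the $\dd_{3/2}^{[n+1]}$ contribution. Indeed
\[
 (2n+3)(2n+2)=4(n+1)(n+3/2),
 \qquad
 \frac{(n+1)(n+3/2)}{(n+1)!(3/2)_{n+1}}=\frac1{n!(3/2)_n},
\]
so the first term equals $4c\,c^n\,\mathbf V^{\mathsf T}\dd_{3/2}^{[n+1]}\mathbf G/(n!(3/2)_n)$. Since $4c=-2\ell$, this is exactly the negative of the contribution $2\ell c^n\,\mathbf V^{\mathsf T}\dd_{3/2}^{[n+1]}\mathbf G/(n!(3/2)_n)$.
\item The $E_4$ contribution from the recursion cancels the $\ell^2E_4/144$ term. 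Using $n!(3/2)_n=n(n+1/2)(n-1)!(3/2)_{n-1}$, the recursion contributes
\[
 \frac{2\ell c}{144}\,E_4\,\frac{c^{n-1}\,\mathbf V^{\mathsf T}\dd_{3/2}^{[n-1]}\mathbf G}{(n-1)!(3/2)_{n-1}}.
\]
Since $2\ell c=-\ell^2$, this is the negative of $\tfrac{\ell^2}{144}E_4c^{n-1}h_{n-1}$.
\end{itemize}

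What survives is $2\ell c^n(\dd_\kappa\mathbf V)^{\mathsf T}\dd_{3/2}^{[n]}\mathbf G/(n!(3/2)_n)$. This is $2\pi\ii$ times the $X^{2n+1}$-coefficient of $2\ell\,\CK_\ell(\dd_\kappa\mathbf V,\mathbf G)$, which proves the formal identity. The step I expect to need the most care is this index and normalization bookkeeping: the matching of $(3/2)_n$ ratios, the $n=0$ edge case, and checking that $\lambda=\kappa+1/2$ is precisely the shift making the $E_2$ terms assemble into $\dd_{\kappa+3/2+2n}$.

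\textbf{Analytic upgrade.} For $\mathbf G\in H_{3/2}^+(\rho^\vee)$, the bound $\sup_K\|D^n\mathbf G\|\le M_KC_K^nn!$, together with the Cohen--Kuznetsov coefficient formula for $\dd_{3/2}^{[n]}$ and $(3/2)_n\ge n!$, gives $\sup_K\|\dd_{3/2}^{[n]}\mathbf G\|/(n!(3/2)_n)\le M_K'C_K'^n/n!$. Hence the series in \eqref{eq:CK-kernel} converges normally and is entire in $X$, with the same bounds after finitely many $\tau$, $\bar\tau$ derivatives. When $\mathbf V$ is real-analytic, this justifies applying $\GHeat_\lambda^{(\ell)}$ termwise. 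Both sides are then real-analytic in $\tau$ and entire in $X$, with equal Taylor coefficients, so they coincide as functions.
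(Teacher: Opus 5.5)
Your proposal is correct and follows the paper's proof. The paper likewise compares $X^{2n+1}$-coefficients and uses the Serre Leibniz rule to combine the $D$- and $E_2$-terms into $(\dd_\kappa\mathbf V)^{\mathsf T}\dd_{3/2}^{[n]}\mathbf G+\mathbf V^{\mathsf T}\dd_{3/2+2n}\dd_{3/2}^{[n]}\mathbf G$. It then cancels the shifted $\partial_X^2$ and $E_4X^2$ terms with exactly your two normalization identities, written there as $(2n+2)(2n+3)c_{n+1}=-2\ell c_n$ and $\frac{2\ell}{144}n(n+\frac12)c_n+\frac{\ell^2}{144}c_{n-1}=0$, and it relies on the same Cauchy-estimate bounds from the Cohen--Kuznetsov discussion for the analytic part.
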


\begin{proof}
Write $c_n=(-\ell/2)^n/[n!(3/2)_n]$.  In the coefficient of
$X^{2n+1}$, the $D$-term and the
$E_2(X\partial_X+\kappa+1/2)$ term combine into
\[
 2\ell\left[
 (\dd_\kappa\mathbf V)^{\mathsf T}\dd_{3/2}^{[n]}\mathbf G
 +\mathbf V^{\mathsf T}
 \dd_{3/2+2n}\dd_{3/2}^{[n]}\mathbf G
 \right].
\]
The $X$-second derivative shifts $n+1$ to $n$, while the $E_4X^2$ term
shifts $n-1$ to $n$.  The required cancellation follows from
\begin{align*}
 (2n+2)(2n+3)c_{n+1}&=-2\ell c_n &&(n\ge0),\\
 \frac{2\ell}{144}n\left(n+\frac12\right)c_n
 +\frac{\ell^2}{144}c_{n-1}&=0 &&(n\ge1).
\end{align*}
The first identity cancels the shifted second-derivative term for every
$n\ge0$.  For $n\ge1$, the second identity cancels the shifted $E_4X^2$ term
in the higher Serre recursion \eqref{eq:higher-Serre-recursion}; for $n=0$
there is no such shifted term.  Only the term involving
$\dd_\kappa\mathbf V$ remains.
\end{proof}

\begin{corollary}[The Cohen--Kuznetsov series is homogeneous]
\label{cor:CK-homogeneous}
Let $\mathbf G$ be a smooth vector of weight $3/2$ and type
$\sigma_\ell^\vee$.  Then, as an identity of formal power series in $X$,
\begin{equation}
 \GQop_\ell\CK_\ell(\mathbf H_\ell,\mathbf G)=0.
 \label{eq:CK-homogeneous}
\end{equation}
For $\mathbf G\in H_{3/2}^{+}(\sigma_\ell^\vee)$, the series converges
normally on compact subsets of $\Hh\times\C$, and the identity is
real-analytic in $\tau$ and entire in $X$.
\end{corollary}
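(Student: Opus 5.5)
The corollary follows from Theorem~\ref{thm:CK-transference} by iterating it along the MLDE \eqref{eq:theta-MLDE}. Take $\rho=\sigma_\ell$ and $\mathbf V=\mathbf H_\ell^{\langle r\rangle}$, which has weight $1/2+2r$ and type $\sigma_\ell$, since Serre derivatives preserve type. Then $\kappa+1/2=2r+1$, and Theorem~\ref{thm:CK-transference} gives
\[
 \GHeat_{2r+1}^{(\ell)}\CK_\ell(\mathbf H_\ell^{\langle r\rangle},\mathbf G)
 =2\ell\,\CK_\ell(\mathbf H_\ell^{\langle r+1\rangle},\mathbf G).
\]
By induction on $r$, starting from $\GHeat^{\langle0\rangle}=\operatorname{Id}$, we obtain
\[
 \GHeat^{\langle r\rangle}\CK_\ell(\mathbf H_\ell,\mathbf G)=(2\ell)^r\CK_\ell(\mathbf H_\ell^{\langle r\rangle},\mathbf G).
\]
The indices match: $\GHeat^{\langle r+1\rangle}=\GHeat_{2r+1}^{(\ell)}\GHeat^{\langle r\rangle}$, and the weight of $\mathbf H_\ell^{\langle r\rangle}$ is exactly the one for which $\GHeat_{2r+1}$ is the intertwining operator. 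Each step uses only the formal identity, so the formal-power-series statement needs just smoothness of $\mathbf G$ and real-analyticity (hence $C^1$) of the holomorphic theta vectors.

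Next, multiply by $(2\ell)^{d-r}F_{\ell-2r-1}^{(\ell)}$ and sum over $0\le r\le d$. The operator $\GQop_\ell$ in \eqref{eq:gauged-Q} multiplies by functions of $\tau$ alone after applying the heat iterates. Also, $\CK_\ell(\mathbf V,\mathbf G)$ is linear in $\mathbf V$ over functions of $\tau$, because $\mathbf V$ enters only through the contraction $\mathbf V^{\mathsf T}$. Together these give
\[
 \GQop_\ell\CK_\ell(\mathbf H_\ell,\mathbf G)
 =(2\ell)^d\,\CK_\ell\Bigl(\sum_{r=0}^dF_{\ell-2r-1}^{(\ell)}\mathbf H_\ell^{\langle r\rangle},\mathbf G\Bigr).
\]
By \eqref{eq:theta-MLDE} the argument of $\CK_\ell$ on the right vanishes, which proves \eqref{eq:CK-homogeneous} formally.

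For the analytic statement, Theorem~\ref{thm:CK-transference} already gives normal convergence on compact subsets of $\Hh\times\C$ when $\mathbf G\in H_{3/2}^{+}(\sigma_\ell^\vee)$. The justification rests on the Cauchy estimates $\|D^n\mathbf G\|\le M_KC_K^nn!$ together with the Cohen--Kuznetsov identity \eqref{eq:CK-generating-identity}. Each $\mathbf H_\ell^{\langle r\rangle}$ is holomorphic, hence real-analytic, so each intermediate identity holds as an identity of functions entire in $X$ and real-analytic in $\tau$. The operator $\GQop_\ell$ has finite order, and the same estimates, applied after finitely many $\tau$-derivatives, allow it to be applied termwise. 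The only delicate point is this termwise differentiation, and the estimates recorded after \eqref{eq:CK-generating-identity} control it; the rest is bookkeeping of weights and indices.
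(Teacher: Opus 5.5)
Your proposal is correct and follows the paper's proof essentially verbatim: you iterate Theorem~\ref{thm:CK-transference} to get $\GHeat^{\langle r\rangle}\CK_\ell(\mathbf H_\ell,\mathbf G)=(2\ell)^r\CK_\ell(\mathbf H_\ell^{\langle r\rangle},\mathbf G)$, pull the MLDE coefficients into the first slot by linearity, and conclude with \eqref{eq:theta-MLDE}. Your extra remarks on index matching and on termwise differentiation only make explicit what the paper leaves to that theorem and to the estimates following \eqref{eq:CK-generating-identity}.
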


\begin{proof}
Iterating Theorem~\ref{thm:CK-transference} gives
\[
 \GHeat^{\langle r\rangle}
 \CK_\ell(\mathbf H_\ell,\mathbf G)
 =(2\ell)^r
 \CK_\ell(\mathbf H_\ell^{\langle r\rangle},\mathbf G).
\]
Hence
\begin{align*}
 \GQop_\ell\CK_\ell(\mathbf H_\ell,\mathbf G)
 &=(2\ell)^d
 \CK_\ell\!\left(
 \sum_{r=0}^dF_{\ell-2r-1}^{(\ell)}
 \mathbf H_\ell^{\langle r\rangle},\mathbf G
 \right)\\
 &=0
\end{align*}
by \eqref{eq:theta-MLDE}.
\end{proof}

\subsection{Derivation of the scalar correction equation}
\label{sec:correction-pde}

Fix $\mathbf G\in H_{3/2}^{+}(\sigma_\ell^\vee)$ with shadow
$\xi_{3/2}\mathbf G=-\boldsymbol\Theta_\ell$.  Use the scalar corrections
$a_{n,\ell}(\mathbf G)$ defined in \eqref{eq:Mn-general}, and set
\[
 a_{-1,\ell}(\mathbf G):=1.
\]
The value $a_{-1,\ell}=1$ packages the principal term $X^{-1}$ in the
correction series
\begin{equation}
 \mathscr C_{\ell,\mathbf G}(X,\tau)
 :=X^{-1}+\sum_{n\ge0}a_{n,\ell}(\mathbf G)(\tau)X^{2n+1}.
 \label{eq:correction-series}
\end{equation}

Combining \eqref{eq:Mn-general} with the completed Taylor expansion,
using $\mathbf H_\ell=\eta\mathbf F_\ell$ and
$n!(3/2)_n=(2n+1)!/4^n$, gives
\begin{equation}
 \begin{aligned}
 \mathscr C_{\ell,\mathbf G}
 ={}&\Odd_X\!\left[-\e^{U_\ell(X,\tau)}
 \widehat{\mathscr A}_{\ell,0}^\sharp(X,0;\tau)\right]
 -\CK_\ell(\mathbf H_\ell,\mathbf G;X)\\
 ={}&\Odd_X\!\left[-\e^{U_\ell(X,\tau)-dX}
 \widehat{\mathscr A}_{\ell,-d}^\sharp(X,0;\tau)\right]
 -\CK_\ell(\mathbf H_\ell,\mathbf G;X).
 \end{aligned}
 \label{eq:correction-functional}
\end{equation}
The second equality is the complexified elliptic shift
\eqref{eq:complexified-elliptic-shift} in scaled variables.

The theta product expansion is
\begin{equation}
 \frac{\thetaodd(z;\tau)}{\eta(\tau)^3X}
 =\exp\!\left(
 \sum_{m\ge1}
 \frac{B_{2m}}{2m(2m)!}E_{2m}(\tau)X^{2m}
 \right).
 \label{eq:theta-exponential}
\end{equation}
The $m=1$ term is $E_2X^2/24$ and is cancelled exactly by the Gaussian
$\e^{U_\ell}$.  We therefore define the residual source series by
\begin{equation}
 \mathscr S_\ell^{\mathrm{src}}(X,\tau)
 :=\exp\!\left(
 -\ell\sum_{m\ge2}
 \frac{B_{2m}}{2m(2m)!}E_{2m}(\tau)X^{2m}
 \right).
 \label{eq:source-series}
\end{equation}
It satisfies
\[
 \mathscr S_\ell^{\mathrm{src}}
 \in 1+X^4\Q[E_4,E_6][[X^2]].
\]

\begin{theorem}[Lift-independent scalar correction equation]
\label{thm:universal-correction-equation}
For every $\mathbf G\in H_{3/2}^{+}(\sigma_\ell^\vee)$ satisfying
$\xi_{3/2}\mathbf G=-\boldsymbol\Theta_\ell$, the following holds as an
identity of Laurent series at $X=0$, equivalently of meromorphic germs there:
\begin{equation}
 \GQop_\ell\mathscr C_{\ell,\mathbf G}(X,\tau)
 =(\ell-1)!X^{-\ell}\mathscr S_\ell^{\mathrm{src}}(X,\tau).
 \label{eq:general-correction-PDE}
\end{equation}
\end{theorem}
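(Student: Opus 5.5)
The plan is to apply $\GQop_\ell$ to the first line of \eqref{eq:correction-functional} and treat the two pieces separately. By Corollary~\ref{cor:CK-homogeneous}, $\GQop_\ell\CK_\ell(\mathbf H_\ell,\mathbf G)=0$ as a formal (indeed convergent) series in $X$. So it suffices to compute
\[
 \GQop_\ell\,\Odd_X\!\left[-\e^{U_\ell}\widehat{\mathscr A}_{\ell,0}^\sharp(X,0;\tau)\right].
\]

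The first step is to commute $\GQop_\ell$ with $\Odd_X$. The gauged heat operators \eqref{eq:gauged-heat} contain only $\partial_X^2$, $X\partial_X$, $X^2$ and $\tau$-derivatives, so they preserve parity in $X$. Hence $\GQop_\ell\Odd_X=\Odd_X\GQop_\ell$.

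The second step uses the conjugation identity $\GQop_\ell=\e^{U_\ell}\Qop_\ell\e^{-U_\ell}$ together with Corollary~\ref{cor:completed-Appell-PDE} at $Y=0$. These give
\[
 \GQop_\ell\bigl[-\e^{U_\ell}\widehat{\mathscr A}_{\ell,0}^\sharp(X,0;\tau)\bigr]
 =-\e^{U_\ell}(\ell-1)!\,\eta^\ell\,\mathcal C_{\mathrm{cr}}(X/(2\pi\ii);\tau)^\ell .
\]
The specialization $Y=0$ is legitimate because $Y$ is independent of $X$ and $\Qop_\ell$ contains no $Y$-derivatives. Also, $\partial_\tau$ does not act on $Y$ in the complexified setting, since $Y$ is the complexification of $\bar Z$, not a function of $\tau$.

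The third step uses \eqref{eq:crank-theta} and \eqref{eq:theta-exponential} to write
\[
 -\eta^\ell\mathcal C_{\mathrm{cr}}^\ell=-\eta^{3\ell}(-1)^\ell\thetaodd^{-\ell}
 =X^{-\ell}\exp\Bigl(-\ell\sum_{m\ge1}\tfrac{B_{2m}}{2m(2m)!}E_{2m}X^{2m}\Bigr),
\]
using that $\ell$ is odd. The $m=1$ exponent is $-\ell E_2X^2/24=-U_\ell$, so multiplying by $\e^{U_\ell}$ leaves exactly $X^{-\ell}\mathscr S_\ell^{\mathrm{src}}$. This expression is already odd in $X$ (odd power times an even series), so $\Odd_X$ acts trivially, and we obtain \eqref{eq:general-correction-PDE}.

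The main obstacle is justifying these manipulations at the level of Laurent germs at $X=0$. Three points need checking.

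\begin{itemize}
\item The identity \eqref{eq:correction-functional} itself needs care. Its $X^{-1}$ term and its coefficients $a_{n,\ell}$ come from the Taylor expansion \eqref{eq:jet-definition} after the scaling $X=2\pi\ii Z$. The scalar factor $\eta/(2\pi\ii)^{2n+1}$ must match the Cohen--Kuznetsov normalization, which the scalar identity recorded after \eqref{eq:CK-kernel} provides.
\item Applying the mixed $(X,\tau)$-operator termwise to the meromorphic function near $X=0$ must be justified. I would do this by noting that $\widehat{\mathscr A}^\sharp_{\ell,0}(X,0;\tau)$ equals $-X^{-1}$ (up to sign conventions) plus a function real-analytic in $\tau$ and holomorphic in $X$ near $0$, locally uniformly in $\tau$. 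Termwise differentiation is then licensed by Lemma~\ref{lem:theta-factorization} and the estimates of Section~\ref{sec:appell-jets}.
\item The convergence of $\CK_\ell$ is supplied by Corollary~\ref{cor:CK-homogeneous}.
\end{itemize}

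Finally, I would double-check the sign $(-1)^\ell=-1$ against the normalization fixed in Proposition~\ref{prop:odd-level-Appell-PDE}, since the overall sign of the source term hinges on it.
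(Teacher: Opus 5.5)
Your proposal is correct and follows the paper's proof essentially step for step. Like the paper, you apply $\GQop_\ell$ to \eqref{eq:correction-functional}, use parity preservation to commute it with $\Odd_X$, and kill the Cohen--Kuznetsov term by Corollary~\ref{cor:CK-homogeneous}. You then use Gaussian conjugation with Corollary~\ref{cor:completed-Appell-PDE}, rewrite the crank power via \eqref{eq:crank-theta} and \eqref{eq:theta-exponential} with $\ell$ odd, and note that $X^{-\ell}\mathscr S_\ell^{\mathrm{src}}$ is odd; your extra remarks on setting $Y=0$ and on termwise differentiation only make explicit what the paper leaves implicit.
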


\begin{proof}
The operator $\GQop_\ell$ preserves parity in $X$.  Apply it to
\eqref{eq:correction-functional}.  The Cohen--Kuznetsov term vanishes by
Corollary~\ref{cor:CK-homogeneous}.  By Gaussian conjugation and
Corollary~\ref{cor:completed-Appell-PDE}, the completed-Appell term
contributes
\[
 -(\ell-1)!\e^{U_\ell}\eta^\ell\mathcal C_{\mathrm{cr}}^\ell.
\]
Since $\mathcal C_{\mathrm{cr}}=-\eta^2/\thetaodd$ and $\ell$ is odd, this
is
\[
 (\ell-1)!\e^{U_\ell}\frac{\eta^{3\ell}}{\thetaodd^\ell}.
\]
Formula \eqref{eq:theta-exponential} gives
\[
 \e^{U_\ell}\frac{\eta^{3\ell}}{\thetaodd^\ell}
 =X^{-\ell}\mathscr S_\ell^{\mathrm{src}}.
\]
The source series is even in $X$, and $\ell$ is odd, so
$X^{-\ell}\mathscr S_\ell^{\mathrm{src}}$ is odd.  Taking the odd part
therefore leaves the right-hand side unchanged.
\end{proof}

Because the right-hand side of \eqref{eq:general-correction-PDE} is independent
of $\mathbf G$, correction series for two lifts with the same shadow differ by
a solution of $\GQop_\ell F=0$.  The first $d$ conditions select the
canonical sequence.

\section{Wronskian coordinates and the scalar correction sequence}
\label{sec:canonical-normalization}

For a fixed shadow, harmonic lifts differ by weakly holomorphic vectors
\cite{BruinierFunke,BruinierOno}.  The inverse-transpose Wronskian gives a
global dual frame, and the Appell correction functionals are lower-triangular
coordinates with nonzero diagonal.  The scalar equation then determines the
canonical sequence and its maximal initial vanishing range.

\subsection{Coordinates from a nonvanishing modular Wronskian}
\label{sec:abstract-finite-jet}

\begin{proposition}[Coordinates from a nonvanishing modular Wronskian]
\label{prop:cyclic-Wronskian-finite-jet}
Let $\rho$ be a $d$-dimensional unitary finite-image representation of
$\Mp$ and let $\mathbf H\in M_\lambda(\rho)$.  Put
\[
 \mathbf H^{\langle0\rangle}:=\mathbf H,\qquad
 \mathbf H^{\langle r+1\rangle}
 :=\dd_{\lambda+2r}\mathbf H^{\langle r\rangle},
 \qquad
 \mathbf W_{\mathbf H}
 :=\bigl(\mathbf H^{\langle0\rangle},\ldots,
          \mathbf H^{\langle d-1\rangle}\bigr).
\]
Assume that $\mathbf W_{\mathbf H}$ is invertible on $\Hh$ and that the
columns
\[
 \boldsymbol\Psi_j:=\mathbf W_{\mathbf H}^{-\mathsf T}\mathbf e_{j+1},
 \qquad 0\le j<d,
\]
are weakly holomorphic at the cusp.  Let $\mathscr T_n$ be a covariant
modular differential operator of order $n$, monic in $D$, and let
$c_n\in\C^\times$.  If
\[
 \mathcal J_n(\mathbf U):=c_n\mathbf H^{\mathsf T}\mathscr T_n\mathbf U,
 \qquad 0\le n<d,
\]
then every $\mathbf U\in M_\kappa^!(\rho^\vee)$ has the unique expansion
\[
 \mathbf U=\sum_{j=0}^{d-1}f_j\boldsymbol\Psi_j,\qquad
 f_j=\bigl(\mathbf H^{\langle j\rangle}\bigr)^{\mathsf T}\mathbf U
 \in M_{\lambda+\kappa+2j}^!(\SLtwo),
\]
and
\[
 \mathbf U\longmapsto
 \bigl(\mathcal J_0(\mathbf U),\ldots,\mathcal J_{d-1}(\mathbf U)\bigr)
\]
is an isomorphism onto
$\bigoplus_{n=0}^{d-1}M_{\lambda+\kappa+2n}^!(\SLtwo)$.
In the coordinates $f_j$ one has
\[
 \mathcal J_n(\mathbf U)
 =(-1)^nc_nf_n+\sum_{j<n}\mathcal D_{n,j}f_j,
\]
with scalar covariant differential operators $\mathcal D_{n,j}$.
\end{proposition}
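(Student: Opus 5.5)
The plan is to work entirely with the dual frame. First I would record the frame's transformation law and the biorthogonality relation $(\mathbf H^{\langle i\rangle})^{\mathsf T}\boldsymbol\Psi_j=\delta_{ij}$. From these the expansion follows, then the identification with scalar coordinates $f_j$, and finally a Leibniz computation giving the triangular shape of $\mathcal J_n$.

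\emph{Step 1: the frame.} Each column $\mathbf H^{\langle r\rangle}$ transforms with weight $\lambda+2r$ and type $\rho$, because the Serre derivative is covariant. Hence $\mathbf W_{\mathbf H}(\gamma\tau)=\rho(\widetilde\gamma)\mathbf W_{\mathbf H}(\tau)\Lambda_\gamma(\tau)$ with $\Lambda_\gamma=\operatorname{diag}(\varphi_\gamma^{2\lambda+4r})_{r}$. Taking the inverse transpose and using unitarity, $\rho^{-\mathsf T}=\rho^\vee$, I get that $\boldsymbol\Psi_j$ is holomorphic on $\Hh$ (since $\mathbf W_{\mathbf H}$ is invertible there) and transforms with weight $-\lambda-2j$ and type $\rho^\vee$. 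By hypothesis it is weakly holomorphic at the cusp. The identity $\mathbf W_{\mathbf H}^{\mathsf T}\mathbf W_{\mathbf H}^{-\mathsf T}=I$ is exactly the biorthogonality relation.

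\emph{Step 2: the expansion.} At each point the $\boldsymbol\Psi_j$ form a basis, so for $\mathbf U\in M_\kappa^!(\rho^\vee)$ I can write $\mathbf U=\sum_j f_j\boldsymbol\Psi_j$. Pairing with $\mathbf H^{\langle i\rangle}$ and using biorthogonality gives $f_j=(\mathbf H^{\langle j\rangle})^{\mathsf T}\mathbf U$, which also gives uniqueness. Since $\mathbf H^{\langle j\rangle}$ and $\mathbf U$ have dual types, $f_j$ is a scalar $\SLtwo$-invariant form of weight $\lambda+\kappa+2j$. It is holomorphic on $\Hh$. It has finitely many polar terms at the single cusp, because both factors have expansions bounded below and finite image makes the exponents rational with bounded denominators. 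Conversely, any tuple $(f_j)\in\bigoplus_j M^!_{\lambda+\kappa+2j}(\SLtwo)$ yields $\sum f_j\boldsymbol\Psi_j\in M_\kappa^!(\rho^\vee)$ by Step 1. So $\mathbf U\mapsto(f_0,\dots,f_{d-1})$ is an isomorphism onto the stated sum.

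\emph{Step 3: triangularity.} The covariant operator $\mathscr T_n$ is monic in $D$ of order $n$. I would write it as $\dd_\kappa^{n}$ (the iterated Serre operator $\dd_{\kappa+2n-2}\cdots\dd_\kappa$) plus $\sum_{m<n}g_m\dd_\kappa^{m}$ with holomorphic modular coefficients $g_m$ of weight $2(n-m)$. This comes from an induction on order: the difference of two monic covariant operators of order $n$ is covariant of lower order, and its leading coefficient is modular. This decomposition is the step I expect to need the most care, because the leading coefficient of the difference must be shown modular with no $E_2$ contamination. I would argue it by comparing transformation laws of the leading symbols; alternatively, I would note that for the applications $\mathscr T_n=\dd^{[n]}_\kappa$, whose recursion \eqref{eq:higher-Serre-recursion} already has this form with $E_4$ coefficients.

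Next, the Leibniz rule for Serre derivatives gives
\[
\dd_{\lambda+\kappa+2j}\bigl((\mathbf H^{\langle j\rangle})^{\mathsf T}\mathbf V\bigr)=(\mathbf H^{\langle j+1\rangle})^{\mathsf T}\mathbf V+(\mathbf H^{\langle j\rangle})^{\mathsf T}\dd\mathbf V
\]
for $\mathbf V$ of the matching weight. Inducting on $m$ with this identity, $\mathbf H^{\mathsf T}\dd_\kappa^{m}\mathbf U$ equals $(-1)^m(\mathbf H^{\langle m\rangle})^{\mathsf T}\mathbf U$ plus scalar covariant differential operators applied to the $f_i$ with $i<m$. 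This only uses indices $m<d$, so every $\mathbf H^{\langle m\rangle}$ that appears is a frame column. Substituting into $\mathcal J_n=c_n\mathbf H^{\mathsf T}\mathscr T_n\mathbf U$ gives $\mathcal J_n=(-1)^nc_nf_n+\sum_{j<n}\mathcal D_{n,j}f_j$.

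Each $\mathcal D_{n,j}$ maps $M^!$ to $M^!$, and $c_n\ne0$. Hence the system can be solved recursively: $f_n=(-1)^nc_n^{-1}\bigl(g_n-\sum_{j<n}\mathcal D_{n,j}f_j\bigr)$. So the map $(f_j)\mapsto(\mathcal J_n)$ is a triangular automorphism of $\bigoplus_n M^!_{\lambda+\kappa+2n}(\SLtwo)$. Composing it with Step 2 gives the asserted isomorphism.
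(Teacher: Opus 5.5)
Your argument is correct. Its skeleton agrees with the paper's: a dual frame from $\mathbf W_{\mathbf H}^{-\mathsf T}$, coordinates $f_j=(\mathbf H^{\langle j\rangle})^{\mathsf T}\mathbf U$, then a Leibniz-type triangularity. The triangularity step, however, runs differently.

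The paper works with ordinary derivatives. It uses $\mathbf H^{\mathsf T}D^n\mathbf U=\sum_r(-1)^r\binom nr D^{n-r}\bigl((D^r\mathbf H)^{\mathsf T}\mathbf U\bigr)$ and then only the fact that $\mathbf H^{\langle r\rangle}$ and $\mathscr T_n$ are monic in $D$. So it needs no structural information about $\mathscr T_n$. The price is that the lower-order terms carry quasimodular ($E_2$) coefficients, so covariance of the individual $\mathcal D_{n,j}$ is not visible term by term.

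You stay inside Serre derivatives instead. That forces an extra lemma: $\mathscr T_n=\dd_\kappa^{n}+\sum_{m<n}g_m\,\dd_\kappa^{m}$, with $\dd_\kappa^{m}$ the iterated Serre operator and $g_m$ modular. Your leading-symbol argument for it is sound: the top coefficient of a covariant operator of order $m$ from weight $\kappa$ to weight $\kappa+2n$ transforms with weight $2(n-m)$. If the coefficients lie in $\C[E_2,E_4,E_6]$, the $g_m$ land in $\C[E_4,E_6]$. For $\mathscr T_n=\dd_{3/2}^{[n]}$ the decomposition is immediate from \eqref{eq:higher-Serre-recursion}.

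In return your route gives two things the paper's proof leaves implicit:
\begin{itemize}
\item the explicit formula $\mathbf H^{\mathsf T}\dd_\kappa^{m}\mathbf U=\sum_{j\le m}(-1)^j\binom mj\dd^{\,m-j}f_j$, which makes each $\mathcal D_{n,j}$ manifestly covariant, as the statement asserts;
\item the converse direction, that every tuple $(f_j)$ yields an element of $M_\kappa^!(\rho^\vee)$. This is exactly where weak holomorphy of the $\boldsymbol\Psi_j$ enters, and the paper folds it into ``bijective.''
\end{itemize}

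One notational slip: in your Serre Leibniz rule the subscript should be $\lambda+\kappa+2j+2s$ when $\mathbf V=\dd_\kappa^{s}\mathbf U$ has weight $\kappa+2s$.
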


\begin{proof}
Covariance of the successive Serre derivatives shows that
$\boldsymbol\Psi_j$ has weight $-\lambda-2j$ and type $\rho^\vee$.
The identities
$\mathbf W_{\mathbf H}^{\mathsf T}\mathbf W_{\mathbf H}^{-\mathsf T}=I$
and
$\mathbf W_{\mathbf H}^{-\mathsf T}\mathbf W_{\mathbf H}^{\mathsf T}=I$
give the dual-frame expansion.  Repeated use of the product rule yields
\[
 \mathbf H^{\mathsf T}D^n\mathbf U
 =\sum_{r=0}^{n}(-1)^r\binom nr
 D^{n-r}\!\left((D^r\mathbf H)^{\mathsf T}\mathbf U\right).
\]
Since both $\mathbf H^{\langle r\rangle}$ and $\mathscr T_n$ are monic in
their highest derivatives, the resulting system is lower triangular with
diagonal entries $(-1)^nc_n\ne0$.  It is therefore bijective.
\end{proof}

The proposition is the weakly holomorphic coordinate form of the free-module
results in \cite{MarksMason,Mason,FrancMason}.  For the $k$-rank family, the
$M(2,\ell)$ Wronskian gives the frame explicitly, and the Appell functionals
give its lower-triangular coordinates.

\subsection{Application to the \texorpdfstring{$k$}{k}-rank Taylor family}
\label{sec:jet-normalization}

The successive Serre derivatives of the theta vector form the columns of
$\Wr_\ell$.  For $0\le j\le d-1$,
set
\[
 \boldsymbol\Psi_{\ell,j}:=\Wr_\ell^{-\mathsf T}\mathbf e_{j+1}.
\]
For $\mathbf U\in M_{3/2}^{!}(\sigma_\ell^\vee)$, define the $n$th Appell
correction functional
\[
 \Jet_{\ell,n}(\mathbf U)
 :=\frac{(-\ell/2)^n}{n!(3/2)_n}
 \mathbf H_\ell^{\mathsf T}\dd_{3/2}^{[n]}\mathbf U.
\]
These functionals record the change in the scalar corrections under weakly
holomorphic shifts.

\begin{lemma}[Weakly holomorphic shift law]
\label{lem:weakly-holomorphic-shift}
Let
$\mathbf G\in H_{3/2}^{+}(\sigma_\ell^\vee)$ satisfy
$\xi_{3/2}\mathbf G=-\boldsymbol\Theta_\ell$, and let
$\mathbf U\in M_{3/2}^{!}(\sigma_\ell^\vee)$.  Then, for every $n\ge0$,
\begin{equation}
 a_{n,\ell}(\mathbf G+2\pi\ii\mathbf U)
 =a_{n,\ell}(\mathbf G)-\Jet_{\ell,n}(\mathbf U).
 \label{eq:normalization-shift-law}
\end{equation}
The factor $2\pi\ii$ in the shift is chosen so that the induced change is
exactly $-\Jet_{\ell,n}(\mathbf U)$.
\end{lemma}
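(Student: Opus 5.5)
The plan is to substitute $\mathbf G+2\pi\ii\mathbf U$ directly into the defining formula \eqref{eq:Mn-general} and track the scalar factors.

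\textbf{Step 1 (what changes).} Only the second term in the bracket depends on the lift, since $\rjet_{2n+1,k}$ does not. We must also check that $\mathbf G+2\pi\ii\mathbf U$ is admissible. It lies in $H_{3/2}^+(\sigma_\ell^\vee)$, and because $\xi_{3/2}$ kills the holomorphic vector $\mathbf U$, its shadow is still $-\boldsymbol\Theta_\ell$. So $a_{n,\ell}(\mathbf G+2\pi\ii\mathbf U)$ is defined.

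\textbf{Step 2 (linearity).} The operator $\dd_{3/2}^{[n]}$ is $\C$-linear: it is built recursively from $D$ and multiplication by $E_2$ and $E_4$ via \eqref{eq:higher-Serre-recursion}. Hence
\[
 a_{n,\ell}(\mathbf G+2\pi\ii\mathbf U)-a_{n,\ell}(\mathbf G)
 =-\frac{\eta}{(2\pi\ii)^{2n+1}}\,
 \frac{(8\ell\pi^2)^n}{(2n+1)!}\,2\pi\ii\,
 \mathbf F_\ell^{\mathsf T}\dd_{3/2}^{[n]}\mathbf U .
\]

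\textbf{Step 3 (matching the constant).} Use $\eta\mathbf F_\ell=\mathbf H_\ell$ to rewrite the pairing as $\mathbf H_\ell^{\mathsf T}\dd_{3/2}^{[n]}\mathbf U$. The scalar in front then becomes
\[
 \frac{(8\ell\pi^2)^n}{(2\pi\ii)^{2n}(2n+1)!}
 =\frac{(-2\ell)^n}{(2n+1)!}.
\]
By $n!(3/2)_n=(2n+1)!/4^n$, this equals $(-\ell/2)^n/(n!(3/2)_n)$. This is the same identity recorded after \eqref{eq:CK-kernel}, multiplied through by $2\pi\ii$. The difference is therefore exactly $-\Jet_{\ell,n}(\mathbf U)$, which gives \eqref{eq:normalization-shift-law}.

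There is no real obstacle in this argument. The only care needed is the bookkeeping of the powers of $2\pi\ii$ and the sign $(-1)^n$ in Step 3.
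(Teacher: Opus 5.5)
Your proposal is correct and matches the paper's proof: subtract the two instances of \eqref{eq:Mn-general}, use linearity of $\dd_{3/2}^{[n]}$ and $\mathbf H_\ell=\eta\mathbf F_\ell$, and check that $2\pi\ii(8\ell\pi^2)^n/\bigl((2\pi\ii)^{2n+1}(2n+1)!\bigr)=(-\ell/2)^n/\bigl(n!(3/2)_n\bigr)$. Your Step~1 check that the shifted lift keeps the shadow $-\boldsymbol\Theta_\ell$, so that $a_{n,\ell}(\mathbf G+2\pi\ii\mathbf U)$ is defined, is a harmless addition that the paper leaves implicit.
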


\begin{proof}
Subtract the definitions.  Since $\mathbf H_\ell=\eta\mathbf F_\ell$, the
coefficient of
$\mathbf H_\ell^{\mathsf T}\dd_{3/2}^{[n]}\mathbf U$ is
\[
 \frac{2\pi\ii(8\ell\pi^2)^n}
 {(2\pi\ii)^{2n+1}(2n+1)!}
 =\frac{(-\ell/2)^n}{n!(3/2)_n}.
\]
\end{proof}

\begin{theorem}[Coordinates on the weakly holomorphic ambiguity]
\label{thm:jet-coordinate-isomorphism}
The vector $\boldsymbol\Psi_{\ell,j}$ is weakly holomorphic of weight
$-1/2-2j$ and type $\sigma_\ell^\vee$, and
\[
 \bigl(\mathbf H_\ell^{\langle r\rangle}\bigr)^{\mathsf T}
 \boldsymbol\Psi_{\ell,j}=\delta_{rj}.
\]
Every $\mathbf U\in M_{3/2}^{!}(\sigma_\ell^\vee)$ has the unique expansion
\begin{equation}
 \mathbf U=\sum_{j=0}^{d-1}f_j\boldsymbol\Psi_{\ell,j},
 \qquad
 f_j=\bigl(\mathbf H_\ell^{\langle j\rangle}\bigr)^{\mathsf T}\mathbf U
 \in M_{2j+2}^{!}(\SLtwo).
 \label{eq:frame-decomposition}
\end{equation}
Moreover, the map
\begin{equation}
\begin{aligned}
 \Jet_\ell^{<d>}:
 M_{3/2}^{!}(\sigma_\ell^\vee)
 &\longrightarrow
 \bigoplus_{n=0}^{d-1}M_{2n+2}^{!}(\SLtwo),\\
 \mathbf U
 &\longmapsto
 (\Jet_{\ell,0}(\mathbf U),\ldots,
  \Jet_{\ell,d-1}(\mathbf U))
\end{aligned}
 \label{eq:finite-jet-map}
\end{equation}
is an isomorphism.  Although both sides are infinite-dimensional over $\C$,
the weakly holomorphic ambiguity is measured by exactly $d$ scalar
modular-form coordinates.
\end{theorem}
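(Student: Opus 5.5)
The plan is to deduce the theorem from Proposition~\ref{prop:cyclic-Wronskian-finite-jet}, applied with $\rho=\sigma_\ell$, $\mathbf H=\mathbf H_\ell$, $\lambda=1/2$, $\kappa=3/2$, $\mathscr T_n=\dd_{3/2}^{[n]}$ and $c_n=(-\ell/2)^n/(n!(3/2)_n)\ne0$. The hypotheses then need to be checked one at a time.

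First, $\mathbf H_\ell\in S_{1/2}(\sigma_\ell)$ by the projected Weil representation theorem, and $\sigma_\ell$ is unitary with finite image by Proposition~\ref{prop:sigma-finite-gap}. The Serre jets $\mathbf H_\ell^{\langle r\rangle}$ of \eqref{eq:theta-Serre-jets} are exactly the columns of $\Wr_\ell$. By Proposition~\ref{prop:theta-jet-determinant}, $\det\Wr_\ell=\kappa_\ell\eta^{d(2d-1)}$ never vanishes on $\Hh$, so $\Wr_\ell$ is invertible there.

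Second, the operators $\dd_{3/2}^{[n]}$ are covariant, sending weight $3/2$ and type $\rho^\vee$ to weight $3/2+2n$ and the same type. By the recursion \eqref{eq:higher-Serre-recursion} they are monic of order $n$ in $D$, since the leading term is $D^n$ and the lower terms come from $E_2$ and $E_4$.

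Third, and this is the one substantive point, I must show the columns $\boldsymbol\Psi_{\ell,j}=\Wr_\ell^{-\mathsf T}\mathbf e_{j+1}$ are weakly holomorphic at the cusp.
\begin{itemize}
\item Covariance: $\Wr_\ell$ transforms on the left by $\sigma_\ell$ and on the right by the diagonal weights $1/2+2r$. Hence $\Wr_\ell^{-\mathsf T}$ has columns of type $\sigma_\ell^\vee$ (using unitarity and $\sigma_\ell^{-\mathsf T}=\sigma_\ell^\vee$) and weights $-1/2-2j$. This gives the claimed weights.
\item Holomorphy on $\Hh$: by Cramer's rule, each entry of $\Wr_\ell^{-\mathsf T}$ is a cofactor (a polynomial in the $q$-expansions of theta coefficients times $E_2,E_4$) divided by $\kappa_\ell\eta^{d(2d-1)}$. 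Since $\eta$ has no zeros on $\Hh$, the entries are holomorphic there.
\item Behaviour at $\ii\infty$: a cofactor grows at most polynomially in $q^{-1}$. After extracting the rowwise factors $q^{\alpha_a}$, as in the proof of Corollary~\ref{cor:monic-theta-MLDE}, each entry is $q^{-\alpha_a}$ times a power series in $q$ divided by $q^{d(2d-1)/24}$ times a unit. This is a Laurent expansion bounded below, which is meromorphic at the cusp.
\end{itemize}
The duality $(\mathbf H_\ell^{\langle r\rangle})^{\mathsf T}\boldsymbol\Psi_{\ell,j}=\delta_{rj}$ is simply the identity $\Wr_\ell^{\mathsf T}\Wr_\ell^{-\mathsf T}=I$.

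With the hypotheses verified, the proposition gives the frame expansion \eqref{eq:frame-decomposition}. Each $f_j=(\mathbf H_\ell^{\langle j\rangle})^{\mathsf T}\mathbf U$ is the pairing of dual types of weights $1/2+2j$ and $3/2$. It is therefore a scalar of weight $2j+2$ on $\SLtwo$, holomorphic on $\Hh$, and has finite order at the single cusp. It also gives the lower-triangular relation $\Jet_{\ell,n}(\mathbf U)=(-1)^nc_nf_n+\sum_{j<n}\mathcal D_{n,j}f_j$. Because the diagonal entries are nonzero, this system inverts recursively: given any tuple $(g_0,\dots,g_{d-1})$ of weakly holomorphic forms, we solve successively for $f_0,f_1,\dots$. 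The inverse preserves weak holomorphy, since the $\mathcal D_{n,j}$ are built from $D$, $E_2$, $E_4$ and $E_6$. We then set $\mathbf U=\sum f_j\boldsymbol\Psi_{\ell,j}$. This $\mathbf U$ is weakly holomorphic of weight $3/2$ and type $\sigma_\ell^\vee$, since $f_j$ has weight $2j+2$ and $\boldsymbol\Psi_{\ell,j}$ has weight $-1/2-2j$. This yields surjectivity; injectivity comes from uniqueness of the frame expansion.

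The main obstacle is the cusp behaviour of $\Wr_\ell^{-\mathsf T}$, namely that inverting the Wronskian introduces only a finite-order pole at $\ii\infty$. The exact eta-power evaluation of the determinant is what controls this, so I would lean on Proposition~\ref{prop:theta-jet-determinant} rather than on a generic nonvanishing argument.
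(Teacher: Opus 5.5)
Your proposal is correct and is essentially the paper's proof: the paper also applies Proposition~\ref{prop:cyclic-Wronskian-finite-jet} with $\rho=\sigma_\ell$, $\lambda=1/2$, $\kappa=3/2$, $\mathbf H=\mathbf H_\ell$, $\mathscr T_n=\dd_{3/2}^{[n]}$ and $c_n=(-\ell/2)^n/(n!(3/2)_n)\ne0$, and uses the eta-power evaluation of $\det\Wr_\ell$ both for invertibility on $\Hh$ and for the fact that $\Wr_\ell^{-\mathsf T}$ can have poles only at the cusp. Your Cramer's-rule check spells out that last point, and one piece of its bookkeeping should be tightened: the cofactor omitting row $a$ already carries $\prod_{b\ne a}q^{\alpha_b}$, and $\sum_b\alpha_b=d(2d-1)/24$ matches the eta power, so after dividing by $\det\Wr_\ell$ the $a$th entry is exactly $q^{-\alpha_a}$ times a power series rather than having the extra factor $q^{-d(2d-1)/24}$ (the finite-order conclusion is unaffected either way).
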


\begin{proof}
Proposition~\ref{prop:theta-jet-determinant} shows that $\Wr_\ell$ is
invertible on $\Hh$ and that the columns of $\Wr_\ell^{-\mathsf T}$ can have
poles only at the cusp.  Apply
Proposition~\ref{prop:cyclic-Wronskian-finite-jet} with
\[
 \rho=\sigma_\ell,
 \qquad \lambda=\frac12,
 \qquad \kappa=\frac32,
 \qquad \mathbf H=\mathbf H_\ell,
\]
\[
 \mathscr T_n=\dd_{3/2}^{[n]},
 \qquad
 c_n=\frac{(-\ell/2)^n}{n!(3/2)_n}.
\]
The higher Serre operator is monic in $D^n$ by its initial definitions and
\eqref{eq:higher-Serre-recursion}, and every $c_n$ is nonzero.  The general
proposition therefore gives both the dual-frame decomposition and the
coordinate isomorphism.  In the frame coordinates its diagonal coefficient
is
\[
 (-1)^nc_n=\frac{(\ell/2)^n}{n!(3/2)_n}\ne0.
\]
\end{proof}

The inverse-transpose Wronskian is therefore a global dual frame, and the
first $d$ Appell conditions remove the entire weakly holomorphic ambiguity.

\begin{theorem}[The canonical Appell-normalized harmonic lift]
\label{thm:unique-Appell-jet-lift}
There exists a unique
$\mathbf G_\ell^{\mathrm A}\in H_{3/2}^{+}(\sigma_\ell^\vee)$ satisfying
$\xi_{3/2}\mathbf G_\ell^{\mathrm A}=-\boldsymbol\Theta_\ell$ and
\begin{equation}
 a_{0,\ell}(\mathbf G_\ell^{\mathrm A})
 =a_{1,\ell}(\mathbf G_\ell^{\mathrm A})
 =\cdots
 =a_{d-1,\ell}(\mathbf G_\ell^{\mathrm A})=0.
 \label{eq:zero-initial-block}
\end{equation}
It is obtained from any reference lift by a unique weakly holomorphic shift.
\end{theorem}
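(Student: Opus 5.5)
The plan is to start from an arbitrary reference lift and adjust it by a weakly holomorphic shift. Take $\mathbf G_\ell^0$ from Section~\ref{sec:reference-lift}. It lies in $H_{3/2}^{+}(\sigma_\ell^\vee)$ and satisfies $\xi_{3/2}\mathbf G_\ell^0=-\boldsymbol\Theta_\ell$ by \eqref{eq:G-shadow}. By Theorem~\ref{thm:weakly-holomorphic-scalar-corrections}, its corrections $a_{n,\ell}(\mathbf G_\ell^0)$ lie in $M_{2n+2}^{!}(\SLtwo)$ for every $n$. In particular, the tuple $(a_{0,\ell}(\mathbf G_\ell^0),\ldots,a_{d-1,\ell}(\mathbf G_\ell^0))$ is an element of $\bigoplus_{n=0}^{d-1}M_{2n+2}^{!}(\SLtwo)$.

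\textbf{Existence.} Theorem~\ref{thm:jet-coordinate-isomorphism} says the map $\Jet_\ell^{<d>}$ is an isomorphism. Hence there is a unique $\mathbf U_0\in M_{3/2}^{!}(\sigma_\ell^\vee)$ with
\[
 \Jet_{\ell,n}(\mathbf U_0)=a_{n,\ell}(\mathbf G_\ell^0)\qquad(0\le n<d).
\]
Set $\mathbf G_\ell^{\mathrm A}:=\mathbf G_\ell^0+2\pi\ii\,\mathbf U_0$. The shift is weakly holomorphic, so $\xi_{3/2}$ kills it; therefore $\mathbf G_\ell^{\mathrm A}$ has the same shadow $-\boldsymbol\Theta_\ell$ and still lies in $H_{3/2}^{+}(\sigma_\ell^\vee)$. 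The growth condition at the cusp is preserved because $\mathbf U_0$ has a finite principal part. Lemma~\ref{lem:weakly-holomorphic-shift} then gives
\[
 a_{n,\ell}(\mathbf G_\ell^{\mathrm A})=a_{n,\ell}(\mathbf G_\ell^0)-\Jet_{\ell,n}(\mathbf U_0)=0\qquad(0\le n<d),
\]
which is \eqref{eq:zero-initial-block}.

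\textbf{Uniqueness.} Suppose $\mathbf G'$ is another lift in $H_{3/2}^{+}(\sigma_\ell^\vee)$ with the same shadow and the same vanishing conditions. Then $\xi_{3/2}(\mathbf G'-\mathbf G_\ell^{\mathrm A})=0$. I need to check that $\mathbf G'-\mathbf G_\ell^{\mathrm A}$ is genuinely weakly holomorphic, not merely annihilated by $\xi_{3/2}$. This follows because $\xi_{3/2}F=0$ means $\partial_{\bar\tau}F=0$, so the difference is holomorphic on $\Hh$. Both forms have finite principal parts, and the non-holomorphic parts cancel since the shadows agree. Hence the difference lies in $M_{3/2}^{!}(\sigma_\ell^\vee)$, and I write it as $2\pi\ii\,\mathbf U$. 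Applying Lemma~\ref{lem:weakly-holomorphic-shift} again gives $\Jet_{\ell,n}(\mathbf U)=0$ for $0\le n<d$. Injectivity of $\Jet_\ell^{<d>}$ then forces $\mathbf U=0$, so $\mathbf G'=\mathbf G_\ell^{\mathrm A}$.

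\textbf{Independence of the reference lift.} The same argument, run from any reference lift $\mathbf G$ in place of $\mathbf G_\ell^0$, produces a weakly holomorphic shift that is unique by the bijectivity of $\Jet_\ell^{<d>}$. This proves the final sentence of the statement.

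The only real obstacle is the point already flagged in the uniqueness step: that every difference of lifts with the same shadow lies in $M_{3/2}^{!}(\sigma_\ell^\vee)$, with the growth conditions matching the definitions of \cite{BruinierFunke}. All the substantive work is already contained in Theorem~\ref{thm:jet-coordinate-isomorphism}.
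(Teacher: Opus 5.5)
Your proposal is correct and follows the paper's proof essentially verbatim: you pick a reference lift, use the isomorphism $\Jet_\ell^{<d>}$ of Theorem~\ref{thm:jet-coordinate-isomorphism} to solve $\Jet_{\ell,n}(\mathbf U)=a_{n,\ell}(\mathbf G)$ for $0\le n<d$, apply the shift law of Lemma~\ref{lem:weakly-holomorphic-shift}, and deduce uniqueness from injectivity. Your explicit check that two lifts with the same shadow differ by an element of $M_{3/2}^{!}(\sigma_\ell^\vee)$, followed by a second use of the shift law, just spells out the step the paper compresses into ``injectivity of $\Jet_\ell^{<d>}$ gives uniqueness.''
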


\begin{proof}
Choose any reference lift $\mathbf G$.  By
Theorem~\ref{thm:jet-coordinate-isomorphism}, there is a unique
$\mathbf U\in M_{3/2}^!(\sigma_\ell^\vee)$ such that
\[
 \Jet_{\ell,n}(\mathbf U)=a_{n,\ell}(\mathbf G)
 \qquad(0\le n<d).
\]
Lemma~\ref{lem:weakly-holomorphic-shift} then shows that
$\mathbf G+2\pi\ii\mathbf U$ has the required vanishing corrections.
Injectivity of $\Jet_\ell^{<d>}$ gives uniqueness.
\end{proof}

\begin{corollary}[Initial Taylor coefficients]
\label{cor:initial-Taylor-coefficients}
For $0\le n\le d-1$,
\begin{equation}
 \rjet_{2n+1,k}
 =\frac{(8\ell\pi^2)^n}{(2n+1)!}
 \mathbf F_\ell^{\mathsf T}
 \dd_{3/2}^{[n]}\mathbf G_\ell^{\mathrm A}.
 \label{eq:initial-exact-formulas}
\end{equation}
In particular,
$\rjet_{1,k}=\mathbf F_\ell^{\mathsf T}\mathbf G_\ell^{\mathrm A}$.
\end{corollary}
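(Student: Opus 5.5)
This is immediate from the definition \eqref{eq:Mn-general} of the scalar corrections together with Theorem~\ref{thm:unique-Appell-jet-lift}. I would take $\mathbf G=\mathbf G_\ell^{\mathrm A}$ in \eqref{eq:Mn-general}. This is legitimate because $\mathbf G_\ell^{\mathrm A}\in H_{3/2}^{+}(\sigma_\ell^\vee)$ and it has the required shadow $\xi_{3/2}\mathbf G_\ell^{\mathrm A}=-\boldsymbol\Theta_\ell$. For each $0\le n\le d-1$ the vanishing conditions \eqref{eq:zero-initial-block} give
\[
 0=a_{n,\ell}(\mathbf G_\ell^{\mathrm A})
 =\frac{\eta}{(2\pi\ii)^{2n+1}}\left[
 \rjet_{2n+1,k}
 -\frac{(8\ell\pi^2)^n}{(2n+1)!}
 \mathbf F_\ell^{\mathsf T}\dd_{3/2}^{[n]}\mathbf G_\ell^{\mathrm A}
 \right].
\]

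The prefactor $\eta/(2\pi\ii)^{2n+1}$ never vanishes on $\Hh$, since $\eta$ is a nowhere-vanishing product there. Dividing by it shows that the bracket vanishes identically as a real-analytic function of $\tau$, which is exactly \eqref{eq:initial-exact-formulas}. For the special case $n=0$, I would use $\dd_{3/2}^{[0]}=\operatorname{Id}$ and note that the scalar $(8\ell\pi^2)^0/1!$ equals $1$. This gives $\rjet_{1,k}=\mathbf F_\ell^{\mathsf T}\mathbf G_\ell^{\mathrm A}$.

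There is no genuine obstacle. The only points to check are that the normalization in \eqref{eq:Mn-general} is the one used in the displayed formula, and that $d\ge1$, so that the range $0\le n\le d-1$ is nonempty. Both hold because $k\ge2$.
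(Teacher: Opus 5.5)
Your proposal is correct and follows the paper's route: the corollary is stated as an immediate consequence of the vanishing conditions \eqref{eq:zero-initial-block} in Theorem~\ref{thm:unique-Appell-jet-lift} together with the definition \eqref{eq:Mn-general}. Dividing by the nowhere-vanishing factor $\eta/(2\pi\ii)^{2n+1}$ is exactly the step the paper leaves implicit.
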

\subsection{The scalar correction sequence: recurrence and rationality}
\label{sec:recurrence}

Put
\[
 a_{n,\ell}^{\mathrm A}:=a_{n,\ell}(\mathbf G_\ell^{\mathrm A}),
 \qquad
 \mathscr C_\ell^{\mathrm A}(X,\tau)
 :=X^{-1}+\sum_{n\ge0}a_{n,\ell}^{\mathrm A}(\tau)X^{2n+1}.
\]
Then
\begin{equation}
 \GQop_\ell\mathscr C_\ell^{\mathrm A}
 =(\ell-1)!X^{-\ell}\mathscr S_\ell^{\mathrm{src}},
 \qquad
 \mathscr C_\ell^{\mathrm A}=X^{-1}+O(X^\ell).
 \label{eq:KJ-PDE}
\end{equation}
Write
\[
 \mathscr S_\ell^{\mathrm{src}}(X,\tau)
 =\sum_{m\ge0}s_{\ell,m}^{\mathrm{src}}(\tau)X^{2m}.
\]
For $n\in\Z$ and $s\ge0$, the quantity $u_n^{(s)}$ below records the
coefficient of $X^{2n+1}$ after $s$ successive gauged heat operators.  Set
$u_n^{(0)}=a_{n,\ell}^{\mathrm A}$, with the bookkeeping conventions
$a_{-1,\ell}^{\mathrm A}=1$ and $a_{n,\ell}^{\mathrm A}=0$ for $n\le-2$, and,
for $n\in\Z$ and $s\ge0$, recursively put
\begin{equation}
 u_n^{(s+1)}
 =2\ell\dd_{2n+2+2s}u_n^{(s)}
 +(2n+2)(2n+3)u_{n+1}^{(s)}
 +\frac{\ell^2}{144}E_4u_{n-1}^{(s)}.
 \label{eq:u-recursion}
\end{equation}

\begin{theorem}[Triangular coefficient recurrence]
\label{thm:triangular-coefficient-recurrence}
For every integer $n\ge-d-1$,
\begin{equation}
 \sum_{r=0}^d
 (2\ell)^{d-r}F_{\ell-2r-1}^{(\ell)}u_n^{(r)}
 =(\ell-1)!s_{\ell,n+d+1}^{\mathrm{src}}.
 \label{eq:coefficient-recurrence}
\end{equation}
For $n\ge0$, the coefficient of $a_{n+d,\ell}^{\mathrm A}$ is
\begin{equation}
 \prod_{j=0}^{d-1}(2n+2j+2)(2n+2j+3)\ne0.
 \label{eq:leading-recurrence-coefficient}
\end{equation}
Hence the initial conditions
$a_{0,\ell}^{\mathrm A}=\cdots=a_{d-1,\ell}^{\mathrm A}=0$ determine all
later corrections uniquely.
\end{theorem}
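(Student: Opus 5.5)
The plan is to read off \eqref{eq:coefficient-recurrence} as the coefficient of $X^{2n+1}$ on both sides of \eqref{eq:KJ-PDE}. For this I need an explicit description of how one gauged heat operator acts on an odd Laurent series $\sum_n w_n X^{2n+1}$ whose coefficient $w_n$ has weight $2n+2+2s$.

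\emph{Action of one gauged heat operator.} Take $\GHeat_{2s+1}^{(\ell)}$ from \eqref{eq:gauged-heat} and apply it to this series. The term $\partial_X^2$ sends the coefficient $w_{n+1}$ of $X^{2n+3}$ to $(2n+3)(2n+2)w_{n+1}X^{2n+1}$. The term $2\ell D-\frac\ell6E_2(X\partial_X+2s+1)$ acts on $w_nX^{2n+1}$ as $2\ell\bigl(Dw_n-\frac{2n+2s+2}{12}E_2w_n\bigr)X^{2n+1}=2\ell\,\dd_{2n+2+2s}w_n\,X^{2n+1}$. The term $\frac{\ell^2}{144}E_4X^2$ shifts $w_{n-1}$ up to the index $n$.

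This is exactly \eqref{eq:u-recursion}. So by induction on $s$, the coefficient of $X^{2n+1}$ in $\GHeat^{\langle s\rangle}\mathscr C_\ell^{\mathrm A}$ is $u_n^{(s)}$. The weight bookkeeping $2n+2+2s$ is consistent at each stage: $a_{n,\ell}^{\mathrm A}$ has weight $2n+2$, and each heat operator raises weights by $2$. The convention $a_{-1}=1$, $a_{\le-2}=0$ encodes the principal term $X^{-1}$.

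\emph{Matching coefficients.} Summing with the weights $(2\ell)^{d-r}F^{(\ell)}_{\ell-2r-1}$ from \eqref{eq:gauged-Q} gives the left side of \eqref{eq:coefficient-recurrence}. The right side of \eqref{eq:KJ-PDE} is $(\ell-1)!\sum_m s^{\mathrm{src}}_{\ell,m}X^{2m-\ell}$. Its coefficient of $X^{2n+1}$ is the term with $2m=2n+1+\ell=2n+2d+2$, that is $m=n+d+1$. The lower bound $n\ge-d-1$ is where $X^{-\ell}$ first appears, and both sides vanish identically below it. Since everything is a formal Laurent identity (Theorem~\ref{thm:universal-correction-equation}), coefficient comparison is legitimate.

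\emph{Leading coefficient.} In $u_n^{(r)}$, the largest index of $a$ that can occur is $n+r$. It arises only by applying the $\partial_X^2$ shift at every one of the $r$ steps, and it enters with coefficient $\prod_{j=0}^{r-1}(2n+2j+2)(2n+2j+3)$. To check this, induct on \eqref{eq:u-recursion}: index $n+r+1$ in $u^{(r+1)}_n$ can come only from $u^{(r)}_{n+1}$. In the sum, only $r=d$ reaches $a_{n+d}$, and there $F_0^{(\ell)}=1$ and $(2\ell)^0=1$. This gives \eqref{eq:leading-recurrence-coefficient}, which is nonzero for $n\ge0$.

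\emph{Triangularity.} For each $n\ge0$, equation \eqref{eq:coefficient-recurrence} solves for $a_{n+d}^{\mathrm A}$ in terms of $a_{m}^{\mathrm A}$ with $m<n+d$, Serre derivatives, $E_4$, the $F^{(\ell)}$, and the source coefficients. Induction from the initial block $a_0^{\mathrm A}=\dots=a_{d-1}^{\mathrm A}=0$ then gives uniqueness.

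\emph{Main obstacle.} The only delicate point is the index bookkeeping: confirming that $\GHeat_{2s+1}$, rather than some other $\lambda$, is the operator applied at step $s$, so that the Serre weight is exactly $2n+2+2s$. This follows from the product ordering $\GHeat^{\langle r\rangle}=\GHeat_{2r-1}\cdots\GHeat_1$, in which the $(s+1)$st operator applied is $\GHeat_{2s+1}$.
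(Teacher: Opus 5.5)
Your proposal is correct and follows the same route as the paper. You extract the coefficient of $X^{2n+1}$ after each gauged heat operator to obtain \eqref{eq:u-recursion}, sum against the MLDE coefficients to match $(\ell-1)!\,s^{\mathrm{src}}_{\ell,n+d+1}$, and isolate $a_{n+d,\ell}^{\mathrm A}$ through the unique path using $\partial_X^2$ at every stage. Your version supplies the bookkeeping the paper's three-line proof leaves implicit: that stage $s+1$ applies $\GHeat_{2s+1}^{(\ell)}$, and the induction showing the maximal index $n+r$ in $u_n^{(r)}$ carries the stated product coefficient.
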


\begin{proof}
Taking Laurent coefficients at $X=0$ in \eqref{eq:KJ-PDE}, extract the
coefficient of $X^{2n+1}$ after each application of the gauged heat
operator~\eqref{eq:gauged-heat}; this is exactly
\eqref{eq:u-recursion}.  Summing with the MLDE coefficients gives
\eqref{eq:coefficient-recurrence}.  The only path producing
$a_{n+d,\ell}^{\mathrm A}$ is to choose the $X$-second-derivative shift at
each of the $d$ heat-operator stages, giving
\eqref{eq:leading-recurrence-coefficient}.
\end{proof}

\begin{corollary}[Holomorphy and rationality of the correction sequence]
\label{cor:canonical-correction-rationality}
For every $n\ge0$,
\[
 a_{n,\ell}^{\mathrm A}
 \in M_{2n+2}(\SLtwo)\cap\Q[[q]]
 =\Q[E_4,E_6]_{2n+2}.
\]
\end{corollary}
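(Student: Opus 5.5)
The plan is to prove two separate facts, each by induction on $n$ through the triangular recurrence of Theorem~\ref{thm:triangular-coefficient-recurrence}: rationality of the Fourier coefficients, and holomorphy at the cusp. Together with $a_{n,\ell}^{\mathrm A}\in M_{2n+2}^!(\SLtwo)$ from Theorem~\ref{thm:weakly-holomorphic-scalar-corrections}, these give the claim. The final equality $M_{2n+2}(\SLtwo)\cap\Q[[q]]=\Q[E_4,E_6]_{2n+2}$ is standard: the monomials $E_4^aE_6^b$ have rational coefficients and form a basis, and a basis can be chosen with upper-triangular rational $q$-expansions (Miller-type basis). So the coordinates of a form with rational $q$-expansion are rational.

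For the induction, fix $n\ge0$ and solve \eqref{eq:coefficient-recurrence} (at index $n$) for $a_{n+d,\ell}^{\mathrm A}$. By \eqref{eq:leading-recurrence-coefficient} its coefficient is a nonzero rational number. Everything else in \eqref{eq:coefficient-recurrence} falls into three groups:
\begin{enumerate}[label=\textup{(\alph*)},leftmargin=2.4em]
\item terms obtained from $a_{m,\ell}^{\mathrm A}$ with $m<n+d$ by applying the operations in \eqref{eq:u-recursion};
\item the MLDE coefficients $F_{\ell-2r-1}^{(\ell)}$;
\item the source coefficient $s_{\ell,n+d+1}^{\mathrm{src}}$.
\end{enumerate}
The operations in \eqref{eq:u-recursion} are the Serre derivatives $\dd_w=D-\frac w{12}E_2$, multiplication by $E_4$, and multiplication by rational scalars. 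Each preserves $\Q[[q]]$, and since $E_2,E_4$ have no negative powers of $q$, each preserves holomorphy at $\infty$ (no negative $q$-powers). By Corollary~\ref{cor:monic-theta-MLDE} the $F_j^{(\ell)}$ are holomorphic with rational coefficients. By \eqref{eq:source-series}, $s^{\mathrm{src}}_{\ell,m}\in\Q[E_4,E_6]$, which is likewise holomorphic and rational.

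The base cases are $a_{-1,\ell}^{\mathrm A}=1$, $a_{m,\ell}^{\mathrm A}=0$ for $m\le-2$, and the normalization $a_{0,\ell}^{\mathrm A}=\cdots=a_{d-1,\ell}^{\mathrm A}=0$ from Theorem~\ref{thm:unique-Appell-jet-lift}, all trivially holomorphic and rational. Induction then shows that every $a_{m,\ell}^{\mathrm A}$ is a $q$-series in $\Q[[q]]$ with no negative powers. Since it is already known to be weakly holomorphic of weight $2m+2$ on $\SLtwo$, it lies in $M_{2m+2}(\SLtwo)$.

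The main thing to check is that the recurrence really expresses $a_{n+d,\ell}^{\mathrm A}$ only through earlier corrections. Unwinding \eqref{eq:u-recursion}, $u_n^{(r)}$ involves $a_{m,\ell}^{\mathrm A}$ only for $m\le n+r$, so in \eqref{eq:coefficient-recurrence} the top index $n+d$ occurs only in $u_n^{(d)}$ through the pure second-derivative path, as the theorem records. One also needs that \eqref{eq:coefficient-recurrence} is a genuine identity of $q$-expansions, not merely of functions. This holds because all terms are holomorphic functions of $\tau$ with expansions at $\ii\infty$, and \eqref{eq:KJ-PDE} holds as an identity of Laurent series in $X$ with such coefficients.

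A subtle point is that no negative $q$-powers may be introduced. A priori $a_{n,\ell}^{\mathrm A}$ is only weakly holomorphic, so its principal part must be controlled, and the recurrence is exactly what does this. Dividing by the nonzero constant \eqref{eq:leading-recurrence-coefficient} introduces neither poles nor irrational coefficients, so the argument closes.
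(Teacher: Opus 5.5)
Your proposal is correct and is the paper's argument: induction on the triangular recurrence \eqref{eq:coefficient-recurrence}, whose coefficient of $a_{n+d,\ell}^{\mathrm A}$ is a nonzero rational scalar, with the remaining terms built from rational holomorphic data by Serre derivatives and $E_4$-multiplication, followed by the structure theorem for $\SLtwo$. The one presentational difference is that you get modularity explicitly from Theorem~\ref{thm:weakly-holomorphic-scalar-corrections} and use the recurrence only to rule out negative $q$-powers and irrational coefficients, whereas the paper folds this into ``every other term is already a holomorphic modular form.''
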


\begin{proof}
The initial coefficients vanish.  Suppose the assertion is known through
$a_{n+d-1,\ell}^{\mathrm A}$.  In
\eqref{eq:coefficient-recurrence}, the coefficient of
$a_{n+d,\ell}^{\mathrm A}$ is the nonzero scalar in
\eqref{eq:leading-recurrence-coefficient}; every other term is already a
holomorphic modular form with rational Fourier coefficients.  The source
coefficients are rational polynomials in normalized Eisenstein series, the
MLDE coefficients have rational $q$-expansions, and the Serre derivative
preserves rational holomorphic modular forms.  Induction proves the claim.
The final equality is the standard structure theorem for modular forms on
$\SLtwo$.
\end{proof}

\subsection{Constant modes and optimality}
\label{sec:constant-term-sharpness}

Constant-term factorization reduces the correction equation to a hyperbolic
ODE; interpolation then shows that the next correction is nonzero.

For a holomorphic $q$-series $F$, let
$\CT(F):=[q^0]F$.  We extend $\CT$ coefficientwise to Laurent series in $X$
whose coefficients are holomorphic $q$-series.  Write a differential operator
in normal form as
$\mathscr L=\sum_j\mathscr A_j(X,\partial_X;\tau)D^j$, with every
$D$ acting on the argument.  Let $\CTq(\mathscr L)$ denote the specialization
obtained by deleting the terms with $j>0$ and replacing the modular-form
coefficients of $\mathscr A_0$ by their constant Fourier terms.  All operator
coefficients occurring below are holomorphic at the cusp.  Moreover, $D$
annihilates constant Fourier terms, and products of positive $q$-powers cannot
contribute to the constant term.  Consequently, constant-term specialization
is multiplicative for the normally ordered operators used here.  Thus, for
the series considered below,
\[
 \CT(\mathscr L F)=\CTq(\mathscr L)\CT(F).
\]
Put
\[
 \mathscr L_\ell:=\partial_X-\frac{\ell}{12}X
 =\e^{\ell X^2/24}\partial_X\e^{-\ell X^2/24}.
\]
Then, for $\lambda\in\C$,
\begin{equation}
 \CTq\bigl(\GHeat_\lambda^{(\ell)}\bigr)
 =\mathscr L_\ell^2-\frac{\ell(2\lambda-1)}{12}.
 \label{eq:heat-constant-term-sharp}
\end{equation}

\begin{theorem}[Constant-term operator factorization]
\label{thm:constant-term-factorization}
For $\ell=2d+1$,
\begin{equation}
 \GQop_\ell^{(0)}
 :=\CTq(\GQop_\ell)
 =\prod_{j=1}^{d}
 \left(\mathscr L_\ell^2-\frac{(2j-1)^2}{4}\right).
 \label{eq:Q-constant-term-factor}
\end{equation}
\end{theorem}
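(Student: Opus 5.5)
Multiplicativity of $\CTq$ on the normally ordered operators reduces the claim to two facts. First, $\CTq(\GHeat^{\langle r\rangle})=\prod_{s=1}^{r}\bigl(\mathscr L_\ell^2-\tfrac{\ell(4s-3)}{12}\bigr)$ by \eqref{eq:heat-constant-term-sharp} with $\lambda=2s-1$. Second, since the $F^{(\ell)}_{\ell-2r-1}$ are holomorphic at the cusp, only their constant terms $c_r:=\CT(F^{(\ell)}_{\ell-2r-1})$ survive. Because every factor is a polynomial in $\mathscr L_\ell^2$, we get
\[
\GQop_\ell^{(0)}=P(\mathscr L_\ell^2),\qquad
P(y):=\sum_{r=0}^d(2\ell)^{d-r}c_r\prod_{s=1}^{r}\Bigl(y-\frac{\ell(4s-3)}{12}\Bigr).
\]
This $P$ is monic of degree $d$ in $y$, since $c_d=1$ by \eqref{eq:MLDE-coefficients}. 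It therefore suffices to show that the $d$ roots of $P$ are $y=(2a-1)^2/4$ for $1\le a\le d$.

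The roots are found from the leading behaviour of the theta MLDE. Substituting the leading exponent of the $a$th component of $\mathbf H_\ell$ into \eqref{eq:theta-MLDE} gives an indicial equation. Let the $a$th component be $\mathbf H_{\ell,a}$, with $q$-expansion $c\,q^{\alpha_a}(1+O(q))$, where $\alpha_a=(\ell-2a)^2/(8\ell)$ (Proposition~\ref{prop:sigma-finite-gap}). Applying $\dd_{1/2+2s-2}$ to a leading term $q^{\alpha}$ multiplies it by $\alpha-\frac{1/2+2s-2}{12}=\alpha-\frac{4s-3}{24}$. Taking the lowest $q$-power in \eqref{eq:theta-MLDE} therefore shows that each $\alpha_a$ is a root of
\[
I(x):=\sum_{r=0}^d c_r\prod_{s=1}^r\Bigl(x-\frac{4s-3}{24}\Bigr).
\]
This is a monic degree-$d$ polynomial with the $d$ distinct roots $\alpha_1,\dots,\alpha_d$, so $I(x)=\prod_a(x-\alpha_a)$.

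Comparing with $P$, set $y=2\ell x$. Then $y-\frac{\ell(4s-3)}{12}=2\ell\bigl(x-\frac{4s-3}{24}\bigr)$, so $P(2\ell x)=(2\ell)^d I(x)$. Hence the roots of $P$ are
\[
2\ell\alpha_a=\frac{(\ell-2a)^2}{4}=\frac{(2(d-a)+1)^2}{4}.
\]
As $a$ runs over $1,\dots,d$, the values $\ell-2a$ run over the odd numbers $1,3,\dots,2d-1$, so the roots are exactly $(2j-1)^2/4$ for $1\le j\le d$. This proves \eqref{eq:Q-constant-term-factor}.

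The step I expect to be the main obstacle is justifying the multiplicativity $\CTq(\mathscr A\mathscr B)=\CTq(\mathscr A)\CTq(\mathscr B)$ and the formula \eqref{eq:heat-constant-term-sharp} itself. For the formula, one drops $2\ell D$ and replaces $E_2$ and $E_4$ by $1$, which gives $\partial_X^2-\frac{\ell}{6}(X\partial_X+\lambda)+\frac{\ell^2}{144}X^2$. Expanding $\mathscr L_\ell^2=\partial_X^2-\frac{\ell}{6}X\partial_X-\frac{\ell}{12}+\frac{\ell^2}{144}X^2$ shows that the two sides match. The paper asserts the multiplicativity just before the statement, so I will assume it.
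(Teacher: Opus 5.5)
Your proposal is correct and follows the paper's proof essentially step for step. Both arguments reduce $\CTq(\GQop_\ell)$ to $(2\ell)^d P_\ell(\mathscr L_\ell^2/(2\ell))$, where $P_\ell$ (your $I$) is the indicial polynomial of the monic theta MLDE; its $d$ distinct roots are the leading exponents $\alpha_a$, so the roots in $\mathscr L_\ell^2$ are $2\ell\alpha_a=(\ell-2a)^2/4$.

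One input you assert rather than derive is that $(\mathbf H_\ell)_a$ begins with a nonzero multiple of exactly $q^{\alpha_a}$. This comes from the $n=-\tfrac12$ term of the theta series, equivalently the first column of \eqref{pp:eq:M0}. Proposition~\ref{prop:sigma-finite-gap} only fixes the exponent class $\Z+\alpha_a$, so it does not supply this.
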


\begin{proof}
Let $P_\ell(\alpha)$ be the indicial polynomial of the monic theta MLDE:
\[
 P_\ell(\alpha)
 :=\sum_{r=0}^d
 \CT\bigl(F_{\ell-2r-1}^{(\ell)}\bigr)
 \prod_{u=0}^{r-1}
 \left(\alpha-\frac{4u+1}{24}\right).
\]
Equations~\eqref{eq:gauged-Q} and
\eqref{eq:heat-constant-term-sharp} give the exact identity
\[
 \GQop_\ell^{(0)}
 =(2\ell)^dP_\ell\!\left(\frac{\mathscr L_\ell^2}{2\ell}\right).
\]
The leading exponents
$\alpha_m=(\ell-2m)^2/(8\ell)$, $1\le m\le d$, are the distinct roots of
$P_\ell$.  Hence the roots of the monic polynomial in
$\mathscr L_\ell^2$ are
\[
 2\ell\alpha_m=\frac{(\ell-2m)^2}{4}.
\]
As $m$ runs from $1$ to $d$, the absolute values $|\ell-2m|$ are
$1,3,\ldots,2d-1$.  This proves
\eqref{eq:Q-constant-term-factor}.
\end{proof}

The Bernoulli identity and the definition of the source series give
\[
 \begin{gathered}
 \log\left(\frac{2\sinh(X/2)}{X}\right)
 =\sum_{m\ge1}\frac{B_{2m}}{2m(2m)!}X^{2m},\\
 \CT\mathscr S_\ell^{\mathrm{src}}(X,\tau)
 =\e^{\ell X^2/24}
 \left(\frac{X}{2\sinh(X/2)}\right)^\ell.
 \end{gathered}
\]
Set
\[
 Y_\ell(X):=\e^{-\ell X^2/24}\CT\mathscr C_\ell^{\mathrm A}(X,\tau),
 \qquad
 h_0(X):=\frac1{2\sinh(X/2)}.
\]
Taking constant terms in~\eqref{eq:KJ-PDE} and using
Theorem~\ref{thm:constant-term-factorization} yields
\begin{equation}
 \prod_{j=1}^{d}
 \left(\partial_X^2-\frac{(2j-1)^2}{4}\right)Y_\ell(X)
 =(\ell-1)!h_0(X)^\ell.
 \label{eq:constant-term-ODE-sharp}
\end{equation}

Direct differentiation gives
\[
 (h_0')^2=\frac14h_0^2+h_0^4,
 \qquad
 h_0''=\frac14h_0+2h_0^3,
\]
and hence, for every integer $p\ge1$,
\begin{equation}
 \left(\partial_X^2-\frac{p^2}{4}\right)h_0(X)^p
 =p(p+1)h_0(X)^{p+2}.
 \label{eq:hyperbolic-raising-sharp}
\end{equation}
Applying this identity successively with $p=1,3,\ldots,2d-1$ gives
\[
 \prod_{j=1}^{d}
 \left(\partial_X^2-\frac{(2j-1)^2}{4}\right)h_0(X)
 =(2d)!h_0(X)^{2d+1}.
\]
Thus $h_0$ is a particular solution of
\eqref{eq:constant-term-ODE-sharp}.

Every odd meromorphic solution of
\eqref{eq:constant-term-ODE-sharp} with principal part $X^{-1}$ at $X=0$
has the form
\begin{equation}
 Y_\ell(X)
 =h_0(X)+\sum_{j=1}^{d}c_{j,\ell}\sinh(\lambda_j^{\mathrm{char}}X),
 \qquad
 \lambda_j^{\mathrm{char}}:=\frac{2j-1}{2}.
 \label{eq:Y-hyperbolic-sharp}
\end{equation}
These $\lambda_j^{\mathrm{char}}$ are the positive characteristic roots of
the constant-coefficient homogeneous equation; their squares are the
transformed MLDE indicial values, not the original MLDE exponents.  Taking
constant terms of the initial vanishing conditions gives
\begin{equation}
 Y_\ell(X)=\frac{\e^{-\ell X^2/24}}{X}+O(X^\ell).
 \label{eq:Y-initial-sharp}
\end{equation}
Define
\[
 \mathfrak m_{r,\ell}
 :=\frac{(2r+1)!}{(r+1)!}
 \left(-\frac{\ell}{24}\right)^{r+1}
 -\frac{B_{2r+2}(1/2)}{2r+2}.
\]
The expansion
$X/(2\sinh(X/2))=\sum_{m\ge0}B_m(1/2)X^m/m!$ and comparison in
\eqref{eq:Y-initial-sharp} give the moment system
\begin{equation}
 \sum_{j=1}^{d}c_{j,\ell}(\lambda_j^{\mathrm{char}})^{2r+1}
 =\mathfrak m_{r,\ell},
 \qquad 0\le r\le d-1.
 \label{eq:moment-system-sharp}
\end{equation}

Put
\[
 x_j:=(\lambda_j^{\mathrm{char}})^2=\frac{(2j-1)^2}{4},
 \qquad
 \Pi_d(t):=\prod_{j=1}^{d}(t-x_j)
 =\sum_{r=0}^{d}\pi_{r,d}t^r.
\]
Let
\[
 \Gamma_\ell:=\CT\bigl(a_{d,\ell}^{\mathrm A}\bigr)
\]
denote the constant term of the first correction following the imposed zero
conditions.

\begin{proposition}[Finite Bernoulli formula]
\label{prop:finite-Bernoulli-formula}
For $\ell=2d+1$,
\begin{equation}
 \Gamma_\ell=-\frac1{\ell!}\sum_{r=0}^{d}\pi_{r,d}\mathfrak m_{r,\ell}.
 \label{eq:Gamma-sharp-finite}
\end{equation}
\end{proposition}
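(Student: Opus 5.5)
The plan is to read off $\Gamma_\ell$ as the coefficient of $X^{2d+1}=X^\ell$ in the hyperbolic solution \eqref{eq:Y-hyperbolic-sharp}, and then to remove the unknown constants $c_{j,\ell}$ using the annihilating polynomial $\Pi_d$. The moment system \eqref{eq:moment-system-sharp} supplies the relations for $r<d$; we extend it by one more equation at $r=d$ that contains $\Gamma_\ell$.

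First, sharpen \eqref{eq:Y-initial-sharp} by one order. By the definition of $\mathscr C_\ell^{\mathrm A}$ and the vanishing \eqref{eq:zero-initial-block},
\[
 \CT\mathscr C_\ell^{\mathrm A}=X^{-1}+\Gamma_\ell X^{\ell}+O(X^{\ell+2}).
\]
Multiplying by $\e^{-\ell X^2/24}$ gives
\[
 Y_\ell(X)=\frac{\e^{-\ell X^2/24}}{X}+\Gamma_\ell X^{2d+1}+O(X^{2d+3}).
\]
Hence, for $0\le r\le d$, the coefficient of $X^{2r+1}$ in $Y_\ell$ is
\[
 \frac{1}{(r+1)!}\left(-\frac{\ell}{24}\right)^{r+1}+\delta_{rd}\Gamma_\ell .
\]

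Second, compute the same coefficient from \eqref{eq:Y-hyperbolic-sharp}. Since $h_0(X)=X^{-1}\cdot X/(2\sinh(X/2))$, the coefficient of $X^{2r+1}$ in $h_0$ is $B_{2r+2}(1/2)/(2r+2)!$. The coefficient of $X^{2r+1}$ in $\sinh(\lambda_j^{\mathrm{char}}X)$ is $(\lambda_j^{\mathrm{char}})^{2r+1}/(2r+1)!$. Equating the two computations and multiplying by $(2r+1)!$ gives, for $0\le r\le d$,
\[
 \sum_{j=1}^d c_{j,\ell}(\lambda_j^{\mathrm{char}})^{2r+1}
 =\mathfrak m_{r,\ell}+\delta_{rd}\,(2d+1)!\,\Gamma_\ell .
\]
For $r<d$ this is exactly \eqref{eq:moment-system-sharp}; the case $r=d$ is the new equation.

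Third, multiply the $r$th equation by $\pi_{r,d}$ and sum over $0\le r\le d$. On the left, since $x_j=(\lambda_j^{\mathrm{char}})^2$, the sum becomes
\[
 \sum_j c_{j,\ell}\lambda_j^{\mathrm{char}}\,\Pi_d(x_j)=0,
\]
because each $x_j$ is a root of $\Pi_d$. On the right, $\Pi_d$ is monic, so $\pi_{d,d}=1$, and we obtain
\[
 \sum_r\pi_{r,d}\mathfrak m_{r,\ell}+\ell!\,\Gamma_\ell=0,
\]
where we used $(2d+1)!=\ell!$. This is \eqref{eq:Gamma-sharp-finite}.

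The only delicate point is the first step. One must check that the $X^{\ell}$ term of $Y_\ell$ receives exactly $\Gamma_\ell$ from the correction series plus the Gaussian contribution from $X^{-1}$, with no other cross terms. This holds because the $a^{\mathrm A}_{n,\ell}$ with $n<d$ vanish identically, so their constant terms vanish as well. It also uses that taking constant terms commutes with multiplication by the $\tau$-independent factor $\e^{-\ell X^2/24}$.
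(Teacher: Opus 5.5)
Your proof is correct and is essentially the paper's argument. Like the paper, you sharpen the expansion to $Y_\ell=\e^{-\ell X^2/24}/X+\Gamma_\ell X^\ell+O(X^{\ell+2})$ and eliminate the $c_{j,\ell}$ using the fact that $\Pi_d$ vanishes at the nodes $x_j$. The difference is only bookkeeping: the paper evaluates $\sum_j c_{j,\ell}\lambda_j^{\mathrm{char}}x_j^d$ through the degree-$<d$ polynomial $t^d-\Pi_d(t)$ and then compares the $X^\ell$ coefficients, whereas you append the $r=d$ equation with its $\ell!\,\Gamma_\ell$ term and pair the whole system against the coefficients $\pi_{r,d}$ at once.
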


\begin{proof}
Put $w_{j,\ell}:=c_{j,\ell}\lambda_j^{\mathrm{char}}$.  If
$Q\in\C[t]$ has degree less than $d$, then the moment system gives
\[
 \sum_jw_{j,\ell}Q(x_j)
 =\sum_{r=0}^{d-1}[t^r]Q(t)\,\mathfrak m_{r,\ell}.
\]
The polynomial $t^d-\Pi_d(t)$ has degree at most $d-1$ and agrees with
$t^d$ at every node $x_j$.  Hence
\[
 \sum_jw_{j,\ell}x_j^d
 =\mathfrak m_{d,\ell}
  -\sum_{r=0}^{d}\pi_{r,d}\mathfrak m_{r,\ell}.
\]
Since
\[
 \CT\mathscr C_\ell^{\mathrm A}
 =X^{-1}+\Gamma_\ell X^\ell+O(X^{\ell+2}),
\]
equation~\eqref{eq:Y-initial-sharp} sharpens to
\[
 Y_\ell(X)
 =\frac{\e^{-\ell X^2/24}}{X}
  +\Gamma_\ell X^\ell+O(X^{\ell+2}).
\]
Comparing the coefficient of $X^\ell=X^{2d+1}$ in this expansion with
\eqref{eq:Y-hyperbolic-sharp} gives
\[
 \ell!\Gamma_\ell=\sum_jw_{j,\ell}x_j^d-\mathfrak m_{d,\ell},
\]
and the result follows.
\end{proof}

For $\Re(s)>0$, let
\[
 \eta_{\mathrm D}(s):=\sum_{m\ge1}\frac{(-1)^{m-1}}{m^s}.
\]
It extends to an entire function and satisfies
\[
 \eta_{\mathrm D}(s)=(1-2^{1-s})\zeta(s),
\]
with the removable singularity at $s=1$ understood by continuity.

\begin{lemma}[Sign of the interpolation moments]
\label{lem:interpolation-moment-sign}
For $0\le r\le d$,
\begin{equation}
 (-1)^{r+1}\mathfrak m_{r,\ell}
 =(2r+1)!\left[
 \frac{(\ell/24)^{r+1}}{(r+1)!}
 -\frac{2\eta_{\mathrm D}(2r+2)}{(2\pi)^{2r+2}}
 \right]>0.
 \label{eq:nu-sharp}
\end{equation}
\end{lemma}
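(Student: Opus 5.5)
The proof has two parts: an exact evaluation of $\mathfrak m_{r,\ell}$, and then a positivity estimate.

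\emph{Step 1: the identity.} Multiply $\mathfrak m_{r,\ell}$ by $(-1)^{r+1}$. The first term becomes
\[
 (-1)^{r+1}\frac{(2r+1)!}{(r+1)!}\Bigl(-\frac{\ell}{24}\Bigr)^{r+1}
 =(2r+1)!\,\frac{(\ell/24)^{r+1}}{(r+1)!}.
\]
For the Bernoulli term, use $B_{2m}(1/2)=-(1-2^{1-2m})B_{2m}$ and Euler's formula
\[
 B_{2m}=\frac{(-1)^{m+1}2(2m)!\,\zeta(2m)}{(2\pi)^{2m}}
\]
with $m=r+1$. Together with $\eta_{\mathrm D}(2m)=(1-2^{1-2m})\zeta(2m)$ this gives
\[
 \frac{B_{2r+2}(1/2)}{2r+2}
 =(-1)^{r+1}\frac{2(2r+1)!\,\eta_{\mathrm D}(2r+2)}{(2\pi)^{2r+2}}.
\]
Hence
\[
 (-1)^{r+1}\Bigl(-\frac{B_{2r+2}(1/2)}{2r+2}\Bigr)
 =-\frac{2(2r+1)!\,\eta_{\mathrm D}(2r+2)}{(2\pi)^{2r+2}},
\]
which is exactly the displayed identity. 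The only care needed is tracking the signs.

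\emph{Step 2: positivity.} We must show
\[
 \frac{(\ell/24)^{r+1}}{(r+1)!}>\frac{2\eta_{\mathrm D}(2r+2)}{(2\pi)^{2r+2}}
 \qquad\text{for } 0\le r\le d,\ \ell=2d+1\ge3.
\]
Since $\eta_{\mathrm D}(s)<1$ for $s>1$ (alternating series with decreasing terms), the right side is less than $2/(2\pi)^{2r+2}=2/(4\pi^2)^{r+1}$. It therefore suffices to prove
\[
 \Bigl(\frac{4\pi^2\ell}{24}\Bigr)^{r+1}\ge 2\,(r+1)!,
 \qquad\text{i.e.}\qquad
 \Bigl(\frac{\pi^2\ell}{6}\Bigr)^{r+1}\ge 2\,(r+1)!.
\]
Set $m=r+1\le d+1=(\ell+1)/2$, so that $\ell\ge 2m-1$ and $\pi^2/6>1.64$. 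It is then enough to have $(1.64(2m-1))^m\ge 2\,m!$. This holds because $m!\le m^m$ and $1.64(2m-1)\ge m$, with strict room to spare: at $m=1$ the left side is $1.64\cdot 3\approx 4.9>2$ (since $\ell\ge3$), and for $m\ge2$ we have $(1.64(2m-1)/m)^m\ge 2.46^2>2$.

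The main obstacle is only this crude inequality at the edge $r=d$, where $\ell$ is smallest relative to $r$. There the bound $\ell\ge 2(r+1)-1$ must be used carefully, and the sketch above shows the margin is comfortable.
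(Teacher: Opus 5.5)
Your proof is correct and follows the paper's argument. The same Bernoulli--zeta identities give the closed form, and positivity comes from $0<\eta_{\mathrm D}(2r+2)<1$, $\ell\ge 2r+1$, $(r+1)!\le(r+1)^{r+1}$ and $(2r+1)/(r+1)\ge 3/2$. The only difference is at $r=0$: the paper evaluates the expression exactly as $(\ell-1)/24$, whereas you use the crude bound together with $\ell\ge3$. That separate treatment is genuinely needed, since your intermediate reduction $(1.64(2m-1))^m\ge 2\,m!$ fails at $m=1$.
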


\begin{proof}
The identities
\[
 B_{2m}\!\left(\frac12\right)=(2^{1-2m}-1)B_{2m},
 \qquad
 B_{2m}=(-1)^{m+1}\frac{2(2m)!}{(2\pi)^{2m}}\zeta(2m)
\]
give~\eqref{eq:nu-sharp}.  For $r=0$, the right-hand side equals
$(\ell-1)/24>0$.  For $r\ge1$, use $\ell\ge2r+1$ and
$0<\eta_{\mathrm D}(2r+2)<1$.  The ratio of
$((2r+1)/24)^{r+1}/(r+1)!$ to $2/(2\pi)^{2r+2}$ is
\[
 \frac1{2(r+1)!}
 \left(\frac{\pi^2(2r+1)}6\right)^{r+1}
 \ge\frac12\left(\frac{\pi^2}{4}\right)^{r+1}>1,
\]
where we used $(r+1)!\le(r+1)^{r+1}$ and
$(2r+1)/(r+1)\ge3/2$.
\end{proof}

\begin{theorem}[Sign and nonvanishing of the first scalar correction]
\label{thm:first-correction-sign}
Let $e_s(x_1,\ldots,x_d)$ denote the $s$th elementary symmetric
polynomial.  Then
\begin{equation}
 (-1)^d\Gamma_\ell
 =\frac1{\ell!}\sum_{r=0}^{d}
 e_{d-r}(x_1,\ldots,x_d)
 (-1)^{r+1}\mathfrak m_{r,\ell}>0.
 \label{eq:Gamma-sharp-sign}
\end{equation}
In particular, the constant term of $a_{d,\ell}^{\mathrm A}$ is nonzero, so
$a_{d,\ell}^{\mathrm A}\ne0$ for every odd $\ell\ge3$.
\end{theorem}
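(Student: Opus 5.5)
The identity in \eqref{eq:Gamma-sharp-sign} should follow directly from Proposition~\ref{prop:finite-Bernoulli-formula} by rewriting the coefficients of $\Pi_d$ in terms of elementary symmetric polynomials. Positivity should then follow termwise from Lemma~\ref{lem:interpolation-moment-sign}. No new analytic input is needed.

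First, expanding $\Pi_d(t)=\prod_{j=1}^d(t-x_j)$ by Vieta gives
\[
 \pi_{r,d}=(-1)^{d-r}e_{d-r}(x_1,\ldots,x_d),\qquad 0\le r\le d,
\]
with $e_0=1$. Multiplying \eqref{eq:Gamma-sharp-finite} by $(-1)^d$ and using $(-1)^d(-1)^{d-r}=(-1)^r$, we get
\[
 (-1)^d\Gamma_\ell=-\frac1{\ell!}\sum_{r=0}^d(-1)^r e_{d-r}\,\mathfrak m_{r,\ell}
 =\frac1{\ell!}\sum_{r=0}^d e_{d-r}\,(-1)^{r+1}\mathfrak m_{r,\ell}.
\]
This is the displayed equality.

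Second, for positivity we note that every node $x_j=(2j-1)^2/4$ is strictly positive. Hence each $e_{d-r}(x_1,\ldots,x_d)$ is strictly positive: it equals $1$ for $r=d$ and is a sum of products of positive numbers otherwise. By Lemma~\ref{lem:interpolation-moment-sign}, each factor $(-1)^{r+1}\mathfrak m_{r,\ell}$ is strictly positive for $0\le r\le d$. The relevant range $r\le d$ is exactly where $\ell=2d+1\ge 2r+1$, which that lemma uses. Thus every summand is positive, the sum is positive, and $(-1)^d\Gamma_\ell>0$.

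Finally, since $\Gamma_\ell=\CT(a_{d,\ell}^{\mathrm A})\neq0$, the holomorphic modular form $a_{d,\ell}^{\mathrm A}$ is not identically zero. This holds for every odd $\ell\ge3$, i.e.\ every $d\ge1$.

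The only point needing care is the sign bookkeeping in the Vieta step; the genuinely substantive input, the positivity of the interpolation moments, is already supplied by Lemma~\ref{lem:interpolation-moment-sign}.
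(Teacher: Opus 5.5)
Your proposal is correct and is essentially the paper's proof. Both substitute the Vieta identity $\pi_{r,d}=(-1)^{d-r}e_{d-r}(x_1,\ldots,x_d)$ into Proposition~\ref{prop:finite-Bernoulli-formula}, then get the sign termwise from $x_j>0$ and from the positivity of $(-1)^{r+1}\mathfrak m_{r,\ell}$ in Lemma~\ref{lem:interpolation-moment-sign}; your explicit check that $r\le d$ guarantees $\ell\ge 2r+1$ makes visible a hypothesis the paper leaves implicit.
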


\begin{proof}
Since
$\pi_{r,d}=(-1)^{d-r}e_{d-r}(x_1,\ldots,x_d)$, substitution in
\eqref{eq:Gamma-sharp-finite} gives~\eqref{eq:Gamma-sharp-sign}.  All nodes
$x_j$ and all quantities $(-1)^{r+1}\mathfrak m_{r,\ell}$ are strictly
positive by Lemma~\ref{lem:interpolation-moment-sign}.
\end{proof}

\begin{corollary}[Optimality of the initial vanishing range]
\label{cor:optimality-Appell-jet-normalization}
For $d=k-1$, no
$\mathbf G\in H_{3/2}^{+}(\sigma_\ell^\vee)$ satisfying
$\xi_{3/2}\mathbf G=-\boldsymbol\Theta_\ell$ can have
\[
 a_{0,\ell}(\mathbf G)=\cdots=a_{d,\ell}(\mathbf G)=0.
\]
Thus the range $0\le n<d$ specified in~\eqref{eq:zero-initial-block} is the
maximal initial range on which the scalar corrections can vanish simultaneously,
and the first unavoidable scalar correction occurs at $\rjet_{2k-1,k}$.
\end{corollary}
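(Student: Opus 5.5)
The plan is a contradiction argument built from the uniqueness theorem and the sign computation. Suppose $\mathbf G\in H_{3/2}^{+}(\sigma_\ell^\vee)$ has shadow $-\boldsymbol\Theta_\ell$ and satisfies $a_{0,\ell}(\mathbf G)=\cdots=a_{d,\ell}(\mathbf G)=0$. In particular its first $d$ corrections vanish. By the uniqueness part of Theorem~\ref{thm:unique-Appell-jet-lift} we then have $\mathbf G=\mathbf G_\ell^{\mathrm A}$.

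To make this precise, set $\mathbf U:=(\mathbf G-\mathbf G_\ell^{\mathrm A})/(2\pi\ii)$. Since $\xi_{3/2}$ annihilates $\mathbf U$, it lies in $M_{3/2}^!(\sigma_\ell^\vee)$. Lemma~\ref{lem:weakly-holomorphic-shift} gives
\[
 \Jet_{\ell,n}(\mathbf U)=a_{n,\ell}(\mathbf G_\ell^{\mathrm A})-a_{n,\ell}(\mathbf G)=0
 \qquad (0\le n<d).
\]
Injectivity of $\Jet_\ell^{<d>}$ from Theorem~\ref{thm:jet-coordinate-isomorphism} then forces $\mathbf U=0$.

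Consequently $a_{d,\ell}(\mathbf G)=a_{d,\ell}^{\mathrm A}$. Theorem~\ref{thm:first-correction-sign} shows that the constant term $\Gamma_\ell$ of $a_{d,\ell}^{\mathrm A}$ satisfies $(-1)^d\Gamma_\ell>0$. Hence $a_{d,\ell}^{\mathrm A}\ne0$, which contradicts the assumption that $a_{d,\ell}(\mathbf G)=0$. An alternative route avoids the uniqueness step: the recurrence of Theorem~\ref{thm:triangular-coefficient-recurrence} is lift-independent, so the same initial zeros determine the same $a_{d,\ell}$. The uniqueness argument is the cleaner of the two.

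Finally, we translate back to Taylor coefficients. By the definition \eqref{eq:Mn-general}, $a_{d,\ell}\ne0$ means that $\rjet_{2d+1,k}$ is not equal to the pure higher-Serre term. Since $2d+1=\ell=2k-1$, the first unavoidable correction occurs at $\rjet_{2k-1,k}$. The only substantive input is the positivity established in Theorem~\ref{thm:first-correction-sign}, which we take as given, so no step here presents a real obstacle.
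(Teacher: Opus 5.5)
Your proposal is correct and follows the paper's proof. Assuming the first $d+1$ corrections vanish forces $\mathbf G=\mathbf G_\ell^{\mathrm A}$ by the uniqueness in Theorem~\ref{thm:unique-Appell-jet-lift}, which you unpack via Lemma~\ref{lem:weakly-holomorphic-shift} and the injectivity of $\Jet_\ell^{<d>}$, and this contradicts $a_{d,\ell}^{\mathrm A}\ne0$ from Theorem~\ref{thm:first-correction-sign}. Your alternative recurrence route is also sound, because the correction equation of Theorem~\ref{thm:universal-correction-equation} holds for every lift with shadow $-\boldsymbol\Theta_\ell$.
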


\begin{proof}
A lift $\mathbf G$ satisfying
$a_{0,\ell}(\mathbf G)=\cdots=a_{d,\ell}(\mathbf G)=0$ would satisfy the first
$d$ normalization conditions and hence equal the unique lift
$\mathbf G_\ell^{\mathrm A}$ by
Theorem~\ref{thm:unique-Appell-jet-lift}.  This contradicts
Theorem~\ref{thm:first-correction-sign}.
\end{proof}

Thus the higher-Serre term alone suffices for $0\le n<d$, while the next
completed coefficient, $\rjet_{2k-1,k}$, has the nonzero scalar correction
$a_{d,\ell}^{\mathrm A}$.

\section{The principal part of the canonical lift}
\label{sec:polar-shadow}

The first subsection reconstructs the principal part from
$\rjet_{2n+1,k}^{+}$ for $0\le n<d$, the congruence
$(q;q)_\infty R_k(\e^X;q)\equiv1\pmod{q^k}$, and the modular Wronskian.  The
remaining subsections determine the shadow and cusp-form adjustment.

\subsection{From Taylor data to the principal part}
\label{sec:canonical-principal-part}

The reconstruction has three steps:
\[
 \begin{gathered}
 \{\rjet_{2n+1,k}^{+}:0\le n<d\}
 \xrightarrow{\ \text{triangular recovery}\ }
 \widehat{\boldsymbol\Phi}_\ell,\\
 \widehat{\boldsymbol\Phi}_\ell
 \equiv\widehat{\boldsymbol\Phi}_\ell^{\mathrm{univ}}\pmod{q^k}
 \xrightarrow{\ (\Wr_\ell^{\mathrm{nor}})^{-\mathsf T}\ }
 \left\{\pPol_{\ell,a}(m)\right\}_{\substack{1\le a\le d\\
 m\in\Z_{\ge0},\ m<\alpha_a}}.
 \end{gathered}
\]
The first arrow is the lower-triangular recursion of
Lemma~\ref{lem:triangular-Wronskian-recovery}; after the universal replacement
modulo $q^k$, the second applies the inverse normalized Wronskian.

\noindent\emph{Local notation.}
\begin{center}
\footnotesize
\begin{tabular}{@{}>{\raggedright\arraybackslash}p{0.29\linewidth}
                  >{\raggedright\arraybackslash}p{0.57\linewidth}@{}}
\hline
Symbol & Role \\
\hline
$\mathcal B_{\ell,n}^{+}$ & normalized holomorphic $k$-rank Taylor input \\
$\widehat{\boldsymbol\Phi}_\ell$ & normalized Wronskian-contraction vector \\
$\widehat{\boldsymbol\Phi}_\ell^{\mathrm{univ}}$
 & universal replacement modulo $q^k$ \\
$\Wr_\ell^{\mathrm{nor}}$ & Wronskian with leading $q$-powers removed \\
$\Pcan_\ell$ & canonical principal part \\
\hline
\end{tabular}
\end{center}
\subsubsection*{Recovering the Wronskian contractions.}
The quantities $\Phi_{\ell,r}$ and $\mathcal P_{r,s}$ below record the first
arrow.  We use the conventions for holomorphic and principal parts from
Section~\ref{sec:foundations}; congruences modulo $q^A$ are componentwise.
For $0\le n\le d-1$, set
\[
 \mathcal B_{\ell,n}
 :=\frac{(2n+1)!}{(8\ell\pi^2)^n}\eta\,\rjet_{2n+1,k}.
\]
For every $r\ge0$, set
\[
 \Phi_{\ell,r}
 :=\bigl(\mathbf H_\ell^{\langle r\rangle}\bigr)^{\mathsf T}
 \mathbf G_\ell^{\mathrm A}.
\]
By~\eqref{eq:initial-exact-formulas},
\begin{equation}
 \mathcal B_{\ell,n}
 =\mathbf H_\ell^{\mathsf T}
 \dd_{3/2}^{[n]}\mathbf G_\ell^{\mathrm A},
 \qquad 0\le n\le d-1.
 \label{pp:eq:U-pairing}
\end{equation}

\begin{lemma}[Triangular recovery of the Wronskian contractions]
\label{lem:triangular-Wronskian-recovery}
For $r,s\ge0$, introduce the bookkeeping quantities
\[
 \mathcal P_{r,s}
 :=\bigl(\mathbf H_\ell^{\langle r\rangle}\bigr)^{\mathsf T}
 \dd_{3/2}^{[s]}\mathbf G_\ell^{\mathrm A},
 \qquad
 \mathcal P_{r,-1}:=0.
\]
Then
\begin{equation}
 \mathcal P_{r,s+1}
 =\dd_{2+2r+2s}\mathcal P_{r,s}
 -\mathcal P_{r+1,s}
 -\frac{s(s+1/2)}{144}E_4\mathcal P_{r,s-1}.
 \label{pp:eq:P-rec}
\end{equation}
The coefficient of $\Phi_{\ell,r+s}$ in $\mathcal P_{r,s}$ is $(-1)^s$.
Consequently, the equations
$\mathcal P_{0,s}=\mathcal B_{\ell,s}$,
$0\le s\le d-1$, determine
$\Phi_{\ell,0},\ldots,\Phi_{\ell,d-1}$ uniquely by a lower-triangular
recursion.  The same recursion applies to holomorphic parts.
\end{lemma}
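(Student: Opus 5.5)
I would derive the recursion \eqref{pp:eq:P-rec} directly from the definition of the higher Serre derivatives and a Leibniz rule for the Serre derivative of a pairing. Write $\mathbf G=\mathbf G_\ell^{\mathrm A}$. The recursion \eqref{eq:higher-Serre-recursion} at $\kappa=3/2$ gives
\[
 \dd_{3/2}^{[s+1]}\mathbf G
 =\dd_{3/2+2s}\dd_{3/2}^{[s]}\mathbf G
 -\frac{s(s+1/2)}{144}E_4\dd_{3/2}^{[s-1]}\mathbf G .
\]
For $s=0$ the second term is absent, consistent with $\mathcal P_{r,-1}=0$. Pairing with $(\mathbf H_\ell^{\langle r\rangle})^{\mathsf T}$ handles the $E_4$ term immediately.

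For the first term I would use the Leibniz rule for Serre derivatives. If $\mathbf A$ has weight $a$ and $\mathbf B$ has weight $b$, then $\dd_{a+b}(\mathbf A^{\mathsf T}\mathbf B)=(\dd_a\mathbf A)^{\mathsf T}\mathbf B+\mathbf A^{\mathsf T}\dd_b\mathbf B$, because the $E_2$ terms add: $\frac{a}{12}+\frac{b}{12}=\frac{a+b}{12}$. Take $\mathbf A=\mathbf H_\ell^{\langle r\rangle}$, of weight $1/2+2r$, and $\mathbf B=\dd_{3/2}^{[s]}\mathbf G$, of weight $3/2+2s$, so $a+b=2+2r+2s$. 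Since $\dd_{1/2+2r}\mathbf H_\ell^{\langle r\rangle}=\mathbf H_\ell^{\langle r+1\rangle}$ by \eqref{eq:theta-Serre-jets}, this gives
\[
 (\mathbf H_\ell^{\langle r\rangle})^{\mathsf T}\dd_{3/2+2s}\dd_{3/2}^{[s]}\mathbf G
 =\dd_{2+2r+2s}\mathcal P_{r,s}-\mathcal P_{r+1,s}.
\]
This proves \eqref{pp:eq:P-rec}. The rule is purely algebraic, so it applies to real-analytic $\mathbf G$. It also applies verbatim to $\mathbf G^+$, since $\dd$, multiplication by $E_4$ and pairing with the holomorphic $\mathbf H_\ell^{\langle r\rangle}$ all commute with extracting the holomorphic part. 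This is the "holomorphic parts" clause.

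For the triangularity I would argue by induction on $s$. The claim is that $\mathcal P_{r,s}$ is a linear combination of $\Phi_{\ell,r},\ldots,\Phi_{\ell,r+s}$ with coefficients that are differential polynomials in $E_2,E_4$ and $D$ (acting on the $\Phi$'s). The term $\Phi_{\ell,r+s}$ enters undifferentiated with coefficient $(-1)^s$.

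The base case is $\mathcal P_{r,0}=\Phi_{\ell,r}$. In the inductive step, $\dd_{2+2r+2s}\mathcal P_{r,s}$ and the $E_4$ term only involve $\Phi_{\ell,j}$ with $j\le r+s$, possibly differentiated. The term $-\mathcal P_{r+1,s}$ contributes $-(-1)^s\Phi_{\ell,r+s+1}=(-1)^{s+1}\Phi_{\ell,r+1+s}$ and nothing higher. Here one must be careful that "coefficient" means the coefficient of the undifferentiated symbol $\Phi_{\ell,r+s+1}$, with lower indices allowed to appear with derivatives. So I would phrase the induction formally in terms of the free differential-polynomial ring in the symbols $\Phi_{\ell,j}$.

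Taking $r=0$, the equation $\mathcal B_{\ell,s}=\mathcal P_{0,s}=(-1)^s\Phi_{\ell,s}+(\text{terms in }\Phi_{\ell,0},\ldots,\Phi_{\ell,s-1})$ can be solved successively for $\Phi_{\ell,0},\ldots,\Phi_{\ell,d-1}$. The identity $\mathcal P_{0,s}=\mathcal B_{\ell,s}$ holds by \eqref{pp:eq:U-pairing}. This gives uniqueness and the lower-triangular recursion.

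The main subtlety I expect is this bookkeeping of the formal coefficient amid derivatives, since the $\Phi_{\ell,j}$ are not independent functions. Treating them as formal symbols resolves it.
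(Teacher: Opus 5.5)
Your proposal is correct and follows essentially the paper's own route: the Serre Leibniz rule applied to the higher-Serre recursion at $\kappa=3/2$ gives \eqref{pp:eq:P-rec}, and induction on $s$ starting from $\mathcal P_{r,0}=\Phi_{\ell,r}$ gives the leading coefficient $(-1)^s$ and the lower-triangular recovery. Your formal differential-polynomial bookkeeping and your justification of the holomorphic-part clause spell out points that the paper leaves implicit.
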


\begin{proof}
The Serre product rule, the identity $\dd_{3/2}^{[1]}=\dd_{3/2}$ for
$s=0$, and \eqref{eq:higher-Serre-recursion} for $s\ge1$ give
\eqref{pp:eq:P-rec}.  Since $\mathcal P_{r,0}=\Phi_{\ell,r}$, induction on
$s$ shows that the coefficient of $\Phi_{\ell,r+s}$ is $(-1)^s$ and all
remaining terms involve only lower-index contractions.  This yields the claimed
lower-triangular recovery.
\end{proof}

Define
\[
 \mathcal B_{\ell,n}^{+}
 :=\frac{(2n+1)!}{(8\ell\pi^2)^n}\eta\,
 \rjet_{2n+1,k}^{+},
 \qquad
 \widehat{\boldsymbol\Phi}_\ell
 :=\frac{1}{2\pi\ii}
 \left(
 \bigl(\mathbf H_\ell^{\langle r\rangle}\bigr)^{\mathsf T}
 (\mathbf G_\ell^{\mathrm A})^{+}
 \right)_{0\le r<d}.
\]
The factor $(2\pi\ii)^{-1}$ removes the common scalar in the holomorphic
Taylor normalization and matches the rational normalization used below for the
polar coefficients.  Since $\mathbf H_\ell$ is holomorphic,
taking holomorphic parts in
\eqref{pp:eq:U-pairing} gives
\[
 \mathcal B_{\ell,n}^{+}
 =\mathbf H_\ell^{\mathsf T}
 \dd_{3/2}^{[n]}(\mathbf G_\ell^{\mathrm A})^{+}.
\]
Hence the triangular recursion of
Lemma~\ref{lem:triangular-Wronskian-recovery}, applied to
$\mathcal B_{\ell,0}^{+},\ldots,\mathcal B_{\ell,d-1}^{+}$, determines
$\widehat{\boldsymbol\Phi}_\ell$.

\begin{theorem}[Reconstruction from the first $d$ completed $k$-rank Taylor coefficients]
\label{thm:Wronskian-reconstruction}
One has
\[
 \mathbf G_\ell^{\mathrm A}
 =\Wr_\ell^{-\mathsf T}
 (\Phi_{\ell,0},\ldots,\Phi_{\ell,d-1})^{\mathsf T},
\]
and
\begin{equation}
 (\mathbf G_\ell^{\mathrm A})^{+}
 =2\pi\ii\,\Wr_\ell^{-\mathsf T}
 \widehat{\boldsymbol\Phi}_\ell.
 \label{pp:eq:G-plus-reconstruction}
\end{equation}
Thus the completed $k$-rank Taylor coefficients $\rjet_{2n+1,k}$ for
$0\le n<d$ determine the canonical harmonic lift and its holomorphic part.
\end{theorem}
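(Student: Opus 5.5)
The plan has two steps: a pointwise dual-frame identity, then passage to holomorphic parts.

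\emph{Step 1 (the full lift).} Proposition~\ref{prop:cyclic-Wronskian-finite-jet} and Theorem~\ref{thm:jet-coordinate-isomorphism} were stated for weakly holomorphic $\mathbf U$. However, the dual-frame identity behind them is purely algebraic and holds pointwise on $\Hh$ for any smooth vector. Indeed, $\Wr_\ell$ is invertible at every point by Proposition~\ref{prop:theta-jet-determinant}. Hence for any vector $\mathbf V(\tau)$ we have
\[
 \mathbf V=\Wr_\ell^{-\mathsf T}\Wr_\ell^{\mathsf T}\mathbf V
 =\Wr_\ell^{-\mathsf T}\bigl((\mathbf H_\ell^{\langle r\rangle})^{\mathsf T}\mathbf V\bigr)_{0\le r<d}.
\]
I would apply this with $\mathbf V=\mathbf G_\ell^{\mathrm A}$. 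By the definition of $\Phi_{\ell,r}$, this gives the first identity immediately. No modularity or cusp behaviour is needed here, since the identity is checked pointwise.

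\emph{Step 2 (holomorphic parts).} I would apply the same identity with $\mathbf V=(\mathbf G_\ell^{\mathrm A})^{+}$. This yields
\[
 (\mathbf G_\ell^{\mathrm A})^{+}=\Wr_\ell^{-\mathsf T}\bigl((\mathbf H_\ell^{\langle r\rangle})^{\mathsf T}(\mathbf G_\ell^{\mathrm A})^{+}\bigr)_{r}=2\pi\ii\,\Wr_\ell^{-\mathsf T}\widehat{\boldsymbol\Phi}_\ell,
\]
by the definition of $\widehat{\boldsymbol\Phi}_\ell$. This is \eqref{pp:eq:G-plus-reconstruction}.

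The point requiring care is that $(\mathbf G_\ell^{\mathrm A})^{+}$ is only the holomorphic part of the Fourier expansion at $\ii\infty$. It is not a modular object. The identity nevertheless holds, because $\Wr_\ell$ and its inverse are holomorphic functions on $\Hh$ and the relation is a pointwise matrix identity. Equivalently, $\Wr_\ell^{\mathsf T}$ is holomorphic, so multiplying the decomposition $\mathbf G=\mathbf G^{+}+\mathbf G^{-}$ by it splits compatibly. This is the sense in which the recursion of Lemma~\ref{lem:triangular-Wronskian-recovery} "applies to holomorphic parts."

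\emph{Determination statement.} Next I would use Lemma~\ref{lem:triangular-Wronskian-recovery}. The equations $\mathcal P_{0,s}=\mathcal B_{\ell,s}$ for $0\le s<d$ recover $\Phi_{\ell,0},\dots,\Phi_{\ell,d-1}$ lower-triangularly from the $\mathcal B_{\ell,s}$. The $\mathcal B_{\ell,s}$ are explicit multiples of $\eta\,\rjet_{2s+1,k}$, with $\Phi_{\ell,s}$ appearing with coefficient $(-1)^s$ and the remaining terms given by Serre derivatives and $E_4$-multiples of previously found $\Phi$'s. The same triangular recursion, applied to $\mathcal B_{\ell,s}^{+}$ (using that $\mathbf H_\ell$ is holomorphic, so taking holomorphic parts commutes with contraction against $\mathbf H_\ell^{\langle r\rangle}$), recovers $\widehat{\boldsymbol\Phi}_\ell$. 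Combining this with Steps 1 and 2 shows that the $\rjet_{2n+1,k}$ for $n<d$ determine both $\mathbf G_\ell^{\mathrm A}$ and its holomorphic part.

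The only real obstacle is the bookkeeping in Step 2. One must make sure that "holomorphic part" is taken componentwise after contraction, and that $\dd$ and multiplication by $E_2,E_4$ preserve the splitting into the $q$-series part and the incomplete-gamma part. They do, since these are holomorphic operations acting termwise on the Fourier expansion, and the incomplete-gamma terms are closed under them up to the non-holomorphic sector.
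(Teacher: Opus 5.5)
Your proposal is correct and follows the paper's proof. The first identity is $\Wr_\ell^{\mathsf T}\mathbf G_\ell^{\mathrm A}=(\Phi_{\ell,0},\ldots,\Phi_{\ell,d-1})^{\mathsf T}$ inverted via the pointwise invertibility of $\Wr_\ell$ from Proposition~\ref{prop:theta-jet-determinant}. For the second identity the paper takes holomorphic parts after multiplying by the holomorphic matrix $\Wr_\ell^{-\mathsf T}$, whereas you apply the same pointwise identity directly to $(\mathbf G_\ell^{\mathrm A})^{+}$; since $\widehat{\boldsymbol\Phi}_\ell$ is defined as the contraction of $(\mathbf G_\ell^{\mathrm A})^{+}$, this is a slightly cleaner route to the same conclusion.
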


\begin{proof}
The first formula is the matrix identity
$\Wr_\ell^{\mathsf T}\mathbf G_\ell^{\mathrm A}
=(\Phi_{\ell,0},\ldots,\Phi_{\ell,d-1})^{\mathsf T}$.  The inverse transpose
is holomorphic on $\Hh$, so multiplication by it preserves the decomposition
into holomorphic and non-holomorphic parts; this gives
\eqref{pp:eq:G-plus-reconstruction}.
\end{proof}

\subsubsection*{Replacing the Taylor data modulo $q^k$.}
\begin{lemma}[Initial congruence in the $q$-expansion]
\label{lem:initial-q-adic-gap}
With $X=2\pi\ii z$, pairing the positive and negative summation indices in
the Lambert expansion gives
\begin{equation}
 \begin{aligned}
 (q;q)_\infty R_k(\e^X;q)=1+\sum_{m\ge1}(-1)^m
 q^{m(\ell m+1)/2}
 \bigg(&\frac{1-\e^X}{1-\e^Xq^m}\\
 &+\frac{1-\e^{-X}}{1-\e^{-X}q^m}\bigg).
 \end{aligned}
 \label{pp:eq:Lambert-paired}
\end{equation}
Consequently,
\begin{equation}
 (q;q)_\infty R_k(\e^X;q)=1+O(q^k).
 \label{pp:eq:Lambert-gap}
\end{equation}
Thus the coefficients of $q,q^2,\ldots,q^{k-1}$ in the normalized
generating function vanish.
\end{lemma}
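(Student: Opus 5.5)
The plan is to start from the Lambert expansion of $R_k$ recorded in Section~\ref{sec:appell-taylor-conventions},
\[
 (q;q)_\infty R_k(\zeta;q)=(1-\zeta)\sum_{n\in\Z}\frac{(-1)^nq^{(\ell n^2+n)/2}}{1-\zeta q^n},
\]
with $\zeta=\e^X$, and split the sum into $n=0$, $n=m\ge1$, and $n=-m\le-1$. The $n=0$ term is $(1-\zeta)/(1-\zeta)=1$. For $n=m\ge1$ the summand is exactly $(-1)^mq^{m(\ell m+1)/2}(1-\zeta)/(1-\zeta q^m)$, the first bracketed term in \eqref{pp:eq:Lambert-paired}.

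For $n=-m$ the exponent is $m(\ell m-1)/2$. Rewrite the summand as
\[
 \frac{1-\zeta}{1-\zeta q^{-m}}
 =\frac{q^m(1-\zeta)}{q^m-\zeta}
 =\frac{q^m(1-\zeta^{-1})}{1-\zeta^{-1}q^m},
\]
where the last step multiplies numerator and denominator by $-\zeta^{-1}$. This gives total $q$-power $q^{m(\ell m-1)/2+m}=q^{m(\ell m+1)/2}$ and the second bracketed term. Summing over $m$ yields \eqref{pp:eq:Lambert-paired}. As formal series in $q$ with coefficients in $\C[[X]]$, or for $|q|<1$ and $X$ near $0$ away from the poles, each denominator has the form $1+O(q)$, so every summand is $q^{m(\ell m+1)/2}$ times a power series in $q$.

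For \eqref{pp:eq:Lambert-gap}, the minimal exponent over $m\ge1$ is attained at $m=1$ and equals $(\ell+1)/2=k$, since $\ell=2k-1$. Every $m\ge1$ term is therefore $O(q^k)$, coefficientwise in $X$. This is also compatible with the Taylor expansion in $X$, because $1-\e^{\pm X}$ is divisible by $X$ and each $q$-coefficient is a polynomial in $\e^{\pm X}$.

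The only point needing care is the reindexing sign in the $n<0$ terms, namely that $(-1)^{-m}=(-1)^m$ and that the exponents combine correctly. There is no real obstacle.
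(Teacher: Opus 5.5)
Your proposal is correct and follows the paper's argument: pair the $n=m$ and $n=-m$ terms of the Lambert expansion, then observe that the exponent $m(\ell m+1)/2$ is minimized over $m\ge1$ at $m=1$, where it equals $(\ell+1)/2=k$. Your explicit rewriting $\frac{1-\zeta}{1-\zeta q^{-m}}=\frac{q^m(1-\zeta^{-1})}{1-\zeta^{-1}q^m}$ simply supplies the pairing step that the paper states as following ``directly.''
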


\begin{proof}
The paired identity follows directly from the Lambert expansion of $R_k$.
For $m\ge1$, the smallest exponent is
$m(\ell m+1)/2\ge(\ell+1)/2=k$, proving the asserted vanishing.
\end{proof}

Define the quasimodular coefficients $V_{\ell,n}(\tau)$ by
\begin{equation}
 \frac{\exp(\ell E_2(\tau)X^2/24)}{2\sinh(X/2)}
 =X^{-1}+\sum_{n\ge0}V_{\ell,n}(\tau)X^{2n+1}.
 \label{pp:eq:V}
\end{equation}
The holomorphic rank generating function and Lemma~\ref{lem:initial-q-adic-gap} imply, for
$0\le n\le d-1$,
\[
 \frac{\mathcal B_{\ell,n}^{+}}{2\pi\ii}
 \equiv \frac{(-1)^n(2n+1)!}{(2\ell)^n}V_{\ell,n}(\tau)
 \pmod{q^k}.
\]
Indeed, the coefficient of $X^{2n+1}$ in the holomorphic rank generating function is
\[
 \frac{\eta\rjet_{2n+1,k}^{+}}{(2\pi\ii)^{2n+1}},
\]
and $(2\pi\ii)^{2n}=(-4\pi^2)^n$ gives the displayed factor.

Let $\widehat{\boldsymbol\Phi}_\ell^{\mathrm{univ}}$ be the vector obtained
from the triangular recursion of
Lemma~\ref{lem:triangular-Wronskian-recovery} after replacing
$\mathcal B_{\ell,n}^{+}/(2\pi\ii)$ by
\[
 \frac{(-1)^n(2n+1)!}{(2\ell)^n}V_{\ell,n}(\tau),
 \qquad 0\le n<d.
\]
Because the recursion is built from $D$ and multiplication by $E_2$ and
$E_4$, each of which preserves the ideal $q^k\C[[q]]$,
\begin{equation}
 \widehat{\boldsymbol\Phi}_\ell
 \equiv\widehat{\boldsymbol\Phi}_\ell^{\mathrm{univ}}\pmod{q^k}
 \label{pp:eq:Phi-hat-congruence}
\end{equation}
componentwise.

\subsubsection*{Reading off the principal part.}
We now remove the known leading factors $q^{\alpha_a}$ from the rows of the
Wronskian.  The remaining matrix is invertible over $\Q[[q]]$, so its inverse
may be applied coefficientwise.

\begin{lemma}[Normalized Wronskian matrix]
\label{lem:normalized-theta-jet-matrix}
Use the exponents $\alpha_a$ from~\eqref{eq:alpha-nonintegral}.  Define
$\Wr_\ell^{\mathrm{nor}}(q)$ by
\begin{equation}
 \Wr_\ell
 =\sqrt{\frac\ell2}\,
 \operatorname{diag}(q^{\alpha_1},\ldots,q^{\alpha_d})
 \Wr_\ell^{\mathrm{nor}}(q).
 \label{pp:eq:M-matrix}
\end{equation}
Then
$\Wr_\ell^{\mathrm{nor}}\in\operatorname{Mat}_d(\Q[[q]])$ and
$\det\Wr_\ell^{\mathrm{nor}}(0)\ne0$.  More precisely,
\begin{equation}
 (\Wr_\ell^{\mathrm{nor}}(0))_{a,r+1}
 =-\prod_{u=0}^{r-1}
 \left(\alpha_a-\frac{4u+1}{24}\right),
 \label{pp:eq:M0}
\end{equation}
where the empty product for $r=0$ is $1$; hence the determinant is a nonzero
Vandermonde determinant.
\end{lemma}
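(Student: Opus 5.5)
The plan is to compute $\Wr_\ell$ row by row from the explicit $q$-expansions of the components of $\mathbf H_\ell$. By \eqref{eq:Theta-Appell-relation}, $\mathbf H_\ell=-\sqrt{\ell/2}\,\boldsymbol\theta_\ell^{\mathrm{App}}$. For $1\le a\le d$, completing the square in \eqref{eq:Appell-theta} gives
\[
 \theta_{\ell,a}^{\mathrm{App}}=q^{\alpha_a}\,g_a(q),\qquad g_a\in 1+q\Z[[q]].
\]
The minimal exponent $\tfrac{\ell}{2}(m-\tfrac12+\tfrac a\ell)^2$ is attained at $m=0$ (or $m=1$, after using the symmetry $a\mapsto\ell-a$), with value $(\ell-2a)^2/(8\ell)=\alpha_a$. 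The sign of this leading term is $+1$, and the integer shifts between exponents come from $m(m-1)\ell/2+m(\ldots)$, which are integers. So the $a$th row of $\mathbf H_\ell^{\langle0\rangle}$ is $-\sqrt{\ell/2}\,q^{\alpha_a}g_a$.

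Next, I would show by induction on $r$ that
\[
 (\mathbf H_\ell^{\langle r\rangle})_a=-\sqrt{\ell/2}\,q^{\alpha_a}\,g_{a,r}(q),\qquad g_{a,r}\in\Q[[q]].
\]
The key identity is
\[
 \dd_{1/2+2r}\bigl(q^{\alpha}g\bigr)=q^{\alpha}\Bigl(\alpha g+Dg-\tfrac{1+4r}{24}E_2g\Bigr).
\]
Since $D$ preserves $\Q[[q]]$ and $E_2\in 1+q\Q[[q]]$, the factor $q^{\alpha_a}$ and the scalar $-\sqrt{\ell/2}$ persist, and the new series stays in $\Q[[q]]$. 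Evaluating at $q=0$, with $Dg(0)=0$ and $E_2(0)=1$, gives
\[
 g_{a,r+1}(0)=\Bigl(\alpha_a-\tfrac{4r+1}{24}\Bigr)g_{a,r}(0),\qquad g_{a,0}(0)=1.
\]
Hence $g_{a,r}(0)=\prod_{u<r}(\alpha_a-\tfrac{4u+1}{24})$. Pulling the overall $-1$ into the normalized matrix, so that the stated factorization with $+\sqrt{\ell/2}$ holds, yields \eqref{pp:eq:M-matrix} and \eqref{pp:eq:M0}.

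For the determinant, column $r+1$ of $\Wr_\ell^{\mathrm{nor}}(0)$ is $-p_r(\alpha_a)$, where $p_r(x)=\prod_{u<r}(x-\tfrac{4u+1}{24})$ is monic of degree $r$. Column operations reduce the determinant to $(-1)^d$ times the Vandermonde determinant $\prod_{a<b}(\alpha_b-\alpha_a)$. This is nonzero because the $\alpha_a$ are distinct: $|\ell-2a|$ runs over the distinct values $\ell-2,\ell-4,\ldots,1$.

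The main obstacle is expected to be the bookkeeping of the leading term of $\theta_{\ell,a}^{\mathrm{App}}$. One must confirm that the minimal exponent is attained by exactly one $m$ with coefficient $+1$, so that $g_a(0)=1$ rather than $2$ or $-1$, and that all exponents lie in $\alpha_a+\Z_{\ge0}$. I would verify this directly for $1\le a\le d$: the two candidates $m=0$ and $m=1$ give $|a/\ell-\tfrac12|$ and $|a/\ell+\tfrac12|$, and the first is strictly smaller since $a<\ell/2$.
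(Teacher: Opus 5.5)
Your proposal is correct and follows the same route as the paper's proof. You factor $-\sqrt{\ell/2}\,q^{\alpha_a}$ out of each row, track constant terms through the successive Serre derivatives using $\dd_{1/2+2r}(q^{\alpha}g)=q^{\alpha}\bigl(\alpha g+Dg-\tfrac{4r+1}{24}E_2g\bigr)$, and reduce the limiting determinant to a Vandermonde determinant in the distinct nodes $\alpha_a$. Your checks fill in what the paper's short sketch leaves implicit: the minimal exponent of $\theta_{\ell,a}^{\mathrm{App}}$ is attained only at $m=0$ with coefficient $+1$, and the sign $-1$ is absorbed into $\Wr_\ell^{\mathrm{nor}}$ to match the stated factorization with $+\sqrt{\ell/2}$.
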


\begin{proof}
Extract the leading factor
$-\sqrt{\ell/2}\,q^{\alpha_a}$ from the $a$th row of the Wronskian matrix.
Successive Serre derivatives give the product in~\eqref{pp:eq:M0}; all
remaining coefficients are rational $q$-series.  The nodes $\alpha_a$ are
distinct, so the limiting determinant is a nonzero Vandermonde determinant.
\end{proof}

For $1\le a\le d$ and $m\in\Z_{\ge0}$, define the polar coefficients by
\begin{equation}
 \pPol_{\ell,a}(m)
 :=[q^m]\left(
 (\Wr_\ell^{\mathrm{nor}}(q))^{-\mathsf T}
 \widehat{\boldsymbol\Phi}_\ell^{\mathrm{univ}}
 \right)_a.
 \label{pp:eq:p}
\end{equation}

\begin{theorem}[Principal part of the canonical lift]
\label{thm:canonical-principal-part}
For every $1\le a\le d$,
\begin{equation}
 \pp\bigl((G_{\ell,a}^{\mathrm A})^+\bigr)
 =2\pi\ii\sqrt{\frac2\ell}\,
 q^{-\alpha_a}
 \sum_{\substack{m\in\Z_{\ge0}\\m<\alpha_a}}
 \pPol_{\ell,a}(m)q^m.
 \label{pp:eq:principal-part}
\end{equation}
All $\pPol_{\ell,a}(m)$ are rational.  The principal part is determined entirely
by the universal truncation modulo $q^k$ and the Wronskian matrix; no positive
Fourier coefficient of the modified $k$-rank moment series is required.
\end{theorem}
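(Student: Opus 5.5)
Starting from the reconstruction identity \eqref{pp:eq:G-plus-reconstruction} of Theorem~\ref{thm:Wronskian-reconstruction}, I substitute the factorization \eqref{pp:eq:M-matrix}. Since $\Wr_\ell=\sqrt{\ell/2}\,\operatorname{diag}(q^{\alpha_a})\Wr_\ell^{\mathrm{nor}}$, the inverse transpose is
\[
 \Wr_\ell^{-\mathsf T}
 =\sqrt{\tfrac2\ell}\,\operatorname{diag}(q^{-\alpha_1},\ldots,q^{-\alpha_d})
 (\Wr_\ell^{\mathrm{nor}})^{-\mathsf T}.
\]
Hence
\[
 (G_{\ell,a}^{\mathrm A})^+
 =2\pi\ii\sqrt{\tfrac2\ell}\,q^{-\alpha_a}
 \bigl((\Wr_\ell^{\mathrm{nor}})^{-\mathsf T}\widehat{\boldsymbol\Phi}_\ell\bigr)_a.
\]
By Lemma~\ref{lem:normalized-theta-jet-matrix}, $\Wr_\ell^{\mathrm{nor}}$ has entries in $\Q[[q]]$ and an invertible constant term. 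Therefore $(\Wr_\ell^{\mathrm{nor}})^{-\mathsf T}\in\operatorname{Mat}_d(\Q[[q]])$, computed by formal inversion. This matrix preserves the ideal $q^k\C[[q]]$ when it acts on vectors of formal power series.

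The next step is to verify that $\widehat{\boldsymbol\Phi}_\ell$ is a genuine power series in nonnegative integral powers of $q$, so that the preceding formula is a Laurent expansion in $q^{-\alpha_a}\C[[q]]$. Each component $(\mathbf H_\ell^{\langle r\rangle})^{\mathsf T}(\mathbf G_\ell^{\mathrm A})^+$ is the holomorphic part of the scalar $\Phi_{\ell,r}$, which is invariant under $T$ up to the trivial type. I need to show it has no negative $q$-powers. I would argue this through the recovery recursion: $\widehat{\boldsymbol\Phi}_\ell$ is obtained from $\mathcal B_{\ell,n}^{+}/(2\pi\ii)$, $0\le n<d$, by Lemma~\ref{lem:triangular-Wronskian-recovery}, using only $D$ and multiplication by $E_2,E_4$. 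The inputs $\mathcal B^+_{\ell,n}$ are $\eta\,\rjet^+_{2n+1,k}$ up to constants. By the defining generating identity, these are $q^0$ times a $\Q[E_2]$-combination of the moment series, hence lie in $\C[[q]]$ with no negative powers. So $\widehat{\boldsymbol\Phi}_\ell\in\C[[q]]^d$.

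Then the congruence \eqref{pp:eq:Phi-hat-congruence}, $\widehat{\boldsymbol\Phi}_\ell\equiv\widehat{\boldsymbol\Phi}_\ell^{\mathrm{univ}}\pmod{q^k}$, transfers through the $\Q[[q]]$-matrix. It shows that the coefficients of $q^m$ with $m<k$ in the $a$th component agree with $\pPol_{\ell,a}(m)$ as defined in \eqref{pp:eq:p}. The principal part of the $a$th component consists of the exponents $m-\alpha_a\le0$, i.e.\ $m<\alpha_a$, because $\alpha_a\notin\Z$. The key bound is $\alpha_a=(\ell-2a)^2/(8\ell)<\ell/8<k$. Thus every polar index $m<\alpha_a$ lies below $k$, and those coefficients are read off from $\widehat{\boldsymbol\Phi}_\ell^{\mathrm{univ}}$. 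This yields \eqref{pp:eq:principal-part}.

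Rationality follows because $V_{\ell,n}\in\Q[E_2]$ has rational $q$-coefficients, the recursion preserves $\Q[[q]]$ (Serre derivatives with rational Eisenstein series), and the normalized Wronskian inverse is rational. The independence from positive moment coefficients is then automatic, since only the universal truncation enters. The step I expect to need most care is the bookkeeping of the scalar $2\pi\ii$ and of the sign $-\sqrt{\ell/2}$ extracted in Lemma~\ref{lem:normalized-theta-jet-matrix} versus $\sqrt{\ell/2}$ in \eqref{pp:eq:M-matrix}. This affects only the normalization of $\pPol$, which is defined consistently through \eqref{pp:eq:p}, so I would check it once against the $k=2$ case.
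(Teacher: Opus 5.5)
Your proposal is correct and follows the paper's proof essentially step for step. You substitute the factorization \eqref{pp:eq:M-matrix} into \eqref{pp:eq:G-plus-reconstruction}, note that $\widehat{\boldsymbol\Phi}_\ell$ has only nonnegative integral $q$-powers, use $\alpha_a<k$ so that every polar exponent is visible modulo $q^k$, and carry the congruence \eqref{pp:eq:Phi-hat-congruence} through the $\Q[[q]]$-matrix $(\Wr_\ell^{\mathrm{nor}})^{-\mathsf T}$. The sign bookkeeping you flag is harmless, since $\pPol_{\ell,a}(m)$ is defined in \eqref{pp:eq:p} directly from the normalization \eqref{pp:eq:M-matrix}, which is the one you used.
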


\begin{proof}
Equations~\eqref{pp:eq:G-plus-reconstruction} and
\eqref{pp:eq:M-matrix} give
\begin{equation}
 (\mathbf G_\ell^{\mathrm A})^+
 =2\pi\ii\sqrt{\frac2\ell}\,
 \operatorname{diag}(q^{-\alpha_1},\ldots,q^{-\alpha_d})
 (\Wr_\ell^{\mathrm{nor}}(q))^{-\mathsf T}
 \widehat{\boldsymbol\Phi}_\ell.
 \label{pp:eq:G-q}
\end{equation}
The eta-normalized holomorphic rank generating series
\[
 \frac{(q;q)_\infty R_k(\e^X;q)}{2\sinh(X/2)}
 \exp\!\left(\frac{\ell E_2(\tau)X^2}{24}\right)
\]
lies in $\C[[q]]((X))$.  Consequently,
$\widehat{\boldsymbol\Phi}_\ell$ has no negative integral $q$-powers.  Hence
the $a$th exponents in~\eqref{pp:eq:G-q} are $m-\alpha_a$ with $m\ge0$.
Since
\[
 \alpha_a\le\frac{(\ell-2)^2}{8\ell}
 <\frac{\ell+1}{2}=k,
\]
every negative Fourier exponent is determined by the truncation modulo $q^k$.
Moreover,
$(\Wr_\ell^{\mathrm{nor}}(q))^{-\mathsf T}
\in\operatorname{Mat}_d(\Q[[q]])$, so
\eqref{pp:eq:Phi-hat-congruence} allows
$\widehat{\boldsymbol\Phi}_\ell$ to be replaced by its universal truncation
without changing the principal part.  This proves
\eqref{pp:eq:principal-part}; rationality follows from~\eqref{pp:eq:p}.
\end{proof}

The principal part is therefore recovered from the initial Appell
coefficients, the congruence modulo $q^k$, and the theta Wronskian before any
positive Fourier coefficient of the modified $k$-rank moment series is used.
We next record its leading coefficients and write it as $\Pcan_\ell$.

For $1\le a\le d$, recall that
\[
 2\ell\alpha_a
 =\bigl(\lambda_{d+1-a}^{\mathrm{char}}\bigr)^2.
\]

\begin{lemma}[Cohen--Kuznetsov contribution of the $m=0$ polar term]
\label{lem:m0-polar-seed-CK}
Suppose the $a$th component of a holomorphic weight-$3/2$ vector
$\mathbf U$ of type $\sigma_\ell^\vee$ contains the polar term
\[
 2\pi\ii\sqrt{\frac2\ell}\,\beta\,q^{-\alpha_a}.
\]
Its contribution to
$\CT\CK_\ell(\mathbf H_\ell,\mathbf U;X)$ is
\begin{equation}
 -\frac{\beta}{\lambda_{d+1-a}^{\mathrm{char}}}
 \e^{\ell X^2/24}\sinh(\lambda_{d+1-a}^{\mathrm{char}}X).
 \label{pp:eq:single-seed-CK}
\end{equation}
\end{lemma}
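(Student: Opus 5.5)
The plan is to evaluate the constant term directly, using the Cohen--Kuznetsov generating identity \eqref{eq:CK-generating-identity} to strip the higher Serre derivatives down to ordinary derivatives. Since only the single monomial $2\pi\ii\sqrt{2/\ell}\,\beta\,q^{-\alpha_a}\mathbf e_a^\vee$ of $\mathbf U$ is in question, and $\CK_\ell$ is linear in its second argument, I would replace $\mathbf U$ by this monomial throughout. The coefficientwise identity is algebraic, so it applies to this monomial even though the monomial is not itself modular. With $\kappa=3/2$ and $Y=-\ell X^2/2$, definition \eqref{eq:CK-kernel} becomes
\[
 \CK_\ell(\mathbf H_\ell,\mathbf U;X)
 =\frac{X}{2\pi\ii}\,\e^{\ell E_2X^2/24}
 \sum_{n\ge0}\frac{\mathbf H_\ell^{\mathsf T}D^n\mathbf U}{n!(3/2)_n}
 \left(-\frac\ell2X^2\right)^n .
\]

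Next I would isolate the $q^0$ term, exactly as in the multiplicativity remark preceding \eqref{eq:heat-constant-term-sharp}. The pairing selects the $a$th component of $\mathbf H_\ell$. By \eqref{eq:Theta-Appell-relation} and Lemma~\ref{lem:normalized-theta-jet-matrix}, that component equals $-\sqrt{\ell/2}\,q^{\alpha_a}(1+O(q))$, and all of its exponents lie in $\alpha_a+\Z_{\ge0}$. Further, $D^n q^{-\alpha_a}=(-\alpha_a)^nq^{-\alpha_a}$, and $\e^{\ell E_2X^2/24}$ has constant term $\e^{\ell X^2/24}$, with every other term carrying positive integral $q$-powers. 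Hence the only contribution to the constant Fourier term comes from the product of the three leading terms. The scalars $-\sqrt{\ell/2}$ and $2\pi\ii\sqrt{2/\ell}$ combine with the prefactor $1/(2\pi\ii)$ to give $-\beta$, so the contribution is
\[
 -\beta X\,\e^{\ell X^2/24}\sum_{n\ge0}\frac{1}{n!(3/2)_n}\left(\frac{\ell\alpha_a X^2}{2}\right)^n .
\]

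Finally I would sum the series. Using $n!(3/2)_n=(2n+1)!/4^n$ gives
\[
 \sum_{n\ge0}\frac{(\mu^2/4)^n}{n!(3/2)_n}=\frac{\sinh\mu}{\mu}.
\]
Setting $\mu^2=2\ell\alpha_aX^2=(\lambda_{d+1-a}^{\mathrm{char}})^2X^2$, by the recalled identity $2\ell\alpha_a=(\lambda_{d+1-a}^{\mathrm{char}})^2$, yields exactly \eqref{pp:eq:single-seed-CK}.

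The computation itself is routine. The only step needing care is the sign and scalar of the leading coefficient of $(\mathbf H_\ell)_a$. I would check this against \eqref{eq:Theta-Appell-relation}: $\theta^{\mathrm{App}}_{\ell,a}$ has leading term $q^{\alpha_a}$ with coefficient $+1$, coming from the index $m$ that minimizes $(m-1/2+a/\ell)^2$, and this is consistent with the row factor $-\sqrt{\ell/2}\,q^{\alpha_a}$ extracted in the proof of Lemma~\ref{lem:normalized-theta-jet-matrix}.
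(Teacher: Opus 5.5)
Your proposal is correct and follows the paper's proof. You apply the Cohen--Kuznetsov identity \eqref{eq:CK-generating-identity} with $Y=-\ell X^2/2$, use $D^nq^{-\alpha_a}=(-\alpha_a)^nq^{-\alpha_a}$, pair only with the leading term $-\sqrt{\ell/2}\,q^{\alpha_a}$ of $(\mathbf H_\ell)_a$, replace $E_2$ by its constant term $1$, and sum to $\sinh(\lambda_{d+1-a}^{\mathrm{char}}X)/(\lambda_{d+1-a}^{\mathrm{char}}X)$ via $2\ell\alpha_a=(\lambda_{d+1-a}^{\mathrm{char}})^2$. Your explicit summation using $n!(3/2)_n=(2n+1)!/4^n$ and your sign check on the leading theta coefficient simply spell out steps the paper leaves implicit.
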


\begin{proof}
The first column of \eqref{pp:eq:M0} gives
$(\mathbf H_\ell)_a=-\sqrt{\ell/2}\,q^{\alpha_a}(1+O(q))$, so only its leading
term pairs with $q^{-\alpha_a}$.  Equation
\eqref{eq:CK-generating-identity}, with
$Y=-\ell X^2/2$ and $2\ell\alpha_a=\bigl(\lambda_{d+1-a}^{\mathrm{char}}\bigr)^2$, gives
\[
 \sum_{n\ge0}
 \frac{\dd_{3/2}^{[n]}q^{-\alpha_a}}{n!(3/2)_n}
 \left(-\frac\ell2X^2\right)^n
 =q^{-\alpha_a}
 \exp\!\left(\frac{\ell E_2(\tau)X^2}{24}\right)
 \frac{\sinh(\lambda_{d+1-a}^{\mathrm{char}}X)}{\lambda_{d+1-a}^{\mathrm{char}}X}.
\]
Multiplying by the Cohen--Kuznetsov prefactor, the polar coefficient, and the
leading term of $(\mathbf H_\ell)_a$, and then taking the constant Fourier
coefficient, replaces $E_2(\tau)$ by its constant term $1$ and gives
\eqref{pp:eq:single-seed-CK}.
\end{proof}

Let $c_{j,\ell}$ be the interpolation constants in
\eqref{eq:moment-system-sharp}.

\begin{proposition}[Closed formula for the leading polar coefficient]
\label{prop:closed-leading-polar-coefficient}
For $1\le a\le d$,
\begin{equation}
 \pPol_{\ell,a}(0)=\lambda_{d+1-a}^{\mathrm{char}}\,c_{d+1-a,\ell}.
 \label{pp:eq:p0-c}
\end{equation}
\end{proposition}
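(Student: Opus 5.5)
The plan is to take constant Fourier terms in the functional identity \eqref{eq:correction-functional} for the canonical lift $\mathbf G=\mathbf G_\ell^{\mathrm A}$. This produces an explicit closed form for $Y_\ell(X)=\e^{-\ell X^2/24}\CT\mathscr C_\ell^{\mathrm A}$ as $h_0(X)$ plus a combination of the functions $\sinh(\lambda_j^{\mathrm{char}}X)$. Comparing that closed form with the representation \eqref{eq:Y-hyperbolic-sharp} then reads off the constants $c_{j,\ell}$.

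\emph{Step 1: reduce to holomorphic parts.} Since every $a_{n,\ell}^{\mathrm A}$ is holomorphic, the non-holomorphic parts of the two terms in \eqref{eq:correction-functional} cancel coefficientwise in $X$. Indeed, $\mathbf H_\ell$ is holomorphic, so this cancellation is the coefficientwise statement $L(a_{n,\ell})=0$ already used in Theorem~\ref{thm:weakly-holomorphic-scalar-corrections}. Hence
\[
 \mathscr C_\ell^{\mathrm A}
 =\Odd_X\Bigl[\eta\,\e^{U_\ell}\mathcal R_k^{\mathrm A,+}\Bigr]_{X\text{-scaled}}
 -\CK_\ell\bigl(\mathbf H_\ell,(\mathbf G_\ell^{\mathrm A})^{+};X\bigr),
\]
with the first term equal to $\frac{(q;q)_\infty R_k(\e^X;q)}{2\sinh(X/2)}\exp(\ell E_2X^2/24)$, which is already odd. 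Its constant Fourier term is $\e^{\ell X^2/24}h_0(X)$: the congruence \eqref{pp:eq:Lambert-gap} gives $\CT\bigl((q;q)_\infty R_k\bigr)=1$, and $\CT E_2=1$.

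\emph{Step 2: constant term of the Cohen--Kuznetsov term.} Write the $a$th component of $(\mathbf G_\ell^{\mathrm A})^+$ as $2\pi\ii\sqrt{2/\ell}\sum_{m\ge0}\pPol^{\ast}_{a}(m)q^{m-\alpha_a}$. The polar part of this expansion is given by Theorem~\ref{thm:canonical-principal-part}, so $\pPol^\ast_a(0)=\pPol_{\ell,a}(0)$. The component $(\mathbf H_\ell)_a$ has exponents in $\alpha_a+\Z_{\ge0}$. The operators $\dd_{3/2}^{[n]}$ involve only $D$ and multiplication by $E_2,E_4$, which contribute only nonnegative integral powers. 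Consequently a constant Fourier term can arise only from pairing the $m=0$ term $q^{-\alpha_a}$ with the leading term of $(\mathbf H_\ell)_a$. Lemma~\ref{lem:m0-polar-seed-CK} with $\beta=\pPol_{\ell,a}(0)$ then gives
\[
 \CT\CK_\ell\bigl(\mathbf H_\ell,(\mathbf G_\ell^{\mathrm A})^{+}\bigr)
 =-\e^{\ell X^2/24}\sum_{a=1}^d
 \frac{\pPol_{\ell,a}(0)}{\lambda_{d+1-a}^{\mathrm{char}}}
 \sinh(\lambda_{d+1-a}^{\mathrm{char}}X).
\]

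\emph{Step 3: comparison.} Combining the two steps,
\[
 Y_\ell(X)=h_0(X)+\sum_{a=1}^d\frac{\pPol_{\ell,a}(0)}{\lambda_{d+1-a}^{\mathrm{char}}}\sinh(\lambda_{d+1-a}^{\mathrm{char}}X).
\]
The functions $h_0,\sinh(\lambda_1^{\mathrm{char}}X),\ldots,\sinh(\lambda_d^{\mathrm{char}}X)$ are linearly independent because the $\lambda_j^{\mathrm{char}}$ are distinct and positive. Therefore comparison with \eqref{eq:Y-hyperbolic-sharp} yields $c_{d+1-a,\ell}=\pPol_{\ell,a}(0)/\lambda_{d+1-a}^{\mathrm{char}}$, which is \eqref{pp:eq:p0-c}.

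The main obstacle is Step 1. There I must check carefully that discarding the non-holomorphic parts is legitimate termwise in $X$ with the right global scalars, in particular that the factor $\eta$ and the $(2\pi\ii)$ normalizations match those in Lemma~\ref{lem:m0-polar-seed-CK}. I also need to check that $\CT$ commutes with the Cohen--Kuznetsov series, which should follow from the normal convergence recorded after \eqref{eq:CK-generating-identity}. A secondary check is that the sign convention relating $\pPol_{\ell,a}$ to the principal part does not introduce an extra sign.
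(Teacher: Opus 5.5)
Your proposal is correct and follows essentially the same route as the paper. You take holomorphic parts in \eqref{eq:correction-functional}, use \eqref{pp:eq:Lambert-gap} to get $\e^{\ell X^2/24}h_0$ as the constant term of the rank series, evaluate the constant term of the Cohen--Kuznetsov kernel with Lemma~\ref{lem:m0-polar-seed-CK}, and match against \eqref{eq:Y-hyperbolic-sharp}. The paper instead writes $\CT\CK_\ell$ in two ways, but this is only a reordering. The points you flag are harmless: everything is coefficientwise in $X$, so no convergence is needed to commute $\CT$ with the series, and the lemma uses the same $2\pi\ii\sqrt{2/\ell}$ normalization as Theorem~\ref{thm:canonical-principal-part}, so no extra sign or scalar enters.
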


\begin{proof}
In the holomorphic kernel
$\CK_\ell(\mathbf H_\ell,(\mathbf G_\ell^{\mathrm A})^+;X)$, the exponents
in the two paired components are $\alpha_a+n$ and $m-\alpha_a$, with
$m,n\ge0$.  Higher Serre derivatives introduce only further nonnegative
integral shifts, so only the $m=n=0$ base terms can contribute to the constant
Fourier coefficient.  Taking holomorphic parts in
\eqref{eq:correction-functional} and then constant Fourier coefficients gives
\[
\begin{aligned}
 \CT\CK_\ell(\mathbf H_\ell,(\mathbf G_\ell^{\mathrm A})^+;X)
 &=\e^{\ell X^2/24}h_0(X)-\CT\mathscr C_\ell^{\mathrm A}(X,\tau)\\
 &=-\e^{\ell X^2/24}
 \sum_{j=1}^{d}c_{j,\ell}\sinh(\lambda_j^{\mathrm{char}}X),
\end{aligned}
\]
where the first equality uses the eta-normalized holomorphic rank series and
Lemma~\ref{lem:initial-q-adic-gap}, while the second uses the definition of
$Y_\ell$ and \eqref{eq:Y-hyperbolic-sharp}.  Comparison with
Lemma~\ref{lem:m0-polar-seed-CK} gives \eqref{pp:eq:p0-c}.
\end{proof}

\medskip
\noindent\emph{Remark (equivalent Lagrange form).}
With $x_j=(\lambda_j^{\mathrm{char}})^2$ and
\[
 L_{j,d}(t):=\prod_{h\ne j}\frac{t-x_h}{x_j-x_h},
\]
for $1\le a\le d$, the moment system~\eqref{eq:moment-system-sharp} gives
\begin{equation}
 \pPol_{\ell,a}(0)
 =\sum_{r=0}^{d-1}\mathfrak m_{r,\ell}
 [t^r]L_{d+1-a,d}(t).
 \label{pp:eq:p0-Lagrange}
\end{equation}

\subsubsection*{Canonical polar data.}
Recall that
\[
 \alpha_a=\frac{(\ell-2a)^2}{8\ell}
 \qquad(1\le a\le d),
\]
and put
\[
 \mathscr S_\ell^{\mathrm{pol}}
 :=\{(a,m):1\le a\le d,\ m\in\Z_{\ge0},\ m<\alpha_a\}.
\]
If $\mathbf G=\sum_aG_a\mathbf e_a^\vee$ and
$G_a^+=\sum_\nu c_{\mathbf G}^+(a,\nu)q^\nu$, then
$c_{\mathbf G}^+(a,\nu)$ denotes the corresponding holomorphic Fourier
coefficient.  Define
\[
 c_{\ell,a}(m)
 :=2\pi\ii\sqrt{\frac2\ell}\,\pPol_{\ell,a}(m),
 \qquad
 \Pcan_\ell(\tau)
 :=\sum_{(a,m)\in\mathscr S_\ell^{\mathrm{pol}}}
 c_{\ell,a}(m)q^{m-\alpha_a}\mathbf e_a^\vee.
\]
Theorem~\ref{thm:canonical-principal-part} says precisely that
\[
 \pp\bigl((\mathbf G_\ell^{\mathrm A})^+\bigr)=\Pcan_\ell.
\]

\subsection{The principal part determines the shadow}

For
\[
 g,h\in S_{1/2}(\sigma_\ell),
 \qquad
 \mathbf G\in H_{3/2}^{+}(\sigma_\ell^\vee),
\]
define the Petersson and Bruinier--Funke pairings by
\[
 (g,h)_{1/2}
 :=\int_{\SLtwo\backslash\Hh}g(\tau)^{\mathsf T}
 \overline{h(\tau)}v^{1/2}\frac{du\,dv}{v^2},
 \qquad
 \{g,\mathbf G\}_{\mathrm{BF}}
 :=(g,\xi_{3/2}\mathbf G)_{1/2}.
\]
We write $\|g\|_{1/2}^2:=(g,g)_{1/2}$.
If
\[
 g_a(\tau)=\sum_{r\in\Z+\alpha_a}b_g(a,r)q^r,
 \qquad
 G_a^+(\tau)=\sum_{\nu\in\Z-\alpha_a}
 c_{\mathbf G}^+(a,\nu)q^\nu,
\]
then Bruinier--Funke's coefficient pairing
\cite[Proposition~3.5]{BruinierFunke} is
\begin{equation}
 \{g,\mathbf G\}_{\mathrm{BF}}
 =\sum_{a=1}^{d}
 \sum_{\substack{\nu\in\Z-\alpha_a\\ \nu<0}}
 c_{\mathbf G}^+(a,\nu)b_g(a,-\nu).
 \label{ps:eq:coefficient-pairing}
\end{equation}
Since $\alpha_a\notin\Z$, neither $\sigma_\ell(\widetilde T)$ nor
$\sigma_\ell^\vee(\widetilde T)$ has eigenvalue $1$; hence no zero Fourier
frequency occurs in either type.

Applying \eqref{ps:eq:coefficient-pairing} to
$\mathbf G=\mathbf G_\ell^{\mathrm A}$, and using
Theorem~\ref{thm:canonical-principal-part} together with
$\xi_{3/2}\mathbf G_\ell^{\mathrm A}=-\boldsymbol\Theta_\ell$, gives, for
every $g\in S_{1/2}(\sigma_\ell)$,
\begin{equation}
 \sum_{(a,m)\in\mathscr S_\ell^{\mathrm{pol}}}
 c_{\ell,a}(m)b_g(a,\alpha_a-m)
 =-(g,\boldsymbol\Theta_\ell)_{1/2}.
 \label{ps:eq:principal-shadow-pairing}
\end{equation}

\subsubsection*{Evaluation on the leading polar coefficients.}
Taking $g=\boldsymbol\Theta_\ell$ in
\eqref{ps:eq:principal-shadow-pairing}, using
$\Theta_{\ell,a}=-\ii q^{\alpha_a}+O(q^{\alpha_a+1})$, gives
\[
 -\|\boldsymbol\Theta_\ell\|_{1/2}^{2}
 =2\pi\sqrt{\frac2\ell}\sum_{a=1}^{d}\pPol_{\ell,a}(0).
\]
Equations~\eqref{pp:eq:p0-c} and \eqref{eq:moment-system-sharp} with $r=0$
give
\[
 \sum_{a=1}^{d}\pPol_{\ell,a}(0)=-\frac{\ell-1}{24},
 \qquad
 \|\boldsymbol\Theta_\ell\|_{1/2}^{2}
 =\frac{\pi(\ell-1)}{12}\sqrt{\frac2\ell}.
\]

\begin{theorem}[The principal part determines the shadow]
\label{ps:thm:principal-part-shadow}
There is no $W\in M_{3/2}^{!}(\sigma_\ell^\vee)$ with
$\pp(W)=\Pcan_\ell$.  If
$\widetilde{\mathbf G}\in H_{3/2}^{+}(\sigma_\ell^\vee)$ satisfies
$\pp(\widetilde{\mathbf G}^{+})=\Pcan_\ell$, then
\[
 \xi_{3/2}\widetilde{\mathbf G}=-\boldsymbol\Theta_\ell,
 \qquad
 \widetilde{\mathbf G}-\mathbf G_\ell^{\mathrm A}\in S_{3/2}(\sigma_\ell^\vee).
\]
Conversely, adding any element of $S_{3/2}(\sigma_\ell^\vee)$ to
$\mathbf G_\ell^{\mathrm A}$ preserves this principal part.
Equivalently, the fiber over $\Pcan_\ell$ is
\[
 \mathbf G_\ell^{\mathrm A}+S_{3/2}(\sigma_\ell^\vee).
\]
\end{theorem}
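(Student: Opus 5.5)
The plan is to let the Bruinier--Funke coefficient pairing \eqref{ps:eq:coefficient-pairing} do all the work, since it couples principal parts with shadows. The first step is the nonexistence claim. Suppose $W\in M_{3/2}^{!}(\sigma_\ell^\vee)$ had $\pp(W)=\Pcan_\ell$. Because $W$ is weakly holomorphic, $\xi_{3/2}W=0$, and \eqref{ps:eq:coefficient-pairing} then gives, for every $g\in S_{1/2}(\sigma_\ell)$,
\[
 \sum_{(a,m)\in\mathscr S_\ell^{\mathrm{pol}}}c_{\ell,a}(m)\,b_g(a,\alpha_a-m)=\{g,W\}_{\mathrm{BF}}=0 .
\]
Take $g=\boldsymbol\Theta_\ell$. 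The left-hand side is exactly the quantity evaluated just before the theorem, so it equals $-\|\boldsymbol\Theta_\ell\|_{1/2}^2=-\frac{\pi(\ell-1)}{12}\sqrt{2/\ell}\ne0$, since $\ell\ge3$. In fact positivity of the Petersson norm already suffices once one has \eqref{ps:eq:principal-shadow-pairing}; the explicit value from \eqref{pp:eq:p0-c} and the $r=0$ moment identity is only a consistency check. This contradiction proves the first assertion.

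The second step handles a general $\widetilde{\mathbf G}$ with $\pp(\widetilde{\mathbf G}^+)=\Pcan_\ell$. Put $\mathbf D:=\widetilde{\mathbf G}-\mathbf G_\ell^{\mathrm A}\in H_{3/2}^+(\sigma_\ell^\vee)$. By Theorem~\ref{thm:canonical-principal-part}, $\pp(\mathbf D^+)=0$. By Proposition~\ref{prop:sigma-finite-gap} there are no zero frequencies, so \eqref{ps:eq:coefficient-pairing} gives $(g,\xi_{3/2}\mathbf D)_{1/2}=0$ for all $g\in S_{1/2}(\sigma_\ell)$. Now $\xi_{3/2}\mathbf D$ lies in $S_{1/2}(\sigma_\ell)$; here I would check that the antilinear $\xi$ lands in type $\sigma_\ell$ via $\overline{\sigma_\ell^\vee}\simeq\sigma_\ell$. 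Choosing $g=\xi_{3/2}\mathbf D$ forces $\xi_{3/2}\mathbf D=0$. Hence $\xi_{3/2}\widetilde{\mathbf G}=\xi_{3/2}\mathbf G_\ell^{\mathrm A}=-\boldsymbol\Theta_\ell$, and $\mathbf D$ is weakly holomorphic with vanishing principal part at the unique cusp. Its exponents therefore lie in $\Z-\alpha_a$ and are all positive, since none is zero. So $\mathbf D$ is a holomorphic form vanishing at $\infty$; all cusps are $\SLtwo$-equivalent for vector-valued forms, so $\mathbf D\in S_{3/2}(\sigma_\ell^\vee)$.

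The converse is immediate: a cusp form has $\xi=0$ and no terms with exponent $\le0$, so adding it changes neither $\pp$ nor membership in $H_{3/2}^+$. Combining the two directions gives the fiber description $\mathbf G_\ell^{\mathrm A}+S_{3/2}(\sigma_\ell^\vee)$.

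The main obstacle is normalization bookkeeping in the pairing, namely checking that \eqref{ps:eq:coefficient-pairing} holds verbatim for type $\sigma_\ell^\vee$ with the paper's $\xi$ convention. That is the identification of the Bruinier--Funke pairing with the Petersson product $(g,\xi\mathbf G)$ through Stokes' theorem, together with the absence of constant-term contributions because $\alpha_a\notin\Z$. The rest of the argument is formal.
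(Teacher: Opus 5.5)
Your proposal is correct and follows essentially the same route as the paper. Both parts rest on the Bruinier--Funke coefficient pairing \eqref{ps:eq:coefficient-pairing} combined with \eqref{ps:eq:principal-shadow-pairing}, on Petersson positivity, and on the absence of a zero exponent since $\alpha_a\notin\Z$. The only differences are cosmetic. You read off nonvanishing directly from $\|\boldsymbol\Theta_\ell\|_{1/2}^2>0$ instead of from $\sum_a\pPol_{\ell,a}(0)=-(\ell-1)/24$. You also argue through $\mathbf D=\widetilde{\mathbf G}-\mathbf G_\ell^{\mathrm A}$, whose image $\xi_{3/2}\mathbf D$ is exactly the paper's $\xi_{3/2}\widetilde{\mathbf G}+\boldsymbol\Theta_\ell$.
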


\begin{proof}
If a weakly holomorphic $W$ had principal part $\Pcan_\ell$, then
\eqref{ps:eq:coefficient-pairing} would give
$\{\boldsymbol\Theta_\ell,W\}_{\mathrm{BF}}=0$, whereas
\eqref{ps:eq:principal-shadow-pairing} together with
$\sum_{a=1}^{d}\pPol_{\ell,a}(0)=-(\ell-1)/24$ gives a nonzero value.
Hence no such $W$ exists.

Now let $\widetilde{\mathbf G}$ have principal part $\Pcan_\ell$.  Equations
\eqref{ps:eq:coefficient-pairing} and
\eqref{ps:eq:principal-shadow-pairing} show that
\[
 \xi_{3/2}\widetilde{\mathbf G}+\boldsymbol\Theta_\ell
 \in S_{1/2}(\sigma_\ell)
\]
is orthogonal to every element of this cusp space.  Taking
$g=\xi_{3/2}\widetilde{\mathbf G}+\boldsymbol\Theta_\ell$ shows that its Petersson norm is zero,
so it vanishes.  Therefore
$\widetilde{\mathbf G}-\mathbf G_\ell^{\mathrm A}$ is weakly holomorphic.  The common principal
part removes every negative Fourier exponent, and the absence of a
$\widetilde T$-fixed vector removes the zero exponent.  Hence the difference
is cuspidal.  The converse is immediate.
\end{proof}

\subsection{The cusp-form adjustment}

By Theorem~\ref{ps:thm:principal-part-shadow}, every lift with canonical
principal part $\Pcan_\ell$ has shadow $-\boldsymbol\Theta_\ell$.  Its first
$d$ scalar corrections determine the unique cusp form that adjusts it to
$\mathbf G_\ell^{\mathrm A}$.

\begin{theorem}[Formula for the cusp-form adjustment]
\label{ps:thm:Poincare-reduction}
Let $\widetilde{\mathbf G}\in H_{3/2}^{+}(\sigma_\ell^\vee)$ satisfy
$\pp(\widetilde{\mathbf G}^{+})=\Pcan_\ell$, and define
\[
 \mathbf U
 :=(\Jet_\ell^{<d>})^{-1}
 (a_{0,\ell}(\widetilde{\mathbf G}),\ldots,a_{d-1,\ell}(\widetilde{\mathbf G})).
\]
Then
\[
 \widetilde{\mathbf G}+2\pi\ii\mathbf U=\mathbf G_\ell^{\mathrm A},
 \qquad
 \mathbf U\in S_{3/2}(\sigma_\ell^\vee).
\]
Write
\[
 (\mathbf U)_b(\tau)
 =\sum_{\nu\in\Z-\alpha_b}c_{\mathbf U}(b,\nu)q^\nu
 \qquad(1\le b\le d).
\]
For $1\le b\le d$ and
$\nu\in\Z-\alpha_b$ with $\nu>0$,
\begin{equation}
 c_{\mathbf G_\ell^{\mathrm A}}^+(b,\nu)
 =c_{\widetilde{\mathbf G}}^+(b,\nu)+2\pi\ii c_{\mathbf U}(b,\nu).
 \label{ps:eq:coefficient-reduction}
\end{equation}
\end{theorem}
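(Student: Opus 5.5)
We combine Theorem~\ref{ps:thm:principal-part-shadow} with the coordinate isomorphism of Theorem~\ref{thm:jet-coordinate-isomorphism} and the shift law of Lemma~\ref{lem:weakly-holomorphic-shift}.

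First, since $\pp(\widetilde{\mathbf G}^{+})=\Pcan_\ell$, Theorem~\ref{ps:thm:principal-part-shadow} gives $\xi_{3/2}\widetilde{\mathbf G}=-\boldsymbol\Theta_\ell$. Consequently the corrections $a_{n,\ell}(\widetilde{\mathbf G})$ are defined and lie in $M_{2n+2}^!(\SLtwo)$ by Theorem~\ref{thm:weakly-holomorphic-scalar-corrections}. Theorem~\ref{ps:thm:principal-part-shadow} also shows that $\mathbf C:=\mathbf G_\ell^{\mathrm A}-\widetilde{\mathbf G}\in S_{3/2}(\sigma_\ell^\vee)$. Thus $(2\pi\ii)^{-1}\mathbf C\in S_{3/2}(\sigma_\ell^\vee)\subset M_{3/2}^!(\sigma_\ell^\vee)$.

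Second, we apply the shift law to $\widetilde{\mathbf G}$ and $(2\pi\ii)^{-1}\mathbf C$. For $0\le n<d$ this gives
\[
 0=a_{n,\ell}(\mathbf G_\ell^{\mathrm A})
 =a_{n,\ell}(\widetilde{\mathbf G})-\Jet_{\ell,n}\bigl((2\pi\ii)^{-1}\mathbf C\bigr),
\]
where the vanishing on the left is the normalization \eqref{eq:zero-initial-block}. Hence
\[
 \Jet_\ell^{<d>}\bigl((2\pi\ii)^{-1}\mathbf C\bigr)
 =\bigl(a_{0,\ell}(\widetilde{\mathbf G}),\ldots,a_{d-1,\ell}(\widetilde{\mathbf G})\bigr).
\]
Because $\Jet_\ell^{<d>}$ is injective on $M_{3/2}^!(\sigma_\ell^\vee)$, it follows that $\mathbf U=(2\pi\ii)^{-1}\mathbf C$. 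Therefore $\mathbf U$ is cuspidal and $\widetilde{\mathbf G}+2\pi\ii\mathbf U=\mathbf G_\ell^{\mathrm A}$.

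Third, \eqref{ps:eq:coefficient-reduction} follows by comparing holomorphic parts. Since $\mathbf U$ is holomorphic, $(\mathbf G_\ell^{\mathrm A})^+=\widetilde{\mathbf G}^++2\pi\ii\mathbf U$, and reading off the coefficient of $q^\nu$ in the $b$th component gives the identity for every $\nu>0$. The decomposition $F=F^++F^-$ is unique because $F^-$ involves only the incomplete-gamma terms with $r<0$. The same comparison holds for all $\nu$, and for $\nu\le0$ it recovers the fact that cusp forms have vanishing principal part.

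The only point needing care is the well-definedness of $\mathbf U$. We must know that $(\Jet_\ell^{<d>})^{-1}$ exists on the given tuple, that is, that the tuple lies in $\bigoplus_{n<d}M_{2n+2}^!(\SLtwo)$. This membership comes from the shadow identification in the first step, which is what lets Theorem~\ref{thm:weakly-holomorphic-scalar-corrections} apply to $\widetilde{\mathbf G}$. The bijectivity of $\Jet_\ell^{<d>}$ is then Theorem~\ref{thm:jet-coordinate-isomorphism}, so no further obstacle arises.
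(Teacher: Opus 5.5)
Your proof is correct and uses the same ingredients as the paper's: the shift law of Lemma~\ref{lem:weakly-holomorphic-shift}, injectivity of $\Jet_\ell^{<d>}$, and Theorem~\ref{ps:thm:principal-part-shadow}. The difference is order. The paper shows that $\widetilde{\mathbf G}+2\pi\ii\mathbf U$ has vanishing corrections, invokes the uniqueness in Theorem~\ref{thm:unique-Appell-jet-lift}, and only then deduces cuspidality; you first obtain the cuspidal difference and then identify it with $2\pi\ii\mathbf U$ by injectivity, and you also check explicitly that $\widetilde{\mathbf G}$ has shadow $-\boldsymbol\Theta_\ell$, which the paper leaves implicit.
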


\begin{proof}
By Lemma~\ref{lem:weakly-holomorphic-shift} and the definition of
$\mathbf U$,
\[
 a_{n,\ell}(\widetilde{\mathbf G}+2\pi\ii\mathbf U)=0
 \qquad(0\le n\le d-1).
\]
The shift does not change the shadow, so
Theorem~\ref{thm:unique-Appell-jet-lift} gives
$\widetilde{\mathbf G}+2\pi\ii\mathbf U=\mathbf G_\ell^{\mathrm A}$.
Theorem~\ref{ps:thm:principal-part-shadow} then yields
\[
 \mathbf U
 =\frac{\mathbf G_\ell^{\mathrm A}-\widetilde{\mathbf G}}{2\pi\ii}
 \in S_{3/2}(\sigma_\ell^\vee).
\]
Taking the $q^\nu\mathbf e_b^\vee$ coefficient gives
\eqref{ps:eq:coefficient-reduction}.
\end{proof}

\section{\texorpdfstring{Weil Kloosterman sums and convergence at Bessel order $1/2$}{Weil Kloosterman sums and convergence at Bessel order 1/2}}
\label{sec:kloosterman-cancellation}

The projection of Section~\ref{sec:theta-representation} reduces the polar
Kloosterman sums to rank-one Weil sums.  At Bessel order $1/2$, even a
square-root bound leaves an essentially harmonic majorant.  We combine the
fixed-index estimate of Andersen--Anderson \cite{AndersenAnderson}, based on
the Goldfeld--Sarnak spectral method \cite{GoldfeldSarnak}, with Abel summation
to prove convergence of the resulting $I_{1/2}$-weighted
Kloosterman--Bessel series.

\subsection{Reduction to rank-one Weil Kloosterman sums}

By \eqref{ps:eq:A-orthonormal} and
\eqref{ps:eq:representation-compression}--\eqref{ps:eq:dual-representation-compression},
$\Aproj_\ell$ is a coisometry; equivalently, $\Aproj_\ell^{\mathsf T}$ is an
isometric embedding.  Hence every matrix coefficient of $\sigma_\ell^\vee$
is a finite linear combination of rank-one Weil coefficients.

\subsubsection*{Metaplectic section and projected sums.}
For $c\in\Z_{>0}$ and a unit $\delta\in(\Z/c\Z)^\times$, choose the
representatives $0\le\delta<c$ and $0\le\delta^*<c$ with
$\delta\delta^*\equiv1\pmod c$; for $c=1$ use $\delta=\delta^*=0$.  Put
\[
 \gamma_{c,\delta}
 :=\begin{pmatrix}
 \delta^*& (\delta^*\delta-1)/c\\
 c&\delta
 \end{pmatrix}\in\SLtwo,
 \qquad
 \widetilde\gamma_{c,\delta}
 :=\bigl(\gamma_{c,\delta},\sqrt{c\tau+\delta}\,\bigr)\in\Mp,
\]
where the square root is the principal holomorphic branch on $\Hh$.  This
fixes the metaplectic section used below.

For $1\le a,b\le d$ and allowed exponents
\[
 \mu\in\Z-\alpha_a,
 \qquad
 \nu\in\Z-\alpha_b,
\]
define the projected dual-representation Kloosterman sum by
\begin{equation}
 K_{\sigma_\ell^\vee}(a,\mu;b,\nu;c)
 :=\sum_{\delta\in(\Z/c\Z)^\times}
 \ee{\frac{\mu\delta^*+\nu\delta}{c}}
 \bigl(\sigma_\ell^\vee(
 \widetilde\gamma_{c,\delta})^{-1}\bigr)_{b,a}.
 \label{ps:eq:compressed-Kloosterman-definition}
\end{equation}
The inverse matrix is the one dictated by the slash operator for forms of type
$\sigma_\ell^\vee$.  The row index $b$ is the target and the column index $a$
is the seed.  If the representation matrix factor is replaced by $1$ and
$\mu,\nu$ are integral, the remaining arithmetic sum is the classical
Kloosterman sum $S(\mu,\nu;c)$.

\subsubsection*{The rank-one discriminant form.}
For an integer $x$ and a class $h\in\Z/(2M)\Z$, use the rational quadratic
form and its reduction
\[
 Q_M^{\mathrm{rat}}(x):=\frac{x^2}{4M}\in\Q,
 \qquad
 \overline Q_M(h):=Q_M^{\mathrm{rat}}(\widetilde h)+\Z\in\Q/\Z,
\]
where $\widetilde h$ is any integer representative.  The rational form enters
the explicit Shintani phase, while $\overline Q_M$ specifies the Fourier
exponent classes.  For
$\mu\in\Z-\overline Q_M(h)$ and
$\nu\in\Z-\overline Q_M(r)$, define
\[
 K_{\rho_M^\vee}(h,\mu;r,\nu;c)
 :=\sum_{\delta\in(\Z/c\Z)^\times}
 \ee{\frac{\mu\delta^*+\nu\delta}{c}}
 \bigl(\rho_M^\vee(
 \widetilde\gamma_{c,\delta})^{-1}\bigr)_{r,h}.
\]

\subsubsection*{Support of the projection.}
For $1\le a\le d$, the projection row from \eqref{eq:projection-row} has
support
\[
 \operatorname{supp}(\mathbf a_a)
 =\{\pm(\ell+2a),\ \pm(\ell-2a)\}
 \subset\Z/(4\ell)\Z.
\]
For $h$ in this support, set
\[
 \varepsilon_a(h):=2(\mathbf a_a)_h
 =\begin{cases}
 +1,&h\equiv\pm(\ell+2a)\pmod{4\ell},\\
 -1,&h\equiv\pm(\ell-2a)\pmod{4\ell}.
 \end{cases}
\]
Moreover,
\[
 \overline Q_M(h)=\alpha_a+\Z
 \qquad(h\in\operatorname{supp}(\mathbf a_a)),
\]
so the same exponent $\mu\in\Z-\alpha_a$ is allowed in every rank-one
component occurring in the $a$th projected row.

\begin{theorem}[Exact reduction to rank-one Kloosterman sums]
\label{ps:thm:exact-Kloosterman-compression}
Put $M=2\ell$.  For $c\ge1$, $1\le a,b\le d$,
$\mu\in\Z-\alpha_a$, and $\nu\in\Z-\alpha_b$,
\begin{equation}
 K_{\sigma_\ell^\vee}(a,\mu;b,\nu;c)
 =\frac14
 \sum_{h\in\operatorname{supp}(\mathbf a_a)}
 \sum_{r\in\operatorname{supp}(\mathbf a_b)}
 \varepsilon_a(h)\varepsilon_b(r)
 K_{\rho_M^\vee}(h,\mu;r,\nu;c).
 \label{ps:eq:exact-Kloosterman-compression}
\end{equation}
Thus every projected vector-valued Kloosterman sum is an explicit signed
combination of at most sixteen rank-one Weil Kloosterman sums.
\end{theorem}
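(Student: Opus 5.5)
The plan is to compute the matrix entry directly from the compression formula \eqref{ps:eq:dual-representation-compression}. That formula gives $\sigma_\ell^\vee(\widetilde\gamma)=\Aproj_\ell\rho_M^\vee(\widetilde\gamma)\Aproj_\ell^{\mathsf T}$ for every $\widetilde\gamma\in\Mp$. Applying it to $\widetilde\gamma^{-1}$, or equivalently inverting and using that $\sigma_\ell^\vee$ is a homomorphism, yields
\[
 \sigma_\ell^\vee(\widetilde\gamma_{c,\delta})^{-1}
 =\sigma_\ell^\vee(\widetilde\gamma_{c,\delta}^{-1})
 =\Aproj_\ell\,\rho_M^\vee(\widetilde\gamma_{c,\delta})^{-1}\Aproj_\ell^{\mathsf T}.
\]
Its $(b,a)$ entry is therefore
\[
 \sum_{r,h}(\mathbf a_b)_r\bigl(\rho_M^\vee(\widetilde\gamma_{c,\delta})^{-1}\bigr)_{r,h}(\mathbf a_a)_h .
\]
The row $\mathbf a_a$ is supported on $\operatorname{supp}(\mathbf a_a)$, with entries $\varepsilon_a(h)/2$, so the entry becomes
\[
 \tfrac14\sum_{h\in\operatorname{supp}(\mathbf a_a)}\;\sum_{r\in\operatorname{supp}(\mathbf a_b)}\varepsilon_a(h)\varepsilon_b(r)\bigl(\rho_M^\vee(\widetilde\gamma_{c,\delta})^{-1}\bigr)_{r,h}.
\]

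Next I substitute this into \eqref{ps:eq:compressed-Kloosterman-definition} and interchange the finite sums over $\delta$ and over $(h,r)$. The exponential phase $\ee{(\mu\delta^*+\nu\delta)/c}$ depends only on $\mu,\nu,c,\delta$, and not on $h$ or $r$. Each inner sum over $\delta$ is therefore exactly $K_{\rho_M^\vee}(h,\mu;r,\nu;c)$, and this gives \eqref{ps:eq:exact-Kloosterman-compression}. Each support has at most four elements, so there are at most sixteen terms. The Kloosterman sum on each side uses the same fixed metaplectic section $\widetilde\gamma_{c,\delta}$, so no section-dependent phase arises.

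The one point needing care is that each rank-one sum $K_{\rho_M^\vee}(h,\mu;r,\nu;c)$ be admissible, meaning $\mu\in\Z-\overline Q_M(h)$ and $\nu\in\Z-\overline Q_M(r)$. This follows from the recorded fact that $\overline Q_M(h)=\alpha_a+\Z$ on $\operatorname{supp}(\mathbf a_a)$, and likewise for $b$. To verify that fact, note that $(\ell\pm2a)^2=(\ell-2a)^2+\{0\text{ or }8\ell a\}$, and $8\ell a/(4M)=a\in\Z$. With that in hand, the proof is pure linear algebra; the only potential obstacle is bookkeeping of the inverse and transpose conventions, which the compression identity already handles.
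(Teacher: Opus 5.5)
Your proposal is correct and follows the paper's own proof. You apply the dual compression identity \eqref{ps:eq:dual-representation-compression} to $\widetilde\gamma^{-1}$, take the $(b,a)$ entry with row entries $\varepsilon_a(h)/2$, insert it into \eqref{ps:eq:compressed-Kloosterman-definition}, interchange the finite sums, and justify admissibility of each rank-one sum by the support congruence $\bigl((\ell+2a)^2-(\ell-2a)^2\bigr)/(8\ell)=a\in\Z$, with $h\mapsto-h$ leaving squares unchanged — exactly the argument the paper gives.
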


\begin{proof}
The support congruence follows from $M=2\ell$ and
\[
 \frac{(\ell+2a)^2-(\ell-2a)^2}{8\ell}=a\in\Z;
\]
opposite residues have the same square.  Hence every rank-one sum on the right
of \eqref{ps:eq:exact-Kloosterman-compression} is defined with the same
$\mu$ and $\nu$ as the projected sum.  Applying
\eqref{ps:eq:dual-representation-compression} to
$\widetilde\gamma^{-1}$ and taking the $(b,a)$ matrix coefficient gives
\[
 \bigl(\sigma_\ell^\vee(\widetilde\gamma)^{-1}\bigr)_{b,a}
 =\sum_{r\,(4\ell)}\sum_{h\,(4\ell)}
 (\Aproj_\ell)_{b,r}
 \bigl(\rho_M^\vee(\widetilde\gamma)^{-1}\bigr)_{r,h}
 (\Aproj_\ell)_{a,h}.
\]
Insert this identity into
\eqref{ps:eq:compressed-Kloosterman-definition}, interchange the finite sums,
and use the row signs.  This yields
\eqref{ps:eq:exact-Kloosterman-compression}.
\end{proof}

\subsection{Andersen--Anderson normalization and reality}
\label{sec:AA-normalization}

To apply Andersen--Anderson's spectral estimate to the projected sums
\cite{AndersenAnderson}, introduce the rank-one even lattices
\[
 L_\ell^+=(\Z,\langle x,y\rangle_+=4\ell xy),
 \qquad
 L_\ell^-=(\Z,\langle x,y\rangle_-=-4\ell xy).
\]
Their signed Gram determinants are
\[
 D_+:=\det(L_\ell^+)=4\ell,
 \qquad
 D_-:=\det(L_\ell^-)=-4\ell,
\]
and their common discriminant group is $\Z/(4\ell)\Z$.  With the
conventions of Section~\ref{sec:foundations},
\[
 \rho_{L_\ell^+}=\rho_M,
 \qquad
 \rho_{L_\ell^-}=\rho_M^\vee,
 \qquad M=2\ell.
\]
Write $\rho_{L;h,r}(\widetilde\gamma)$ for the $(h,r)$ matrix coefficient of
$\rho_L(\widetilde\gamma)$ in the standard discriminant-group basis.  For
allowed exponents $\mu\in\Z-\overline Q_M(h)$ and
$\nu\in\Z-\overline Q_M(r)$ set
\[
 m_\mu:=-8\ell\mu,
 \qquad
 m_\nu:=-8\ell\nu.
\]
Put $q_-:=-\overline Q_M$.  Then $m_\mu,m_\nu\in\Z$ and
\[
 \frac{m_\mu}{2D_-}=\mu\in\Z+q_-(h),
 \qquad
 \frac{m_\nu}{2D_-}=\nu\in\Z+q_-(r).
\]
In particular, $\mu<0<\nu$ corresponds to $m_\mu>0>m_\nu$.  The lattice $L_\ell^-$ has signature
$(b^+,b^-)=(0,1)$ and spectral weight $-1/2$, so
\[
 -\frac12+\frac{b^--b^+}{2}=0\in\Z.
\]

Let $S^-_{h,r}(m_\mu,m_\nu;c)$ be the Kloosterman sum of
Andersen--Anderson for $L_\ell^-$, weight $-1/2$, and $c>0$:
\[
 S^-_{h,r}(m_\mu,m_\nu;c)
 =\e^{\pi\ii/4}
 \sum_{\delta\,(c)^\times}
 \overline{\rho_{L_\ell^-;h,r}
 (\widetilde\gamma_{c,\delta})}
 \ee{\frac{m_\mu\delta^*+m_\nu\delta}{2D_-c}}.
\]

\begin{proposition}[Comparison with the Andersen--Anderson normalization]
\label{ps:prop:AA-normalization}
For $c>0$, $h,r\in\Z/(4\ell)\Z$, and exponents
\[
 \mu\in\Z-\overline Q_M(h),
 \qquad
 \nu\in\Z-\overline Q_M(r),
\]
\begin{equation}
 S^-_{h,r}(m_\mu,m_\nu;c)
 =\e^{\pi\ii/4}
 K_{\rho_M^\vee}(h,\mu;r,\nu;c).
 \label{ps:eq:GS-rank-one-dictionary}
\end{equation}
For $1\le a,b\le d$,
$\mu\in\Z-\alpha_a$, and $\nu\in\Z-\alpha_b$, define the
phase-normalized projected sum by
\[
 \mathcal S_\ell^{\mathrm{proj}}(a,\mu;b,\nu;c)
 :=\e^{\pi\ii/4}
 K_{\sigma_\ell^\vee}(a,\mu;b,\nu;c).
\]
Then
\begin{equation}
 \mathcal S_\ell^{\mathrm{proj}}(a,\mu;b,\nu;c)
 =\frac14\sum_{h\in\operatorname{supp}(\mathbf a_a)}
 \sum_{r\in\operatorname{supp}(\mathbf a_b)}
 \varepsilon_a(h)\varepsilon_b(r)
 S^-_{h,r}(m_\mu,m_\nu;c).
 \label{ps:eq:phase-normalized-compression}
\end{equation}
\end{proposition}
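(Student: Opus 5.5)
The plan is a direct computation of both sides from the definitions; no analytic input is needed. First I prove \eqref{ps:eq:GS-rank-one-dictionary}. Expand the Andersen--Anderson sum term by term. Since $D_-=-4\ell=-2M$ and $m_\mu=-8\ell\mu=-4M\mu$, the arithmetic phase is
\[
 \frac{m_\mu\delta^*+m_\nu\delta}{2D_-c}
 =\frac{-4M(\mu\delta^*+\nu\delta)}{-4Mc}
 =\frac{\mu\delta^*+\nu\delta}{c}.
\]
This is the exponential appearing in $K_{\rho_M^\vee}$, so the two sums share the same additive phases and the same metaplectic section $\widetilde\gamma_{c,\delta}$. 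It remains to match the representation factors:
\[
 \overline{\rho_{L_\ell^-;h,r}(\widetilde\gamma_{c,\delta})}
 =\bigl(\rho_M^\vee(\widetilde\gamma_{c,\delta})^{-1}\bigr)_{r,h}.
\]
I would use $\rho_{L_\ell^-}=\rho_M^\vee$ as recorded above. Unitarity gives $\rho(\widetilde\gamma)^{-1}=\overline{\rho(\widetilde\gamma)}^{\mathsf T}$ for $\rho=\rho_M^\vee$. Then the $(r,h)$ entry of the inverse equals the complex conjugate of the $(h,r)$ entry, which is exactly the needed identity. Multiplying by $\e^{\pi\ii/4}$ gives \eqref{ps:eq:GS-rank-one-dictionary}.

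Before using this, I must check two conventions. First, Andersen--Anderson's $\rho_L$ must be compared with our \eqref{eq:Weil-T}--\eqref{eq:Weil-S}. For $L_\ell^-$ the quadratic form is $-x^2/(8\ell)=-\overline Q_M$, so the $\widetilde T$-eigenvalues are $\ee{-h^2/(4M)}$, matching $\rho_M^\vee(\widetilde T)$. For $\widetilde S$, the signature-$(0,1)$ Weil formula has phase $\ee{+1/8}$ and kernel $\ee{+hr/(2M)}$. This is the complex conjugate of \eqref{eq:Weil-S}, hence equals $\rho_M^\vee(\widetilde S)$ by unitarity. Since both representations are determined by their values on the generators, they agree on all of $\Mp$. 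Second, the exponent conditions $m_\mu/(2D_-)=\mu\in\Z+q_-(h)$ are the allowed-index hypotheses in Andersen--Anderson, and they follow from $\mu\in\Z-\overline Q_M(h)$.

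\textbf{Main obstacle.} The delicate step is the index-order and conjugation convention in the Andersen--Anderson definition. Their convention ($\overline{\rho_{h,r}}$ versus $\rho_{r,h}$, and which index is seed and which is target) must match our inverse-matrix convention, which comes from the slash operator. If the literal indices come out transposed, I would fall back on symmetry. The matrices $\rho_M(\widetilde S)$ and $\rho_M(\widetilde T)$ are symmetric, so $\rho_M(\widetilde\gamma)^{\mathsf T}=\rho_M(\widetilde\gamma')$ for the element $\widetilde\gamma'$ obtained by reversing a word in the generators. I would then check that reindexing $\delta\leftrightarrow\delta^*$ turns $\widetilde\gamma'$ into the section element for the swapped pair, so that the two sums still agree.

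Finally, \eqref{ps:eq:phase-normalized-compression} follows at once. Multiply Theorem~\ref{ps:thm:exact-Kloosterman-compression} by $\e^{\pi\ii/4}$ and replace each $\e^{\pi\ii/4}K_{\rho_M^\vee}(h,\mu;r,\nu;c)$ by $S^-_{h,r}(m_\mu,m_\nu;c)$ using \eqref{ps:eq:GS-rank-one-dictionary}. This substitution is legitimate because, for $h\in\operatorname{supp}(\mathbf a_a)$, we have $\overline Q_M(h)=\alpha_a+\Z$. Hence $\mu\in\Z-\alpha_a$ is an allowed exponent for every such $h$, and likewise for $\nu$ and every $r\in\operatorname{supp}(\mathbf a_b)$.
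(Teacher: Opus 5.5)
Your proposal is correct and follows the paper's proof. The paper also matches the phases via $2D_-=-8\ell$, obtains $\overline{\rho_{L_\ell^-;h,r}(\widetilde\gamma)}=(\rho_M^\vee(\widetilde\gamma)^{-1})_{r,h}$ from unitarity and $\rho_{L_\ell^-}=\rho_M^\vee$, and then multiplies Theorem~\ref{ps:thm:exact-Kloosterman-compression} by $\e^{\pi\ii/4}$. Your index-order worry is moot, because $S^-_{h,r}$ is defined here with exactly the conjugated $(h,r)$ entry; the word-reversal fallback is therefore never needed, and as sketched it would in any case also swap the roles of $\mu$ and $\nu$ in the additive phase after $\delta\leftrightarrow\delta^*$.
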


\begin{proof}
Unitarity and $\rho_{L_\ell^-}=\rho_M^\vee$ give
$\overline{\rho_{L_\ell^-;h,r}(\widetilde\gamma)}
=(\rho_M^\vee(\widetilde\gamma)^{-1})_{r,h}$, while $2D_-=-8\ell$ gives
$(m_\mu\delta^*+m_\nu\delta)/(2D_-c)
=(\mu\delta^*+\nu\delta)/c$.  This proves
\eqref{ps:eq:GS-rank-one-dictionary};
\eqref{ps:eq:phase-normalized-compression} is
Theorem~\ref{ps:thm:exact-Kloosterman-compression}.
\end{proof}

From this point on, we use the phase-normalized sum
$\mathcal S_\ell^{\mathrm{proj}}$, which is shown below to be real.  The unnormalized sum
$K_{\sigma_\ell^\vee}$ reappears only in the Whittaker--Fourier comparison.
For reference, the complete phase dictionary is
\[
 \begin{aligned}
 S^-_{h,r}(m_\mu,m_\nu;c)
 &=\e^{\pi\ii/4}K_{\rho_M^\vee}(h,\mu;r,\nu;c),\\
 \mathcal S_\ell^{\mathrm{proj}}(a,\mu;b,\nu;c)
 &=\e^{\pi\ii/4}K_{\sigma_\ell^\vee}(a,\mu;b,\nu;c),\\
 K_{\sigma_\ell^\vee}(a,\mu;b,\nu;c)
 &=\e^{-\pi\ii/4}
 \mathcal S_\ell^{\mathrm{proj}}(a,\mu;b,\nu;c),\\
 \e^{-3\pi\ii/4}K_{\sigma_\ell^\vee}(a,\mu;b,\nu;c)
 &=-\mathcal S_\ell^{\mathrm{proj}}(a,\mu;b,\nu;c).
 \end{aligned}
\]
The last identity is the one used in the Fourier transform of
Section~\ref{sec:rademacher}.

Put $q_+:=\overline Q_M$.  For $h,r\in\Z/(4\ell)\Z$ and integers
$m,n$ satisfying
\[
 \frac{m}{2D_+}\in\Z+q_+(h),
 \qquad
 \frac{n}{2D_+}\in\Z+q_+(r),
\]
define, for $c>0$, the positive-lattice, weight-$1/2$ sum by
\[
 S^+_{h,r}(m,n;c)
 :=\e^{-\pi\ii/4}
 \sum_{\delta\,(c)^\times}
 \overline{\rho_{L_\ell^+;h,r}(\widetilde\gamma_{c,\delta})}
 \ee{\frac{m\delta^*+n\delta}{2D_+c}}.
\]
The lattice $L_\ell^+$ has signature $(b^+,b^-)=(1,0)$ and spectral
weight $1/2$, so
\[
 \frac12+\frac{b^--b^+}{2}=0\in\Z.
\]
For the indices used here,
\[
 \frac{m_\mu}{2D_+}=-\mu\in\Z+q_+(h),
 \qquad
 \frac{m_\nu}{2D_+}=-\nu\in\Z+q_+(r).
\]
For $c<0$ set
\[
 S^+_{h,r}(m,n;c)
 :=\overline{S^+_{h,r}(m,n;-c)},
\]
which is the convention used in the associated Kloosterman--Selberg zeta
functions.

The lattices have opposite quadratic modules and
$\rho_{L_\ell^-}=\overline{\rho_{L_\ell^+}}$.  Reversing the lattice sign
conjugates the signature factor, the Weil matrix coefficient, and the
exponential phase in the Andersen--Anderson convention.  Hence, for the
allowed exponents above and every $c>0$,
\begin{equation}
 S^-_{h,r}(m_\mu,m_\nu;c)
 =\overline{S^+_{h,r}(m_\mu,m_\nu;c)}.
 \label{ps:eq:lattice-sign-conjugation}
\end{equation}

\begin{proposition}[Reality of the projected sum after phase normalization]
\label{ps:prop:compressed-reality}
For every $c>0$, $1\le a,b\le d$, and exponents
\[
 \mu\in\Z-\alpha_a,
 \qquad
 \nu\in\Z-\alpha_b,
\]
one has
\begin{equation}
 \mathcal S_\ell^{\mathrm{proj}}(a,\mu;b,\nu;c)\in\R.
 \label{ps:eq:compressed-reality}
\end{equation}
\end{proposition}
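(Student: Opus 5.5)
The plan is to reduce the reality statement to a symmetry of the rank-one sums $S^-_{h,r}$, using the compression formula \eqref{ps:eq:phase-normalized-compression}. The coefficients $\frac14\varepsilon_a(h)\varepsilon_b(r)$ there are real. Each support set $\operatorname{supp}(\mathbf a_a)=\{\pm(\ell+2a),\pm(\ell-2a)\}$ is stable under $h\mapsto-h$, and $\varepsilon_a(-h)=\varepsilon_a(h)$. It therefore suffices to prove, for $c>0$ and the allowed exponents,
\[
 \overline{S^-_{h,r}(m_\mu,m_\nu;c)}=S^-_{-h,-r}(m_\mu,m_\nu;c).
\]
Granting this, complex conjugation of the right-hand side of \eqref{ps:eq:phase-normalized-compression} merely permutes the summands via $(h,r)\mapsto(-h,-r)$, which preserves the product of signs. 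Hence $\mathcal S_\ell^{\mathrm{proj}}$ equals its own conjugate.

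To prove the symmetry, I would use the outer automorphism $\gamma\mapsto\epsilon\gamma\epsilon$ with $\epsilon=\operatorname{diag}(1,-1)$. Its metaplectic lift sends $(\gamma,\varphi)$ to $(\epsilon\gamma\epsilon,\tilde\varphi)$, where $\tilde\varphi(\tau)$ is a suitable conjugate of $\varphi(-\bar\tau)$. The first claim is $\overline{\rho_M(\widetilde\gamma)}=\rho_M(\epsilon\widetilde\gamma\epsilon)$, checked on generators. For $\widetilde T$ this is $\overline{\rho_M(\widetilde T)}=\rho_M(\widetilde T)^{-1}$, which holds by \eqref{eq:Weil-T}. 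For $\widetilde S$, conjugation sends $S$ to $S^{-1}$. Since $\rho_M(\widetilde S)$ is symmetric and unitary by \eqref{eq:Weil-S}, $\overline{\rho_M(\widetilde S)}=\rho_M(\widetilde S)^{-1}$, and one must check that this matches the lift of $S^{-1}$ fixed by the principal-branch convention.

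Next, $\epsilon\gamma_{c,\delta}\epsilon$ has lower-left entry $-c$. Multiplying by the central element $\widetilde Z=(-I,\ii)$ (or its inverse) gives a matrix with lower row $(c,-\delta)$, i.e.\ $\gamma_{c,-\delta}$ up to left multiplication by a power of $T$. Here I would use that $\rho_M(\widetilde Z)$ acts by $\mathbf e_h\mapsto\pm\ii^{\pm1}\mathbf e_{-h}$, a phase times the involution $h\mapsto-h$. This involution commutes with $\rho_M$, which produces the index swap $(h,r)\mapsto(-h,-r)$. The $T$-power in the representative changes the matrix coefficient only by the phase $\ee{\overline Q_M(h)\cdot(\text{integer})}$, and this phase is cancelled by the exponential factor because $\mu\in\Z-\overline Q_M(h)$.

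Finally, reindex $\delta\mapsto-\delta$, which sends $\delta^*\mapsto-\delta^*$. This turns $\ee{-(m_\mu\delta^*+m_\nu\delta)/(2D_-c)}$ back into the original exponential factor. The main obstacle is the bookkeeping of eighth roots of unity. One has to collect the conjugated prefactor $\e^{-\pi\ii/4}$, the phase of $\rho_M(\widetilde Z)$, and the sign of the metaplectic cocycle produced by the branch change $\sqrt{-c\tau-\delta}$ versus $\sqrt{c\tau-\delta}$. The claim to verify is that these combine exactly to $\e^{\pi\ii/4}$, which is what the normalizations in \S\ref{sec:AA-normalization} are designed for (weight $-1/2$ plus signature term $\in\Z$).

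I would do this computation first with $\gamma_{c,\delta}$ written in the Bruhat form $T^{x}ST^{y}$ and $\widetilde Z=\widetilde S^2$. If the phase bookkeeping resists, a fallback is available. Write $\overline{S^-}=S^+$ by \eqref{ps:eq:lattice-sign-conjugation}, and use the self-adjointness of the Kloosterman operator in Andersen--Anderson's (and Goldfeld--Sarnak's) zeta function, which gives the symmetry $S^{+}_{h,r}(m,n;c)=S^{+}_{-h,-r}(m,n;c)$ up to conjugation. From this the displayed identity follows in the same way.
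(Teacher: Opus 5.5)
\textbf{Gap: the symmetry you set out to prove is false.} Reducing to a symmetry of the rank-one sums is the right idea. But the identity
\[
\overline{S^-_{h,r}(m_\mu,m_\nu;c)}=S^-_{-h,-r}(m_\mu,m_\nu;c)
\]
does not hold.

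By \eqref{eq:Weil-S}, $\rho_M(\widetilde Z)=\rho_M(\widetilde S)^2=-\ii\,\iota$, where $\iota\mathbf e_h=\mathbf e_{-h}$. The involution $\iota$ commutes with $\rho_M(\widetilde T)$ and $\rho_M(\widetilde S)$, hence with all of $\rho_M$ and $\rho_M^\vee$. So $S^-_{-h,-r}=S^-_{h,r}$ identically, and your identity amounts to saying that each individual rank-one sum is real.

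That already fails at $c=1$. There $\widetilde\gamma_{1,0}=\widetilde S$, and \eqref{eq:Weil-S} gives $S^-_{h,r}(m_\mu,m_\nu;1)=(4\ell)^{-1/2}\exp(-2\pi\ii\,hr/(4\ell))$. For $\ell=3$ and $h=r=5$ this equals $12^{-1/2}\exp(-\pi\ii/6)\notin\R$.

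The slip is in the central-element step. One checks that $\epsilon\widetilde\gamma_{c,\delta}\epsilon=\widetilde Z^{-1}\widetilde\gamma'$, where $\widetilde\gamma'=(\gamma',\sqrt{c\tau-\delta})$ has lower row $(c,-\delta)$. This multiplies $\rho_M(\widetilde\gamma')$ by $\rho_M(\widetilde Z^{-1})=\ii\,\iota$ on one side only, so the $(h,r)$ entry becomes $\ii\,\rho_M(\widetilde\gamma')_{-h,r}$. Only one index flips. The commutation of $\iota$ with $\rho_M$ is exactly why flipping both indices gives nothing.

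Your fallback has the same defect. A relation $S^+_{h,r}=\overline{S^+_{-h,-r}}$ is again equivalent to reality of each rank-one term. Without the conjugation it is just the trivial symmetry.

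\textbf{Repair and comparison with the paper.} With the single flip, your route does go through. You get
\[
\overline{S^-_{h,r}(m_\mu,m_\nu;c)}=S^-_{-h,r}(m_\mu,m_\nu;c).
\]
The factor $\ii$ from $\rho_M(\widetilde Z^{-1})$ turns $\e^{-\pi\ii/4}$ into $\e^{\pi\ii/4}$, and the $c=1$ case confirms that no other phase survives. To return to the section representative you need both a left and a right power of $T$, not only a left one. Their phases are cancelled because $\mu\in\Z-\overline Q_M(h)$ and $\nu\in\Z-\overline Q_M(r)$. Complex conjugation then permutes the summands of \eqref{ps:eq:phase-normalized-compression} by $h\mapsto-h$ at fixed $r$, which preserves $\varepsilon_a(h)\varepsilon_b(r)$.

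This is exactly the pairing in the paper, whose involution $(\delta,\delta^*,h,t,r)\mapsto(-\delta,-\delta^*,-h,-t,r)$ flips $h$ and fixes $r$. The paper obtains it from Shintani's explicit formula. There the eighth roots of unity cancel against the normalization and leave the real prefactor $1/(4\sqrt{4\ell c})$, so no metaplectic cocycle bookkeeping is needed. Your outer-automorphism route avoids the explicit Gauss-sum formula, but it requires carrying out that cocycle computation, which your proposal leaves unverified.
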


\begin{proof}
Using $\rho_M^\vee=\overline{\rho_M}$, unitarity, and Shintani's formula for
the positive lattice $L_\ell^+$ \cite{Shintani,Stromberg}, one obtains the
following expression.  For each support class $h$ and $r$, let
\[
 \widetilde h\in\{\pm(\ell+2a),\ \pm(\ell-2a)\},
 \qquad
 \widetilde r\in\{\pm(\ell+2b),\ \pm(\ell-2b)\}
\]
denote its unique representative in the displayed set.  Then
\[
\begin{split}
 \mathcal S_\ell^{\mathrm{proj}}(a,\mu;b,\nu;c)
 ={}&\frac{1}{4\sqrt{4\ell c}}
 \sum_{h\in\operatorname{supp}(\mathbf a_a)}
 \sum_{r\in\operatorname{supp}(\mathbf a_b)}
 \varepsilon_a(h)\varepsilon_b(r)\\
 &\times\sum_{\delta\,(c)^\times}\sum_{t\,(c)}
 \ee{\frac{\Xi(\delta,\delta^*,\widetilde h,t,\widetilde r)}{c}},
\end{split}
\]
where
\[
 \Xi(\delta,\delta^*,\widetilde h,t,\widetilde r)
 :=\delta^*\bigl(\mu+Q_M^{\mathrm{rat}}(\widetilde h+4\ell t)\bigr)
 -\frac{\widetilde r(\widetilde h+4\ell t)}{4\ell}
 +\delta\bigl(\nu+Q_M^{\mathrm{rat}}(\widetilde r)\bigr).
\]
The allowed-exponent conditions make the first and third parenthesized
quantities integral.  The involution
$(\delta,\delta^*,h,t,r)\mapsto
(-\delta,-\delta^*,-h,-t,r)$, together with
$\widetilde{(-h)}=-\widetilde h$, preserves the summation set, satisfies
$\varepsilon_a(-h)=\varepsilon_a(h)$, and sends the phase to its negative
modulo $c$.  Hence the
exponential phases occur in conjugate pairs, proving
\eqref{ps:eq:compressed-reality}.
\end{proof}

\subsection{Fixed-index power saving and Bessel-weighted Kloosterman series}

We use the general symmetric expansion in
Andersen--Anderson~\cite[Theorem~5.1]{AndersenAnderson}, rather than their
Corollary~5.2.  The latter gives a direct positive-modulus estimate under a
fundamental-discriminant hypothesis; the former has no such hypothesis but
requires us to control the exceptional terms and convert the symmetric sum
using Proposition~\ref{ps:prop:compressed-reality}.  The superscript
$\mathrm{GS}$ in $\varepsilon_\ell^{\mathrm{GS}}$ refers to the
Goldfeld--Sarnak spectral-cancellation method; the quantitative input below
is Andersen--Anderson's theorem.

\begin{theorem}[Power saving for projected Kloosterman sums with fixed opposite-sign indices]
\label{ps:thm:compressed-power-saving}
For every odd $\ell\ge3$ there exists
$\varepsilon_\ell^{\mathrm{GS}}>0$, depending only on $\ell$, such that for
$1\le a,b\le d$ and fixed exponents
\[
 \mu\in\Z-\alpha_a,
 \qquad
 \nu\in\Z-\alpha_b,
 \qquad
 \mu<0<\nu,
\]
one has
\begin{equation}
 \sum_{1\le c\le T}
 \frac{\mathcal S_\ell^{\mathrm{proj}}(a,\mu;b,\nu;c)}{c}
 =O_{\ell,a,b,\mu,\nu}
 \left(T^{1/2-\varepsilon_\ell^{\mathrm{GS}}}\right)
 \qquad(T\to\infty).
 \label{ps:eq:compressed-power-saving}
\end{equation}
The exponent may be chosen so that
\[
 0<\varepsilon_\ell^{\mathrm{GS}}\le\frac1{12}.
\]
\end{theorem}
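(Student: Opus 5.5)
The plan is to reduce the projected sum, via \eqref{ps:eq:phase-normalized-compression}, to the finitely many rank-one sums $S^-_{h,r}(m_\mu,m_\nu;c)$ for the negative lattice $L_\ell^-$, and to apply Andersen--Anderson's symmetric estimate \cite[Theorem~5.1]{AndersenAnderson} to each of them. Since the signed combination in \eqref{ps:eq:phase-normalized-compression} has at most sixteen terms with fixed coefficients, it suffices to bound each rank-one partial sum $\sum_{c\le T}S^-_{h,r}(m_\mu,m_\nu;c)/c$. The sign hypothesis $\mu<0<\nu$ becomes $m_\mu>0>m_\nu$, which is the mixed-sign configuration covered by their theorem. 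The integrality condition $-\tfrac12+\tfrac{b^--b^+}{2}\in\Z$ recorded above confirms that we are in their admissible weight/signature setting.

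\textbf{Step 1: pass to symmetric sums.} Their Theorem~5.1 controls the symmetric sum over $0<|c|\le T$, where negative moduli use the convention $S^+(m,n;-c)=\overline{S^+(m,n;c)}$. Via \eqref{ps:eq:lattice-sign-conjugation}, I translate between $S^-$ and $S^+$. The symmetric sum of $S^+/c$ then equals $2\operatorname{Re}$ of the positive-modulus sum, possibly up to a factor of $\ii$ depending on the phase conventions. Proposition~\ref{ps:prop:compressed-reality} shows that the full projected combination is real. Its positive-$c$ partial sum is therefore one half of the symmetric projected sum, with no loss from an imaginary part.

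\textbf{Step 2: control the main term.} Theorem~5.1 produces a main term plus an error $O(T^{1/2-\varepsilon})$. The main term comes from exceptional eigenvalues, i.e.\ small Laplace eigenvalues $\lambda=s(1-s)$ with $s>1/2$ for weight $\pm1/2$, and contributes terms of size $T^{2s-1}$ up to logarithms. By Selberg-type lower bounds for half-integral weight, or for the finite-index congruence subgroup on which $\rho_M$ factors (Proposition~\ref{prop:sigma-finite-gap}), there are finitely many such eigenvalues, and each satisfies $s\le 1/2+\theta$ with $\theta<1/4$. The residual terms then contribute $T^{2\theta}\le T^{1/2-\varepsilon}$ for some $\varepsilon>0$. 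The key point I expect to verify is that in weight $1/2$ on the lattice side the $s=3/4$ residual spectrum (theta functions) would give exactly $T^{1/2}$. Its coefficient is a product of Fourier coefficients of a weight-$1/2$ theta form at the index $m_\mu>0$ and at the index $m_\nu<0$. A unary theta function has no coefficients at negative indices, so this product vanishes when the signs are opposite. This is precisely why the opposite-sign hypothesis is imposed, and it is what removes the $T^{1/2}$ term.

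\textbf{Step 3: fix the exponent.} The main obstacle is making the exceptional-term analysis of Step 2 rigorous within Andersen--Anderson's normalization. I would take $\varepsilon_\ell^{\mathrm{GS}}$ to be the minimum of their generic saving (which is at most $1/12$ from the Goldfeld--Sarnak method) and $1/2-2\theta_\ell$, where $\theta_\ell$ is the largest exceptional parameter for the level determined by $\ell$. This minimum is positive, depends only on $\ell$, and satisfies $\varepsilon_\ell^{\mathrm{GS}}\le1/12$. The implied constant depends on $a,b,\mu,\nu$ because the fixed indices enter the spectral coefficients.
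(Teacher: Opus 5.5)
Your proposal is correct and follows essentially the same route as the paper. You reduce via \eqref{ps:eq:phase-normalized-compression} to at most sixteen rank-one sums, apply Andersen--Anderson's Theorem~5.1 (whose generic error exponent is at most $5/12$ from the trivial bound), remove the endpoint $T^{1/2}$ term because its coefficient is a product of Fourier coefficients of a holomorphic weight-$1/2$ form at the opposite-sign indices $m_\mu>0>m_\nu$, and convert the symmetric sum to the positive-modulus sum using the negative-$c$ convention, \eqref{ps:eq:lattice-sign-conjugation}, and Proposition~\ref{ps:prop:compressed-reality}. Two small clean-ups: the negative-$c$ convention gives exactly $2\operatorname{Re}$ with no possible extra factor of $\mathrm i$, and the bound $s\le 3/4$ on the exceptional spectrum is the elementary weight-$1/2$ bound $\lambda\ge 3/16$ (attained only by the holomorphic forms) rather than a Selberg-type result, so the remaining finitely many exceptional exponents are strictly below $1/2$.
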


\begin{proof}
By \eqref{ps:eq:GS-rank-one-dictionary} and
\eqref{ps:eq:lattice-sign-conjugation}, every rank-one term in
\eqref{ps:eq:phase-normalized-compression} is an odd-rank Weil Kloosterman
sum in the Andersen--Anderson normalization with opposite-sign spectral
indices.  Rank-one terms that vanish identically may be discarded, since
their contribution is zero.  For each remaining term, let
$B_{\mathrm{AA}}$ denote the growth exponent in
Andersen--Anderson's Theorem~5.1:
\[
 B_{\mathrm{AA}}
 :=\limsup_{c\to\infty}
 \frac{\log|S^+_{h,r}(m_\mu,m_\nu;c)|}{\log c},
\]
with $\log0=-\infty$.  The trivial unitary estimate gives
$B_{\mathrm{AA}}\le1$.  Theorem~5.1 of Andersen--Anderson therefore gives
a finite exceptional expansion and, on taking its auxiliary
$\epsilon=1/12$, an error exponent at most
\[
 \frac{B_{\mathrm{AA}}}{3}+\frac1{12}
 \le\frac13+\frac1{12}=\frac5{12}.
\]
The exceptional exponents in that theorem depend only on the lattice, hence
only on $\ell$ here, and not on $h,r,m_\mu$, or $m_\nu$.  For odd rank and
weight $\pm1/2$, the spectral bound in the paragraph preceding their
Corollary~5.2 gives $e_j\le1/2$.  At the endpoint $e_j=1/2$, the
corresponding spectral contribution arises from holomorphic cusp forms;
equation~(5.9) of \cite{AndersenAnderson} expresses its coefficient as a product
of the $m_\mu$- and $m_\nu$-Fourier coefficients.  Since
$m_\mu>0>m_\nu$, this product vanishes.  Thus every nonzero exceptional term
that remains has exponent strictly less than $1/2$; see
\cite[Theorem~5.1, equation~(5.9), and the paragraph preceding
Corollary~5.2]{AndersenAnderson}.

Let $\mathcal E_\ell^{\mathrm{exc}}$ be the finite set of these remaining
exponents.  Then
\[
 \varepsilon_\ell^{\mathrm{GS}}
 :=\frac12-
 \max\!\left(
 \left\{\frac5{12}\right\}
 \cup\mathcal E_\ell^{\mathrm{exc}}
 \right)>0,
\]
and the inclusion of $5/12$ gives
$\varepsilon_\ell^{\mathrm{GS}}\le1/12$.

The Andersen--Anderson theorem gives the corresponding symmetric bound for
each rank-one term.  By the negative-$c$ convention and
\eqref{ps:eq:lattice-sign-conjugation},
\[
 \frac12\sum_{0<|c|\le T}
 \frac{S^+_{h,r}(m_\mu,m_\nu;c)}{|c|}
 =\sum_{1\le c\le T}
 \frac{\Re S^-_{h,r}(m_\mu,m_\nu;c)}{c}.
\]
Using \eqref{ps:eq:phase-normalized-compression} and the reality
\eqref{ps:eq:compressed-reality} therefore gives the exact conversion
\[
\begin{split}
 \sum_{1\le c\le T}
 \frac{\mathcal S_\ell^{\mathrm{proj}}(a,\mu;b,\nu;c)}{c}
 ={}&\frac14
 \sum_{h\in\operatorname{supp}(\mathbf a_a)}
 \sum_{r\in\operatorname{supp}(\mathbf a_b)}
 \varepsilon_a(h)\varepsilon_b(r)\\
 &\times
 \frac12\sum_{0<|c|\le T}
 \frac{S^+_{h,r}(m_\mu,m_\nu;c)}{|c|}.
\end{split}
\]
There are at most sixteen terms, so the same power saving holds for the
projected sum.
\end{proof}

For $1\le a,b\le d$ and exponents
\[
 \mu\in\Z-\alpha_a,
 \qquad
 \nu\in\Z-\alpha_b,
 \qquad
 \mu<0<\nu,
\]
put
\[
 \Lambda_{\mu,\nu}:=4\pi\sqrt{|\mu|\nu}.
\]
For $\omega\in\C$, define the following series of projected Kloosterman sums
weighted by the modified Bessel function $I_{\omega-1/2}$:
\begin{equation}
 \mathcal Z_{\ell;a,\mu;b,\nu}^{\mathrm{Bes}}(\omega)
 :=\sum_{c\ge1}
 \frac{\mathcal S_\ell^{\mathrm{proj}}(a,\mu;b,\nu;c)}{c}
 I_{\omega-1/2}\!\left(\frac{\Lambda_{\mu,\nu}}{c}\right).
 \label{ps:eq:spectral-KB-series}
\end{equation}
The positive real argument fixes the principal branch in the standard
power-series definition of $I_{\omega-1/2}$.  By definition, this series uses
the phase-normalized sum $\mathcal S_\ell^{\mathrm{proj}}$, not the raw sum
$K_{\sigma_\ell^\vee}$.  Put
\[
 \Omega_\ell^{\mathrm{Bes}}
 :=\{\omega\in\C:\Re(\omega)>1-\varepsilon_\ell^{\mathrm{GS}}\}.
\]

Let $\mathcal K\Subset\C$ be compact, let $\Lambda>0$, and put
$\omega_*:=\inf_{\omega\in\mathcal K}\Re(\omega)$.  The defining series
for $I_\nu$ \cite[Equation~10.25.2]{DLMF}, together with termwise
differentiation, gives uniformly for $\omega\in\mathcal K$ and $x\ge1$,
\begin{align}
 I_{\omega-1/2}(\Lambda/x)
 &=O_{\mathcal K,\Lambda}(x^{-\omega_*+1/2}),
 \label{ps:eq:I-complex-bound}\\
 \frac{d}{dx}I_{\omega-1/2}(\Lambda/x)
 &=O_{\mathcal K,\Lambda}(x^{-\omega_*-1/2}).
 \label{ps:eq:I-complex-derivative-bound}
\end{align}
For each fixed positive argument, both functions are entire in $\omega$.

\begin{theorem}[Holomorphic Kloosterman--Bessel series]
\label{ps:thm:holomorphic-Bessel-series}
Fix $1\le a,b\le d$ and
\[
 \mu\in\Z-\alpha_a,
 \qquad
 \nu\in\Z-\alpha_b,
 \qquad
 \mu<0<\nu.
\]
Then \eqref{ps:eq:spectral-KB-series} converges locally uniformly and defines
a holomorphic function on $\Omega_\ell^{\mathrm{Bes}}$.  More precisely, if
$\mathcal K\Subset\Omega_\ell^{\mathrm{Bes}}$,
$\omega_*:=\inf_{\omega\in\mathcal K}\Re(\omega)$, and $C\ge1$, then
\[
 \sum_{c>C}
 \frac{\mathcal S_\ell^{\mathrm{proj}}(a,\mu;b,\nu;c)}{c}
 I_{\omega-1/2}\!\left(\frac{\Lambda_{\mu,\nu}}{c}\right)
 =O_{\mathcal K,\ell,a,b,\mu,\nu}
 \left(C^{1-\omega_*-\varepsilon_\ell^{\mathrm{GS}}}\right)
\]
uniformly for $\omega\in\mathcal K$.  In particular, the series at
$\omega=1$ converges with tail
$O_{\ell,a,b,\mu,\nu}(C^{-\varepsilon_\ell^{\mathrm{GS}}})$.
\end{theorem}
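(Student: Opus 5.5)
The plan is partial (Abel) summation against the fixed-index power saving of Theorem~\ref{ps:thm:compressed-power-saving}. Fix $a,b,\mu,\nu$ as in the statement and write $\Lambda=\Lambda_{\mu,\nu}$. Put
\[
 A(T):=\sum_{1\le c\le T}\frac{\mathcal S_\ell^{\mathrm{proj}}(a,\mu;b,\nu;c)}{c},
 \qquad
 \phi_\omega(x):=I_{\omega-1/2}(\Lambda/x).
\]
By Theorem~\ref{ps:thm:compressed-power-saving}, $A(T)=O(T^{1/2-\varepsilon})$ with $\varepsilon=\varepsilon_\ell^{\mathrm{GS}}$, and the implied constant is independent of $\omega$.

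For $C<N$, Abel summation gives
\[
 \sum_{C<c\le N}\frac{\mathcal S_\ell^{\mathrm{proj}}(\cdots;c)}{c}\,\phi_\omega(c)
 =A(N)\phi_\omega(N)-A(C)\phi_\omega(C)-\int_C^N A(x)\,\phi_\omega'(x)\,dx .
\]
The bounds \eqref{ps:eq:I-complex-bound} and \eqref{ps:eq:I-complex-derivative-bound} hold uniformly for $\omega\in\mathcal K$ and $x\ge1$. Using them, the boundary terms are $O(N^{1-\omega_*-\varepsilon})$ and $O(C^{1-\omega_*-\varepsilon})$. The integrand is $O(x^{1/2-\varepsilon}\,x^{-\omega_*-1/2})=O(x^{-\omega_*-\varepsilon})$.

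Since $\mathcal K\Subset\Omega_\ell^{\mathrm{Bes}}$, we have $\omega_*>1-\varepsilon$, so $1-\omega_*-\varepsilon<0$. Hence $N^{1-\omega_*-\varepsilon}\to0$, and the integral converges, with tail bounded by $\int_C^\infty x^{-\omega_*-\varepsilon}\,dx\ll C^{1-\omega_*-\varepsilon}$. Letting $N\to\infty$ shows that the partial sums are uniformly Cauchy on $\mathcal K$, with the stated tail estimate uniform in $\omega$.

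Holomorphy then follows from Weierstrass's theorem. Each term $c^{-1}\mathcal S_\ell^{\mathrm{proj}}(\cdots;c)\,I_{\omega-1/2}(\Lambda/c)$ is entire in $\omega$, since the Bessel function has fixed positive argument. The partial sums therefore converge locally uniformly to a holomorphic limit.

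The case $\omega=1$ is the specialization to $\mathcal K=\{1\}$, where $\omega_*=1$ and the tail is $O(C^{-\varepsilon})$. Note that $1\in\Omega_\ell^{\mathrm{Bes}}$ because $\varepsilon>0$.

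The only delicate point is the uniformity of \eqref{ps:eq:I-complex-bound} and \eqref{ps:eq:I-complex-derivative-bound} in $\omega$. I would verify it directly from the power series
\[
 I_{\omega-1/2}(y)=(y/2)^{\omega-1/2}\sum_{j\ge0}\frac{(y/2)^{2j}}{j!\,\Gamma(j+\omega+1/2)},
\]
taken with $y=\Lambda/x\le\Lambda$. The factor $1/\Gamma$ is entire, so it is bounded on compact sets. The sum is bounded uniformly on $\mathcal K$, and $|(y/2)^{\omega-1/2}|\ll x^{-\Re\omega+1/2}\le x^{-\omega_*+1/2}$ for $x\ge1$. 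Differentiating in $x$ multiplies each term by a factor of size $O(x^{-1})$ uniformly on $\mathcal K$, which gives the derivative bound.

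The main arithmetic input, the power saving with $\varepsilon>0$, is already provided by Theorem~\ref{ps:thm:compressed-power-saving}, so beyond these bounds the argument is routine.
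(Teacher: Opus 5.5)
Your proposal is correct and follows the paper's proof essentially verbatim. Both arguments apply Abel summation to $A(x)=\sum_{c\le x}\mathcal S_\ell^{\mathrm{proj}}(a,\mu;b,\nu;c)/c$ against $I_{\omega-1/2}(\Lambda_{\mu,\nu}/x)$, using the power saving $A(x)=O(x^{1/2-\varepsilon_\ell^{\mathrm{GS}}})$ and the Bessel bounds \eqref{ps:eq:I-complex-bound}--\eqref{ps:eq:I-complex-derivative-bound}, and then invoke Weierstrass for holomorphy. Your extra check of those Bessel bounds is fine, with one imprecision: uniformity in $j$ of $1/\Gamma(j+\omega+1/2)$ follows from its decay as $j\to\infty$, uniformly on $\mathcal K$, not merely from boundedness of the entire function $1/\Gamma$ on compact sets.
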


\begin{proof}
Set
\[
 A(x):=\sum_{1\le c\le x}
 \frac{\mathcal S_\ell^{\mathrm{proj}}(a,\mu;b,\nu;c)}c.
\]
Theorem~\ref{ps:thm:compressed-power-saving} gives
$A(x)=O_{\ell,a,b,\mu,\nu}
(x^{1/2-\varepsilon_\ell^{\mathrm{GS}}})$.
Apply Abel summation on $(C,Y]$ with
$b_\omega(x)=I_{\omega-1/2}(\Lambda_{\mu,\nu}/x)$.  By \eqref{ps:eq:I-complex-bound}--
\eqref{ps:eq:I-complex-derivative-bound},
\[
 A(Y)b_\omega(Y)
 =O\!\left(Y^{1-\omega_*-\varepsilon_\ell^{\mathrm{GS}}}\right)
 \longrightarrow0
\]
uniformly on $\mathcal K$, because
$\omega_*>1-\varepsilon_\ell^{\mathrm{GS}}$.  The lower boundary term has the
claimed size, and the integral is bounded by
\[
 \int_C^\infty
 x^{1/2-\varepsilon_\ell^{\mathrm{GS}}}
 x^{-\omega_*-1/2}\,dx
 \ll C^{1-\omega_*-\varepsilon_\ell^{\mathrm{GS}}}.
\]
This proves the locally uniform tail estimate.  Since every summand is entire
in $\omega$, Weierstrass's theorem gives holomorphy.
\end{proof}

\begin{corollary}[Half-plane of holomorphy]
\label{ps:cor:spectral-half-plane}
With $\omega=2s-\tfrac12$, the function
\[
 s\longmapsto
 \mathcal Z_{\ell;a,\mu;b,\nu}^{\mathrm{Bes}}
 \left(2s-\frac12\right)
\]
is holomorphic on
\begin{equation}
 \Re(s)>\frac34-
 \frac{\varepsilon_\ell^{\mathrm{GS}}}{2}.
 \label{ps:eq:spectral-half-plane}
\end{equation}
Here $I_{\omega-1/2}=I_{2s-1}$; at $s=3/4$ the Bessel order is $1/2$, and
the series
$\mathcal Z_{\ell;a,\mu;b,\nu}^{\mathrm{Bes}}(1)$ converges.
\end{corollary}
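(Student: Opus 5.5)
The plan is to derive the corollary directly from Theorem~\ref{ps:thm:holomorphic-Bessel-series} by the change of variables $\omega=2s-\tfrac12$. This map is an affine biholomorphism of $\C$. It carries the half-plane $\Re(s)>\tfrac34-\varepsilon_\ell^{\mathrm{GS}}/2$ onto $\Re(\omega)>2\bigl(\tfrac34-\varepsilon_\ell^{\mathrm{GS}}/2\bigr)-\tfrac12=1-\varepsilon_\ell^{\mathrm{GS}}$, which is exactly $\Omega_\ell^{\mathrm{Bes}}$.

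Since $\mathcal Z^{\mathrm{Bes}}_{\ell;a,\mu;b,\nu}$ is holomorphic on $\Omega_\ell^{\mathrm{Bes}}$ by that theorem, its composition with the holomorphic map $s\mapsto 2s-\tfrac12$ is holomorphic on the preimage half-plane \eqref{ps:eq:spectral-half-plane}. Locally uniform convergence also transfers: compact subsets in $s$ map to compact subsets of $\Omega_\ell^{\mathrm{Bes}}$, so the tail estimate of the theorem still applies.

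For the Bessel-order statement, note that $I_{\omega-1/2}=I_{2s-1}$ by substitution. At $s=3/4$ we get $\omega=1$ and Bessel order $1/2$. Because $\varepsilon_\ell^{\mathrm{GS}}>0$ (Theorem~\ref{ps:thm:compressed-power-saving}), the point $\omega=1$ satisfies $1>1-\varepsilon_\ell^{\mathrm{GS}}$ and so lies in $\Omega_\ell^{\mathrm{Bes}}$. The convergence of $\mathcal Z^{\mathrm{Bes}}_{\ell;a,\mu;b,\nu}(1)$, with tail $O(C^{-\varepsilon_\ell^{\mathrm{GS}}})$, is the final assertion of Theorem~\ref{ps:thm:holomorphic-Bessel-series}.

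There is no real obstacle. The only point that needs checking is the arithmetic of the half-plane under the affine change of variable, together with the strict positivity of $\varepsilon_\ell^{\mathrm{GS}}$, which guarantees that $s=3/4$ lies in the interior of the region.
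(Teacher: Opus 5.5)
Your proposal is correct and matches the paper's intent: the corollary is stated without a separate proof, as a direct reparametrization of Theorem~\ref{ps:thm:holomorphic-Bessel-series} under $\omega=2s-\tfrac12$. This substitution maps the half-plane \eqref{ps:eq:spectral-half-plane} onto $\Omega_\ell^{\mathrm{Bes}}$ and sends $s=3/4$ to $\omega=1$, and your half-plane arithmetic, the identity $I_{\omega-1/2}=I_{2s-1}$, and the use of $\varepsilon_\ell^{\mathrm{GS}}>0$ for convergence at $\omega=1$ are all correct.
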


\section{Maass--Poincar\'e realization and Rademacher formulas}
\label{sec:rademacher}

For each negative polar term we form a weight-$3/2$ Niebur-type
Maass--Poincar\'e family \cite{Niebur1973,Niebur1974,Fay}.
Appendix~\ref{app:friedrichs-continuation} continues each family to
$s=3/4$, while Section~\ref{sec:kloosterman-cancellation} proves convergence
of its $I_{1/2}$-weighted coefficients.  Weighting these families by the
recovered polar coefficients gives $\mathbf G_\ell^{\mathrm P}$, whose
positive Fourier coefficients are Rademacher series
\cite{Rademacher,BringmannOno}.  Section~\ref{sec:polar-shadow} then
determines the cusp-form adjustment from $\mathbf G_\ell^{\mathrm P}$ to
$\mathbf G_\ell^{\mathrm A}$.  We use
Garthwaite's Fourier transform with the metaplectic conventions of
Jeon--Kang--Kim \cite{Garthwaite,JKK,JKKcorr}.

\subsection{Whittaker seeds and the convergent Poincar\'e family}

\subsubsection*{Metaplectic and Whittaker conventions.}
For general weight $\kappa$, we retain the slash action and Laplacian fixed
in Section~\ref{sec:foundations}; in particular,
\begin{equation}
 \Delta_\kappa=-\xi_{2-\kappa}\xi_\kappa
 \label{rad:eq:Laplacian-factorization}
\end{equation}
on smooth automorphic vectors.

For $\kappa\in\frac12\Z$, $s\in\C$, and $y\in\R\setminus\{0\}$ put
\begin{align}
 \mathcal M_{\kappa,s}(y)
 &:=|y|^{-\kappa/2}
 M_{\frac\kappa2\operatorname{sgn}(y),\,s-1/2}(|y|),
 \label{rad:eq:Whittaker-M}\\
 \mathcal W_{\kappa,s}(y)
 &:=|y|^{-\kappa/2}
 W_{\frac\kappa2\operatorname{sgn}(y),\,s-1/2}(|y|),
 \label{rad:eq:Whittaker-W}
\end{align}
Here $M_{\lambda,\rho}$ and $W_{\lambda,\rho}$ are the standard Whittaker
functions in the normalization of the NIST Digital Library of Mathematical
Functions (DLMF) \cite[Section~13.14]{DLMF}; all powers of $|y|$ are taken
as positive real powers.  On
\[
 \Omega_\ell:=\left\{s\in\C:
 \Re(s)>\frac34-\frac{\varepsilon_\ell^{\mathrm{GS}}}{2}\right\},
\]
the parameter $2s$ avoids the nonpositive integers, and both functions in
\eqref{rad:eq:Whittaker-M}--\eqref{rad:eq:Whittaker-W} are holomorphic in
$s$ by \cite[Section~13.14]{DLMF}.

\subsubsection*{Central coset normalization.}
Put $\widetilde Z=\widetilde S^2$ and
$\widetilde\Gamma_\infty=\langle\widetilde T,\widetilde Z^2\rangle$.
The weight-$3/2$ slash action with representation $\sigma_\ell^\vee$ is
trivial on both $\widetilde Z$ and $\widetilde Z^2$: one has
$\sigma_\ell^\vee(\widetilde Z)=\ii I_d$, so the representation factor
$-\ii I_d$ cancels the weight factor $\ii^{-3}=\ii$.  Since
$\widetilde Z\notin\widetilde\Gamma_\infty$ but the two cosets represented
by $\widetilde\gamma$ and $\widetilde Z\widetilde\gamma$ give the same
summand, the Poincar\'e series below carries a factor $1/2$.

\subsubsection*{The negative seed and the convergent family.}
Fix $1\le a\le d$ and
\[
 \mu\in\Z-\alpha_a,\qquad \mu<0.
\]
Define the Whittaker seed
\[
 \mathscr M_{a,\mu}^{\mathrm{seed}}(\tau,s)
 :=\mathcal M_{3/2,s}(4\pi\mu v)\ee{\mu u}\mathbf e_a^\vee.
\]
It is $\widetilde\Gamma_\infty$-invariant in the slash sense because
$\sigma_\ell^\vee(\widetilde T)\mathbf e_a^\vee
=\ee{-\alpha_a}\mathbf e_a^\vee$ and $\ee{\mu}=\ee{-\alpha_a}$.
For $\Re(s)>1$ the absolutely convergent family is
\begin{equation}
 \mathbb P_{\ell;a,\mu}^{\mathrm{abs}}(\tau,s)
 :=\frac12\sum_{\widetilde\gamma\in
 \widetilde\Gamma_\infty\backslash\Mp}
 \bigl(\mathscr M_{a,\mu}^{\mathrm{seed}}(\,\cdot\,,s)
 |_{3/2,\sigma_\ell^\vee}\widetilde\gamma\bigr)(\tau).
 \label{rad:eq:Poincare-family}
\end{equation}
For related scalar and vector-valued weight-$3/2$ Maass--Poincar\'e
normalizations, see \cite{JKK,JKKcorr}.  The continuation below is derived
directly from the Friedrichs resolvent for $\sigma_\ell^\vee$.

The Whittaker equation gives
\begin{equation}
 \Delta_{3/2}\mathbb P_{\ell;a,\mu}^{\mathrm{abs}}(\tau,s)
 =\Lambda_{\mathrm{sp}}(s)\mathbb P_{\ell;a,\mu}^{\mathrm{abs}}(\tau,s),
 \qquad
 \Lambda_{\mathrm{sp}}(s):=\left(s-\frac34\right)\left(\frac14-s\right).
 \label{rad:eq:eigenvalue}
\end{equation}
At $s_0=3/4$ the identity seed specializes to
\begin{equation}
 \mathcal M_{3/2,3/4}(4\pi\mu v)\ee{\mu u}=q^\mu,
 \label{rad:eq:harmonic-seed}
\end{equation}
because $M_{-3/4,1/4}(y)=y^{3/4}\e^{y/2}$.

\subsection{\texorpdfstring{Specialization at $s=3/4$}{Specialization at s=3/4}}

Let
\begin{equation}
 \mathcal H_\ell^{(3/2)}
 :=L^2_{3/2}(\sigma_\ell^\vee),
 \qquad
 \|F\|_{3/2}^2
 :=\int_{\mathfrak F}\|F(\tau)\|^2v^{3/2}\,d\mu(\tau),
 \label{rad:eq:weighted-Hilbert-space}
\end{equation}
with associated inner product
\[
 \langle F_1,F_2\rangle_{3/2}
 :=\int_{\mathfrak F}
 F_1(\tau)^{\mathsf T}\overline{F_2(\tau)}
 v^{3/2}\,d\mu(\tau).
\]
Let $\Delta_{3/2,F}$ be the Friedrichs realization of the weight-$3/2$
Laplacian.  Choose
$\chi\in C^\infty(\R_{>0})$ with $\chi(v)=0$ for $v\le1$ and $\chi(v)=1$
for $v\ge2$, and define
\begin{align}
 \mathbb A_{\ell;a,\mu}(\tau,s)
 &:={}
 \frac12\sum_{\widetilde\gamma\in
 \widetilde\Gamma_\infty\backslash\Mp}
 \bigl(\chi(v)\mathscr M_{a,\mu}^{\mathrm{seed}}(\,\cdot\,,s)
 |_{3/2,\sigma_\ell^\vee}\widetilde\gamma\bigr)(\tau),
 \label{rad:eq:cutoff-Poincare}\\
 \mathcal S_{\ell;a,\mu}^{\mathrm{cut}}(\tau,s)
 &:={}
 (\Delta_{3/2}-\Lambda_{\mathrm{sp}}(s))
 \mathbb A_{\ell;a,\mu}(\tau,s).
 \label{rad:eq:compact-source}
\end{align}
Let
$\mathscr R_\Delta(z):=(\Delta_{3/2,F}-z)^{-1}$ for $z$ in the resolvent
set of $\Delta_{3/2,F}$.  Initially, for $s\in\Omega_\ell$ such that
$\Lambda_{\mathrm{sp}}(s)$ lies in this resolvent set, put
\begin{equation}
 \mathbb P_{\ell;a,\mu}(\tau,s)
 :=\mathbb A_{\ell;a,\mu}(\tau,s)
 -\mathscr R_\Delta(\Lambda_{\mathrm{sp}}(s))
  \mathcal S_{\ell;a,\mu}^{\mathrm{cut}}(\tau,s).
 \label{rad:eq:resolvent-continuation}
\end{equation}
Appendix~\ref{app:friedrichs-continuation} proves that the right-hand side
extends meromorphically to $\Omega_\ell$; we use the same notation for this
continuation.

That appendix also proves compact resolvent,
local regularity, identification of the zero eigenspace with
$S_{3/2}(\sigma_\ell^\vee)$, and agreement with the absolutely convergent
Poincar\'e family on $\Re(s)>1$.

\begin{theorem}[Maass--Poincar\'e series at $s=3/4$]
\label{rad:thm:endpoint-existence}
The family \eqref{rad:eq:resolvent-continuation} is holomorphic at
$s_0=3/4$.  Its value at $s=3/4$
\[
 \mathbb P_{\ell;a,\mu}(\tau)
 :=\mathbb P_{\ell;a,\mu}(\tau,3/4)
\]
lies in $H_{3/2}^{+}(\sigma_\ell^\vee)$ and has principal part
\begin{equation}
 \pp\bigl(\mathbb P_{\ell;a,\mu}^{+}\bigr)
 =q^\mu\mathbf e_a^\vee.
 \label{rad:eq:endpoint-principal-part}
\end{equation}
\end{theorem}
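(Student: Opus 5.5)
The plan is to work with the resolvent representation \eqref{rad:eq:resolvent-continuation} and use the results of Appendix~\ref{app:friedrichs-continuation}: compact resolvent, local regularity, identification of the zero eigenspace with $S_{3/2}(\sigma_\ell^\vee)$, and agreement with $\mathbb P^{\mathrm{abs}}_{\ell;a,\mu}$ for $\Re(s)>1$.

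\emph{Step 1: holomorphy at $s_0=3/4$.} At $s_0=3/4$ we have $\Lambda_{\mathrm{sp}}(3/4)=0$. Near $s_0$, write
\[
 \mathscr R_\Delta(z)=-\frac{\Pi_0}{z}+\mathscr R_\Delta^{\mathrm{reg}}(z),
\]
where $\Pi_0$ is the orthogonal projection onto $\ker\Delta_{3/2,F}=S_{3/2}(\sigma_\ell^\vee)$ and $\mathscr R_\Delta^{\mathrm{reg}}$ is holomorphic near $0$. The point $0$ is an isolated eigenvalue by compactness of the resolvent, and the pole is simple by self-adjointness. The only possible pole therefore comes from $\Pi_0\mathcal S^{\mathrm{cut}}_{\ell;a,\mu}(\cdot,s)/\Lambda_{\mathrm{sp}}(s)$.

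For a cusp form $f$, I will show that $\langle \mathcal S^{\mathrm{cut}},f\rangle_{3/2}$ vanishes to first order at $s_0$. Since $\mathcal S^{\mathrm{cut}}$ is compactly supported modulo $\Gamma$ (the cutoff makes $\chi'$ supported in $1\le v\le2$), one can integrate by parts:
\[
 \langle(\Delta_{3/2}-\Lambda_{\mathrm{sp}})\mathbb A,f\rangle
 =\langle\mathbb A,(\Delta_{3/2}-\overline{\Lambda_{\mathrm{sp}}})f\rangle+\text{boundary terms}.
\]
Unfold over $\widetilde\Gamma_\infty$ to a strip. The boundary terms become an integral over $v$ of the $\mu$-th Fourier coefficient of $f$ paired against a Whittaker Wronskian. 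The key point is that $f$, being holomorphic of type $\sigma_\ell^\vee$, has only exponents in $\Z-\alpha_a$ that are \emph{positive}. So the $\mu<0$ coefficient of $f$ vanishes, the unfolded pairing is identically zero, and $\Pi_0\mathcal S^{\mathrm{cut}}(\cdot,s)\equiv0$. The residue therefore vanishes and the family is holomorphic at $3/4$.

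\emph{Step 2: harmonicity and growth.} The value at $s_0=3/4$ satisfies $\Delta_{3/2}\mathbb P=0$, by continuation of \eqref{rad:eq:eigenvalue} and local regularity. By \eqref{rad:eq:harmonic-seed}, $\mathbb A$ equals $\chi(v)q^\mu\mathbf e_a^\vee$ near $\ii\infty$, up to terms of moderate growth. The correction $\mathscr R^{\mathrm{reg}}_\Delta\mathcal S^{\mathrm{cut}}$ lies in $L^2_{3/2}$ and is harmonic for $v\ge2$. Its Fourier expansion for $v\ge2$ therefore has only decaying terms: the $\mathcal W$-type terms. There are no growing $\mathcal M$ modes, since these are not $L^2$, and no zero-frequency term, by \eqref{eq:cusp-frequency-gap}.

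Any harmonic Fourier mode of weight $3/2$ is a combination of $q^\nu$ and $\Gamma(-1/2,4\pi|\nu|v)q^{\nu}$. The $L^2$ condition removes $q^\nu$ for $\nu<0$. It also removes the non-holomorphic mode for $\nu>0$, which grows like $\e^{4\pi\nu v}$ times $q^{\nu}$ in absolute size. This yields an expansion of the required $H^+_{3/2}$ shape, with the non-holomorphic part supported on $\nu<0$.

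\emph{Step 3: principal part and shadow.} Since the only nonpositive holomorphic exponent comes from the seed, \eqref{rad:eq:endpoint-principal-part} follows. Finally, $\xi_{3/2}\mathbb P$ is holomorphic, because $\Delta_{3/2}=-\xi_{1/2}\xi_{3/2}$ by \eqref{rad:eq:Laplacian-factorization}. It is also cuspidal: it comes only from the $\Gamma(-1/2,\cdot)$ terms with $\nu<0$, and there is no zero mode. Hence $\mathbb P\in H^+_{3/2}(\sigma_\ell^\vee)$.

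\emph{Main obstacle.} The main obstacle is Step 1: justifying that the boundary and unfolding computation at the cutoff gives exactly the vanishing $\mu$-coefficient of $f$ uniformly near $s_0$. If that pairing argument proves awkward, the fallback is Green's identity directly on $\mathfrak F$ with the cutoff, using that $f$ is rapidly decaying.
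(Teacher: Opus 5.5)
Your proof is correct and follows the paper's own route. That route has three parts: the Laurent expansion $\mathscr R_\Delta(z)=-\operatorname{pr}_0/z+\mathscr R_\Delta^{\mathrm{reg}}(z)$; orthogonality of $\mathcal S_{\ell;a,\mu}^{\mathrm{cut}}$ to $S_{3/2}(\sigma_\ell^\vee)$, because the unfolded pairing sees only the vanishing $\mu$-th Fourier coefficient of the cusp form ($\mu<0<r$); and the cusp expansion of the $L^2$ harmonic resolvent term with growing and zero modes excluded. The obstacle you flag in Step~1 disappears if, as the paper does, you unfold the compactly supported source directly, with no integration by parts and no pairing against the exponentially growing $\mathbb A$ (which need not converge absolutely). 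Two small slips: for $\nu>0$ the growing radial solution is $\Gamma(-\tfrac12,-4\pi\nu v)q^\nu$, not $\Gamma(-\tfrac12,4\pi|\nu|v)q^\nu$; and for $v\ge2$ the cutoff sum at $s=3/4$ is exactly $q^\mu\mathbf e_a^\vee$, since all cosets with $c\ne0$ have $\operatorname{Im}(\gamma\tau)\le 1/v<1$, so no moderate-growth terms arise.
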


\begin{proof}
Appendix~\ref{app:friedrichs-continuation} shows that the continuation is
regular at $s=3/4$: the only possible pole is the zero-eigenspace projection,
and the cutoff source is orthogonal to that space.  It also excludes growing
solutions and zero Fourier modes in the cusp expansion of the $L^2$ resolvent
term.  Hence this term adds no negative holomorphic contribution to
$q^\mu\mathbf e_a^\vee$,
and its $\xi_{3/2}$-image is cuspidal.
\end{proof}

\subsection{Positive-index Fourier transform}

Fix $1\le b\le d$ and
\[
 \nu\in\Z-\alpha_b,\qquad \nu>0.
\]
For $\Re(s)>1$, absolute convergence permits unfolding and Poisson summation.
Fix $c>0$.  For one scalar lower row of modulus $c$, Garthwaite's Fourier
transform
\cite[Theorem~4.1]{Garthwaite} gives
\begin{equation}
 \ii^{-\kappa}2\pi
 \left(\frac{\nu}{|\mu|}\right)^{\kappa/2-1/2}
 \frac{\Gamma(2s)}{\Gamma(s+\kappa/2)}
 \frac1c I_{2s-1}\!\left(\frac{4\pi\sqrt{|\mu|\nu}}{c}\right)
 \mathcal W_{\kappa,s}(4\pi\nu v).
 \label{rad:eq:scalar-Whittaker-transform}
\end{equation}
The relevant matrix coefficient of
$\sigma_\ell^\vee(\widetilde\gamma)^{-1}$ is constant with respect to this
one-dimensional integration variable.  The following proposition records the complete
Whittaker--Fourier convention comparison; all subsequent Fourier formulas use
only the notation fixed here and in Section~\ref{sec:AA-normalization}.

\begin{proposition}[Whittaker--Fourier convention comparison]
\label{rad:prop:Whittaker-transform}
Fix $1\le a,b\le d$, $\mu\in\Z-\alpha_a$ with $\mu<0$, and
$\nu\in\Z-\alpha_b$ with $\nu>0$.  For $\Re(s)>1$, the $\nu$-th
Fourier term of the $b$th component of
$\mathbb P_{\ell;a,\mu}^{\mathrm{abs}}(\tau,s)$ is
\begin{align}
 &-2\pi
 \left(\frac{\nu}{|\mu|}\right)^{1/4}
 \frac{\Gamma(2s)}{\Gamma(s+3/4)}
 \mathcal W_{3/2,s}(4\pi\nu v)
 \nonumber\\[-1mm]
 &\hspace{18mm}\times
 \mathcal Z_{\ell;a,\mu;b,\nu}^{\mathrm{Bes}}
 \left(2s-\frac12\right)\ee{\nu u}.
 \label{rad:eq:positive-Fourier-general-s}
\end{align}
\end{proposition}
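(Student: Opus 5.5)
We would prove the formula by the standard unfolding computation, keeping careful track of the vector-valued data and the phase conventions recorded above. For $\Re(s)>1$ the series \eqref{rad:eq:Poincare-family} converges absolutely, so we split the coset sum into the identity cosets and the cosets with $c_\gamma\ne0$. The identity coset contributes only the seed $\mathcal M_{3/2,s}(4\pi\mu v)\ee{\mu u}\mathbf e_a^\vee$, which sits in the $a$th component at the frequency $\mu<0$. It therefore has no $\nu$-th term for $\nu>0$. The factor $\tfrac12$ together with the central elements $\widetilde Z^{\pm1}$ identifies the remaining cosets with pairs $(c,\delta)$, where $c>0$ and $\delta\in(\Z/c\Z)^\times$, each shifted by all $\widetilde T^n$. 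We use the section $\widetilde\gamma_{c,\delta}$ fixed in Section~\ref{sec:kloosterman-cancellation}. The central factor is trivial by the central coset normalization, so no sign is lost here.

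Next we compute the slash action on a representative $\widetilde\gamma_{c,\delta}\widetilde T^n$. The $b$th component is $(\sigma_\ell^\vee(\widetilde\gamma_{c,\delta})^{-1})_{b,a}$ times the scalar slash $\varphi^{-3}\,\mathcal M_{3/2,s}(4\pi\mu\operatorname{Im}\gamma\tau)\ee{\mu\operatorname{Re}\gamma\tau}$. We then sum over $n$ and apply Poisson summation in $u$. The phase $\ee{\mu a_\gamma/c}$, with $a_\gamma=\delta^*$, and the phase $\ee{\nu\delta/c}$ coming from the translation by $\delta/c$ combine into $\ee{(\mu\delta^*+\nu\delta)/c}$. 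The matrix coefficient is constant in the integration variable, so the $\delta$-sum reproduces exactly $K_{\sigma_\ell^\vee}(a,\mu;b,\nu;c)$ as defined in \eqref{ps:eq:compressed-Kloosterman-definition}. The remaining one-dimensional integral is the scalar transform \eqref{rad:eq:scalar-Whittaker-transform} with $\kappa=3/2$. It gives $\ii^{-3/2}2\pi(\nu/|\mu|)^{1/4}\Gamma(2s)\Gamma(s+3/4)^{-1}c^{-1}I_{2s-1}(\Lambda_{\mu,\nu}/c)\mathcal W_{3/2,s}(4\pi\nu v)$, multiplied by $\ee{\nu u}$. Absolute convergence for $\Re(s)>1$, which follows from the trivial bound $|K|\le c$ and $I_{2s-1}(x)\ll x^{2\Re s-1}$, justifies interchanging the sums and the integral.

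Finally we convert the phase. We use the principal branch $\ii^{-3/2}=\e^{-3\pi\ii/4}$ and the dictionary identity $\e^{-3\pi\ii/4}K_{\sigma_\ell^\vee}=-\mathcal S_\ell^{\mathrm{proj}}$. Summing over $c\ge1$ with $\omega=2s-\tfrac12$, so that $I_{\omega-1/2}=I_{2s-1}$, gives $-2\pi(\nu/|\mu|)^{1/4}\frac{\Gamma(2s)}{\Gamma(s+3/4)}\mathcal W_{3/2,s}(4\pi\nu v)\,\mathcal Z^{\mathrm{Bes}}_{\ell;a,\mu;b,\nu}(2s-\tfrac12)\ee{\nu u}$, which is \eqref{rad:eq:positive-Fourier-general-s}.

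The main obstacle is the convention bookkeeping rather than any analysis. Three points must be checked. First, Garthwaite's transform has to be stated for our slash $\varphi_\gamma^{-2\kappa}$ with the principal square root $\sqrt{c\tau+\delta}$; this is where the factor $\ii^{-\kappa}$ comes from, and it requires the Jeon--Kang--Kim corrections. Second, the inverse $\sigma_\ell^\vee(\widetilde\gamma)^{-1}$, rather than its transpose, must appear with the index order $(b,a)$. Third, the metaplectic lift of $\widetilde\gamma_{c,\delta}\widetilde T^n$ must agree with the section, which holds because $\widetilde T$ has trivial $\varphi$. We would check these by comparing with the scalar case, taking $d=1$, $\ell=3$, where $\sigma_3^\vee$ is a power of the eta multiplier and the classical formula is known.
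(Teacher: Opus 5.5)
Your proposal is correct and follows the paper's own argument: unfold over the lower rows $(c,\delta)$ with the factor $\tfrac12$ absorbing the central copies, recognize the raw sum $K_{\sigma_\ell^\vee}(a,\mu;b,\nu;c)$, apply Garthwaite's transform componentwise with $\ii^{-3/2}=\e^{-3\pi\ii/4}$, and convert via $\e^{-3\pi\ii/4}K_{\sigma_\ell^\vee}=-\mathcal S_\ell^{\mathrm{proj}}$. One small bookkeeping correction: for the representative $\widetilde\gamma_{c,\delta}\widetilde T^n$ the $(b,a)$ matrix entry is $\ee{n\alpha_b}\bigl(\sigma_\ell^\vee(\widetilde\gamma_{c,\delta})^{-1}\bigr)_{b,a}$, and this twist is exactly what makes the Poisson summation over $n$ produce frequencies $\nu\in\Z-\alpha_b$.
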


\begin{proof}
For a lower row $(c,\delta)$ choose $\delta^*$ with
$\delta\delta^*\equiv1\pmod c$ and use
\begin{equation}
 \begin{pmatrix}
  \delta^*&(\delta^*\delta-1)/c\\ c&\delta
 \end{pmatrix}\tau
 =\frac{\delta^*}{c}-\frac{1}{c(c\tau+\delta)}.
 \label{rad:eq:lower-row-fractional-linear}
\end{equation}
The central-coset factor leaves each ordinary lower row once, and the matrix
coefficient with the phases $\ee{\mu\delta^*/c}$ and
$\ee{\nu\delta/c}$ is precisely the raw Kloosterman sum
$K_{\sigma_\ell^\vee}(a,\mu;b,\nu;c)$ of
\eqref{ps:eq:compressed-Kloosterman-definition}, in the source/target order
fixed in Section~\ref{sec:AA-normalization}.  Apply
\cite[Theorem~4.1]{Garthwaite} componentwise.  With the principal branch and
$\kappa=3/2$, one has $\ii^{-3/2}=\e^{-3\pi\ii/4}$,
$\kappa/2-1/2=1/4$, and Bessel order $2s-1$.  The phase dictionary in
Section~\ref{sec:AA-normalization} gives
\[
 \e^{-3\pi\ii/4}K_{\sigma_\ell^\vee}(a,\mu;b,\nu;c)
 =-\mathcal S_\ell^{\mathrm{proj}}(a,\mu;b,\nu;c).
\]
Since $\mathcal Z^{\mathrm{Bes}}$ is defined using
$\mathcal S_\ell^{\mathrm{proj}}$, summing the lower rows gives
\eqref{rad:eq:positive-Fourier-general-s}.
\end{proof}

\begin{theorem}[Rademacher formula for a single weight-$3/2$ Poincar\'e seed]
\label{rad:thm:single-seed-positive-coefficient}
Fix $1\le b\le d$ and $\nu\in\Z-\alpha_b$ with $\nu>0$.  The
$q^\nu\mathbf e_b^\vee$ coefficient of
$\mathbb P_{\ell;a,\mu}^{+}$ is
\begin{equation}
 c^{\mathrm P}_{a,\mu;b,\nu}
 =-2\pi
 \left(\frac{\nu}{|\mu|}\right)^{1/4}
 \mathcal Z_{\ell;a,\mu;b,\nu}^{\mathrm{Bes}}(1).
 \label{rad:eq:single-seed-complex-formula}
\end{equation}
At $\omega=1$, put
\begin{equation}
 \mathcal Z_{\ell;a,\mu;b,\nu}^{\mathrm{proj}}
 :=\sum_{c\ge1}
 \frac{\mathcal S_\ell^{\mathrm{proj}}(a,\mu;b,\nu;c)}{c}
 I_{1/2}\!\left(\frac{4\pi\sqrt{|\mu|\nu}}{c}\right).
 \label{rad:eq:comp-Bessel-series}
\end{equation}
Then
$\mathcal Z_{\ell;a,\mu;b,\nu}^{\mathrm{Bes}}(1)
 =\mathcal Z_{\ell;a,\mu;b,\nu}^{\mathrm{proj}}$, and
\begin{equation}
 c^{\mathrm P}_{a,\mu;b,\nu}
 =-2\pi\left(\frac{\nu}{|\mu|}\right)^{1/4}
 \mathcal Z_{\ell;a,\mu;b,\nu}^{\mathrm{proj}}.
 \label{rad:eq:single-seed-real-formula}
\end{equation}
The series
$\mathcal Z_{\ell;a,\mu;b,\nu}^{\mathrm{proj}}$ converges, and for every
$C\ge1$ its tail after truncation at $c\le C$ is
$O_{\ell,a,b,\mu,\nu}(C^{-\varepsilon_\ell^{\mathrm{GS}}})$.
\end{theorem}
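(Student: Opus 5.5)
The plan is to read off the $\nu$th Fourier coefficient of $\mathbb P_{\ell;a,\mu}(\tau,s)$ for $\Re(s)>1$ from Proposition~\ref{rad:prop:Whittaker-transform}, then continue it in $s$ to $s_0=3/4$. The coefficient is a holomorphic function on $\Omega_\ell$ of the explicit form \eqref{rad:eq:positive-Fourier-general-s}. Corollary~\ref{ps:cor:spectral-half-plane} makes $s\mapsto\mathcal Z^{\mathrm{Bes}}_{\ell;a,\mu;b,\nu}(2s-\tfrac12)$ holomorphic on $\Omega_\ell$. The Gamma and Whittaker factors are holomorphic there as well, since $2s$ avoids nonpositive integers and $\Re(s)+3/4>0$.

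On the other side, Appendix~\ref{app:friedrichs-continuation} gives meromorphic continuation of $\mathbb P_{\ell;a,\mu}(\tau,s)$ to $\Omega_\ell$, agreeing with the absolutely convergent family on $\Re(s)>1$. Theorem~\ref{rad:thm:endpoint-existence} gives holomorphy at $3/4$. The $\nu$th Fourier coefficient is obtained by integrating over $u\in[0,1]$ at fixed $v$. It therefore depends holomorphically on $s$ wherever the family does, using local uniformity of the continuation from the appendix. By the identity theorem, the Fourier term equals \eqref{rad:eq:positive-Fourier-general-s} on a connected neighborhood of $[3/4,\infty)$ in $\Omega_\ell$, and in particular at $s=3/4$.

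Next I evaluate at $s=3/4$. There $\Gamma(2s)/\Gamma(s+3/4)=\Gamma(3/2)/\Gamma(3/2)=1$. For $y>0$, $W_{3/4,1/4}(y)=y^{3/4}\e^{-y/2}$, which is the decaying counterpart of \eqref{rad:eq:harmonic-seed}. Hence $\mathcal W_{3/2,3/4}(4\pi\nu v)\ee{\nu u}=\e^{-2\pi\nu v}\ee{\nu u}=q^\nu$. Since $\nu>0$, the non-holomorphic part $\mathbb P^-$ contributes no $q^\nu$-frequency: its expansion in the conventions of Section~\ref{sec:foundations} involves only negative frequencies, and a harmonic form's $\nu$-frequency Fourier term at positive $\nu$ is a pure holomorphic multiple of $q^\nu$ plus a growing term excluded by Theorem~\ref{rad:thm:endpoint-existence}. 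So the whole Fourier term is $c^{\mathrm P}_{a,\mu;b,\nu}q^\nu$, and $\omega=2\cdot\tfrac34-\tfrac12=1$ gives \eqref{rad:eq:single-seed-complex-formula}.

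The identification $\mathcal Z^{\mathrm{Bes}}(1)=\mathcal Z^{\mathrm{proj}}$ is the definition \eqref{ps:eq:spectral-KB-series} at $\omega=1$, with Bessel order $1/2$. Convergence and the tail bound $O(C^{-\varepsilon_\ell^{\mathrm{GS}}})$ are the last sentence of Theorem~\ref{ps:thm:holomorphic-Bessel-series}. Note that $1\in\Omega^{\mathrm{Bes}}_\ell$.

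The main obstacle is justifying that the Fourier coefficient of the continued family equals the continuation of the Kloosterman--Bessel expression. Termwise evaluation at $s=3/4$ is not absolutely convergent: the $I_{1/2}$ series converges only conditionally via Andersen--Anderson cancellation. I would therefore avoid any direct unfolding at $3/4$ and rely purely on the uniqueness of analytic continuation. This requires holomorphy of both sides on a common connected domain, which Corollary~\ref{ps:cor:spectral-half-plane} and the appendix provide. The one point to verify carefully is that Fourier coefficient extraction commutes with the resolvent continuation, which follows from local uniform convergence in $\tau$ on compacta.
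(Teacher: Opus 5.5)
Your proposal is correct and follows essentially the same route as the paper: you identify the Fourier term on $\Re(s)>1$ via the Whittaker--Fourier comparison, continue both sides to the connected half-plane $\Omega_\ell$ (the left side as a meromorphic $C^\infty_{\mathrm{loc}}$-valued family via the appendix, the right side as a holomorphic function via the Kloosterman--Bessel half-plane), apply the identity theorem, evaluate at $s=3/4$ using $\Gamma(3/2)/\Gamma(3/2)=1$ and $\mathcal W_{3/2,3/4}(4\pi\nu v)=\e^{-2\pi\nu v}$, and take the tail bound from Theorem~\ref{ps:thm:holomorphic-Bessel-series} at $\omega=1$. The only difference is cosmetic: the paper does not invoke Theorem~\ref{rad:thm:endpoint-existence} for regularity at $3/4$, because the holomorphic right-hand side already makes any apparent pole of the left-hand Fourier coefficient removable on all of $\Omega_\ell$.
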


\begin{proof}
By Proposition~\ref{rad:prop:resolvent-agreement},
Proposition~\ref{rad:prop:Whittaker-transform} is the Fourier identity of
the meromorphic family \eqref{rad:eq:resolvent-continuation} on
$\Re(s)>1$.  The parameter-dependent local elliptic regularity recalled in
Appendix~\ref{app:friedrichs-continuation}, applied to the resolvent term,
and local finiteness of the cutoff sum show that this family is
meromorphic with values in $C^\infty_{\mathrm{loc}}(\Hh)$.  Hence,
for every fixed $v>0$, the Fourier functional
\[
 s\longmapsto
 \int_0^1
 \bigl(\mathbb P_{\ell;a,\mu}(u+\ii v,s)\bigr)_b
 \ee{-\nu u}\,du
\]
is meromorphic on $\Omega_\ell$.
On this connected half-plane, the Bessel series is holomorphic
by the half-plane~\eqref{ps:eq:spectral-half-plane}; moreover
$\Gamma(2s)/\Gamma(s+3/4)$ is holomorphic there because
$\Omega_\ell\subset\Re(s)>0$, and
$\mathcal W_{3/2,s}(4\pi\nu v)$ is holomorphic in its second Whittaker
parameter.  Hence the right-hand side of
\eqref{rad:eq:positive-Fourier-general-s} is holomorphic throughout
$\Omega_\ell$.  The left-hand Fourier coefficient is meromorphic there and
agrees with this holomorphic function on $\Re(s)>1$.  The meromorphic
identity theorem extends the equality to $\Omega_\ell$ and makes every
apparent pole on the left removable.

At $s=3/4$ the Bessel order is $1/2$, and
\[
 \frac{\Gamma(2s)}{\Gamma(s+3/4)}=1,
 \qquad
 \mathcal W_{3/2,3/4}(4\pi\nu v)=\e^{-2\pi\nu v}.
\]
This yields \eqref{rad:eq:single-seed-complex-formula}; the identity
$\mathcal Z^{\mathrm{Bes}}(1)=\mathcal Z^{\mathrm{proj}}$ gives
\eqref{rad:eq:single-seed-real-formula}.  The tail estimate is
Theorem~\ref{ps:thm:holomorphic-Bessel-series} at $\omega=1$.
\end{proof}

\subsection{The Maass--Poincar\'e realization and the cusp-form adjustment}

Define
\begin{equation}
 \mathbf G_\ell^{\mathrm P}
 :=\sum_{(a,m)\in\mathscr S_\ell^{\mathrm{pol}}}
 c_{\ell,a}(m)\,
 \mathbb P_{\ell;a,\,m-\alpha_a}.
 \label{rad:eq:Poincare-normalization}
\end{equation}

\begin{theorem}[Maass--Poincar\'e series with the prescribed principal part]
\label{rad:thm:spectral-realization}
The vector $\mathbf G_\ell^{\mathrm P}$ belongs to
$H_{3/2}^{+}(\sigma_\ell^\vee)$.  Moreover,
\begin{equation}
 \pp\bigl((\mathbf G_\ell^{\mathrm P})^+\bigr)=\Pcan_\ell,
 \qquad
 \xi_{3/2}\mathbf G_\ell^{\mathrm P}=-\boldsymbol\Theta_\ell.
 \label{rad:eq:Rademacher-pp-shadow}
\end{equation}
Consequently
\begin{equation}
 \mathbf G_\ell^{\mathrm A}-\mathbf G_\ell^{\mathrm P}
 \in S_{3/2}(\sigma_\ell^\vee),
 \label{rad:eq:JR-cusp-correction}
\end{equation}
and
\begin{equation}
 \mathbf G_\ell^{\mathrm A}-\mathbf G_\ell^{\mathrm P}
 =2\pi\ii\,
 \bigl(\Jet_\ell^{<d>}\bigr)^{-1}
 \bigl(a_{0,\ell}(\mathbf G_\ell^{\mathrm P}),\ldots,
       a_{d-1,\ell}(\mathbf G_\ell^{\mathrm P})\bigr).
 \label{rad:eq:finite-jet-cusp-correction}
\end{equation}
\end{theorem}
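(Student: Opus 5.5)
Everything follows by linearity from the single-seed results, Theorem~\ref{rad:thm:endpoint-existence} and Theorem~\ref{ps:thm:principal-part-shadow}. The approach is to pass each property of $\mathbb P_{\ell;a,\mu}$ through the finite sum \eqref{rad:eq:Poincare-normalization}, and then let the earlier results on the canonical principal part produce the shadow and the cusp-form statements.

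\emph{Membership and principal part.} The index set $\mathscr S_\ell^{\mathrm{pol}}$ is finite. For each $(a,m)\in\mathscr S_\ell^{\mathrm{pol}}$ we have $\mu=m-\alpha_a\in\Z-\alpha_a$, and $\mu<0$ because $m<\alpha_a$. Theorem~\ref{rad:thm:endpoint-existence} therefore applies: each $\mathbb P_{\ell;a,m-\alpha_a}$ lies in $H_{3/2}^{+}(\sigma_\ell^\vee)$ and has principal part $q^{m-\alpha_a}\mathbf e_a^\vee$. The space $H_{3/2}^{+}(\sigma_\ell^\vee)$ is a vector space, and the principal-part map is linear. Hence $\mathbf G_\ell^{\mathrm P}\in H_{3/2}^{+}(\sigma_\ell^\vee)$, and its principal part is
\[
 \pp\bigl((\mathbf G_\ell^{\mathrm P})^+\bigr)=\sum_{(a,m)\in\mathscr S_\ell^{\mathrm{pol}}}c_{\ell,a}(m)q^{m-\alpha_a}\mathbf e_a^\vee=\Pcan_\ell .
\]
This is the definition of $\Pcan_\ell$. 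One small point needs checking: the principal part in Theorem~\ref{rad:thm:endpoint-existence} must carry no zero-frequency term. This holds because $\alpha_a\notin\Z$, by Proposition~\ref{prop:sigma-finite-gap}.

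\emph{Shadow and cusp difference.} With the principal part established, apply Theorem~\ref{ps:thm:principal-part-shadow} to $\widetilde{\mathbf G}=\mathbf G_\ell^{\mathrm P}$. It gives $\xi_{3/2}\mathbf G_\ell^{\mathrm P}=-\boldsymbol\Theta_\ell$, which completes \eqref{rad:eq:Rademacher-pp-shadow}. It also gives $\mathbf G_\ell^{\mathrm P}-\mathbf G_\ell^{\mathrm A}\in S_{3/2}(\sigma_\ell^\vee)$, which is \eqref{rad:eq:JR-cusp-correction}. Note that the shadow is not computed from the Poincar\'e series at all. It is forced by the Bruinier--Funke pairing argument, and this is exactly the point of part (ii) of the main theorem.

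\emph{Explicit adjustment.} Because the shadow is now known, the corrections $a_{n,\ell}(\mathbf G_\ell^{\mathrm P})$ are defined. Theorem~\ref{ps:thm:Poincare-reduction} with $\widetilde{\mathbf G}=\mathbf G_\ell^{\mathrm P}$ then yields $\mathbf G_\ell^{\mathrm P}+2\pi\ii\mathbf U=\mathbf G_\ell^{\mathrm A}$, where
\[
 \mathbf U=(\Jet_\ell^{<d>})^{-1}\bigl(a_{0,\ell}(\mathbf G_\ell^{\mathrm P}),\ldots,a_{d-1,\ell}(\mathbf G_\ell^{\mathrm P})\bigr)\in S_{3/2}(\sigma_\ell^\vee).
\]
This is \eqref{rad:eq:finite-jet-cusp-correction}. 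The underlying mechanism is the shift law of Lemma~\ref{lem:weakly-holomorphic-shift} combined with the uniqueness in Theorem~\ref{thm:unique-Appell-jet-lift}.

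\emph{Main obstacle.} The only substantive input is Theorem~\ref{rad:thm:endpoint-existence}: the endpoint continuation must be regular at $s=3/4$, and the resolvent term must add no negative holomorphic modes. Both are deferred to Appendix~\ref{app:friedrichs-continuation}. Granting that theorem, the remaining steps are formal linearity.
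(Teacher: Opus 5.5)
Your proposal is correct and follows the paper's proof: the principal part comes from the finite polar sum together with Theorem~\ref{rad:thm:endpoint-existence}, the shadow and the cuspidality of the difference come from Theorem~\ref{ps:thm:principal-part-shadow}, and the explicit adjustment comes from Theorem~\ref{ps:thm:Poincare-reduction}. You also point out that the shadow must be established before the corrections $a_{n,\ell}(\mathbf G_\ell^{\mathrm P})$ are even defined, which makes explicit an ordering the paper leaves implicit.
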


\begin{proof}
The finite polar sum and Theorem~\ref{rad:thm:endpoint-existence} give the
principal part.  Theorem~\ref{ps:thm:principal-part-shadow} gives both the
shadow and the fact that the difference is cuspidal, while
Theorem~\ref{ps:thm:Poincare-reduction} gives the formula for the
cusp-form adjustment.
\end{proof}

\begin{corollary}[Positive Fourier coefficients of $\mathbf G_\ell^{\mathrm P}$]
\label{rad:cor:GR-exact-coefficients}
Fix $1\le b\le d$ and $\nu\in\Z-\alpha_b$ with $\nu>0$.  Then
\begin{align}
 c_{\mathbf G_\ell^{\mathrm P}}^+(b,\nu)
 ={}&-4\pi^2\ii\sqrt{\frac2\ell}
 \sum_{(a,m)\in\mathscr S_\ell^{\mathrm{pol}}}
 \pPol_{\ell,a}(m)
 \left(\frac{\nu}{\alpha_a-m}\right)^{1/4}
 \nonumber\\[-1mm]
 &\hspace{18mm}\times
 \mathcal Z_{\ell;a,\,m-\alpha_a;b,\nu}^{\mathrm{proj}}.
 \label{rad:eq:GR-exact-coefficient}
\end{align}
The seed sum is finite.  For every $C\ge1$, truncating each modulus sum at
$c\le C$ produces an error
$O_{\ell,b,\nu}(C^{-\varepsilon_\ell^{\mathrm{GS}}})$.
\end{corollary}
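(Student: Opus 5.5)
The plan is to expand $\mathbf G_\ell^{\mathrm P}$ by its definition \eqref{rad:eq:Poincare-normalization} and read off the $q^\nu\mathbf e_b^\vee$ coefficient of its holomorphic part term by term. The sum is over $\mathscr S_\ell^{\mathrm{pol}}$, which is finite: each $a$ contributes only the integers $0\le m<\alpha_a$, and there are $d$ values of $a$. So taking holomorphic parts and Fourier coefficients commutes with the sum, giving
\[
 c_{\mathbf G_\ell^{\mathrm P}}^+(b,\nu)
 =\sum_{(a,m)\in\mathscr S_\ell^{\mathrm{pol}}}c_{\ell,a}(m)\,
 c^{\mathrm P}_{a,m-\alpha_a;b,\nu}.
\]
For each seed $\mu=m-\alpha_a$ we have $\mu\in\Z-\alpha_a$, and $\mu<0$ because $m<\alpha_a$. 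Hence Theorem~\ref{rad:thm:single-seed-positive-coefficient} applies, in its real form \eqref{rad:eq:single-seed-real-formula}, with $|\mu|=\alpha_a-m$.

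Next I substitute $c_{\ell,a}(m)=2\pi\ii\sqrt{2/\ell}\,\pPol_{\ell,a}(m)$ and multiply the constants:
\[
 2\pi\ii\sqrt{\tfrac2\ell}\cdot(-2\pi)=-4\pi^2\ii\sqrt{\tfrac2\ell}.
\]
Each seed also carries the factor $(\nu/(\alpha_a-m))^{1/4}\,\mathcal Z^{\mathrm{proj}}_{\ell;a,m-\alpha_a;b,\nu}$. Together these give exactly \eqref{rad:eq:GR-exact-coefficient}.

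For the truncation statement, Theorem~\ref{rad:thm:single-seed-positive-coefficient} says that truncating each series $\mathcal Z^{\mathrm{proj}}$ at $c\le C$ leaves a tail $O_{\ell,a,b,\mu,\nu}(C^{-\varepsilon_\ell^{\mathrm{GS}}})$. There are finitely many seeds, so the implied constants can be maximized over them. The coefficients $\pPol_{\ell,a}(m)$ and the prefactors are fixed numbers depending only on $\ell$, $a$, $m$ and $\nu$. Once $\ell$ is fixed, the pairs $(a,m)$ range over a finite set determined by $\ell$, so the total error is $O_{\ell,b,\nu}(C^{-\varepsilon_\ell^{\mathrm{GS}}})$.

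None of these steps is a real obstacle. The one point needing care is that the positive Fourier coefficients of the endpoint Poincaré series are really the values at $s=3/4$ from Theorem~\ref{rad:thm:single-seed-positive-coefficient}, with no extra contribution from the resolvent term. That is already built into that theorem through the meromorphic identity argument, so we only cite it, together with linearity of the holomorphic-part projection on $H_{3/2}^+(\sigma_\ell^\vee)$.
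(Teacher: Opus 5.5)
Your proposal is correct and matches the paper's proof: you substitute $c_{\ell,a}(m)=2\pi\ii\sqrt{2/\ell}\,\pPol_{\ell,a}(m)$ into the finite seed sum defining $\mathbf G_\ell^{\mathrm P}$ and apply the single-seed formula \eqref{rad:eq:single-seed-real-formula} with $|\mu|=\alpha_a-m$. Your explicit remarks on the finiteness of $\mathscr S_\ell^{\mathrm{pol}}$ and on making the tail constants uniform over the finitely many seeds spell out details the paper leaves implicit.
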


\begin{proof}
Insert $c_{\ell,a}(m)=2\pi\ii\sqrt{2/\ell}\,\pPol_{\ell,a}(m)$ into
\eqref{rad:eq:Poincare-normalization} and apply
\eqref{rad:eq:single-seed-real-formula}.
\end{proof}

\begin{theorem}[Fourier coefficients of the canonical lift]
\label{rad:thm:GJ-exact-coefficients}
Fix $1\le b\le d$ and $\nu\in\Z-\alpha_b$ with $\nu>0$.  Then
\begin{align}
 c_{\mathbf G_\ell^{\mathrm A}}^+(b,\nu)
 ={}&-4\pi^2\ii\sqrt{\frac2\ell}
 \sum_{(a,m)\in\mathscr S_\ell^{\mathrm{pol}}}
 \pPol_{\ell,a}(m)
 \left(\frac{\nu}{\alpha_a-m}\right)^{1/4}
 \nonumber\\[-1mm]
 &\hspace{18mm}\times
 \mathcal Z_{\ell;a,\,m-\alpha_a;b,\nu}^{\mathrm{proj}}
 +c_{\mathbf G_\ell^{\mathrm A}-\mathbf G_\ell^{\mathrm P}}^+(b,\nu).
 \label{rad:eq:GJ-exact-coefficient}
\end{align}
The first term is a convergent Rademacher series, and the second is the
coefficient of the unique cusp-form adjustment in the finite-dimensional space
$S_{3/2}(\sigma_\ell^\vee)$, given by
\eqref{rad:eq:finite-jet-cusp-correction}.
\end{theorem}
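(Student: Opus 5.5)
The plan is to write $\mathbf G_\ell^{\mathrm A}$ as $\mathbf G_\ell^{\mathrm P}$ plus the cusp form $\mathbf G_\ell^{\mathrm A}-\mathbf G_\ell^{\mathrm P}$ and read off one Fourier coefficient. By Theorem~\ref{rad:thm:spectral-realization}, $\mathbf G_\ell^{\mathrm A}-\mathbf G_\ell^{\mathrm P}\in S_{3/2}(\sigma_\ell^\vee)$, and this difference is given explicitly by \eqref{rad:eq:finite-jet-cusp-correction}. A cusp form is holomorphic, so its non-holomorphic part is zero. Taking holomorphic parts is linear, so for every allowed exponent
\[
 c_{\mathbf G_\ell^{\mathrm A}}^+(b,\nu)
 =c_{\mathbf G_\ell^{\mathrm P}}^+(b,\nu)
 +c_{\mathbf G_\ell^{\mathrm A}-\mathbf G_\ell^{\mathrm P}}^+(b,\nu).
\]
One point needs care. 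The holomorphic/non-holomorphic splitting of $F\in H_{3/2}^+(\sigma_\ell^\vee)$ is unique: the $F^-$ terms are $\Gamma(-1/2,4\pi|r|v)q^r$ with $r<0$ only, and no zero frequency occurs by Proposition~\ref{prop:sigma-finite-gap}. Hence adding a holomorphic cusp form changes only $F^+$, and the identity above is legitimate coefficientwise.

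Next, substitute Corollary~\ref{rad:cor:GR-exact-coefficients} for the first term. That corollary comes from inserting $c_{\ell,a}(m)=2\pi\ii\sqrt{2/\ell}\,\pPol_{\ell,a}(m)$ into \eqref{rad:eq:Poincare-normalization} and applying the single-seed formula \eqref{rad:eq:single-seed-real-formula} with $\mu=m-\alpha_a$, so $|\mu|=\alpha_a-m$. The constants combine as
\[
 2\pi\ii\sqrt{2/\ell}\cdot(-2\pi)=-4\pi^2\ii\sqrt{2/\ell}.
\]
The seed set $\mathscr S_\ell^{\mathrm{pol}}$ is finite, so finite linearity of the Fourier coefficient functional justifies summing the seed contributions. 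This yields exactly the first term of \eqref{rad:eq:GJ-exact-coefficient}.

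Convergence of each $\mathcal Z^{\mathrm{proj}}$, with a tail of size $O(C^{-\varepsilon_\ell^{\mathrm{GS}}})$, is the final assertion of Theorem~\ref{rad:thm:single-seed-positive-coefficient}, which rests on Theorem~\ref{ps:thm:holomorphic-Bessel-series}. A finite sum of convergent series is convergent, so the first term is a convergent Rademacher series.

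For the second term, uniqueness comes from Theorem~\ref{ps:thm:principal-part-shadow}: the fiber over $\Pcan_\ell$ is $\mathbf G_\ell^{\mathrm A}+S_{3/2}(\sigma_\ell^\vee)$. Since $\mathbf G_\ell^{\mathrm A}$ is unique (Theorem~\ref{thm:unique-Appell-jet-lift}), the cusp form $\mathbf G_\ell^{\mathrm A}-\mathbf G_\ell^{\mathrm P}$ is uniquely determined. Its explicit description is \eqref{rad:eq:finite-jet-cusp-correction}, obtained from Theorem~\ref{ps:thm:Poincare-reduction} with $\widetilde{\mathbf G}=\mathbf G_\ell^{\mathrm P}$.

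The main obstacle is not in this argument but in the inputs it uses: the endpoint regularity at $s=3/4$ and the identification of the Fourier coefficients of the continued Poincaré family with the Bessel series (Theorem~\ref{rad:thm:endpoint-existence} and Theorem~\ref{rad:thm:single-seed-positive-coefficient}). Given those, the only step needing a check is the uniqueness of the holomorphic/non-holomorphic splitting discussed above.
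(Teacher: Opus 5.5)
Your proposal is correct and follows the paper's proof. The paper likewise combines the Maass--Poincar\'e coefficient formula \eqref{rad:eq:GR-exact-coefficient} with the cuspidality \eqref{rad:eq:JR-cusp-correction} of $\mathbf G_\ell^{\mathrm A}-\mathbf G_\ell^{\mathrm P}$; your remark on the uniqueness of the splitting $F=F^++F^-$ makes explicit the linearity of $c^+(b,\nu)$ that the paper leaves implicit.
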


\begin{proof}
Combine the coefficient formula~\eqref{rad:eq:GR-exact-coefficient} with
\eqref{rad:eq:JR-cusp-correction}.
\end{proof}

\begin{remark}[The canonical and Maass--Poincar\'e forms]
The positive holomorphic coefficients of $\mathbf G_\ell^{\mathrm P}$ are
Rademacher series.  The canonical lift differs from it by the unique cusp form
in $S_{3/2}(\sigma_\ell^\vee)$ given by
\eqref{rad:eq:finite-jet-cusp-correction}; this adjustment vanishes, for
example, when $\ell=3$.
\end{remark}

\subsection{\texorpdfstring{Coefficients of the completed $k$-rank Taylor family}{Coefficients of the completed k-rank Taylor family}}

Expand the higher-Serre term componentwise.  For $1\le b\le d$, write
\[
 (\mathbf F_\ell)_b(\tau)
 =\sum_{\lambda\in\Z+\alpha_b-1/24}
 f_{\ell,b}(\lambda)q^\lambda,
 \qquad
 (G_{\ell,b}^{\mathrm A})^+(\tau)
 =\sum_{\rho\in\Z-\alpha_b}
 g_{\ell,b}^{\mathrm A}(\rho)q^\rho.
\]
For $r,t\in\Z_{\ge0}$ put
\[
 e_r(t):=[q^t]\left(-\frac{E_2}{12}\right)^r.
\]
For $n\ge0$, $1\le b\le d$, and $\rho\in\Z-\alpha_b$, define the
coefficient convolution associated with the higher Serre derivatives
\[
 \mathcal K_{n,b}^{\mathrm{HS}}(\rho)
 :=\sum_{j=0}^n\binom nj(3/2+j)_{n-j}
 \sum_{t\ge0}e_{n-j}(t)(\rho-t)^j
 g_{\ell,b}^{\mathrm A}(\rho-t).
\]
Here $\mathrm{HS}$ stands for higher Serre.

\begin{theorem}[Fourier coefficients of the completed $k$-rank Taylor family]
\label{rad:thm:Taylor-exact-coefficients}
For every $n,N\in\Z_{\ge0}$,
\begin{align}
 [q^{N-1/24}]\,\rjet_{2n+1,k}^{+}
 ={}&\frac{(8\ell\pi^2)^n}{(2n+1)!}
 \sum_{b=1}^d
 \sum_{\lambda+\rho=N-1/24}
 f_{\ell,b}(\lambda)\mathcal K_{n,b}^{\mathrm{HS}}(\rho)
 \nonumber\\
 &+(2\pi\ii)^{2n+1}
 \sum_{r=0}^N p(N-r)[q^r]a_{n,\ell}^{\mathrm A}.
 \label{rad:eq:Taylor-exact-coefficient}
\end{align}
\end{theorem}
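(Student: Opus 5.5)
The plan is to take holomorphic parts of the central decomposition at the canonical lift, and then read off the $q^{N-1/24}$ coefficient term by term. By definition \eqref{eq:Mn-general} with $\mathbf G=\mathbf G_\ell^{\mathrm A}$,
\[
 \rjet_{2n+1,k}
 =\frac{(8\ell\pi^2)^n}{(2n+1)!}\mathbf F_\ell^{\mathsf T}\dd_{3/2}^{[n]}\mathbf G_\ell^{\mathrm A}
 +\frac{(2\pi\ii)^{2n+1}}{\eta}\,a_{n,\ell}^{\mathrm A}.
\]
The factor $\mathbf F_\ell$ is holomorphic. Moreover, $\dd_{3/2}^{[n]}$ is built from $D$ and multiplication by the holomorphic functions $E_2$ and $E_4$, so it maps $(\mathbf G_\ell^{\mathrm A})^{\pm}$ to functions of the same type: holomorphic $q$-series go to holomorphic $q$-series, and incomplete-gamma terms go to non-holomorphic terms. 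The holomorphic part of the right side is therefore
\[
 \frac{(8\ell\pi^2)^n}{(2n+1)!}\mathbf F_\ell^{\mathsf T}\dd_{3/2}^{[n]}(\mathbf G_\ell^{\mathrm A})^+
 +(2\pi\ii)^{2n+1}\eta^{-1}a_{n,\ell}^{\mathrm A}.
\]
The holomorphic part of the left side is $\rjet^{+}_{2n+1,k}$. To justify the splitting, I would note that the non-holomorphic pieces are exactly what $L$ detects, by the proof of Theorem~\ref{thm:weakly-holomorphic-scalar-corrections}. Equivalently, $\rjet^-_{2n+1,k}$ matches the contraction of $\dd_{3/2}^{[n]}(\mathbf G^{\mathrm A}_\ell)^-$. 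This holds because $L$ of the difference vanishes, and because a function killed by $L$ that is a combination of $q^\lambda\Gamma(\cdot,\cdot v)$-type terms with vanishing holomorphic remainder must be zero.

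For the first term, I use the coefficientwise Cohen--Kuznetsov formula
\[
 \dd_{3/2}^{[n]}G=\sum_{j=0}^n\binom nj(3/2+j)_{n-j}\left(-\frac{E_2}{12}\right)^{n-j}D^jG.
\]
Applied to $(G^{\mathrm A}_{\ell,b})^+=\sum_\rho g^{\mathrm A}_{\ell,b}(\rho)q^\rho$, the operator $D^j$ multiplies each coefficient by $\rho^j$. Multiplication by $(-E_2/12)^{n-j}$ then convolves with $e_{n-j}(t)$. The resulting $q^\rho$ coefficient of the $b$th component of $\dd_{3/2}^{[n]}(\mathbf G^{\mathrm A}_\ell)^+$ is exactly $\mathcal K^{\mathrm{HS}}_{n,b}(\rho)$. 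Pairing with $(\mathbf F_\ell)_b=\sum f_{\ell,b}(\lambda)q^\lambda$ and summing over $b$ produces the first line of \eqref{rad:eq:Taylor-exact-coefficient}.

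Each inner sum is finite. The exponents $\lambda$ are bounded below by $\alpha_b-1/24$, and the exponents $\rho$ are bounded below by the principal part. Hence only finitely many pairs $(\lambda,\rho)$ with $\lambda+\rho=N-1/24$ occur.

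For the second term, I write $\eta^{-1}=q^{-1/24}\sum_{m\ge0}p(m)q^m$. The coefficient of $q^{N-1/24}$ in $\eta^{-1}a^{\mathrm A}_{n,\ell}$ is then $\sum_{r=0}^Np(N-r)[q^r]a^{\mathrm A}_{n,\ell}$, since $a^{\mathrm A}_{n,\ell}\in\Q[E_4,E_6]_{2n+2}$ by Corollary~\ref{cor:canonical-correction-rationality} has only nonnegative integral exponents.

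The only delicate step is the clean separation into holomorphic and non-holomorphic parts, that is, confirming that $\dd_{3/2}^{[n]}$ and contraction with $\mathbf F_\ell$ produce no constant-in-$q$ non-holomorphic leftovers that could be mistaken for holomorphic terms. I expect this to follow from the form of the expansion in Section~\ref{sec:foundations}. The non-holomorphic part has no zero-frequency term, and $D$ acting on $\Gamma(-1/2,4\pi|r|v)q^r$ yields only terms with explicit $v$-dependence of the same frequency, which cannot be holomorphic. With that established, the remainder is bookkeeping of the coefficients.
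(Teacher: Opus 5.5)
Your proposal is correct and follows essentially the same route as the paper's proof. You take holomorphic parts of the decomposition \eqref{eq:Mn-general} at $\mathbf G_\ell^{\mathrm A}$, using that $\dd_{3/2}^{[n]}$ has holomorphic coefficients and so commutes with passing to $(\cdot)^+$; you then expand coefficientwise with the explicit higher-Serre formula and write $\eta^{-1}=q^{-1/24}\sum_{m\ge0}p(m)q^m$. Your additional remarks on the uniqueness of the holomorphic/non-holomorphic splitting and on the finiteness of the convolutions spell out details that the paper's one-line argument leaves implicit.
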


\begin{proof}
The explicit higher Serre formula is
\[
 \dd_{3/2}^{[n]}G
 =\sum_{j=0}^n\binom nj(3/2+j)_{n-j}
 \left(-\frac{E_2}{12}\right)^{n-j}D^jG.
\]
Take holomorphic parts of the established decomposition.  Since the higher
Serre operators have holomorphic coefficients on $\Hh$,
\[
 \left(\dd_{3/2}^{[n]}\mathbf G_\ell^{\mathrm A}\right)^+
 =\dd_{3/2}^{[n]}\left(\mathbf G_\ell^{\mathrm A}\right)^+.
\]
Apply the explicit higher-Serre formula coefficientwise to this holomorphic
part, and use $\eta^{-1}=q^{-1/24}\sum_{m\ge0}p(m)q^m$.
\end{proof}

\begin{corollary}[Exact coefficient formulas for the modified $k$-rank moments]
\label{rad:cor:k-rank-moment-formulas}
For every $n,N\in\Z_{\ge0}$,
\begin{align}
 \frac{M_{2n+2,k}(N)}{(2n+2)!}
 ={}&\frac{[q^{N-1/24}]\rjet_{2n+1,k}^{+}}
 {(2\pi\ii)^{2n+1}}\nonumber\\
 &-\sum_{\substack{a,j,r\ge0\\a+j+r=n+1\\j\le n}}
 \frac{B_{2a}(1/2)}{(2a)!}\,
 \frac{1}{(2j)!\,r!}
 \left(\frac{\ell}{24}\right)^r
 [q^N]\!\left(E_2^rM_{2j,k}(q)\right).
 \label{rad:eq:k-rank-moment-recursion}
\end{align}
Starting from $M_{0,k}(q)=(q;q)_\infty^{-1}$, this is a triangular recursion
for all even modified $k$-rank moments.  Together with
Theorem~\ref{rad:thm:Taylor-exact-coefficients}, it expresses each moment
coefficient in terms of convergent Rademacher series, the uniquely determined
cusp-form adjustment, partition numbers, divisor sums, and lower moments.

In particular,
\begin{align}
 M_{2,k}(N)
 ={}&\frac{2}{2\pi\ii}
 [q^{N-1/24}]\rjet_{1,k}^{+}
 -\frac{\ell-1}{12}p(N)
 +2\ell\sum_{m=1}^{N}\sigma_1(m)p(N-m).
 \label{rad:eq:second-k-rank-moment}
\end{align}
\end{corollary}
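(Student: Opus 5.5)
The first formula \eqref{rad:eq:k-rank-moment-recursion} is obtained by reading the definition of $\rjet_{2n+1,k}^{+}$ coefficientwise. I would use the explicit expansion
\[
 \rjet_{2n+1,k}^{+}
 =(2\pi\ii)^{2n+1}q^{-1/24}
 \sum_{a+j+r=n+1}\frac{B_{2a}(1/2)}{(2a)!}\frac{M_{2j,k}(q)}{(2j)!}\frac1{r!}\Bigl(\frac{\ell E_2}{24}\Bigr)^r,
\]
divide by $(2\pi\ii)^{2n+1}$, and take $[q^{N-1/24}]$, which is the same as $[q^N]$ after the factor $q^{-1/24}$ is removed.

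The only term with $j=n+1$ forces $a=r=0$. That term is $M_{2n+2,k}(N)/(2n+2)!$, because $B_0(1/2)=1$. Moving every term with $j\le n$ to the other side gives \eqref{rad:eq:k-rank-moment-recursion} exactly.

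The right-hand side involves only $M_{2j,k}$ with $j\le n$, multiplied by powers of $E_2$ with integral coefficients (divisor sums). It also involves $[q^{N-1/24}]\rjet_{2n+1,k}^{+}$, which Theorem~\ref{rad:thm:Taylor-exact-coefficients} expresses in two pieces:
\begin{itemize}
\item the Fourier coefficients of $\mathbf G_\ell^{\mathrm A}$, which by Theorem~\ref{rad:thm:GJ-exact-coefficients} are convergent Rademacher series plus the cusp-form adjustment;
\item the correction term $a_{n,\ell}^{\mathrm A}$, convolved with $p(N-r)$.
\end{itemize}
Induction on $n$, starting from $M_{0,k}(q)=R_k(1;q)=(q;q)_\infty^{-1}$, then gives the triangular recursion and the claimed description of each moment.

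For the special case \eqref{rad:eq:second-k-rank-moment}, take $n=0$. The triples $(a,j,r)$ with $a+j+r=1$ and $j=0$ are $(1,0,0)$ and $(0,0,1)$. Using $B_2(1/2)=-1/12$, the recursion reads
\[
 \frac{M_{2,k}(N)}{2}
 =\frac{[q^{N-1/24}]\rjet_{1,k}^{+}}{2\pi\ii}
 +\frac1{24}p(N)
 -\frac{\ell}{24}[q^N]\bigl(E_2(q;q)_\infty^{-1}\bigr).
\]
Substituting $E_2=1-24\sum_{m\ge1}\sigma_1(m)q^m$ turns the last term into $-\frac{\ell}{24}p(N)+\ell\sum_{m=1}^N\sigma_1(m)p(N-m)$. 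Multiplying by $2$ gives the coefficient $\frac{1}{12}-\frac{\ell}{12}=-\frac{\ell-1}{12}$ on $p(N)$ and the divisor convolution $2\ell\sum_{m=1}^N\sigma_1(m)p(N-m)$, which is the claimed formula.

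No step is a genuine obstacle; the work is sign and constant bookkeeping. Where an error is most likely is matching $[q^{N-1/24}]$ against $[q^N]$, and the sign of $B_2(1/2)$.
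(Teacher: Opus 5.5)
Your proposal is correct and follows the paper's proof: isolate the unique $(a,j,r)=(0,n+1,0)$ term in the explicit moment expansion of $\rjet_{2n+1,k}^{+}$, take $[q^N]$ after removing $q^{-1/24}$, and for $n=0$ use $B_2(1/2)=-1/12$, $M_{0,k}=(q;q)_\infty^{-1}$ and the $q$-expansion of $E_2$. Your constant bookkeeping for \eqref{rad:eq:second-k-rank-moment} checks out.
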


\begin{proof}
In the explicit moment expansion of $\rjet_{2n+1,k}^{+}$ in
Section~\ref{sec:appell-jets}, the term involving $M_{2n+2,k}$ occurs only for
$(a,j,r)=(0,n+1,0)$ and has coefficient $1/(2n+2)!$.  Isolating this term and
taking the coefficient of $q^N$ gives
\eqref{rad:eq:k-rank-moment-recursion}.  For $n=0$, use
$B_2(1/2)=-1/12$, $M_{0,k}(q)=(q;q)_\infty^{-1}$, and
$E_2=1-24\sum_{m\ge1}\sigma_1(m)q^m$ to obtain
\eqref{rad:eq:second-k-rank-moment}.
\end{proof}

\begin{remark}[Inputs to the coefficient formula]
All convolutions in \eqref{rad:eq:Taylor-exact-coefficient} are finite.
Negative coefficients of $(\mathbf G_\ell^{\mathrm A})^+$ come from the
principal part, positive coefficients from
\eqref{rad:eq:GJ-exact-coefficient}, and the zero exponent is absent because
$\alpha_b\notin\Z$.  Thus the formula separates the polar, Rademacher,
cusp-form, and partition-convolution contributions.  For $0\le n<d$, the
last contribution vanishes because $a_{n,\ell}^{\mathrm A}=0$.
\end{remark}

\subsection{\texorpdfstring{The case $\ell=3$}{The case ell=3}}

\begin{corollary}[The scalar $\ell=3$ Rademacher identity]
\label{rad:cor:ell3-Rademacher-identity}
For $\ell=3$ one has $S_{3/2}(\sigma_3^\vee)=\{0\}$, hence
$\mathbf G_3^{\mathrm A}=\mathbf G_3^{\mathrm P}$.  Its holomorphic part
begins
\begin{equation}
 \begin{aligned}
 (\mathbf G_3^{\mathrm A})^+
 =-\frac{\pi\ii}{6}\sqrt{\frac23}q^{-1/24}
 \bigl(&1-35q-130q^2-273q^3\\
        &-595q^4-1001q^5+\cdots\bigr).
 \end{aligned}
 \label{rad:eq:ell3-qexp}
\end{equation}
If $A(n)$ denotes the coefficient in parentheses, then for every integer
$n\ge1$,
\begin{equation}
\begin{aligned}
 A(n)={}&-2\pi(24n-1)^{1/4}
 \sum_{c\ge1}\frac{\mathcal S_3^{\mathrm{proj}}
 (1,-1/24;1,n-1/24;c)}{c}\\
 &\hspace{30mm}\times
 I_{1/2}\!\left(\frac{\pi\sqrt{24n-1}}{6c}\right).
\end{aligned}
 \label{rad:eq:ell3-exact}
\end{equation}
In particular $A(1)=-35$, $A(2)=-130$, and $A(3)=-273$.
\end{corollary}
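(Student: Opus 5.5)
The argument has three parts: show the cusp space vanishes, compute the principal part and the first few coefficients, and read off the Rademacher identity from the single-seed theorem.

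\emph{Step 1: $S_{3/2}(\sigma_3^\vee)=\{0\}$.} For $\ell=3$ we have $d=1$ and $\alpha_1=1/24$. From \eqref{eq:T-ell} and \eqref{eq:S-ell}, $\sigma_3$ is one-dimensional with $T$-eigenvalue $\ee{1/24}$ and $S$-entry $\ee{-1/8}(S_3)_{11}$, where $(S_3)_{11}=\tfrac{2}{\sqrt3}\sin(2\pi/3)=1$ (the sign $(-1)^{k+2}$ equals $+1$ for $k=2$). This is the eta multiplier $\psi$, consistent with $\Theta_{3,1}=-\ii\eta$. Hence $\sigma_3^\vee=\psi^{-1}$, and
\[
 S_{3/2}(\psi^{-1})=\{0\}.
\]
Indeed, multiplying $f\in S_{3/2}(\psi^{-1})$ by $\eta$ gives an element of $S_2(\SLtwo)=\{0\}$. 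Equivalently, $f/\eta^{-1}=f\eta$ is a weight-$2$ cusp form. Theorem~\ref{rad:thm:spectral-realization} then gives $\mathbf G_3^{\mathrm A}=\mathbf G_3^{\mathrm P}$.

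\emph{Step 2: principal part and the displayed $q$-expansion.} Since $\alpha_1=1/24<1$, the index set $\mathscr S_3^{\mathrm{pol}}$ is the single pair $(1,0)$. By \eqref{pp:eq:p0-c} and the $r=0$ moment system, $\pPol_{3,1}(0)=\lambda_1^{\mathrm{char}}c_{1,3}=\mathfrak m_{0,3}=-(\ell-1)/24=-1/12$. Therefore
\[
 \pp\bigl((\mathbf G_3^{\mathrm A})^+\bigr)=2\pi\ii\sqrt{2/3}\,(-1/12)\,q^{-1/24}=-\frac{\pi\ii}{6}\sqrt{\frac23}\,q^{-1/24},
\]
which matches the leading term of \eqref{rad:eq:ell3-qexp}.

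For the further coefficients, use Theorem~\ref{thm:Wronskian-reconstruction} with $d=1$. Here $\Wr_3=\mathbf H_3=-\sqrt{3/2}\,\eta$, so
\[
 (\mathbf G_3^{\mathrm A})^+=2\pi\ii\,\widehat\Phi_3/\mathbf H_3,
\qquad
 \widehat\Phi_3=\mathcal B_{3,0}^+/(2\pi\ii)=\eta\,\rjet_{1,2}^+/(2\pi\ii).
\]
This yields
\[
 (\mathbf G_3^{\mathrm A})^+=-\sqrt{\tfrac23}\,\rjet_{1,2}^+
 =-\sqrt{\tfrac23}\,2\pi\ii\,q^{-1/24}\Bigl(-\tfrac1{12}\,\tfrac{E_2}{\ldots}\Bigr),
\]
where the bracket is, explicitly, $q^{-1/24}\bigl(\tfrac{M_{2,2}(q)}2+\tfrac{\ell E_2-1}{24}\,p\text{-series}\bigr)$ from the $n=0$ moment formula. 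The coefficients $1,-35,-130,\dots$ then follow by a finite computation, using $M_{2,2}(N)=2D_2(N)$ and \eqref{rad:eq:second-k-rank-moment}. This is a routine but error-prone check. The values $A(1),A(2),A(3)$ are obtained the same way.

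\emph{Step 3: the Rademacher identity.} Apply Corollary~\ref{rad:cor:GR-exact-coefficients} with $a=b=1$, $m=0$, $\mu=-1/24$ and $\nu=n-1/24$. Factoring out $-\tfrac{\pi\ii}{6}\sqrt{2/3}=2\pi\ii\sqrt{2/3}\,\pPol_{3,1}(0)$ gives
\[
 A(n)=-2\pi\,\bigl(\nu/(1/24)\bigr)^{1/4}\mathcal Z^{\mathrm{proj}}=-2\pi(24n-1)^{1/4}\mathcal Z^{\mathrm{proj}},
\]
with Bessel argument
\[
 4\pi\sqrt{(24n-1)/24^2}/c=\pi\sqrt{24n-1}/(6c).
\]
Convergence is supplied by Theorem~\ref{rad:thm:single-seed-positive-coefficient}.

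The main obstacle is the explicit verification of the numerical coefficients in Step 2, since it requires tracking every normalization constant exactly. The structural statements follow directly from the earlier theorems.
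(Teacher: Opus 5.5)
Your proposal is correct and follows the paper's own proof. You annihilate $S_{3/2}(\sigma_3^\vee)$ by multiplying with $\eta$ (your check that $\sigma_3$ is the eta multiplier $\psi$ is a more explicit form of the paper's $\boldsymbol\Theta_3=-\ii\eta$), use $\pPol_{3,1}(0)=\mathfrak m_{0,3}=-1/12$ for the principal part, insert $\alpha_1=1/24$ into the single-seed Rademacher formula, and take the initial coefficients from the $n=0$ moment formula, exactly as the paper indicates. The garbled intermediate display in Step~2 should simply read $(\mathbf G_3^{\mathrm A})^+=-\sqrt{2/3}\,\rjet_{1,2}^+$ (your bracket also carries a duplicated factor $q^{-1/24}$). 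Unwinding it gives $A(N)=6M_{2,2}(N)+p(N)-36\sum_{m=1}^{N}\sigma_1(m)p(N-m)$, so the Dyson moments $M_{2,2}(1),M_{2,2}(2),M_{2,2}(3)=0,2,8$ give $-35,-130,-273$ immediately.
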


\begin{proof}
Since $\boldsymbol\Theta_3=-\ii\eta$, multiplication by $\eta$ sends
$S_{3/2}(\sigma_3^\vee)$ into $M_2(\SLtwo)=\{0\}$.  Hence the cusp-form
adjustment vanishes.  Insert $\alpha_1=1/24$ and
$\pPol_{3,1}(0)=-1/12$ into
Theorem~\ref{rad:thm:GJ-exact-coefficients}.  The displayed initial
coefficients follow by expanding the resulting convergent series, or
equivalently from the $n=0$ Taylor/moment formula and the initial
Dyson-rank moments.
\end{proof}

\begin{corollary}[A convergent formula for $2$-marked Durfee symbols]
\label{rad:cor:two-marked-Durfee}
Let $D_2(N)$ be the number of $2$-marked Durfee symbols of size $N$, and
let $A(N)$ be the coefficient in \eqref{rad:eq:ell3-qexp}.  For every $N\ge1$,
\begin{equation}
 D_2(N)=\frac{A(N)-p(N)}{12}
 +3\sum_{m=1}^{N}\sigma_1(m)p(N-m).
 \label{rad:eq:two-marked-Durfee}
\end{equation}
Since $A(N)$ is given by the convergent Kloosterman--Bessel series
\eqref{rad:eq:ell3-exact}, this expresses $D_2(N)$ exactly in terms of a
convergent Kloosterman--Bessel series and explicit partition--divisor
convolution terms.
\end{corollary}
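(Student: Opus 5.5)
The formula should come from the $n=0$ case of Corollary~\ref{rad:cor:k-rank-moment-formulas} at $k=2$ ($\ell=3$, $d=1$), combined with $D_2(N)=\frac12M_{2,2}(N)$ from the Introduction. By \eqref{rad:eq:second-k-rank-moment},
\[
 D_2(N)=\frac{1}{2\pi\ii}[q^{N-1/24}]\rjet_{1,2}^{+}-\frac{1}{12}p(N)+3\sum_{m=1}^N\sigma_1(m)p(N-m),
\]
since $(\ell-1)/12=1/6$ and $2\ell/2=3$. The divisor--partition term and the $-p(N)/12$ term already agree with \eqref{rad:eq:two-marked-Durfee}. It therefore remains to show that $\frac{1}{2\pi\ii}[q^{N-1/24}]\rjet_{1,2}^{+}=A(N)/12$ for $N\ge1$.

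For this I would use Corollary~\ref{cor:initial-Taylor-coefficients} with $n=0<d=1$. It gives $\rjet_{1,2}=\mathbf F_3^{\mathsf T}\mathbf G_3^{\mathrm A}$, and hence, taking holomorphic parts (Theorem~\ref{rad:thm:Taylor-exact-coefficients} with $n=0$ and $a_{0,3}^{\mathrm A}=0$), $\rjet_{1,2}^{+}=\mathbf F_3^{\mathsf T}(\mathbf G_3^{\mathrm A})^+$. With $\Theta_{3,1}=-\ii\eta$ we get $\mathbf F_3=-\ii\sqrt{3/2}\,(-\ii\eta)/\eta=-\sqrt{3/2}$, a constant. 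Inserting \eqref{rad:eq:ell3-qexp},
\[
 \rjet_{1,2}^{+}=-\sqrt{\tfrac32}\cdot\Bigl(-\tfrac{\pi\ii}{6}\sqrt{\tfrac23}\Bigr)q^{-1/24}\sum_{n\ge0}A(n)q^n=\frac{\pi\ii}{6}q^{-1/24}\sum_{n\ge0}A(n)q^n,
\]
with $A(0)=1$. Dividing by $2\pi\ii$ gives $A(N)/12$, as required. The final statement, that $A(N)$ is a convergent Kloosterman--Bessel series, is then just \eqref{rad:eq:ell3-exact} from Corollary~\ref{rad:cor:ell3-Rademacher-identity}, whose convergence comes from Theorem~\ref{rad:thm:single-seed-positive-coefficient}.

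The main obstacle is bookkeeping of the normalizations: the sign and phase of $\mathbf F_3$, and the prefactor $-\frac{\pi\ii}{6}\sqrt{2/3}$ in \eqref{rad:eq:ell3-qexp}. I would check these against $N=0$. There $M_{2,2}(0)=0$ forces $\frac{1}{2\pi\ii}[q^{-1/24}]\rjet_{1,2}^{+}=\frac{1}{12}$, which matches $A(0)/12=1/12$ and is consistent with $\pPol_{3,1}(0)=-1/12$ through $c_{3,1}(0)=2\pi\ii\sqrt{2/3}\,\pPol_{3,1}(0)$. As a second sanity check I would take $N=1$: $A(1)=-35$ gives $D_2(1)=(-35-1)/12+3=0$, which is correct since a size-$1$ Durfee symbol cannot carry two marks.
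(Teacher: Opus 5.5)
Your proposal is correct and matches the paper's proof: combine $D_2(N)=\eta_2(N)=\frac12M_{2,2}(N)$ with the $\ell=3$ case of \eqref{rad:eq:second-k-rank-moment}, and use $\mathbf F_3=-\sqrt{3/2}$, $a_{0,3}^{\mathrm A}=0$ and \eqref{rad:eq:ell3-qexp} to get $\frac{1}{2\pi\ii}[q^{N-1/24}]\rjet_{1,2}^{+}=A(N)/12$. Your checks at $N=0$ and $N=1$ are a sensible confirmation of the normalizations, though the argument does not need them.
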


\begin{proof}
For $k=2$, symmetry of the Dyson-rank distribution gives
\[
 \eta_2(N)=\frac12M_{2,2}(N),
\]
and Andrews proved $D_2(N)=\eta_2(N)$ \cite{AndrewsDurfee}.  Moreover,
$\mathbf F_3=-\sqrt{3/2}$ and $a_{0,3}^{\mathrm A}=0$.  Comparing
Theorem~\ref{thm:main-canonical-lift} with \eqref{rad:eq:ell3-qexp} gives
\[
 \frac{1}{2\pi\ii}[q^{N-1/24}]\rjet_{1,2}^{+}=\frac{A(N)}{12}.
\]
Insert this identity into \eqref{rad:eq:second-k-rank-moment} with $\ell=3$,
and then use the convergent formula \eqref{rad:eq:ell3-exact} for $A(N)$.
\end{proof}

\subsubsection*{Bringmann's $k=2$ case.}
Bringmann's formulation identifies
\[
 (2\pi\ii)^{-1}\rjet_{1,2}(\tau)=\mathcal M_{\mathrm{Br}}(\tau/24),
\]
where $\mathcal M_{\mathrm{Br}}$ is the weight-$3/2$ harmonic weak Maass form
of \cite{BringmannDuke}; see \cite[Corollary~1.2]{Bringmann}.  Combined with
Corollary~\ref{rad:cor:two-marked-Durfee}, this gives the stated exact formula
for $D_2(N)$.

\section*{Conclusion}

For each $k\ge2$, scalar contractions of the higher Serre derivatives of one
weight-$3/2$ vector-valued harmonic Maass form give all non-holomorphic Taylor
terms.  The first $d=k-1$ Appell conditions remove the weakly holomorphic
ambiguity, and the next correction $a_{d,\ell}^{\mathrm A}$ is nonzero, so
this initial range is maximal.

The first $d$ holomorphic coefficients and
$(q;q)_\infty R_k(\e^X;q)\equiv1\pmod{q^k}$ determine the principal part
without positive Fourier coefficients of the modified moment series.  The
Bruinier--Funke pairing then determines the shadow.  The Maass--Poincar\'e lift
has convergent $I_{1/2}$-weighted Rademacher coefficients, and the canonical
lift differs from it by a unique cusp form.

For $k=2$, the relevant cusp space is zero, giving an exact formula for
$2$-marked Durfee symbols in terms of a convergent Kloosterman--Bessel series
and explicit partition--divisor convolutions.  It remains to describe
$S_{3/2}(\sigma_\ell^\vee)$, determine the arithmetic of
$a_{d,\ell}^{\mathrm A}$, and extend the construction to other Appell or
mock-Jacobi partition families.

\appendix

\section{\texorpdfstring{Resolvent continuation to $s=3/4$ via the Friedrichs realization}{Resolvent continuation to s=3/4 via the Friedrichs realization}}
\label{app:friedrichs-continuation}

Automorphic resolvent continuation is classical
\cite{Niebur1973,Niebur1974,Fay}, with related discreteness results in
\cite{Deitmar}.  For $\sigma_\ell^\vee$ we prove the four facts needed in
Section~\ref{sec:rademacher}: compact resolvent, identification of the zero
eigenspace with $S_{3/2}(\sigma_\ell^\vee)$, agreement with the convergent
Poincar\'e family, and regular specialization at $s=3/4$ without additional
negative holomorphic modes.

\begin{proposition}[Compact resolvent in the absence of zero Fourier modes at the cusp]
\label{rad:prop:compact-resolvent}
On $\mathcal H_\ell^{(3/2)}$, the Friedrichs realization
$\Delta_{3/2,F}$ of $\Delta_{3/2}$ has compact resolvent.
\end{proposition}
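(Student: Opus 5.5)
The plan is to prove compactness of the resolvent by the standard cusp-truncation argument of Lax--Phillips/Colin de Verdière type. The hypothesis that makes it work is the uniform frequency gap $\delta_\ell>0$ of Proposition~\ref{prop:sigma-finite-gap}: type $\sigma_\ell^\vee$ has no $\widetilde T$-fixed vector, so there is no zero Fourier mode at the cusp. The Friedrichs realization is defined from the closed quadratic form $\mathfrak q(F)=\langle\Delta_{3/2}F,F\rangle_{3/2}+C\|F\|_{3/2}^2$ on smooth compactly supported slash-invariant vectors. Write $\Delta_{3/2}=-\xi_{1/2}\xi_{3/2}$ via \eqref{rad:eq:Laplacian-factorization}, or equivalently use the weighted Dirichlet integral
\[
 \int_{\mathfrak F}v^{2}\bigl(\|\partial_uF\|^2+\|\partial_vF\|^2\bigr)v^{3/2}\,d\mu
\]
plus a lower-order term linear in $\kappa$. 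Either way, we first check that the form is bounded below, so that the Friedrichs extension exists and is self-adjoint. By Rellich's criterion it then suffices to show that the form domain embeds compactly into $\mathcal H_\ell^{(3/2)}$.

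Split $\mathfrak F$ into a compact piece $\mathfrak F_{\le Y}$ and the cusp region $\{v>Y\}$. On the compact piece, the form controls the local $H^1$ norm, since the weights are bounded above and below there. Rellich--Kondrachov on a compact manifold with boundary, using periodicity in $u$ and a smooth partition of unity, then gives compactness of the restriction map. On the cusp region, expand each component in Fourier series in $u$. The $a$th component involves only frequencies $n-\alpha_a$ with $|n-\alpha_a|\ge\delta_\ell$. By Parseval,
\[
 \int_0^1\|\partial_uF\|^2\,du\ge4\pi^2\delta_\ell^2\int_0^1\|F\|^2\,du .
\]
Hence the kinetic term $v^2\|\partial_uF\|^2$ dominates $4\pi^2\delta_\ell^2v^2\|F\|^2$, and on $\{v>Y\}$
\[
 \|F\|_{L^2(v>Y)}^2\le\frac{C}{\delta_\ell^2Y^2}\,\mathfrak q(F).
\]
To make this valid we must absorb the first-order weight-$\kappa$ term $\ii\kappa v\partial_u$ into the kinetic part using Cauchy--Schwarz, which costs only $O(1/Y)$ relative to $v^2\partial_u^2$.

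With these two pieces, any $\mathfrak q$-bounded sequence has uniformly small $L^2$ mass in the cusp and is precompact on the truncated region. A diagonal argument as $Y\to\infty$ yields an $L^2$-convergent subsequence, so the embedding is compact and the resolvent is compact. We also note that the finite image and unitarity of $\sigma_\ell^\vee$ make the pointwise norm $\|F(\tau)\|$ slash-invariant, which is what makes the integrals over $\mathfrak F$ well defined.

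The main obstacle is the lower bound on the form. The weight-$3/2$ Laplacian is not literally a sum of squares of $\partial_u,\partial_v$: after conjugating by $v^{\kappa/2}$ it becomes the Maass Laplacian, which carries a $\kappa$-dependent first-order $u$-term and a constant shift. We first try the conjugation $F=v^{-\kappa/2}f$, which reduces to the standard weight-$\kappa$ operator $-v^2(\partial_u^2+\partial_v^2)+\ii\kappa v\partial_u$ on $L^2(d\mu)$. The $\ii\kappa v\partial_u$ term is then handled through the identity $\Delta_\kappa=-\xi_{2-\kappa}\xi_\kappa$, which gives nonnegativity directly. The absence of zero modes is exactly what rules out the continuous-spectrum contribution from constant Fourier terms. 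Without that gap, the bound degenerates and the resolvent fails to be compact.
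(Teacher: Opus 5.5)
Your proposal is correct and follows essentially the same route as the paper. Both arguments use a lower-bounded Friedrichs form that controls the $H^1$ norm, the frequency gap $\delta_\ell$ with Parseval to make cusp tails uniformly $O(Y^{-2})$ in the form norm, Rellich--Kondrachov on the truncated domain, and a diagonal argument giving a compact form-domain embedding. The only difference is cosmetic: the paper conjugates by $UF=v^{3/4}F$ and writes the shifted form (for $C>3/4$) as an explicit sum of squares, whereas you absorb the first-order $\ii\kappa v\partial_u$ term by Cauchy--Schwarz.
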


\begin{proof}
Let $\mathscr C_\ell^{(3/2)}$ be the smooth compactly supported automorphic
core.  The unitary map $UF=v^{3/4}F$ identifies
\eqref{rad:eq:weighted-Hilbert-space} with the unweighted
$L^2(d\mu)$-space of sections of the conjugated unitary automorphic bundle
and gives
\begin{equation}
 U\Delta_{3/2}U^{-1}
 =-v^2(\partial_u^2+\partial_v^2)
  +\frac{3\ii}{2}v\partial_u-\frac3{16}.
 \label{rad:eq:conjugated-Laplacian}
\end{equation}
For $f\in U\mathscr C_\ell^{(3/2)}$ and $C>3/4$, integration by parts yields
\begin{align}
 \big\langle(U\Delta_{3/2}U^{-1}+C)f,f\big\rangle_{L^2(d\mu)}
 ={}&\int_{\mathfrak F}\|\partial_v f\|^2\,du\,dv
 \nonumber\\
 &+\int_{\mathfrak F}
 \left\|\left(-\ii\partial_u-\frac{3}{4v}\right)f\right\|^2\,du\,dv
 \nonumber\\
 &+\left(C-\frac34\right)
 \int_{\mathfrak F}\frac{\|f\|^2}{v^2}\,du\,dv.
 \label{rad:eq:Friedrichs-form-identity}
\end{align}
The side-pairing terms cancel.  The closure $\mathfrak q_C$ of the
nonnegative right-hand side is the Friedrichs form of
$U\Delta_{3/2}U^{-1}+C$ by the first representation theorem
\cite[Chapter~VI, Sections~1--2]{Kato}.  With
$\|f\|_{\mathfrak q_C}^{2}:=\mathfrak q_C[f]+\|f\|_{L^2(d\mu)}^{2}$,
\eqref{rad:eq:Friedrichs-form-identity} gives
\begin{equation}
 \int_{\mathfrak F}
 \left(\|\partial_u f\|^2+\|\partial_v f\|^2+v^{-2}\|f\|^2\right)\,du\,dv
 \ll_C\|f\|_{\mathfrak q_C}^2.
 \label{rad:eq:form-controls-H1}
\end{equation}

By Proposition~\ref{prop:sigma-finite-gap}, the $a$th cusp component has
frequencies $n-\alpha_a$.  Parseval and \eqref{eq:cusp-frequency-gap} imply
\[
 \int_0^1\|f(u,v)\|^2\,du
 \le(2\pi\delta_\ell)^{-2}
 \int_0^1\|\partial_u f(u,v)\|^2\,du,
\]
and hence, for $Y$ above the compact part,
\begin{equation}
 \int_{\mathfrak F\cap\{v>Y\}}\|f(\tau)\|^2\,d\mu(\tau)
 \ll_\ell Y^{-2}
 \int_{\mathfrak F\cap\{v>Y\}}\|\partial_u f(\tau)\|^2\,du\,dv.
 \label{rad:eq:uniform-cusp-tail}
\end{equation}
These estimates are first obtained on the smooth core and extend to the
closed form domain by density.  Thus form-bounded sequences have uniformly
negligible cusp tails.  On every
truncated domain, \eqref{rad:eq:form-controls-H1} and the
Rellich--Kondrachov compactness theorem \cite{AdamsFournier} give an
$L^2$-convergent subsequence.  A diagonal argument and
\eqref{rad:eq:uniform-cusp-tail} prove compactness of the form-domain
embedding, hence compactness of the Friedrichs resolvent.
\end{proof}

The closed form also identifies distributional $L^2$ solutions with the
Friedrichs operator domain.

\begin{lemma}[Maximal-domain identification]
\label{rad:lem:maximal-domain}
Let $G,F\in\mathcal H_\ell^{(3/2)}$ and $z\in\C$.  If
\[
 (\Delta_{3/2}-z)G=F
\]
holds distributionally on the metaplectic quotient, then
\[
 G\in\operatorname{Dom}(\Delta_{3/2,F}),
 \qquad
 (\Delta_{3/2,F}-z)G=F.
\]
In particular, if $z$ lies in the resolvent set of $\Delta_{3/2,F}$, then
$G=(\Delta_{3/2,F}-z)^{-1}F$.
\end{lemma}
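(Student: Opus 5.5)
The plan is to show that $G$ lies in the form domain of the closed form $\mathfrak q_C$ of Proposition~\ref{rad:prop:compact-resolvent}. Once that is known, the weak identity $\mathfrak q_C[G,\varphi]-(C+z)\langle G,\varphi\rangle=\langle F,\varphi\rangle$ for all $\varphi$ in the smooth core follows by integration by parts. The first representation theorem \cite[Chapter~VI]{Kato} then places $G$ in $\operatorname{Dom}(\Delta_{3/2,F})$ with $(\Delta_{3/2,F}-z)G=F$. So the only nontrivial step is the regularity and integrability of $G$ near the cusp; the compact part is handled by local elliptic regularity.

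\textbf{Interior.} On the compact part, $G$ is a distributional solution of an elliptic equation with smooth coefficients and $L^2$ right-hand side. Interior elliptic regularity therefore gives $G\in H^2_{\mathrm{loc}}$. The metaplectic quotient glues the side pairings consistently, so on every truncated region $\mathfrak F\cap\{v\le Y\}$ the function $G$ has finite $H^1$ norm.

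\textbf{Cusp.} This is the main obstacle. For $v>Y_0$ I will expand each component of $UG=v^{3/4}G$ in Fourier modes $e^{2\pi\ii(n-\alpha_a)u}$; there are no zero modes, by Proposition~\ref{prop:sigma-finite-gap}. Each coefficient $g_\lambda(v)$, with $\lambda=n-\alpha_a$, satisfies the ODE
\[
 -v^2g_\lambda''+(4\pi^2\lambda^2v^2-3\pi\lambda v-3/16-z)g_\lambda=f_\lambda .
\]
Since $|\lambda|\ge\delta_\ell$, the homogeneous solutions are Whittaker functions $W$ and $M$, behaving like $\exp(\mp2\pi|\lambda|v)$. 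The condition $g_\lambda\in L^2(dv/v^2)$ excludes the growing one, so $g_\lambda$ equals the decaying particular solution plus a multiple of the decaying Whittaker function.

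Next I will test the equation against $\chi_R(v)^2\,\overline{UG}$, where $\chi_R$ is a smooth cutoff equal to $1$ for $v\le R$ and supported in $v\le2R$. Integrating by parts gives
\[
 \int\chi_R^2\bigl(\|\partial_vUG\|^2+\|(-\ii\partial_u-\tfrac{3}{4v})UG\|^2\bigr)\,du\,dv
 \le|z|\,\|G\|^2+\|F\|\,\|G\|+\int|\chi_R'|^2\,\|UG\|^2\,du\,dv+O(1),
\]
with the $O(1)$ coming from the interior boundary. The cutoff-derivative term is $\ll R^{-2}\int_{R}^{2R}\|UG\|^2\,du\,dv$. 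Since $v\asymp R$ on that range, this is $\ll\int_R^{2R}\|UG\|^2v^{-2}\,du\,dv$, which is bounded by the $L^2$ norm. Letting $R\to\infty$ shows that the form energy \eqref{rad:eq:form-controls-H1} is finite.

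To make this rigorous I must justify the integration by parts for a function that is only $H^2_{\mathrm{loc}}$. I will mollify in $u$ (translation-invariant, so it commutes with $\Delta$) or work modewise, using the exponential decay of each $g_\lambda$ to make the boundary terms at $v=R$ vanish in the limit.

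\textbf{Conclusion.} With finite energy, $\chi_RG$ is approximated in $\|\cdot\|_{\mathfrak q_C}$ by core elements: cut off, then mollify. So $G$ lies in the closed form domain, and the weak equation extends from the core to the form domain by density. The representation theorem gives $G\in\operatorname{Dom}(\Delta_{3/2,F})$ with $(\Delta_{3/2,F}-z)G=F$. If $z$ is in the resolvent set, applying $(\Delta_{3/2,F}-z)^{-1}$ gives $G=(\Delta_{3/2,F}-z)^{-1}F$.
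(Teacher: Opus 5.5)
Your proposal is correct and follows essentially the same route as the paper: interior $H^2_{\mathrm{loc}}$ regularity, a cusp-cutoff energy (IMS-type) estimate showing that $UG$ lies in the closed form domain, and then the weak identity on the core together with Kato's first representation theorem. Your linear cutoff works as well as the paper's logarithmic one, since the localization error $\int|\chi_R'|^2\|UG\|^2\,du\,dv$ is dominated by the $L^2(d\mu)$ tail. The Fourier--Whittaker mode analysis and the mollification in $u$ are unnecessary: $\chi_R^2\overline{UG}$ is compactly supported, so $H^2_{\mathrm{loc}}$ regularity alone justifies the integration by parts. Moreover, the claimed exponential decay of each $g_\lambda$ is not justified for general $L^2$ data $F$.
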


\begin{proof}
Conjugate by $U$ and write
$\mathcal L=U\Delta_{3/2}U^{-1}$,
$\mathcal L_F:=U\Delta_{3/2,F}U^{-1}$, $g=UG$, and $f=UF$.
Interior elliptic regularity gives $g\in H^2_{\mathrm{loc}}$.  Choose a
smooth cusp cutoff $\chi_R$ on the automorphic quotient, equal to $1$ on
the compact core and, in the standard cusp, depending only on $v$ and
satisfying $\chi_R=1$ for $v\le e^R$, $\chi_R=0$ for $v\ge e^{2R}$, and
$|\partial_v\chi_R|\ll(Rv)^{-1}$.  Interior regularity and compact support
place $\chi_Rg$ in the form domain.  The Ismagilov--Morgan--Simon (IMS) localization identity
\cite{CyconFroeseKirschSimon}, applied to
\eqref{rad:eq:Friedrichs-form-identity} and followed by the distributional
equation, gives
\[
 \mathfrak q_C[\chi_Rg]
 \le\bigl(\|f\|+(|z|+C)\|g\|\bigr)\|g\|
      +O(R^{-2})\|g\|^2.
\]
All unlabelled norms in this estimate are $L^2(d\mu)$-norms.  Thus
$\chi_Rg$ is bounded in the form norm and converges to $g$ in $L^2$;
weak compactness places $g$ in the closed form domain.  For every core vector
$\varphi$,
\[
 \mathfrak q_C(g,\varphi)
 =\langle f+(z+C)g,\varphi\rangle_{L^2(d\mu)}.
\]
Continuity extends this identity to the full form domain, and the first
representation theorem \cite[Chapter~VI, Section~2]{Kato} yields
$g\in\operatorname{Dom}(\mathcal L_F)$ with
$(\mathcal L_F-z)g=f$.  Conjugating back proves the lemma.
\end{proof}

\subsubsection*{Local elliptic regularity.}
After conjugation by $U$, the operator is the fixed elliptic expression
\eqref{rad:eq:conjugated-Laplacian}.  The standard interior estimates
\cite[Chapter~XVII, Section~17.1]{HormanderIII}, applied after removing a local
parameter pole and then to parameter difference quotients, have the following
consequence.  Let $\mathcal O$ be a relatively compact open subset of a
local coordinate chart on the quotient, and let $z(s)$ be holomorphic.
If meromorphic $\mathcal H_\ell^{(3/2)}$-valued families $G(s),F(s)$ satisfy
$(\Delta_{3/2}-z(s))G(s)=F(s)$ distributionally and $F(s)$ is meromorphic
with values in $C^\infty(\mathcal O)$, then $G(s)$ is meromorphic with
values in $H^m_{\mathrm{loc}}(\mathcal O)$ for every $m$, and hence in
$C^\infty_{\mathrm{loc}}(\mathcal O)$.  The same estimates give the
fixed-parameter smoothness statement.  In a cusp-coordinate neighborhood,
restriction to a fixed horizontal circle, followed by Fourier projection in
$u$, is therefore meromorphic in $s$.

For a measurable weight-$1/2$ vector $G$ of type $\sigma_\ell$, put
\[
 \|G\|_{L^2_{1/2}}^2
 :=\int_{\mathfrak F}\|G(\tau)\|^2v^{1/2}\,d\mu(\tau),
\]
and write $G\in L^2_{1/2}(\sigma_\ell)$ when this norm is finite.  For cusp
forms, this is the Petersson norm $\|G\|_{1/2}$ used above.

\begin{lemma}[The zero eigenspace is the cusp space]
\label{rad:lem:L2-kernel}
For the Friedrichs realization $\Delta_{3/2,F}$ on
$\mathcal H_\ell^{(3/2)}$,
\[
 \ker\Delta_{3/2,F}=S_{3/2}(\sigma_\ell^\vee).
\]
\end{lemma}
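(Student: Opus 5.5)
The proof shows the two inclusions separately, working throughout with the factorization $\Delta_{3/2}=-\xi_{1/2}\xi_{3/2}$ from \eqref{rad:eq:Laplacian-factorization}. The easy direction is $S_{3/2}(\sigma_\ell^\vee)\subset\ker\Delta_{3/2,F}$. A cusp form $f$ is holomorphic, so $\xi_{3/2}f=0$ and hence $\Delta_{3/2}f=0$ pointwise. By Proposition~\ref{prop:sigma-finite-gap}, every Fourier frequency of $f$ at the cusp is at least $\delta_\ell>0$. Consequently $f$ decays like $\e^{-2\pi\delta_\ell v}$, and $f$ lies in $\mathcal H_\ell^{(3/2)}$. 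Lemma~\ref{rad:lem:maximal-domain}, applied with $z=0$ and $F=0$, then places $f$ in $\operatorname{Dom}(\Delta_{3/2,F})$ with $\Delta_{3/2,F}f=0$.

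For the reverse inclusion, take $G\in\ker\Delta_{3/2,F}$. Local elliptic regularity makes $G$ real-analytic, and $G$ satisfies $\Delta_{3/2}G=0$ classically. The first step is to prove $\xi_{3/2}G=0$ using the closed form. Since $G$ is in the form domain, the form identity \eqref{rad:eq:Friedrichs-form-identity} with $C$ and the $L^2$ term removed reduces (after undoing the conjugation by $U$) to
\[
 \langle\Delta_{3/2}G,G\rangle_{3/2}=\|\xi_{3/2}G\|_{L^2_{1/2}}^2 .
\]
The intended derivation is to approximate $G$ in the form norm by core vectors $\chi_R G$, as in the proof of Lemma~\ref{rad:lem:maximal-domain}. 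For those vectors, integrating $\overline{L}$ by parts over $\mathfrak F$ gives $\langle-\xi_{1/2}\xi_{3/2}\varphi,\varphi\rangle_{3/2}=\|\xi_{3/2}\varphi\|^2$, because the side-pairing boundary terms cancel by automorphy. The IMS error is $O(R^{-2})$, so it vanishes in the limit. Applying the identity to $G$ in the kernel gives $\|\xi_{3/2}G\|=0$, hence $G$ is holomorphic.

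A holomorphic $G$ of type $\sigma_\ell^\vee$ in $L^2_{3/2}$ has a cusp expansion $\sum_a\sum_{\nu\in\Z-\alpha_a}c(a,\nu)q^\nu$. Parseval on horizontal circles gives
\[
 \int_Y^\infty\sum|c(a,\nu)|^2\e^{-4\pi\nu v}v^{-1/2}\,dv<\infty ,
\]
which forces $c(a,\nu)=0$ for every $\nu<0$. Since $\alpha_a\notin\Z$, no frequency $\nu=0$ occurs. Therefore $G$ is holomorphic with only positive exponents, so $G\in S_{3/2}(\sigma_\ell^\vee)$.

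I expect the main obstacle to be the first step, namely justifying that the Friedrichs form evaluated at $G$ equals $\|\xi_{3/2}G\|^2$ with no boundary contribution at the cusp. The plan is to use \eqref{rad:eq:form-controls-H1} together with the cutoff argument. The key point is that $v^{-1}G$ and $\partial G$ lie in $L^2(du\,dv)$, so the boundary term on the line $v=e^{R}$ averages to zero along a sequence $R\to\infty$. If this proves delicate, the fallback is to compute $\xi_{3/2}G$ directly from the Fourier expansion of the harmonic function $G$. In that expansion, the only $L^2$-admissible modes are $q^\nu$ with $\nu>0$ and $\Gamma(-1/2,4\pi|\nu|v)q^\nu$ with $\nu<0$. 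The latter yield a shadow in $S_{1/2}(\sigma_\ell)$, and the pairing $\{\xi G,G\}$ computed by \eqref{ps:eq:coefficient-pairing} vanishes because $G$ has no principal part. This shows the shadow has zero Petersson norm, hence vanishes.
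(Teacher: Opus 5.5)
Your proposal is correct and follows essentially the paper's route. The Green identity $\mathfrak q_\Delta[G]=\|\xi_{3/2}G\|_{L^2_{1/2}}^2$ on the form domain forces $\xi_{3/2}G=0$ for a kernel vector; then $L^2$ growth, the absence of an integral exponent, and the decay of the positive modes give a cusp form. Your IMS cutoff estimate $\mathfrak q_\Delta[\chi_RG]=O(R^{-2})\|G\|^2$ stands in for the paper's extension of $\xi_{3/2}$ by form closure, and either works; for the reverse inclusion, applying Lemma~\ref{rad:lem:maximal-domain} with $z=0$ and zero source is a valid shortcut for the paper's cusp-cutoff approximation and first representation theorem.
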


\begin{proof}
Let $\mathfrak q_\Delta$ be the closed quadratic form of the Friedrichs
realization.  On the smooth core, the factorization
\eqref{rad:eq:Laplacian-factorization} and integration by parts give
\begin{equation}
 \mathfrak q_\Delta[F]
 =\|\xi_{3/2}F\|_{L^2_{1/2}}^2.
 \label{rad:eq:kernel-Green-identity}
\end{equation}
The right-hand side is controlled by the shifted form norm in
\eqref{rad:eq:Friedrichs-form-identity}.  Hence $\xi_{3/2}$ extends by form
closure: if $F_j$ is a core sequence converging to $F$ in the form norm,
then $\xi_{3/2}F_j$ is Cauchy and its limit is independent of the chosen
sequence.  Passing to the limit in the core identity proves
\eqref{rad:eq:kernel-Green-identity} on the full form domain.  For
$F$ in the operator domain, the first representation theorem also gives
$\mathfrak q_\Delta[F]=\langle\Delta_{3/2,F}F,F\rangle_{3/2}$.

An $L^2$ zero mode therefore satisfies $\xi_{3/2}F=0$ and is holomorphic.
Its component exponents belong to $\Z-\alpha_a$.  Negative exponents are
excluded by $L^2$ growth.  The~zero exponent is absent by
\eqref{eq:alpha-nonintegral}, and the remaining positive modes decay at the
cusp.  Thus the zero mode is a holomorphic cusp form.  Conversely, every
element of $S_{3/2}(\sigma_\ell^\vee)$ is approximated in the form norm by
standard quotient cusp cutoffs of the same type.  The closed form identity then gives
$\mathfrak q_\Delta[F,G]=0$ for every form-domain vector $G$; the first
representation theorem places $F$ in the operator domain with
$\Delta_{3/2,F}F=0$.
\end{proof}

\subsection{The cutoff source and the convergent family}

The cutoff Poincar\'e sum and compact source are defined in
\eqref{rad:eq:cutoff-Poincare}--\eqref{rad:eq:compact-source}.

\begin{lemma}[Cutoff source]
\label{rad:lem:cutoff-source}
The sum \eqref{rad:eq:cutoff-Poincare} is locally finite and holomorphic in
$s\in\Omega_\ell$.  The source
$\mathcal S_{\ell;a,\mu}^{\mathrm{cut}}(\cdot,s)$ is a holomorphic family of
smooth vectors compactly supported on the quotient.  Moreover, for every
$s\in\Omega_\ell$ and every
$H\in S_{3/2}(\sigma_\ell^\vee)$,
\begin{equation}
 \left\langle
 \mathcal S_{\ell;a,\mu}^{\mathrm{cut}}(\,\cdot\,,s),H
 \right\rangle_{3/2}=0.
 \label{rad:eq:source-orthogonality}
\end{equation}
\end{lemma}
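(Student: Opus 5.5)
The proof has three parts, one for each assertion of the lemma: local finiteness and holomorphy of the cutoff sum, compact support of the source, and orthogonality to cusp forms.

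\emph{Local finiteness.} Since $\chi(v)=0$ for $v\le1$, the summand attached to $\widetilde\gamma$ is nonzero at $\tau$ only if $\operatorname{Im}(\gamma\tau)=v/|c_\gamma\tau+d_\gamma|^2>1$. For $\tau$ in a compact set $K$, only finitely many cosets $\widetilde\Gamma_\infty\widetilde\gamma$ satisfy this, because the pairs $(c,d)$ with $|c\tau+d|^2<v$ are finite. Thus on a neighbourhood of $K$ the sum \eqref{rad:eq:cutoff-Poincare} is a finite sum. Each summand is holomorphic in $s\in\Omega_\ell$, since $\mathcal M_{3/2,s}$ is holomorphic there (as recorded after \eqref{rad:eq:Whittaker-W}) and the slash action does not involve $s$. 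Hence $\mathbb A_{\ell;a,\mu}$ is smooth in $\tau$ and holomorphic in $s$.

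\emph{Compact support.} The slash operator commutes with $\Delta_{3/2}$, so the source is the automorphic average
\[
\mathcal S^{\mathrm{cut}}_{\ell;a,\mu}(\cdot,s)
=\frac12\sum_{\widetilde\gamma}
\bigl((\Delta_{3/2}-\Lambda_{\mathrm{sp}}(s))(\chi\mathscr M^{\mathrm{seed}}_{a,\mu})\bigr)\big|_{3/2,\sigma_\ell^\vee}\widetilde\gamma .
\]
Whenever $\chi\equiv1$, the Whittaker equation \eqref{rad:eq:eigenvalue} gives $(\Delta_{3/2}-\Lambda_{\mathrm{sp}}(s))\mathscr M^{\mathrm{seed}}_{a,\mu}=0$. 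The inner function is therefore supported in the strip $1\le v\le2$ and equals commutator terms involving $\chi'$ and $\chi''$ times the seed. The $\Gamma_\infty$-quotient of this strip is compact, and its image in the quotient $\SLtwo\backslash\Hh$ is compact. Local finiteness then gives a smooth, compactly supported source, holomorphic in $s$.

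\emph{Orthogonality.} Let $H\in S_{3/2}(\sigma_\ell^\vee)$; I will not rely on the source being $\Delta_{3/2}$ of something in $L^2$. First, unfold the Petersson product against the automorphic average:
\[
\langle\mathcal S^{\mathrm{cut}},H\rangle_{3/2}
=\int_1^2\!\!\int_0^1 (\Delta_{3/2}-\Lambda_{\mathrm{sp}})\bigl(\chi\mathscr M^{\mathrm{seed}}\bigr)^{\mathsf T}\,\overline{H}\,v^{3/2}\,\frac{du\,dv}{v^2}.
\]
The factor $1/2$ matches the central-coset count, since the strip integrand is compactly supported. Second, integrate by parts. The operator $\Delta_{3/2}$ is formally symmetric for the measure $v^{3/2}\,d\mu$, and the integrand has compact $v$-support on $[1,2]$ and period one in $u$, so there are no boundary terms. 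This turns the integral into
\[
\int\!\!\int \chi\mathscr M^{\mathrm{seed}\,\mathsf T}\,\overline{(\Delta_{3/2}-\overline{\Lambda_{\mathrm{sp}}})H}\,v^{3/2}\,d\mu .
\]
Here I must take care that the strip is effectively a region $1\le v\le Y$ with $\chi=1$ near its top; I will instead integrate over $v\in[1,\infty)$ and use decay to dispose of the top boundary. Third, use that $\Delta_{3/2}H=0$, which reduces everything to $-\Lambda_{\mathrm{sp}}(s)\int\chi\mathscr M^{\mathrm{seed}\,\mathsf T}\overline H$, up to conjugation.

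To kill this remaining term, I would not integrate by parts at all and instead compute the $u$-integral directly. It extracts the $\mu$-th Fourier coefficient of the $a$-th component of $\overline H$. Since $H$ is holomorphic with exponents in $\Z-\alpha_a$, the function $\overline{H_a}$ has frequencies $-(\Z-\alpha_a)$. Pairing with $e(\mu u)$ picks out the coefficient $c_H(a,\mu)$ with $\mu<0$, and this coefficient vanishes because $H$ is cuspidal. The $u$-integral is therefore zero pointwise in $v$, so the inner product is $0$ with no integration by parts needed.

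The main obstacle is the unfolding step, which needs absolute convergence. It is guaranteed here because the source is compactly supported on the quotient and each $v$-slice contributes only finitely many cosets.
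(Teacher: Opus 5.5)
Your argument is correct and is essentially the paper's proof. It uses local finiteness from the condition $\operatorname{Im}(\gamma\tau)>1$, holomorphy from the Whittaker seed, and commutator terms supported in $1\le v\le 2$. It then unfolds and applies Fourier orthogonality in $u$: the source has the single frequency $\mu<0$ in its $a$th component, while $H_a$ has only frequencies $r>0$ with $\mu-r\in\Z\setminus\{0\}$.

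You should delete the integration-by-parts detour. Its ``no boundary terms'' claim fails, because $\chi\mathscr M^{\mathrm{seed}}_{a,\mu}$ is supported on all of $v\ge1$ and the polar seed can grow faster than $H_a$ decays. Instead, apply the direct $u$-integral to the unfolded source itself. This needs one line you only left implicit: $\chi$ depends only on $v$ and $\Delta_{3/2}-\Lambda_{\mathrm{sp}}(s)$ has $u$-independent coefficients, so the frequency $\mu$ is preserved.
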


\begin{proof}
For a fixed compact set in $\Hh$, only finitely many cosets can satisfy
$\Im(\gamma\tau)>1$, so the cutoff sum is locally finite.  The Whittaker
normalization above then gives holomorphy in $s$.  Since the uncut seed is an
eigenfunction, only the commutator terms in which at least one derivative
hits $\chi$ remain after applying
$\Delta_{3/2}-\Lambda_{\mathrm{sp}}(s)$.  Before summation these terms are
supported in $1<v<2$.  Their image in the quotient is compact, proving the
support claim.

Unfolding the compactly supported source against $H$ leaves only the
$a$th component and the single $u$-frequency $\mu<0$.  If
$r\in\Z-\alpha_a$ is a Fourier frequency of the cusp form $H_a$, then
$\mu-r\in\Z$; since $\mu<0<r$, one has $\mu-r\ne0$.  Hence
\[
 \int_0^1\e^{2\pi\ii(\mu-r)u}\,du=0,
\]
which proves \eqref{rad:eq:source-orthogonality}.
\end{proof}

The resolvent continuation is the family in
\eqref{rad:eq:resolvent-continuation}.

\begin{proposition}[Agreement with the convergent Poincar\'e family]
\label{rad:prop:resolvent-agreement}
Equation \eqref{rad:eq:resolvent-continuation} is a meromorphic automorphic
family on $\Omega_\ell$.  On $\Re(s)>1$ it agrees with
$\mathbb P_{\ell;a,\mu}^{\mathrm{abs}}(\tau,s)$ in
\eqref{rad:eq:Poincare-family}.
\end{proposition}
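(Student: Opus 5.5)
Meromorphy of \eqref{rad:eq:resolvent-continuation} on $\Omega_\ell$ will come from analytic Fredholm theory: compact resolvent plus holomorphy of the source. Agreement on $\Re(s)>1$ will come from uniqueness of $L^2$ solutions via Lemma~\ref{rad:lem:maximal-domain}.

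\emph{Meromorphy.} By Proposition~\ref{rad:prop:compact-resolvent}, $\Delta_{3/2,F}$ is self-adjoint and bounded below with compact resolvent. Hence its spectrum is discrete and real, and $z\mapsto\mathscr R_\Delta(z)$ is meromorphic on $\C$ with finite-rank residues at the eigenvalues. The map $s\mapsto\Lambda_{\mathrm{sp}}(s)$ is entire. By Lemma~\ref{rad:lem:cutoff-source}, $s\mapsto\mathcal S^{\mathrm{cut}}_{\ell;a,\mu}(\cdot,s)$ is a holomorphic family of compactly supported smooth vectors, and in particular a holomorphic $\mathcal H_\ell^{(3/2)}$-valued family. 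The composition $\mathscr R_\Delta(\Lambda_{\mathrm{sp}}(s))\mathcal S^{\mathrm{cut}}(s)$ is therefore a meromorphic $\mathcal H_\ell^{(3/2)}$-valued family on $\Omega_\ell$. Its poles lie in the discrete set where $\Lambda_{\mathrm{sp}}(s)$ is an eigenvalue.

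\emph{Automorphy and smoothness.} The cutoff term $\mathbb A_{\ell;a,\mu}$ is locally finite and holomorphic (again by Lemma~\ref{rad:lem:cutoff-source}), and it is slash-invariant by construction. The resolvent term lies in the automorphic Hilbert space, so it is automorphic as well. The local elliptic regularity paragraph then upgrades the resolvent term to a meromorphic family with values in $C^\infty_{\mathrm{loc}}$. This gives a meromorphic automorphic family.

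\emph{Agreement on $\Re(s)>1$.} Fix $s$ with $\Re(s)>1$ away from the discrete pole set. Put $D(s):=\mathbb A_{\ell;a,\mu}(\cdot,s)-\mathbb P^{\mathrm{abs}}_{\ell;a,\mu}(\cdot,s)$. This equals $-\frac12\sum_{\widetilde\gamma}\bigl((1-\chi)\mathscr M^{\mathrm{seed}}\bigr)|\widetilde\gamma$, a sum of seeds supported in $v\le2$.

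By \eqref{rad:eq:eigenvalue}, $(\Delta_{3/2}-\Lambda_{\mathrm{sp}}(s))D(s)=\mathcal S^{\mathrm{cut}}(s)$ distributionally. The main obstacle is showing that $D(s)\in\mathcal H_\ell^{(3/2)}$. The plan is to use the standard estimate for $\Re(s)>1$: the absolutely convergent sum has at the cusp only contributions from nonidentity cosets. These are bounded by $\ll v^{1-\Re(s)-3/4}$-type terms (via $\mathcal M_{3/2,s}(y)\ll y^{\Re s}$ near $0$ and $\operatorname{Im}(\gamma\tau)\le 1/(c^2v)$). The resulting bound is uniformly $O(v^{-\Re(s)+3/4})$, hence square-integrable against $v^{3/2}\,d\mu$ for $\Re(s)>1$. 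Meanwhile the identity-coset term $(1-\chi)\mathscr M^{\mathrm{seed}}$ vanishes for $v\ge2$. Convergence of the same majorant on the compact part is routine from absolute convergence.

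Given $D(s)\in\mathcal H_\ell^{(3/2)}$, Lemma~\ref{rad:lem:maximal-domain} yields $D(s)=\mathscr R_\Delta(\Lambda_{\mathrm{sp}}(s))\mathcal S^{\mathrm{cut}}(s)$. This uses that $\Lambda_{\mathrm{sp}}(s)$ is in the resolvent set for generic such $s$; for real $s>1$ it is negative, below the nonnegative spectrum guaranteed by \eqref{rad:eq:kernel-Green-identity}. The identity shows \eqref{rad:eq:resolvent-continuation} equals $\mathbb P^{\mathrm{abs}}$ there. The identity theorem for meromorphic families extends the agreement to all of $\Re(s)>1$.
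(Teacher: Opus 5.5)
Your proposal is correct and follows the paper's proof essentially step for step. Meromorphy comes from the compact (hence meromorphic) resolvent, the holomorphic compactly supported source and local elliptic regularity; agreement comes from an $L^2$ bound on $\mathbb A_{\ell;a,\mu}-\mathbb P^{\mathrm{abs}}_{\ell;a,\mu}$ via a truncated Eisenstein-type majorant, followed by Lemma~\ref{rad:lem:maximal-domain} and resolvent uniqueness.

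Two small corrections, neither affecting the argument:
\begin{itemize}
\item Near $y=0$ one has $|\mathcal M_{3/2,s}(y)|\asymp|y|^{\Re(s)-3/4}$, not $|y|^{\Re(s)}$. Each slashed term is therefore $\ll v^{-3/4}\Im(\gamma\tau)^{\Re(s)}$, and the cusp bound is $O(v^{1/4-\Re(s)})$, i.e.\ the paper's $\|UG_s\|=O(v^{1-\Re(s)})$. This is stronger than the $O(v^{3/4-\Re(s)})$ you state, and either bound is square-integrable for $\Re(s)>1$.
\item Since $\Im\Lambda_{\mathrm{sp}}(x+\ii y)=-(2x-1)y$ and $\Lambda_{\mathrm{sp}}(x)<0$ for real $x>1$, the point $\Lambda_{\mathrm{sp}}(s)$ lies in the resolvent set for every $s$ with $\Re(s)>1$. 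The agreement therefore holds pointwise there, and no identity-theorem step is needed.
\end{itemize}
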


\begin{proof}
Compact resolvent, the holomorphic compact source, and the local elliptic
regularity just recalled make
\eqref{rad:eq:resolvent-continuation} a meromorphic
$C^\infty_{\mathrm{loc}}$-valued automorphic family.  For $\Re(s)>1$, put
$G_s=\mathbb P_{\ell;a,\mu}^{\mathrm{abs}}(\cdot,s)-
\mathbb A_{\ell;a,\mu}(\cdot,s)$.  Locally uniformly for $s$ in a
compact set $K\Subset\{\Re(s)>1\}$, the small-argument Whittaker estimate
shows, after the unitary conjugation, that
\begin{equation}
 \|UG_s(\tau)\|
 \ll_{K,\ell,\mu}
 \sum_{\gamma\in\Gamma_\infty\backslash\SLtwo}
 \mathbf 1_{\{\Im(\gamma\tau)\le2\}}\Im(\gamma\tau)^{\Re(s)}.
 \label{rad:eq:truncated-Eisenstein-majorant}
\end{equation}
The right-hand side is bounded on the compact part; in the cusp, summing over
lower rows gives $O(v^{1-\Re(s)})$.  Hence $G_s\in\mathcal H_\ell^{(3/2)}$.
It satisfies
\[
 (\Delta_{3/2}-\Lambda_{\mathrm{sp}}(s))G_s
 =-\mathcal S_{\ell;a,\mu}^{\mathrm{cut}}(\cdot,s)
\]
distributionally.  Lemma~\ref{rad:lem:maximal-domain} upgrades this to the
Friedrichs operator.  If $s=x+\ii y$ with $x>1$, then
\[
 \operatorname{Im}\Lambda_{\mathrm{sp}}(s)=-(2x-1)y.
\]
Thus $\Lambda_{\mathrm{sp}}(s)$ is nonreal when $y\ne0$, while for $y=0$
one has
$\Lambda_{\mathrm{sp}}(x)=(x-3/4)(1/4-x)<0$.  Since the Friedrichs
spectrum is nonnegative, resolvent uniqueness gives
\[
 G_s=-\mathscr R_\Delta(\Lambda_{\mathrm{sp}}(s))
 \mathcal S_{\ell;a,\mu}^{\mathrm{cut}}(\cdot,s).
\]
This is exactly \eqref{rad:eq:resolvent-continuation}, proving agreement with
the absolutely convergent family.
\end{proof}

\subsection{\texorpdfstring{Details of the specialization at $s=3/4$}{Details of the specialization at s=3/4}}

\begin{proof}[Details for Theorem~\ref{rad:thm:endpoint-existence}]
By Lemma~\ref{rad:lem:L2-kernel}, let $\operatorname{pr}_0$ denote the
orthogonal projection onto $S_{3/2}(\sigma_\ell^\vee)$.  The spectral
expansion near $z=0$ is
\[
 \mathscr R_\Delta(z)=-\frac{\operatorname{pr}_0}{z}
 +\mathscr R_\Delta^{\mathrm{reg}}(z).
\]
By Lemma~\ref{rad:lem:cutoff-source},
\[
 \operatorname{pr}_0\mathcal S_{\ell;a,\mu}^{\mathrm{cut}}(\cdot,s)=0
 \qquad (s\in\Omega_\ell).
\]
Since $\Lambda_{\mathrm{sp}}(3/4)=0$, the only possible pole at $s=3/4$ in
\eqref{rad:eq:resolvent-continuation} is absent.  The specialization is
therefore harmonic.

For $v$ sufficiently large, every nonparabolic summand in
\eqref{rad:eq:cutoff-Poincare} has cutoff zero; the two central copies of the
identity seed are compensated by the factor $1/2$.  By
\eqref{rad:eq:harmonic-seed}, the cutoff sum is then exactly
$q^\mu\mathbf e_a^\vee$.  Define the regular resolvent term at $s=3/4$ by
\[
 \mathcal E_{\ell;a,\mu}
 :=-\mathscr R_\Delta^{\mathrm{reg}}(0)
   \mathcal S_{\ell;a,\mu}^{\mathrm{cut}}(\cdot,3/4).
\]
Because the source is orthogonal to the zero eigenspace, the regular
resolvent maps it into $\operatorname{Dom}(\Delta_{3/2,F})$.  Hence
\[
 \mathcal E_{\ell;a,\mu}
 \in\operatorname{Dom}(\Delta_{3/2,F})
 \subset\operatorname{Dom}(\mathfrak q_\Delta),
 \qquad
 \xi_{3/2}\mathcal E_{\ell;a,\mu}
 \in L^2_{1/2}(\sigma_\ell)
\]
by \eqref{rad:eq:kernel-Green-identity}; in particular,
$\mathcal E_{\ell;a,\mu}$ is $L^2$.  The compact support of the source
implies that $\Delta_{3/2}\mathcal E_{\ell;a,\mu}=0$ distributionally in
the cusp.  Local elliptic regularity makes this resolvent term smooth there.
The componentwise radial equation, equivalently the standard Fourier
expansion of a weight-$3/2$ harmonic function in the cusp, gives
\begin{equation}
 (\mathcal E_{\ell;a,\mu})_j(\tau)
 =\sum_{\substack{r\in\Z-\alpha_j\\ r>0}}A_j(r)q^r
  +\sum_{\substack{r\in\Z-\alpha_j\\ r<0}}
    B_j(r)\Gamma\!\left(-\frac12,4\pi|r|v\right)q^r.
 \label{rad:eq:L2-harmonic-cusp-branches}
\end{equation}
Indeed, for $r<0$ the omitted holomorphic branch $q^r$ grows exponentially
and is not $L^2$; for $r>0$ the second radial solution grows exponentially
and is likewise excluded.  The zero Fourier-mode solutions do not occur
because \eqref{eq:alpha-nonintegral} rules out $r=0$.  Thus the resolvent term adds no negative holomorphic term, proving
\eqref{rad:eq:endpoint-principal-part}.

Finally, harmonicity implies that
$\xi_{3/2}\mathbb P_{\ell;a,\mu}$ is holomorphic of weight $1/2$ and type
$\sigma_\ell$.  On the compact part the specialized series is smooth.  In
the cusp, the prescribed growing seed is holomorphic and is killed by
$\xi_{3/2}$, while the form-domain argument above shows that
$\xi_{3/2}\mathcal E_{\ell;a,\mu}$ lies in
$L^2_{1/2}(\sigma_\ell)$.  Thus the shadow is globally $L^2$.  Its component
exponents lie in $\Z+\alpha_j$; negative exponents are excluded by $L^2$
growth, the zero exponent is absent by \eqref{eq:cusp-frequency-gap}, and
the remaining positive frequencies decay at the cusp.  Hence the shadow is
cuspidal, and the specialized series belongs to
$H_{3/2}^{+}(\sigma_\ell^\vee)$.
\end{proof}

\end{document}